\newcommand{\url}{}
\numberwithin{equation}{section}
\def\@settitle{\begin{center}%
    \bfseries
 \normalfont\LARGE\@title
  \end{center}%
}
\def\@setauthors{\begin{center}%
 \normalsize\@author
  \end{center}%
}
\theoremstyle{plain}
\newtheorem{theorem}{Theorem}[section]
\newtheorem{lemma}[theorem]{Lemma}
\newtheorem{remark}[theorem]{Remark}
\newtheorem{proposition}[theorem]{Proposition}
\newtheorem{definition}[theorem]{Definition}
\newtheorem{corollary}[theorem]{Corollary}
\newcommand{\DKL}{D_{\mathrm{KL}}}
\newcommand{\tr}{{\rm tr}}
\newcommand{\Tr}{{\rm Tr}}
\newcommand{\dE}{\mathbb {E}}
\newcommand{\dN}{\mathbb {N}}
\newcommand{\dP}{\mathbb{P}}
\newcommand{\dZ}{\mathbb {Z}}
\newcommand{\dR}{\mathbb {R}}
\newcommand{\dC}{\mathbb {C}}
\newcommand{\cG}{\mathcal {G}}
\newcommand{\cX}{\mathcal {X}}
\newcommand{\cP}{\mathcal {P}}
\newcommand{\cN}{\mathcal {N}}
\newcommand{\cC}{\mathcal {C}}
\newcommand{\cS}{\mathcal {S}}
\newcommand{\cD}{\mathcal {D}}
\newcommand{\cZ}{\mathcal {Z}}
\newcommand{\cGr}{\mathcal {G}^\bullet}
\newcommand{\cGe}{\vec{\mathcal {G}}^\bullet}
\newcommand{\cNr}{{\mathcal {N}}^\bullet}
\newcommand{\cNe}{\vec{\mathcal {N}}^\bullet}
\newcommand{\E}{\mathbb{E}}
\renewcommand{\P}{\mathbb{P}}
\newcommand{\cB}{\mathcal{B}}
\newcommand{\DEG}{{\mathrm{Deg}}}
\newcommand{\MM}{{\mathfrak{m}}}
\newcommand{\UGW}{{\mathrm{UGW}}}
\newcommand{\PWIT}{{\mathrm{PWIT}}}
\newcommand{\Utau}{\mathcal T}
\newcommand{\TRAF}{{\mathrm{Traf\langle J \rangle}}}
\newcommand{\TRAFB}{{\mathrm{Traf^\bullet\langle J \rangle}}}
\newcommand{\TV}{ \mathrm{TV}}
\newcommand{\Poi}{ \mathrm{Poi}}
\newcommand{\LOC}{{\mathrm{loc}}}
\newcommand{\DLOC}{{ \mathrm{d}_{\LOC}}}
\newcommand{\DD}{\mathfrak{d}}
\newcommand{\AND}{\quad \mathrm{and} \quad}
\newcommand{\1}{1\!\!{\sf I}}
\newcommand{\IND}{\1}
\newcommand{\veps}{\varepsilon}
\newcommand{\eps}{\varepsilon}
\newcommand{\cH}{\mathcal{H}}
\newcommand{\BEAS}{\begin{eqnarray*}}
\newcommand{\EEAS}{\end{eqnarray*}}
\newcommand{\BEA}{\begin{eqnarray}}
\newcommand{\EEA}{\end{eqnarray}}
\newcommand{\BEQ}{\begin{equation}}
\newcommand{\EEQ}{\end{equation}}
\newcommand{\BIT}{\begin{itemize}}
\newcommand{\EIT}{\end{itemize}}
\newcommand{\BNUM}{\begin{enumerate}}
\newcommand{\ENUM}{\end{enumerate}}
\newcommand{\mbf}{\mathbf}
\renewcommand{\leq}{\leqslant}
\renewcommand{\geq}{\geqslant}
\newcommand{\beq}{\begin{equation}}
\newcommand{\eeq}{\end{equation}}
\title{Large deviations for  macroscopic observables of heavy-tailed matrices}
\author{Charles~Bordenave\thanks{Aix-Marseille Univ, CNRS, I2M, Marseille, France. Email: charles.bordenave@univ-amu.fr}\,,  \; Alice Guionnet\thanks{ENS Lyon, CNRS, UMPA, Lyon, France. Email: aguionne@ens-lyon.fr} \; and \;  Camille~Male\thanks{Univ. Bordeaux, CNRS, IMB, Bordeaux, France. Email: camille.male@gmail.com}}
\date{}
\begin{document}

\maketitle

\begin{abstract} We consider a finite collection of independent Hermitian heavy-tailed random matrices of  growing dimension. Our model includes the L\'evy matrices proposed by Bouchaud and Cizeau, as well as sparse random matrices with  $O(1)$ non-zero entries per row. By representing these matrices as weighted graphs, we  derive a large deviations principle for key macroscopic observables. Specifically, we focus on the empirical distribution of eigenvalues, the joint neighborhood distribution, and the joint traffic distribution. As an application, we define a notion of microstates entropy for traffic distributions which is additive for the free traffic convolution.  
\end{abstract}

\section{Introduction}

\subsection{Large deviations for empirical spectral distribution of random matrices}

\paragraph{Background} Consider a self-adjoint matrix $Y \in M_{n} (\dC)$ of dimension $n$. We classically define its  {\em empirical spectral distribution} (ESD) as the probability measure on $\dR$ which puts an equal mass to all its eigenvalues counted with multiplicities : 
\begin{equation}\label{def:ESD}
L_Y = \frac 1 n \sum_{k=1}^n \delta_{\lambda_k(Y)} \in \cP(\dR). 
\end{equation}
Given a sequence of matrices $Y = Y_n$ of growing dimensions, a classical question 
is to study  the convergence, say  for the weak topology, of $L_Y$ as the dimension grows toward a limiting probability measure $L_{\star}$. When $Y$ is a random matrix, such convergence could be in expectation or in probability. This convergence of the ESD is well-understood for essentially all classical models of self-adjoint random matrices, see the monographs \cite{BaiSil,MR2760897,zbMATH06684673}. 

Much less is known however on the probability of  large deviations of the ESD away form  its typical behavior $L_{\star}$ beyond a short list of probabilistic ensembles. Before going through this list, let us recall briefly the definition of a {\em large deviation  principle } (LDP) for a sequence of random variables which was introduced by Varadhan \cite{Varadhan}. Let $(\cX,d)$ be a metric space. A {\em rate function} $I : \cX \to [0,\infty]$ is a lower semi-continuous function. A rate function is {\em good} if for all $t \in [0,\infty)$, $I^{-1} ([0,t])$ is a compact subset of $\cX$. Then, a sequence of random variables $X_n \in (\cX,d)$ (here $X_n = L_{Y}$ and $\cX = \cP(\dR)$) satisfies a LDP at rate $v_n$ with $\lim_n v_n = \infty$ and rate function $I$, if for every measurable set $A \subset \cX$, 
$$
-\inf_{x \in \dot A} I(x) \leq \liminf \frac{1}{v_n} \ln \dP ( X_n \in A) \leq \limsup \frac{1}{v_n} \ln \dP ( X_n \in A) \leq - \inf_{ x \in \bar{A} } I(x),
$$
where $\dot A$ and $\bar A$ are the interior and closure of $A$ respectively. For references on this classical topic of probability theory, see \cite{Deu-St,De-Ze,DuEl}.

Sanov's theorem asserts that the rate of the LDP for 
 the empirical measure of i.i.d random variables is always $n$ and therefore does not depend on their distribution. The situation is very different for the LDP for ESD of  random matrices  where it depends on the tail of the entries.
We now list the known LDP  for ESD of random matrices, ordered  by decreasing rates. The LDP for the ESD at rate $n^2$ of the Gaussian unitary ensemble and more general orthogonal or unitary invariant matrix ensembles was established in \cite{BAG97}. Recently \cite{BHG,NaSh24} established the LDP with rate $n^2$ for the ESD of sums of matrices in typical positions : $A + U B U^*$ where $A,B$ are self-adjoint and $U$ is random unitary sampled according to the Haar measure on the unitary group. In \cite{BordCap}, the LDP with rate $n^{1+ \alpha/2}$ was proved for Wigner matrices $Y =  X/ \sqrt n$ where the entries of $X$ have an exponential tail of the form $\dP ( |X_{ij}| \geq t ) \sim_{t \to \infty} c_1 \exp ( -c_2 t^{\alpha})$ for some $0 < \alpha < 2$.  In the recent work  \cite{augeri2024}, Augeri
 proves the LDP for the ESD for sparse Wigner matrices $Y_{ij} = A_{ij} X_{ij} / \sqrt{np} $ where $X$ is self-adjoint with iid bounded centered entries above the diagonal and $A$ is self-adjoint with iid Bernoulli $\{0,1\}$-entries above the diagonal, $\dP (A_{ij} = 1) = p$. In the regime $\log n \ll  p \ll 1$,  Augeri proves  that $L_{Y}$ satisfies a  LDP with  rate  $n^2p$. Moreover, from the references \cite{BoCa15} and \cite[pp 103-107]{MR3791802} , a LDP at rate $n$ for the ESD of the adjacency matrix can be extracted for the above matrix $A$ in the regime $p = d/n$ with $d$ fixed or for adjacency matrices of random graphs uniform given their sequence of degrees, provided that the degree sequence converges. 

A first goal of the present work is to present new large deviation  principles for the ESD of random matrices at rate $n$.

\paragraph{Weighted graphs with uniformly bounded average degrees}

We start with sparse Wigner matrices with order $1$ non-zeroes entries per row. More precisely, for every  integer $n \geq 1$, let 
$ (A_{ij})_{1 \leq i < j \leq n}$ be iid independent Bernoulli variables, $\dP(A_{ij} = 1) = 1 - \dP( A_{ij} = 0) = \min(d/n,1) $ for some fixed $d > 0$. We set $A_{ji} = A_{ij}$ and $A_{ii} = 0$. Let $\gamma$ be a probability measure on $\dC$ which is invariant by complex conjugation (that is  $\gamma (B) = \gamma (\bar B)$ for all Borel sets $B\subset \dC$).  We consider the self-adjoint matrix $X = (X_{ij})$ with zero diagonal entries $X_{ii} = 0$ and for all $i > j$, $(X_{ij})$ iid with law $\gamma$. Finally, as above we set 
\begin{equation}\label{eq:SW}
Y_{ij} = A_{ij} X_{ij},
\end{equation}
(in other words, for $n\geq d$, $Y_{ij}$ has law $(1-d/n) \delta_0 + (d/n) \gamma$ outside the diagonal). By construction $A$ is the adjacency  matrix of an Erd\H{o}s-R\'enyi random graph on $n$ vertices and edge probability $p/n$.

Note that by definition, the rate function $I$ of a LDP  is non-negative and $\inf I=0$. If $I$ is  a good rate function, this infimum is achieved.
Here and below, we will say that a rate function $I$ on $\cX$ has a {\em unique minimizer} if there is a unique $x \in \cX$ such that $I(x) = 0$.  This property 
is important as it implies the almost sure convergence of the random variable satisfying this LDP towards the unique minimizer $x$.
\begin{theorem}\label{thA}
Let $d > 0$, $\gamma \in \cP(\dC)$ be invariant by complex conjugation  and $Y$ the Wigner matrix with entries defined in \eqref{eq:SW}. The ESD $L_{Y}$ satisfies a LDP in $\cP(\dR)$ with speed $n$ and good rate function $J_{\gamma,d}$  with a unique minimizer $\mu_{\gamma,d}\in\cP(\dR)$.
\end{theorem}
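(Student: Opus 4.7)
I would deduce Theorem \ref{thA} from a more fundamental LDP at speed $n$ for the joint neighborhood distribution (JND) of $Y$, then contract to the ESD. Given a weighted graph $Y$ on $[n]$, let $U(Y) \in \cP(\cGr)$ denote the empirical distribution, over roots $i \in [n]$, of the rooted connected component $(Y,i)$, where $\cGr$ is equipped with the local weak topology.

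\textbf{Step 1 (LDP for the JND).} Decompose $Y$ into the Erd\H os--R\'enyi skeleton $A$ and the iid $\ga$-weights on its edges. For $A$ alone, the JND satisfies a LDP at speed $n$ on $\cP(\cGr)$; this is essentially contained in the references \cite{BoCa15} and \cite[pp.~103--107]{MR3791802} cited in the introduction, where the combinatorial fluctuations of $G(n,d/n)$ around $\UGW(\Poi(d))$ are counted at rate $n$. Conditionally on $A$, the $\sim nd/2$ edge weights are iid $\ga$, so a Sanov-type argument produces a conditional LDP at speed $n$ with rate a relative entropy against the product law $\ga^{\otimes \infty}$. A fibered LDP would assemble the two layers into a joint LDP for $U(Y)$ at speed $n$, with a good rate function $\mathfrak{J}_{\ga,d}$ concentrated on unimodular measures on $\cGr$ and admitting $\UGW(\Poi(d),\ga)$ (the $\ga$-weighted Poisson Galton--Watson tree) as its unique zero.

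\textbf{Step 2 (Spectral contraction).} The identity $\int f \rd L_Y = \frac{1}{n}\sum_i \ANG{\delta_i, f(Y)\delta_i}$ shows that $L_Y = \int \mu^{o,o}_{(G,o)} \, \rd U(Y)(G,o)$, where $\mu^{o,o}_{(G,o)}$ is the spectral measure at the root of the symmetric operator associated to $(G,o)$. On the set of essentially self-adjoint, locally tree-like rooted graphs --- a set of full measure under the limits that matter here --- the map $\Phi : (G,o) \mapsto \mu^{o,o}_{(G,o)}$ is continuous in the local weak topology, via the resolvent recursion on trees due to Bordenave--Lelarge--Salez. The contraction principle applied to the induced map $\widetilde\Phi : \nu \mapsto \int \mu^{o,o}_{(G,o)}\,\rd\nu$ then transports the LDP of Step 1 to a LDP for $L_Y$ with good rate function
\[
J_{\ga,d}(\mu) = \inf\BRA{ \mathfrak{J}_{\ga,d}(\nu) : \widetilde\Phi(\nu) = \mu },
\]
whose unique minimizer $\mu_{\ga,d}$ is the expected root spectral measure of $\UGW(\Poi(d),\ga)$. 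This also recovers the well-known almost sure convergence $L_Y \to \mu_{\ga,d}$.

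\textbf{Main obstacle.} The delicate point is the continuity of $\Phi$ when $\ga$ is heavy-tailed: unbounded edge weights can produce limiting graphs whose symmetric operator fails to be essentially self-adjoint, making the spectral measure discontinuous. My remedy is truncation combined with exponential equivalence: replace $X_{ij}$ by $X_{ij}\ind{|X_{ij}| \leq K}$, where $\Phi$ is continuous on a much larger set and the contraction principle applies cleanly, then let $K \to \infty$. The crux is to prove that in the JND-level rate function $\mathfrak{J}_{\ga,d}$, the cost of having an $\eps$-fraction of rooted neighborhoods carrying weights of modulus exceeding $K$ diverges as $K\to\infty$, uniformly in $n$ and at a speed compatible with $n$. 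Once this tail control (which rides on the Sanov/entropy component of $\mathfrak{J}_{\ga,d}$) is established, the truncated LDPs glue together into the claimed LDP for $L_Y$.
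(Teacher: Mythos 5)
Your plan---deduce Theorem~\ref{thA} by first obtaining an LDP at speed $n$ for the empirical neighborhood distribution $U(G(Y))$ on $\cP(\cGr)$ and then contracting through the root spectral measure---is the same architecture the paper uses (Theorem~\ref{th:SIGMAER} combined with Lemma~\ref{smooth} and the contraction principle, see Section~\ref{secproofAB}). However, both steps as you have written them contain genuine gaps or misplacements.

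In Step~1, the phrase ``a fibered LDP would assemble the two layers'' hides the actual difficulty. The neighborhood distribution $U(Y)$ is \emph{not} a simple functional of the pair $\bigl(U(A),\text{empirical measure of edge weights}\bigr)$: distinct roots share edges, so the weights appearing in overlapping neighborhoods are coupled, and the correct rate function involves a \emph{signed} combination of relative entropies (compare $\DKL(\mu_h\mid\gamma^{\otimes B_h})-\tfrac{d}{2}\DKL(\vec\mu_h\mid\gamma^{\otimes \vec B_h})$ in~\eqref{def:Jgamma} and the expressions in \eqref{eq:defSigmad}--\eqref{diff}). A Sanov-type argument conditional on $A$ gives an LDP for the empirical weight law, but that empirical weight law does not determine $U(Y)$, and a naive conditional LDP would not produce the edge-rooted subtraction term. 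The paper instead establishes the LDP for finite mark alphabets (Proposition~\ref{prop:SIGMA1}, relying on \cite{BoCa15,DeAn,MR4492967}), then discretizes the continuous mark space into a projective system $\{\cZ^\delta\}$ and applies Dawson--G\"artner's theorem, with the relative-entropy discretization identity (Lemma~\ref{le:discretization}) controlling the $\delta\to 0$ limit. You would need to either reproduce this quantization/projective-limit machinery or give a genuinely new ``fibered'' proof of the $\Sigma_{\gamma,d}$ rate function; as written, the assembly step is not justified.

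In Step~2, the ``main obstacle'' you identify is misplaced for this theorem. Theorem~\ref{thA} carries \emph{no} tail assumption on $\gamma$, and the paper points this out explicitly. The reason is that $U(G(Y))$ lives in $\cP(\cGr)$, i.e.\ on locally finite marked graphs (the sparsity comes from the Bernoulli skeleton, not from a truncation of $\gamma$), and Lemma~\ref{smooth} proves that $\nu\mapsto L_\nu$ is \emph{continuous} on the set of unimodular probability measures on $\cGr$, via the rank inequality~\eqref{cont} and removal of vertices with degree or incident weight exceeding $k$, with the error controlled by the tightness of the degree/weight distribution under $\nu$ itself. No essential self-adjointness pathology arises, no exponential equivalence is needed, and no uniform-in-$n$ tail bound on the rate function is required: the contraction principle applies directly. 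Your truncation-and-glue strategy is not wrong, but it introduces machinery that is unnecessary here; the concern you raise is relevant to Theorem~\ref{LDPHT} (genuine heavy-tailed matrices modeled as networks), where the paper does need a variant continuity statement (Lemma~\ref{smoothN}) and a tail estimate (Lemma~\ref{app}), not to Theorem~\ref{thA}.
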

 Interestingly, there is no tail assumption on the  law $\gamma$ for the the conclusion of Theorem \ref{thA} to hold.  
As explained below, the rate function $J_{\gamma,d}$ will be obtained by a contraction principle from an explicit rate function on a larger space of marked graphs (see Section \ref{secproofAB}). Therefore, the minimizer 
$\mu_{\gamma,d}$ will be described as the push-forward of the unique minimizer of a rate function on this larger space, namely a random marked tree $ \UGW(\gamma,d)$ (see Lemma \ref{le:minimizer}). This minimizer also characterizes the almost sure limit of 
the ESD $L_{Y}$ already described in \cite{BGM,BoLe}

The proof of Theorem \ref{thA} is very robust, we may for example consider Laplacian instead of adjacency operators for the conclusion to hold. We may also consider the case where $A$ is the adjacency matrix of  other random graphs than  the Erd\H{o}s-R\'enyi random graph. More precisely, let $(D_1(1),\ldots,D_n(n))$ be an integer sequence such that 
$\sum_v D_n(k) $ is even and $1 \leq D_n(v) \leq \theta$  for some fixed $\theta >0$. Assume further that any integer $k \geq 0$,
$$
\lim_{n \to \infty} \frac{1}{n} \sum_{v=1}^n \IND(D_n(v) = k) = \pi(k),
$$
for some probability measure $\pi$ in $\cP(\dZ_+)$. Let $G_n$ be a random graph uniformly sampled on the set of simple graphs on the vertex set $V_n = \{ 1,\ldots,n\}$ such that for all $v \in V_n$, the degree of $v$ is $D_n(v)$. This set is non-empty for  $n$ large enough. Let $A$ be the adjacency matrix of $G_n$ and define the self-adjoint matrix $Z = (Z_{ij})$ with $Z_{ij} = A_{ij} X_{ij}$ and $X$ independent of $A$ as above.

\begin{theorem}\label{thB}
Let $\pi \in \cP(\dZ_+)$ be as above and $\gamma \in \cP(\dC)$ be invariant by complex conjugation. The ESD $L_{Z}$ satisfies a LDP in $\cP(\dR)$ with speed $n$ and good rate function $J_{\gamma,\pi}$  with a unique minimizer $\mu_{\gamma,\pi}$.
\end{theorem}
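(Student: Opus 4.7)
The plan is to mimic the proof of Theorem \ref{thA}: first prove an LDP at speed $n$ for a richer macroscopic observable of the weighted graph $(G_n, X)$ living in a space of probability measures on rooted marked graphs, and then deduce the LDP for $L_Z$ by the contraction principle. Concretely, to $Z$ we associate the empirical distribution $U(Z) = \frac{1}{n}\sum_{v=1}^n \delta_{[Z,v]}$ of rooted neighborhoods of $Z$ seen as an edge-marked graph (with marks in $\dC$ drawn from $\gamma$), valued in $\cP$ of the space of rooted marked graphs equipped with the local topology of Benjamini--Schramm. Because degrees are uniformly bounded by $\theta$, the spectral measure at the root is a continuous functional of the rooted neighborhood distribution (Kesten--von Neumann--Serre), and the map $U(Z) \mapsto L_Z$ is continuous; hence an LDP for $U(Z)$ at speed $n$ yields an LDP for $L_Z$ at speed $n$ by contraction, with rate function
\[
J_{\gamma,\pi}(\mu) \;=\; \inf\BRA{ \mathcal{I}(\rho) \,:\, \rho \in \cP \text{ with spectral push-forward equal to } \mu },
\]
where $\mathcal{I}$ is the rate function on the marked-graph side.

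To construct $\mathcal{I}$ I would proceed in two steps. First, establish the LDP at speed $n$ for the empirical rooted-neighborhood distribution of $G_n$ alone (no marks). In the bounded-degree regime, this is essentially the combinatorial LDP extracted in \cite{BoCa15} and \cite[pp.~103--107]{MR3791802}: count the number of simple graphs on $n$ vertices with prescribed degree sequence and prescribed local profile. The unconstrained LDP for the configuration model with degree distribution $\pi$ transfers to the uniform measure on simple graphs with exact degree sequence $D_n$ because, under the bound $D_n(v) \leq \theta$, the configuration model is simple with probability bounded away from $0$, so conditioning on simplicity only produces an additive $O(1)$ term that does not affect the rate. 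Second, since $X$ is independent of $A$ with i.i.d.\ entries of law $\gamma$, a conditional Sanov-type argument (applied edge by edge, conditionally on the structure of $G_n$) adds a relative entropy contribution $\int H(\cdot \,|\, \gamma^{\otimes \text{edges}})$ in the mark coordinate. Combining the two gives $\mathcal{I}$ as a sum of a graph-combinatorial entropy and a mark relative entropy.

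The unique minimizer of $\mathcal{I}$ is the law of the unimodular Galton--Watson tree with offspring distribution $\pi$ whose edges carry i.i.d.\ marks of law $\gamma$, namely $\UGW(\gamma,\pi)$. The uniqueness comes from strict convexity of relative entropy in the mark variable and from the fact that the graph-entropy part has $\UGW(\pi)$ as its unique zero (this is consistent with the already known almost sure local weak convergence of $G_n$ to $\UGW(\pi)$, as in \cite{BGM,BoLe}). Pushing forward through the spectral-measure map yields the unique minimizer $\mu_{\gamma,\pi}$ of $J_{\gamma,\pi}$, which is also the almost sure limit of $L_Z$.

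The main obstacle I anticipate is the transfer from the configuration model to the uniform-over-simple-graphs measure while keeping the LDP intact at the correct speed. A naive use of the fact that simplicity holds with positive probability gives the upper bound trivially, but for the matching lower bound one needs a local-surgery/switching argument showing that, given a prescribed local profile, one can realize it by a simple graph without paying more than an $o(n)$ entropy cost; this is where the hypothesis $D_n(v) \leq \theta$ is essential. A secondary technical point is verifying continuity of the spectral-measure map $\rho \mapsto \mu$ on the set of distributions concentrated on graphs with degrees $\leq \theta$; bounded degrees and the trace-moment method make this clean, but some care is needed because cycles in $G_n$ do not disappear in the local limit unless one argues via the tree-limit $\UGW(\pi)$.
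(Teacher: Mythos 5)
Your proposal follows essentially the same route as the paper: represent $Z$ as a $\dC$-marked graph, establish an LDP at speed $n$ for the rooted-neighborhood distribution $U(H_n)$ in the Benjamini–Schramm topology (Theorem~\ref{th:SIGMADR}), verify that the spectral-measure map is continuous on laws supported on bounded-degree graphs (Lemma~\ref{smooth}), and then contract. One minor imprecision to flag: the marked-graph rate function is not simply a graph-combinatorial entropy plus a relative entropy in the marks; the mark contribution in the paper is $\Sigma^1_{\gamma,d}(\mu,h)=\DKL(G|G_\gamma)-\tfrac{\bar d}{2}\DKL(\vec G|\vec G_\gamma)$, a \emph{difference} of vertex- and edge-rooted relative entropies, which is the correct accounting for the fact that each edge appears in the local view of both its endpoints; a naive "conditional Sanov edge by edge" would need to recover this cancellation. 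Also, the configuration-model-to-simple-graph transfer you worry about is avoided in the paper because the cited references \cite{BoCa15,DeAn,MR4492967} prove the LDP directly for the uniform measure on simple graphs with prescribed bounded degree sequence.
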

This result is proved in Section \ref{secproofAB}.

\paragraph{Heavy-tailed random matrices}  We consider a random Hermitian matrix $Y_n$ such that $(Y_{ij})_{i > j}$ are iid with law $\gamma_n$,{{ the diagonal entries $(Y_{ii})_i$ are independent variables, and they are  independent of $(Y_{ij})_{i > j}$. }}We assume that $\gamma_n$ is invariant by complex conjugation and that for every Borel sets $A \subset \dC$, $0 \notin A$, 
\begin{equation}\label{eq:gamman2N}
\lim_{n \to \infty} n \gamma_{n}(A) = \Lambda (A),
\end{equation}
where $\Lambda$ is a non-trivial Radon measure on $\dR^k \backslash \{0\}$ which we call the intensity measure of the model. We assume that $\Lambda$ is finite at infinity, that is, if $B_1$ is the unit ball of $\dR^k$, we assume that
\begin{equation}\label{eq:Lambdabdd}
\Lambda(\dR^k \backslash B_1) < \infty.
\end{equation}

Sparse Wigner matrices $Y_{ij}=A_{ij} X_{ij}$ defined in the previous paragraph is contained in this model with $\Lambda = d \gamma'$ where $\gamma' = \gamma - \gamma(\{0\}) \delta_0$.  Importantly, the L\'evy matrices fit also into this framework. They are defined as follows. Consider an infinite triangular array $(X_{ij})_{i \geq j}$ which are iid copies of a random variable $X$ on $\dR$ and for some $\alpha\in (0,2)$,  some $(p,q)\in [0,1]^{2}$ so that $p+q = 1$ and $c > 0$, $ \dP ( X \geq t ) \sim_{t \to \infty}  p c t^{-\alpha}$ and $\dP ( X \leq - t ) \sim_{t \to \infty}  q c t^{-\alpha}$. 
We set $X_{ji} = X_{ij}$ for $i \leq j$ and define the symmetric matrix $Y = (Y_{ij})_{1 \leq i,j \leq n}$ with $$Y_{ij}=\frac{X_{ij}}{ (cn)^{1/\alpha}}.$$
It is immediate to check that the law $\gamma_n$ of $Y_{ij}$ satisfies \eqref{eq:gamman2N} with $\Lambda$ given by 
\begin{equation}\label{eq:LambdaHT}
\Lambda (dt) =  ( \IND( t > 0 ) p + \IND( t < 0 ) q) \alpha |t|^{-\alpha -1}  dt.
\end{equation}
This measure satisfies \eqref{eq:Lambdabdd} for any $\alpha > 0$.

Back to the general setup, we assume the tail assumption: for some $c_0 > 0$ and $0 < \alpha < 2$, for all $n \geq 1$, $i,j$,
\begin{equation}\label{eq:tailalpha2}
n \dP ( |Y_{ij}| \geq t ) \leq c_0 t^{-\alpha} \quad \hbox{ for all $0 < t  \leq 1$}.
\end{equation}

 \begin{theorem}\label{LDPHT} Let $\gamma_n \in \cP(\dC)$ be invariant by complex conjugation. Assume  that \eqref{eq:gamman2N}-\eqref{eq:Lambdabdd} and \eqref{eq:tailalpha2} hold for some $0 < \alpha < 2$. The ESD $L_{Y}$ satisfies a LDP in $\cP(\dR)$ with speed $n$ and good rate function  $J_{\Lambda}$ with a unique minimizer $\mu_{\Lambda}$.
  \end{theorem}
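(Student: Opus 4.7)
The plan is to reduce the heavy-tailed model to the sparse case of Theorem~\ref{thA} by truncating the small entries, and to run this reduction at the level of the richer joint neighborhood distribution of $Y$ viewed as a weighted graph, so that the LDP for the ESD follows by a single application of the contraction principle. Passing from the family of truncated LDPs to the full LDP is carried out via the exponentially good approximation scheme of~\cite{De-Ze}.

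Fix $\veps>0$ and split $Y=Y^{>\veps}+Y^{\leq\veps}$ with $Y^{>\veps}_{ij}\deq Y_{ij}\IND(|Y_{ij}|>\veps)$. By \eqref{eq:gamman2N}--\eqref{eq:Lambdabdd}, each off-diagonal entry of $Y^{>\veps}$ is non-zero with probability $(1+o(1))d_\veps/n$ where $d_\veps\deq\Lambda(\{|t|>\veps\})<\infty$, and, conditionally on being non-zero, its law converges to $\Lambda|_{\{|t|>\veps\}}/d_\veps$. Up to this triangular-array $n$-dependence, which does not affect the marked-graph LDP underlying Theorem~\ref{thA} (only the weak convergence of the normalized edge marks is used), the matrix $Y^{>\veps}$ falls into that framework, and $L_{Y^{>\veps}}$ satisfies an LDP at speed $n$ with a good rate function $J_\Lambda^{>\veps}$ and a unique minimizer $\mu_\Lambda^{>\veps}$.

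The heart of the proof is to verify that $L_{Y^{>\veps}}$ is an exponentially good approximation of $L_Y$ at speed $n$ as $\veps\downarrow 0$. By the Hoffman--Wielandt inequality followed by Cauchy--Schwarz, for any bounded Lipschitz $\psi:\dR\to\dR$,
\[
\left|\int\psi\,dL_Y-\int\psi\,dL_{Y^{>\veps}}\right|\leq\NRM{\psi}_{\mathrm{Lip}}\,\bigl(n^{-1}\NRM{Y^{\leq\veps}}_F^{2}\bigr)^{1/2}.
\]
The tail assumption \eqref{eq:tailalpha2} with $\alpha<2$ yields $\dE|Y^{\leq\veps}_{ij}|^2\leq C\veps^{2-\alpha}/n$, while $|Y^{\leq\veps}_{ij}|\leq\veps$ by construction, so Bernstein's inequality applied to the $\sim n^2/2$ independent bounded summands of $\NRM{Y^{\leq\veps}}_F^2$ delivers
\[
\dP\bigl(n^{-1}\NRM{Y^{\leq\veps}}_F^2>\delta^{2}\bigr)\leq\exp\!\PAR{-c(\veps,\delta)\,n}
\]
with $c(\veps,\delta)\to\infty$ as $\veps\downarrow 0$ for every fixed $\delta>0$. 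The main obstacle I anticipate is to lift this estimate from the ESD to the local weak topology in which the joint neighborhood distribution is stated: one must propagate the bound to the statistics of $r$-neighborhoods in the weighted graph, exploiting that a typical vertex has $O(1)$ large-weight incident edges while the small-entry contribution is globally controlled by the Frobenius bound above. Granting this, the exponential approximation theorem produces the LDP for $L_Y$ with good rate function $J_\Lambda(\mu)=\sup_{\eta>0}\liminf_{\veps\downarrow 0}\inf_{\nu:\,d(\nu,\mu)<\eta}J_\Lambda^{>\veps}(\nu)$, and its unique minimizer $\mu_\Lambda$ is identified, by continuity of the spectral map under the contraction, as the push-forward of the unique minimizer at the neighborhood level, namely the law of $\PWIT(\Lambda)$.
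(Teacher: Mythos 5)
Your truncation-plus-exponential-approximation strategy is a genuinely different route from the paper's, and the Hoffman--Wielandt/Frobenius-norm estimate combined with Bernstein's inequality does give an exponentially good approximation of $L_Y$ by $L_{Y^{>\eps}}$ at the ESD level, as you claim. The paper, by contrast, never does an exponential approximation on spectral measures. It proves an LDP for the neighborhood distribution $U(G(Y_n))$ directly in the network space $\cP(\cNr)$ (Theorem \ref{th:SIGMA1N}), using Dawson--G\"artner to pass through the projective system of $\eps$-truncated networks $G_n^\eps$, then shows the spectral map $\mu\mapsto L_\mu$ is continuous on sets $K_{\beta,\eps}$ defined by a $\beta$-moment control with $\alpha<\beta<2$ (Lemma \ref{smoothN}), proves super-exponential concentration on $K_{\beta,\eps}$ (Lemma \ref{app}), and closes with a single application of the extended contraction principle. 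Because the $\eps$-truncations form a projective system in $\cP(\cNr)$, Dawson--G\"artner delivers a full LDP with a \emph{good} rate function automatically, and contraction preserves both goodness and the uniqueness of the minimizer, which is identified through Lemma \ref{le:minimizerPWIT}.

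Your argument has a genuine gap exactly where the two approaches diverge. The exponential approximation theorem of \cite{De-Ze} only produces a \emph{weak} LDP; upgrading to a full LDP and establishing goodness of $J_\Lambda$ requires either exponential tightness of $(L_{Y_n})$ in $\cP(\dR)$ or the additional condition $\inf_F J_\Lambda \leq \limsup_{\eps\to 0}\inf_F J_\Lambda^{>\eps}$ for every closed $F$, and you address neither. Neither is automatic here: exponential tightness in $\cP(\dR)$ is delicate because the entries have infinite second moment when $\alpha<2$, so Chebyshev-type controls on $\|Y\|_F^2$ are unavailable, while monotonicity of $J_\Lambda^{>\eps}$ in $\eps$ is obscured by the infimum over preimages $\nu$ with $L_\nu=p$. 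You would also need a self-contained argument that the minimizer is unique; your appeal to the ``unique minimizer at the neighborhood level'' presupposes the network-level LDP for the \emph{untruncated} model, which your scheme does not construct. Finally, the obstacle you flag about propagating the Frobenius estimate to the local weak topology is a red herring: if you commit to a purely spectral exponential approximation, no such propagation is needed, whereas if you insist on a single contraction from a full neighborhood LDP as your opening paragraph suggests, the Frobenius bound cannot provide it because dropping small-weight edges does change local neighborhoods when $\Lambda$ has infinite mass near zero. These are two distinct schemes and your proposal currently interleaves them without completing either.
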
 
This theorem is also obtained by the contraction principle thanks to a large deviation principle on a larger space, namely networks. It is proved in Section \ref{sec:LDPHT}.
The minimizer $\mu_{\Lambda}$ is thus also described as the push-forward of the unique minimizer of this rate function, namely Aldous's PWIT (see Lemma \ref{le:minimizerPWIT}). This minimizer was shown to be the limit of the ESD \cite{BCC}. In the special case of L\'evy matrices, a different characterization of this limit was provided in \cite{BAG1}.

\paragraph{The rate function for regular weighted graphs} 
In this paragraph, we give an expression for the rate function $J_{\gamma,\pi}$ which appears in  Theorem \ref{thB} in the simplest example where  $\pi_d = \delta_d$ is a Dirac mass at some integer $d \geq 2$ and the support of $\gamma$ is a compact subset of $\dR$. The general expression of the rate functions in the above theorems requires to introduce more notions and we postpone its definition to Section \ref{sec:LDPHT}.

Let $T = (V,E)$ be an infinite $d$-regular tree. We equip the  set of probability measures on $\dR^E$, $\cP(\dR^E)$, with the product topology. A real random process indexed by $E$, say $A = (A_e)_{e \in E}$, is invariant if its law is left invariant by all automorphisms of $T$ (that is bijections  $\varphi : V \to V$ such that $\varphi(T)= T$). We denote by $\cP_{\gamma} (\dR^E) \subset \cP ( \dR^E)$, the set of laws of invariant processes such that for any finite subsets $F \subset E$ of edges, the restriction of $A$ to $F$ has a finite relative entropy with respect to the product measure $\gamma^{\otimes F}$.  Recall that the relative entropy between probability measures $p,q$ is given by $\DKL(p|q) = \int d p(z) \ln (dp(z)/dq(z))$.

Let $o \in V$ be a distinguished vertex and $\rho = \{a,b\} \in E$ be a distinguished edge. For $h \geq 1$, let $B_h$ be the ball of radius $h$ around $o$ and $\vec B_h$ be the ball of radius $h-1$ around $\rho$ (the union of the two ball of radius $h-1$ around $a$ and $b$). We let $\mu_h$ and $\vec \mu_{h}$ be the laws of the restriction of $A$ to the balls of radius $h$ around the roots. For $\mu \in \cP(\dR^{E})$ we define its entropy as 
$$
\Sigma_{\gamma,\pi_d}( \mu) = \left\{\begin{array}{ll}
\lim_{h \to \infty}\left( \DKL(\mu_h|\gamma^{\otimes B_h})- \frac d 2  \DKL(\vec \mu_h|\gamma^{\otimes \vec B_h}) \right) & \hbox{if $\mu \in \cP_{\gamma}(\dR^{E})$}\\
 \infty & \hbox{\hbox{otherwise.}}
\end{array}\right. 
$$ 
We shall prove that the above limit is well-defined: in fact, for $\mu \in \cP_\gamma(\dR^E)$, $\DKL(\mu_h|\gamma^{\otimes B_h})- \frac d 2  \DKL(\vec \mu_h|\gamma^{\otimes \vec B_h})$ is non-decreasing in $h$. The entropy $\Sigma_{\gamma,\pi_d}$ will appear as the good rate function of an LDP on random weighted regular graphs. The unique global minimizer of $\Sigma_{\gamma,\pi_d}$ is the product measure $\gamma^{\otimes E}$.

Now, to each element  $A = (A_e)_{e \in E} \in \dR^{E}$ with uniformly bounded coordinates, $\sup_e |A_e| < \infty$, we can associate a bounded self-adjoint operator on $\ell^2 (V)$, which we also denote by $A$, by the following  formula that  for every $\psi \in \ell^2(V)$ and $u \in V$,
$$
(A \psi )(u) = \sum_{ v : \{u,v\} \in E} A_{\{u,v\}} \psi (v).  
$$
From the spectral theorem, we can then define its spectral measure at vector $\psi$ as the unique probability measure $L_A^{\psi} \in \cP(\dR)$ such that for every bounded continuous functions $f$,
$$
\langle \psi, f(A)\psi \rangle = \int_{\dR} f(\lambda) d L_A^{\psi} (\lambda).
$$

Since $\gamma$ has bounded support, the above condition $\sup_e |A_e| < \infty$ holds a.s. if $\mu \in \cP_{\gamma}(\dR^{E})$ and $A$ has law $\mu$. If $(e_v)_{v \in V}$ is the canonical basis of $\ell^{2}(V)$, we set 
$$
L_\mu = \dE_{\mu} [ L^{e_o}_A] = \int  L^{e_o}_A d \mu(A).
$$

We are finally ready to give an expression for the rate function $J_{\gamma,\pi_d}$ which appears in Theorem \ref{thB}: for $p \in \cP(\dR)$,
\begin{equation}\label{def:Jgamma}
J_{\gamma,\pi_d} (p) = \inf \left\{ \Sigma_{\gamma,\pi_d}( \mu) : \mu \in \cP_\gamma(\dR^{E}) \hbox{ such that } L_\mu = p \right\},
\end{equation}
with the usual convention that the inf over an empty set is $\infty$.
This expression is already rather delicate to understand: neither the map $\mu \to L_\mu$ nor the map $\mu \to  \Sigma_{\gamma,\pi_d}( \mu)$ are  straightforward to compute. In general, the formula for the rate functions $J_{\gamma,d}$, $J_{\gamma,\pi}$ and $J_{\Lambda}$ in Theorem \ref{thA}, Theorem \ref{thB} and Theorem \ref{LDPHT} are of the same nature but will require   more care to be properly defined.

\subsection{A microstates entropy for heavy-tailed traffic distribution} \label{traffic:int}

\paragraph{Background}
The notion of traffics \cite{Camille} generalizes the notion of non-commutative variables introduced by Voiculescu in the framework of free probability, see the monographs \cite{MR1217253,MR2760897,zbMATH06684673}.  Voiculescu defined several notions of entropy in this framework, mainly the microstates free entropy $\chi$ and the free entropy $\chi^*$, see \cite{zbMATH00703862,zbMATH01186160}. Even though these two notions were expected  to match, it could only be proved so far that one is bounded by  the other \cite{BCG03,dabrowski,jekel}. The microstates entropy is defined as the rate function of a large deviation  principle  for the non-commutative distribution of Gaussian matrices, namely as the volume of matrices whose non-commutative distribution approximates a given non-commutative law. One of the main difficulty is that a full large deviation  principle  has  not yet been proven, in particular that the limsup of these volumes can be replaced by a liminf. As  a consequence, one of the main expected property that  the entropy  sums under freeness is still an open question in general. 

The introduction of traffic distributions was motivated by study of the non-commutative distribution of heavy-tailed matrices and  sparse matrices \cite{Camille2}. While they allow to generalize the later, they also give a more appropriate framework to deal with such matrices which have no unitary invariance, but rather invariance under permutation multiplied entry-wise by bounded matrices. This distribution symmetry is relevant for matrices that are associated to random marked graphs. On the other hand, the data of the traffic distribution is equivalent to the neighborhood distribution of these graphs. 

In this section, we show how the large deviation  principle  for the law of a Erd\"os Renyi marked graphs implies a large deviation  principle  for their traffic distribution, by a contraction principle. The rate function for these large deviations defines an entropy in the traffic framework that we call the entropy of sparse traffics. We show that it sums under independence of traffics \cite{Camille2, Camille}. 

\paragraph{Traffic distribution} Let us recall the notion of {\em traffic distribution} of a collection $\mbf Y$ of matrices in $M_n(\dC)$. It is encoded in the linear form defined below, which contains more information than the usual non commutative distribution of $\mbf Y$ seen  in the $*$-probability space $M_n(\dC)$ equipped with the tracial state $\frac 1 n \tr (\cdot)$. 

Let us fix a label set $J$.  We call \emph{test graph} labeled in $J$ a finite connected graphs $H=(V,E,\ell, \varepsilon)$ with possibly multiple edges, where $V$ is a nonempty set and each edge $e \in E$  has labels $\ell(e)\in J$ and $\varepsilon(e)\in \{1,*\}$. We use this terminology since we use them as elementary functions in order to understand $\mbf Y = (Y_j)_{j\in J}$, for which an edge $e\in E$ is associated to the matrix $Y_{\ell(e)}^{\varepsilon(e)}$ ($Y_j^*$ denotes the conjugate transpose of $Y_j$). In the definitions of this subsection, the graphs are directed, and they may have self-loops and multiple edges, each with its own labels.  An \emph{isomorphism} between two test graphs $H$ and $H'$ is a bijection between the sets of vertices of $H$ and $H'$ which  preserves the adjacency structure and the edge labels. We denote by $\cH \langle J\rangle$ the set of test graphs up to isomorphisms, by $\dC \cH \langle J\rangle$ the linear space it generates (the elements of the set form a basis of the space). Then an (algebraic) \emph{traffic distribution} is an element of the linear dual space $\TRAF:={\dC\cH}\langle J \rangle^*$ of linear forms on $\dC\cH\langle J \rangle$. If $\mbf Y=(Y_j)_{j\in J}$ is a collection of matrices in $M_n(\dC)$, then their canonical traffic distribution $\tau_{\mbf Y} \in \TRAF$ is defined by
	\begin{equation}\label{traf:def}\tau_{\mbf Y}[H]= \frac 1 n \sum_{  \phi:V\to \{1,\ldots,n\}}    \prod_{e = (v,w)\in E} Y_{\ell(e)}^{\varepsilon(e)}\big(\phi(w),\phi(v) \big).\end{equation}
We equip $\TRAF$ with the topology of pointwise convergence.  	
	Investigating LDPs for traffic distribution, a difficulty arises with the fact that the  canonical traffic distribution $\tau_{\mbf Y}[H] $ may go to infinity  or become ill-defined when $n$ 	goes to infinity.
This can be avoided if each of the matrices belong to a set 	
\begin{equation}\label{eq:defBntheta}
B_{n}(\theta)=\left\{ Y\in M_{n}(\mathbb C): \max_{1\le i\le n}\Big( \sum_{j=1}^{n}\IND_{Y_{ij}\neq 0}\Big) \le \theta, \max_{1\le j\le n}\Big(\sum_{i=1}^{n}\IND_{Y_{ij}\neq 0}\Big)\le \theta, \max_{1\le i,j\le n}|Y_{ij}|\le \theta\right\}\,,
\end{equation}
	for some deterministic real $\theta > 0$. 
		This is the case when $\mbf Y$ are independent matrices with law $Z$ as in Theorem \ref{thB} with $\pi$ and $\gamma$ with bounded support. We then can show that 
	
\begin{theorem}\label{LDPtraffic0} 
Let  $\mbf Y$ be $|J|$ independent matrices distributed as in Theorem \ref{thB}, with parameters, for $j \in J$,  $\pi_j \in \mathcal P(\dZ_+)$ finitely supported  and $\gamma_j \in \cP(\dC)$ invariant by complex conjugation and compactly supported. Set $\pi = (\pi_j)_{j \in J}$ and  $\gamma = (\gamma_j)_{j \in J}$.
	 Then, the traffic distribution of $\mbf Y$ satisfies a LDP with speed $n$ and good rate function  $\chi_{\pi,\gamma}$ with a unique minimizer $\tau_{\pi,\gamma}$.
\end{theorem}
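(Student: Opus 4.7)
The plan is to derive the LDP for the traffic distribution from an underlying LDP on the joint neighborhood distribution of $\mathbf{Y}$ via the contraction principle, exactly the strategy alluded to in the abstract. First, I would lift Theorem~\ref{thB} to an LDP at speed $n$ on the space of joint neighborhood distributions associated with the $|J|$-tuple $\mathbf{Y}=(Y_j)_{j\in J}$. Viewing $\mathbf{Y}$ as a single $\dC$-weighted multigraph whose edges carry both a label in $J$ and a mark in $\dC$, with degree sequence $\pi_j$ and edge mark law $\gamma_j$ for each $j$, the entropic argument underlying Theorem~\ref{thB} produces a good rate function $\Sigma_{\pi,\gamma}^{\mathrm{nbhd}}$ on $\cP(\cN^\bullet)$ with a unique minimizer $\mu_{\pi,\gamma}$, equal (by independence of the $Y_j$) to the independent product across $j\in J$ of the $\UGW(\gamma_j,\pi_j)$ laws. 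The hypotheses that each $\pi_j$ is finitely supported and each $\gamma_j$ compactly supported ensure $\mathbf{Y}\in B_n(\theta)^{J}$ a.s.\ for some deterministic $\theta=\theta(\pi,\gamma)<\infty$, so the LDP can be restricted to the closed subset of neighborhood distributions supported on $J$-labelled rooted marked graphs with degree and edge weight bounded by $\theta$.

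Next, I would identify the map from neighborhood distribution to traffic distribution on this bounded subset and show it is continuous. Fix a test graph $H=(V,E,\ell,\varepsilon)$ and a root $v_0\in V$. Rewriting \eqref{traf:def} by summing first over $\phi(v_0)=x$ yields
\[
\tau_{\mathbf{Y}}[H]=\frac{1}{n}\sum_{x=1}^{n} F_H(\mathbf{Y},x),
\]
where $F_H(\mathbf{Y},x)$ depends only on the ball of radius $|V|$ around $x$ in the multigraph union of the supports of the $Y_j$. For $\mathbf{Y}\in B_n(\theta)^{J}$, exploration from $x$ yields at most $(|J|\theta)^{|V|-1}$ contributing $\phi$ and each product is bounded by $\theta^{|E|}$, so $\|F_H\|_{\infty}\leq(|J|\theta)^{|V|-1}\theta^{|E|}$; moreover $F_H$ is continuous in the local topology on rooted marked graphs with bounded degrees and marks. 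Writing $\cN_{\mathbf{Y}}$ for the empirical joint neighborhood distribution, one has $\tau_{\mathbf{Y}}[H]=\cN_{\mathbf{Y}}(F_H)$, and the map $\Phi:\mu\mapsto\bigl(\mu(F_H)\bigr)_{H\in\cH\langle J\rangle}$ sends the local-weak topology on neighborhood distributions continuously into the pointwise topology on $\TRAF$.

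Applying the contraction principle along $\Phi$ then produces an LDP at speed $n$ on $\TRAF$ with good rate function
\[
\chi_{\pi,\gamma}(\tau)=\inf\bigl\{\Sigma_{\pi,\gamma}^{\mathrm{nbhd}}(\mu):\Phi(\mu)=\tau\bigr\},
\]
under the convention $\inf\varnothing=+\infty$. Uniqueness of the minimizer is inherited from the neighborhood level: if $\chi_{\pi,\gamma}(\tau)=0$, then by goodness of $\Sigma_{\pi,\gamma}^{\mathrm{nbhd}}$ there is a sequence $\mu_n$ with $\Phi(\mu_n)=\tau$ and $\Sigma_{\pi,\gamma}^{\mathrm{nbhd}}(\mu_n)\to 0$; any convergent subsequence has limit $\mu_{\pi,\gamma}$, hence $\tau=\Phi(\mu_{\pi,\gamma})=:\tau_{\pi,\gamma}$ by continuity of $\Phi$.

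The main obstacle is the first step: one must set up the correct monotone-in-$h$ relative-entropy construction on the space of invariant laws on rooted trees whose edges carry both a $J$-valued color and a $\dC$-valued mark, and verify that the resulting $\Sigma_{\pi,\gamma}^{\mathrm{nbhd}}$ is a good rate function with the independent product across $j\in J$ of $\UGW(\gamma_j,\pi_j)$ as its unique zero. Once this joint neighborhood LDP is in hand, the continuity and contraction steps are essentially mechanical, because the deterministic bound $\mathbf{Y}\in B_n(\theta)^{J}$ makes every test-graph functional $F_H$ a bounded local quantity on the support multigraph.
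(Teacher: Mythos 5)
Your proposal is correct and follows essentially the same route as the paper's proof in Subsection~\ref{subsec:TF1}: represent $\mbf Y$ as a marked graph on $V_n$ with marks in $\cB\times\dR^k$ where $\cB=J$, invoke the neighborhood LDP (which the paper has already established as Theorem~\ref{th:SIGMADR}, so the monotone-in-$h$ entropy construction you flag as the main obstacle is already in hand and need not be redone), observe that $\mu\mapsto\tau_\mu$ is continuous on the subset of $\cP(\cGr)$ with uniformly bounded degrees and marks, and apply the contraction principle. Uniqueness of the minimizer follows directly from Lemma~\ref{le:minimizer}, though your lower-semicontinuity/compactness argument also works.
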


As in Voiculescu's definition of micro-states entropy \cite{Vo93,Vo02}, the rate function $\tau_{\pi,\gamma}$ can be thought as a micro-states entropy on traffic distributions. However, in situations where the family of random matrices $\mbf Y$ do not belong to the sets $B_n(\theta)$ for all $n$ for some $\theta >0$ such an approach has to be modified. To do so, we have identified two possible strategies that we explain below. But, let us first describe the setting. We now consider independent sparse Wigner or heavy-tailed random matrices. More precisely, we consider a sequence $\mbf Y = (Y_j)_{j \in J}$ of iid random matrices in $M_n(\dC)$ for a finite set $J$ of labels. The entries $(Y_{j}(k,l))_{k \geq l}$ are assumed to be independent, diagonal entries are assumed to be $0$ for simplicity and outside the diagonal, the entries have distribution $\gamma_n \in \cP(\dC)$ invariant by complex conjugation. We assume that \eqref{eq:gamman2N} holds for some measure $\Lambda$. We restrict ourselves to the simplest case :
\begin{equation}\label{eq:Lambdabddt} 
0 < \Lambda(\dC\backslash \{0\}) < \infty
\end{equation}
This is a strengthening of \eqref{eq:Lambdabdd}. This case contains the sparse Wigner matrices defined in \eqref{eq:SW} with $\Lambda = d \gamma$, but not the $\alpha$-stable laws.

We now describe the first strategy to define the micro-states entropy, which is similar to Voiculescu's micro-state entropy. With the above assumptions, the probability that $  Y_j \in B_n (\theta)$ is of order $\exp( - n L(\theta) (1+o(1))) $ for some  $L( \theta) >0$ for every $\theta$ large enough with $\lim_{\theta \to \infty } L(\theta) = 0$ (see Subsection \ref{subsec:tildechi}). Nevertheless we can deduce from our work that for all $\theta >0$ large enough, any $\tau \in \TRAF$,
\begin{align*}
& \tilde \chi_{\Lambda,\theta}(\tau)  = \lim_{\epsilon\downarrow 0}\liminf_{n\rightarrow\infty}\frac{1}{n}\ln \mathbb P\left( \mbf Y\in  B_{n}(\theta) ^{ J } \; ;  d( \tau_{\mbf Y}, \tau) \le\epsilon\right) \\
 & \quad  = \; \lim_{\epsilon\downarrow 0}\limsup_{n\rightarrow\infty}\frac{1}{n}\ln \mathbb P\left( \mbf Y\in  B_{n}(\theta) ^{ J } \; ;  d( \tau_{\mbf Y}, \tau) \le\epsilon\right),
\end{align*}
where $d$ is any distance generating the topology of pointwise convergence (see Subsection \ref{subsec:tildechi}). The map $\tau \to \tilde \chi_{\Lambda,\theta}(\tau) $ is a good rate function. By monotonicity, we may then define the good rate function: 
\begin{equation}
\label{eq:deftildechi}
 \tilde \chi_{\Lambda}(\tau) = \lim_{\theta \to \infty} \tilde \chi_{\Lambda,\theta} ( \tau).
\end{equation}
The need to restrict ourselves to matrices in $B_{n}(\theta)$ is similar to Voiculescu's restriction to matrices with bounded norm in his definition of the micro-sates entropy: otherwise the traffic distribution  may be ill-defined. For example, in  \cite{fannymom}, the rate of the large deviations for traces of moments of GUE matrices is shown to depend on the power of these moments.

The second strategy to define a micro-state entropy and  circumvent this issue is to change the topology.  In the definition of Voiculescu's micro-states entropy, in \cite{BCG03, CDG2} polynomials test functions were replaced by products of resolvents. In \cite{BB03}, it was shown that these different notions give the same definition of Voiculescu's micro-states entropy.  

In our case where $\mbf Y$ are $|J|$ independent heavy-tailed random matrices satisfying \eqref{eq:Lambdabddt}, we may use that  with overwhelming probability only a few number of columns have a large number of non-zero (or very small) entries or large entries. To study the traffic distribution of these matrices, we found to be more convenient to study  the \emph{rooted} traffic distribution  $\tau_{\mbf Y,i}$ where the value of the function $\phi$ in Definition \ref{traf:def} is specified for a vertex of $H$: the root of $H$ has to be sent to a given $i\in\{1,\ldots, n\}$.  We then take $i$ at random and consider the randomly rooted-traffic distribution given by 
 $$  \Utau_{\mbf Y}=\frac{1}{n}\sum_{i=1}^{n}\delta_{  \tau_{\mbf Y,i} }.$$

Our main result is a LDP for the random  rooted traffic distribution of independent sparse Wigner or heavy-tailed random matrices, see subsection \ref{subsec:rootedtraff} for the precise definition of the topologies.

\begin{theorem}\label{LDPtraffic1} Assume that $\gamma_n \in \cP(\dC)$ is invariant by complex conjugation, and  that  \eqref{eq:gamman2N} and \eqref{eq:Lambdabddt}  hold for some $\Lambda$. Then $\Utau_{\mbf Y}$, the randomly rooted traffic distribution of $\mbf Y$, satisfies a LDP with speed $n$ and good rate function  $\chi_{\Lambda}$ with a unique minimizer $\tau_{\Lambda}$.
\end{theorem}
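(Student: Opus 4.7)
The strategy is to obtain Theorem \ref{LDPtraffic1} from a finer LDP at the level of randomly rooted marked graphs by a contraction principle, using that the rooted traffic distribution is essentially an encoding of the local neighborhood of the root.

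\textbf{Step 1: The underlying LDP for the neighborhood distribution.} View $\mbf Y=(Y_j)_{j\in J}$ as a $J$-colored weighted (multi)graph $G_n$ on $[n]$ where the edge $\{i,k\}$ of color $j$ carries the mark $Y_j(i,k)\in\dC$ (and the convention that a zero entry means no edge). As in the proof of Theorem \ref{LDPHT}, I would first establish an LDP at speed $n$ for the empirical neighborhood distribution
\[
\mathsf U(G_n) \;=\; \frac1n\sum_{i=1}^n \delta_{(G_n, i)} \;\in\; \cP(\cGr\langle J\rangle),
\]
where $\cGr\langle J\rangle$ is the space of rooted marked $J$-colored graphs equipped with the local weak topology. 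The good rate function $I_\Lambda$ is the entropy functional from Section~\ref{sec:LDPHT} (of the same ``relative entropy minus half the edge entropies'' flavor as in \eqref{def:Jgamma}); it is supported on unimodular measures and admits the multi-color Poisson Weighted Infinite Tree $\PWIT(\Lambda)$ as its unique minimizer (by \eqref{eq:Lambdabddt}, $\Lambda$ is a finite intensity, so the PWIT is locally finite).

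\textbf{Step 2: Continuity of the traffic encoding.} For a finite test graph $H=(V,E,\ell,\varepsilon)\in\cH\langle J\rangle$ and any $i\in[n]$, the rooted traffic value
\[
\tau_{\mbf Y,i}[H] \;=\; \sum_{\phi:V\to[n],\,\phi(\mathrm{root})=i}\;\prod_{e=(v,w)\in E} Y_{\ell(e)}^{\varepsilon(e)}\bigl(\phi(w),\phi(v)\bigr)
\]
depends only on the colored weighted subgraph of $G_n$ induced by vertices within graph-distance at most $|V|-1$ from $i$ (non-vanishing contributions require $\phi$ to have connected image containing $i$). Consequently, for each fixed $H$, the map $(G,o)\mapsto \tau_{G,o}[H]$ is a continuous function on the local weak topology of $\cGr\langle J\rangle$. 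Since the topology on rooted traffic distributions (Subsection~\ref{subsec:rootedtraff}) is the topology of pointwise convergence on the countable set $\cH\langle J\rangle$, the induced map
\[
\Phi\colon\cGr\langle J\rangle\longrightarrow\TRAFB,\qquad (G,o)\longmapsto \tau_{G,o}
\]
is continuous. By pushforward, the map $\Phi_\#\colon\cP(\cGr\langle J\rangle)\to\cP(\TRAFB)$, $\rho\mapsto \rho\circ\Phi^{-1}$, is continuous for the weak topologies, and
\[
\Phi_\#\bigl(\mathsf U(G_n)\bigr) \;=\; \Utau_{\mbf Y}.
\]

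\textbf{Step 3: Contraction and uniqueness of the minimizer.} The contraction principle applied to $\Phi_\#$ and the LDP of Step~1 yields an LDP for $\Utau_{\mbf Y}$ at speed $n$ with good rate function
\[
\chi_\Lambda(\mathcal T) \;=\; \inf\bigl\{\, I_\Lambda(\rho)\;:\; \rho\in\cP(\cGr\langle J\rangle),\ \Phi_\#\rho=\mathcal T\,\bigr\}.
\]
For the unique minimizer: if $\chi_\Lambda(\mathcal T)=0$ then the infimum is attained (by goodness of $I_\Lambda$) at some $\rho$ with $I_\Lambda(\rho)=0$, hence $\rho=\PWIT(\Lambda)$; conversely any such $\mathcal T$ equals $\Phi_\#\PWIT(\Lambda)$. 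Setting $\tau_\Lambda:=\Phi_\#\PWIT(\Lambda)$ gives the unique minimizer.

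\textbf{Main obstacle.} The delicate point is the LDP of Step~1 under only the finiteness assumption \eqref{eq:Lambdabddt}: although $\Lambda$ is finite, individual entries $Y_j(i,k)$ can still be arbitrarily large, so the local weak topology must accommodate unbounded marks. I would handle this by truncating marks at level $\theta$, proving the LDP on the truncated model (where standard sparse-graph techniques of Section~\ref{secproofAB} apply), and then promoting it to the untruncated setting via the exponential tightness estimate used to define $\tilde\chi_{\Lambda,\theta}$ in \eqref{eq:deftildechi}: the probability that more than $\eps n$ rows leave $B_n(\theta)$ decays like $\exp(-n L(\theta)(1+o(1)))$ with $L(\theta)\downarrow 0$, which is precisely what is needed to exchange $\theta\to\infty$ with the LDP bounds. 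Once Step~1 is in place for the unrestricted topology, Steps~2 and~3 are essentially routine.
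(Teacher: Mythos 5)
Your overall contraction strategy matches the paper: view $\mathbf Y$ as a colored weighted graph, establish a speed-$n$ LDP for the empirical neighborhood distribution, and push forward through the continuous encoding $(G,o)\mapsto\tau_{G,o}$ (which is the paper's Lemma~\ref{continuous}). Step~3 (using goodness of the rate function to identify the unique minimizer as the push-forward of the unique minimizer upstairs) is also fine.

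The problem is your Step~1 reference and, more importantly, the ``Main obstacle'' paragraph, which rests on a misconception. Because \eqref{eq:Lambdabddt} says $\Lambda$ is a finite measure, $d_\Lambda=|J|\Lambda(\dC)<\infty$, so $G(\mathbf Y)$ is an honest locally finite marked graph: it is exactly the marked Erd\H{o}s--R\'enyi model of Subsection~\ref{subsec:LDPER} on $\cGr(\dC^J)$, and Theorem~\ref{th:SIGMAER} applies directly. That theorem is proved for marks in the full unbounded space $\dR^k$; the unboundedness of the marks is handled there by the \emph{mark quantization} $\{\cdot\}^\kappa_\delta$ together with Dawson--G\"artner, not by truncating matrix rows. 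In particular, the neighborhood LDP you need does not come from Section~\ref{sec:LDPHT}, which concerns networks $\cNr$ and is only required when $\Lambda$ has infinite total mass (e.g.\ L\'evy matrices). Citing Theorem~\ref{LDPHT}/Section~\ref{sec:LDPHT} is both a misattribution and an overshoot.

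Your proposed fix --- restricting to $B_n(\theta)^J$, obtaining a truncated LDP, and then ``exchanging $\theta\to\infty$ with the LDP bounds'' via the estimate that $\dP(\mathbf Y\notin B_n(\theta)^J)\approx e^{-nL(\theta)}$ --- is not what the paper does, and if carried out it would construct the \emph{other} entropy, $\tilde\chi_\Lambda$ of \eqref{eq:deftildechi}. The event $\{\mathbf Y\in B_n(\theta)^J\}$ has exponentially small probability, so restricting to it and renormalizing changes the rate function by a $\theta$-dependent quantity; the resulting $\tilde\chi_{\Lambda,\theta}$ and its $\theta\to\infty$ limit are a genuinely different object from $\chi_\Lambda$ (the paper advertises precisely that $\chi_\Lambda$, unlike $\tilde\chi_\Lambda$, avoids the truncation step). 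You also do not supply the exponential-tightness and uniform-approximation arguments that such an exchange would require; they are nontrivial and are exactly why the paper handles $\tilde\chi_\Lambda$ separately in Subsection~\ref{subsec:tildechi}. The clean route is: use Theorem~\ref{th:SIGMAER} for the marked Erd\H{o}s--R\'enyi graph $G(\mathbf Y)$ with limiting parameters $d_\Lambda$ and $\gamma_\Lambda=\tfrac{1}{|J|\Lambda(\dC)}\sum_{j\in J}\Lambda^j$, note $\hat\tau_{U(G(\mathbf Y))}=\Utau_{\mathbf Y}$ by \eqref{eq:trafGA}, and contract through Lemma~\ref{continuous}; no matrix-level truncation is needed.
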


The rate function $\chi_{\Lambda}$ defines a notion of entropy for the traffic distributions: by definition as a rate function it is a micro-states entropy since  it measures the volume of small balls around a given traffic distribution. On the other hand, we identify in \eqref{defent}, Section  \ref{sec:traffic},  that the microstates entropy $\chi_{\Lambda} $ is given by a rate function $\chi_{\Lambda}^{*}$, thanks a contraction principle as the pushforward of an entropy for the law of a weighted Erd\"os-R\'enyi  graphs, see Theorem \ref{th:SIGMAER}. 
The minimizer $\tau_{\Lambda}$ is the product of the distributions $\Phi_{\Lambda}$ as described in \cite[Definition 2.11]{Camille}, with $\Phi_{\Lambda}$ the limiting traffic distribution of a single matrix (see \cite{Camille2}).  An advantage of $\chi_{\Lambda}$ over $\tilde \chi_{\Lambda}$ defined in \eqref{eq:deftildechi} is that it does not require a truncation step.

Interestingly, the existence of the LDP implies that this entropy is additive for the traffic distributions in situation of \emph{traffic independence}. The notion of independence for traffics was introduced in \cite{Camille}, and encodes in particular the other notions of non-commutative independence such as classical, boolean or free independence. As for the latter notions, it is formulated in terms of a product of their traffic distribution, called the free product of the traffic distributions where \emph{free} has be be understood as \emph{canonical}. 

To fit the framework of LDP, surprisingly, we must consider an adequate notion of \emph{random traffic distributions} drawing inspiration from the concept of amalgamation, which allows  to well define the free product $\tau_1*\tau_2 $ of two random traffic distributions, see Section \ref{trafind}.

\begin{corollary}\label{cor:trafficind} For any random traffic distributions $\tau_{1}$, $\tau_2$, with  $\chi_{\Lambda}$  as in Theorem \ref{LDPtraffic1} 
$$\chi_{\Lambda}(\tau_{1}*\tau_{2})=\chi_{\Lambda}(\tau_{1})+\chi_{\Lambda}(\tau_{2}).$$
\end{corollary}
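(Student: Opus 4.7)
The plan is to derive the additivity of $\chi_{\Lambda}$ from the LDP in Theorem \ref{LDPtraffic1} via the contraction principle, exploiting the fact that independent sparse/heavy-tailed matrix families are asymptotically traffic-free, so that the traffic-free product $*$ is realized as a deterministic continuous operation at the level of randomly rooted traffic distributions.

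Concretely, I would consider two independent families $\mathbf{Y}^{(1)} = (Y^{(1)}_j)_{j\in J_1}$ and $\mathbf{Y}^{(2)} = (Y^{(2)}_j)_{j \in J_2}$ of matrices in $M_n(\mathbb C)$, each satisfying the hypotheses of Theorem \ref{LDPtraffic1} with intensity $\Lambda$, indexed by \emph{disjoint} label sets. By independence and Theorem \ref{LDPtraffic1} applied to each family, the pair $(\Utau_{\mathbf{Y}^{(1)}}, \Utau_{\mathbf{Y}^{(2)}})$ satisfies a product LDP at speed $n$ with good rate function $(\sigma_1,\sigma_2)\mapsto \chi_{\Lambda}(\sigma_1)+\chi_{\Lambda}(\sigma_2)$. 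At the same time, the concatenated family $\mathbf{Y}=\mathbf{Y}^{(1)} \sqcup \mathbf{Y}^{(2)}$ is itself a family of iid heavy-tailed matrices on the larger label set $J_1 \sqcup J_2$, so Theorem \ref{LDPtraffic1} also provides an LDP for $\Utau_{\mathbf{Y}}$ with good rate function $\chi_{\Lambda}$ on the larger space.

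The bridge between the two LDPs is the continuous map $\Phi:(\sigma_1,\sigma_2)\mapsto \sigma_1 * \sigma_2$ on random rooted traffic distributions, using the amalgamated free product introduced in Section \ref{trafind}. I would check two points: (i) that $\Phi$ is continuous for the randomly rooted topology of Subsection \ref{subsec:rootedtraff}, which reduces to checking it on the class of rooted test graphs that are connected unions of subgraphs with labels in $J_1$ and $J_2$ glued at cut vertices, where the evaluation formula of \cite{Camille} factorizes; and (ii) that with overwhelming probability on the LDP scale, $\Utau_{\mathbf{Y}}$ is arbitrarily close to $\Phi(\Utau_{\mathbf{Y}^{(1)}}, \Utau_{\mathbf{Y}^{(2)}})$, which follows from independence and the sparse/heavy-tailed structure since the expectation of $\tau_{\mathbf{Y},i}[H]$ factorizes over the $J_1$- and $J_2$-pieces of $H$ up to negligible cross-terms. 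Combining (i), (ii), and the contraction principle applied to $\Phi$, the LDP for $\Utau_{\mathbf{Y}}$ has rate function
\[
\tau \mapsto \inf\bigl\{ \chi_{\Lambda}(\sigma_1)+\chi_{\Lambda}(\sigma_2) : \sigma_1 * \sigma_2 = \tau \bigr\}.
\]
By the uniqueness of the rate function in Theorem \ref{LDPtraffic1}, this infimum equals $\chi_\Lambda(\tau)$. Because the label sets are disjoint, any $\tau$ in the image of $\Phi$ uniquely determines its $J_i$-marginals, so for $\tau = \tau_1 * \tau_2$ the infimum is attained only at $(\sigma_1,\sigma_2)=(\tau_1,\tau_2)$, yielding $\chi_{\Lambda}(\tau_1 * \tau_2)=\chi_{\Lambda}(\tau_1)+\chi_{\Lambda}(\tau_2)$.

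The main obstacle, and the step I expect to be genuinely delicate, is establishing (i) and (ii) in the random rooted topology: the amalgamated free product for \emph{random} traffic distributions requires identifying the cut-vertex decomposition of test graphs with a Markovian gluing of rooted components, and controlling the large-deviation contribution of unlikely test graphs (those with many shared vertices between the two label classes) that could a priori spoil the factorization. Once the continuity of $\Phi$ and the exponential approximation $\Utau_{\mathbf{Y}} \approx \Phi(\Utau_{\mathbf{Y}^{(1)}}, \Utau_{\mathbf{Y}^{(2)}})$ are in place, the corollary follows formally from the contraction principle and the uniqueness of the rate in Theorem \ref{LDPtraffic1}.
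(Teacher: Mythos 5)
Your overall plan — identify $\chi_\Lambda(\tau_1*\tau_2)$ with $\inf\{\chi_\Lambda(\sigma_1)+\chi_\Lambda(\sigma_2):\sigma_1*\sigma_2=\tau_1*\tau_2\}$ and collapse the infimum using uniqueness of the marginals — is a clean way to package the statement, and your points (i) (continuity of the free-product map) and the final uniqueness step are both sound and line up with Lemma~\ref{Lem:Homomorphismfin}. The easy superadditivity direction ($\chi$ cannot increase by restricting to marginals) is also implicitly correct. But the route you propose hinges on point~(ii), the \emph{exponential equivalence} of $\Utau_{\mathbf Y}$ with $\Phi(\Utau_{\mathbf Y^{(1)}},\Utau_{\mathbf Y^{(2)}})$, and this is both unproved and substantially stronger than what you need.

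Exponential equivalence would require $\dP\big(d(\Utau_{\mathbf Y},\,\Utau_{\mathbf Y^{(1)}}*\Utau_{\mathbf Y^{(2)}})>\epsilon\big)\le e^{-nM}$ for every $M$ and $n$ large, i.e.\ that the joint rooted traffic distribution is concentrated around the free product of the marginals \emph{uniformly} over atypical realizations of $(\mathbf Y^{(1)},\mathbf Y^{(2)})$. The factorization of $\mathbb E[\tau_{\mathbf Y,i}[H]]$ that you invoke controls only the first moment and says nothing about such tail probabilities; conditionally on the marginals, $\Utau_{\mathbf Y}$ is genuinely random (it records the coupling of the two colored subgraphs at each vertex), and establishing super-exponential concentration of that conditional law around a deterministic target is exactly the delicate point. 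The paper sidesteps it entirely: for the lower bound it replaces $\mathbf Y^{(2)}$ by $S\mathbf Y^{(2)}S^*$ for $S$ a uniform permutation (legitimate since the law is unchanged by independence and permutation-invariance), truncates to bounded marks and degrees, and uses the asymptotic traffic-independence result of \cite[Theorem~1.8]{Camille} to get that, conditionally on the truncated marginals being $\kappa$-close to $\tau_1^k,\tau_2^k$, the randomized joint is $\delta$-close to $\tau_1^k*\tau_2^k$ with $S$-probability at least~$\tfrac12$. A constant lower bound on this conditional probability is all that is needed: it yields $\dP(d(\tau_{\mathbf Y},\tau_1*\tau_2)\le\epsilon)\ge\tfrac12\dP(d(\tau_{\mathbf Y_1},\tau_1)\le\kappa)\dP(d(\tau_{\mathbf Y_2},\tau_2)\le\kappa)-O(e^{-nM})$, which, together with the trivial reverse inclusion and the coincidence of the liminf and limsup microstate entropies guaranteed by the full LDP, gives both inequalities. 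If you replace your goal~(ii) by this weaker two-sided sandwich (constant multiplicative factors are harmless at exponential scale), your contraction-principle framing reduces essentially to the paper's argument; as written, it demands a concentration statement you have no tools to establish.
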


This result is interesting notably because the analog statement is open in general for Voiculescu's microstates entropy. Voiculescu's free product $\Phi = \Phi_1*\Phi_2$ of two non-commutative tracial distribution corresponds to the free product $\tau_{\Phi} = \tau_{\Phi_1} * \tau_{\Phi_2}$ via the universal construction \cite{GAC}. However, the case considered in this article is somehow orthogonal to Voiculescu's original motivation, when $\Phi_1$ and $\Phi_2$ are the distribution of free semicircular variables the above additivity results is trivial as both sides are infinite. The techniques of this article may open perspectives for further investigations on Voiculescu's microstates entropy question.

\subsection{Organization of the paper}

In Section \ref{sec:graphs}, we define the metric space of unlabeled random rooted marked graphs and its local weak topology. Some of our matrices will be embedded into this space. On this metric space, we establish large deviation  principles.  In Section \ref{sec:networks}, we consider the enlarged metric space of unlabeled random rooted networks which is necessary to deal with general heavy-tailed matrices. In Section  \ref{sec:LDPHT}, we prove that the ESD extends to a continuous function on these metric spaces. As a byproduct, we will obtain Theorem \ref{thA}, Theorem \ref{thB} and Theorem \ref{LDPHT} by a contraction principle. In Section \ref{sec:traffic}, we prove Theorem \ref{LDPtraffic1} and Corollary \ref{cor:trafficind} again by a contraction principle. We also prove that  traffic distributions and neighborhood distributions of unlabeled rooted marked graphs are homeomorphic  in  Lemmas \ref{continuous} and \ref{Lem:Homomorphismfin} . Finally, in Section \ref{sec:discretization}, we establish a general convergence result for the relative entropy of discretization variables in $\dR^d$ which is of general interest. 

\bigskip

\noindent
{\bf Acknowledgments:} A.G. was partly supported  by the ERC Project LDRAM : ERC-2019-ADG Project 884584.

\section{Microstates entropy for locally finite marked graphs}
\label{sec:graphs}

\subsection{Marked graphs}\label{Sec:MarkG}

\paragraph{Basics} We start with a few definitions. Let $G = (V,E)$ be a graph on a countable set $V$ and edge set $E$. In the definitions of this section, the graphs are unoriented, they can have multiple edges and self-loops attached to a vertex. A graph is {\em locally finite} if for all $v \in V$, $\deg(v)  < \infty$, where $\deg(v)$ is the degree of $v \in V$ in $G$ (i.e. number of adjacent edges, a self-loop counts as two adjacent edges). Let $\cZ$ be a set. A {\em marked graph} $G = (V,E,\xi)$ on $\cZ$ is a  graph $(V,E)$
where each edge carries two marks in $\cZ$, one towards
each of its endpoints. For an edge $e = \{u,v\} \in E$ between vertices $u,v$ in $V$, we denote its mark towards the vertex $v$ by $\xi (u,v)$, and its mark towards the vertex $u$ by $\xi (v, u)$. The marked graphs considered in this work are {\em symmetric} marked graphs, namely we assume that $\cZ$ is equipped with an involution denoted by $*$ ($(z^*)^* = z$ for all $z \in \cZ)$ such that for all $\{u,v\} \in E$, 
\begin{equation}\label{eq:symxi}
\xi(u,v) = \xi(v,u)^*.
\end{equation}
By marked graph, we will always mean symmetric marked graph (note that there is no loss of generality here: up to replacing $\cZ$ by $ \cZ \times \cZ$ equipped with the involution $(z,w)^* = (w,z)$ we may always identify a marked graph with a symmetric marked graph). For functions or parameters of $G$, we might use sometimes a subscript $G$ to insist on its dependence on $G$, $V_G,E_G,\deg_G,\xi_G$ and so on.

For a marked graph $G=(V,E,\xi)$ and $v\in V$, we denote by $G(v)$ the connected component $v$ (the subgraph spanned by vertices connected to $v$).

A rooted marked graph $g = (G,o)$ is the pair formed by a connected marked graph and a distinguished vertex $o \in V(G)$ called the root. Let $(G,o)$ and $(G',o')$  be rooted marked graphs. We say that $(G, o)$ and $(G',o')$ are {\em isomorphic}, and write $(G, o) \equiv (G',o')$, if there exists a bijection between the sets of vertices of $G$ and $G'$ which maps $o$ to $o'$ while preserving  the adjacency structure of these connected components, and the edge marks.
Let $\cGr = \cGr(\cZ)$ denote the space of locally finite rooted marked graphs up to isomorphism. In combinatorial language, $\cGr(\cZ)$ is the set of unlabeled locally finite rooted marked graphs. From \cite{MR2354165,MR3933204}, if $\cZ$ is a Baire space, for each $g \in \cGr$, there is a canonical representative marked graph on a vertex set in $\dN$ with root $o = 0$. It is thus fine to write for $g \in \cGr$,  $g = (G,o) $ where $G = (V,E,\xi)$ is a marked graph as long as the properties we consider are class properties (that is do not depend on the choice of the representative of $g$). In the next subsection, we shall introduce a random labeling which will turn to be convenient.

If $G = (V,E, \xi)$ is a marked graph, its unmarked graph is denoted by $\bar G = (V,E)$. Similarly for rooted marked graphs, if $g = (G,o)$, we set  $\bar g = (\bar G,o)$.

\paragraph{Degree sequence}

Let $G = (V,E,\xi)$ be a locally finite marked graph. The {\em marked degree} of $v \in V$ is the counting measure  $\DEG (v)$ on  $\cZ$ defined by
$$
\DEG(v) = \sum_{u \sim v} \delta_{ \xi (v,u)},
$$
where the sum is over all $u \in V$ such that $\{u,v\} \in E$, that is, the neighbors of $v$. In other words, for any $z \in \cZ$, $\DEG(v ; z):=\int {\bf 1}_{\{z\}}\DEG(v)$ is the number of neighbors of $u$ such that $\xi(v,u) = z$. In particular, the total mass of the marked degree is the usual degree:
$$
\DEG(v ; \cZ) = \deg(v).
$$
The {\em marked degree sequence} of $G$ is the sequence $(\DEG(v))_{v \in V}$. If $G =(V,E,\xi)$ is finite then the following {\em edge counting measure} is well-defined:
$$
\MM = \sum_{v \in V} \DEG(v) =  \sum_{e = \{u,v\} \in E} \left( \delta_{\xi(u,v)} + \delta_{\xi(v,u)} \right) ,
$$ 
where we recall that $\{u,v\}$ and $E$  are non oriented edges. 
Namely, for any $z \in \cZ$, $\MM(z)=\int 1_{\{z\}}d\MM$ is the number of oriented edges $(u,v)$ with $\{u,v\} \in E$ such that $\xi(u,v) =z $.  From \eqref{eq:symxi} and the hand-shaking lemma, if $G$ is finite, the counting measure $\MM$ satisfies the following balance equations : for any subset $A \subset \cZ$:
\begin{align} \label{eq:handshake}
 \MM (A) = \MM( A^*) \AND  \MM (A) \in 2 \dN \;\hbox{ if $A = A^*$}.
\end{align}
For $V$ finite, the Erd\H{o}s-Gallai Theorem and its extensions give the necessary and sufficient conditions for a sequence of finite counting measures on $\cZ$, $D = (D(v))_{v \in V}$ to be the marked degree sequence of a simple marked graph $G = (V,E,\xi)$, see \cite{erdHos1960graphs,TRIPATHI2003417}. 
%For a given sequence $D = D(v))_{v \in V}$ with $V = \{1,\ldots,n\}$, we set 
%$$
%\cG_n(D) = \{ G = (V,E,\xi) : \DEG = D \}
%$$
%is the set of marked graphs on $V$ whose marked degree sequence is equal to $D$.

\subsection{Benjamini-Schramm convergence}

\paragraph{Local weak topology} The local topology on $\cGr$ is the product topology inherited from projections around the root. More precisely, for an  integer  number $r \geq 0$ and $g \in \cGr$, let $g_r$ be the intersection of the graph $G$ with the ball (for the graph metric) of center $o$ and radius $r$. Assume from now on that $\cZ$ is a complete separable metric space equipped with the distance $d$. Let $g = (G,o)$, $g' = (G',o') \in \cGr$, we say that the pair $(r,\delta)$ is good for $(g,g')$ if there exists an isomorphism $\psi$ between  $\bar g_{r}$ and $\bar g'_{r}$ such that 
$$\sup_{u,v\in V(G_{r})}d ( \xi(u,v), \xi'_{(\psi(u),\psi(v))}) \leq \delta.$$
We then endow $\cGr$ with the local topology, which is compatible with the distance
$$\DLOC(g,g')=\inf\left\{ \frac{1}{1+r} +\delta:(r,\delta)  \mbox{ is  good for  }(g,g')\right\}.$$

The set $\cGr$ equipped with $\DLOC$ is a complete separable metric space. We denote by $\cP( \cGr)$ the set of probability measures on $\cGr$ equipped with the topology of weak convergence. %It is compatible with the Lévy-Prohorov distance associated to $\DLOC$ which we denote by $\DWLOC$. 

\paragraph{Neighborhood distribution of finite graphs}
Let $G = (V,E,\xi)$ be a marked graph with $V$ finite. The {\em neighborhood distribution} $U(G) \in \cP( \cGr)$ of $G$ is the law of the equivalence class of the rooted graph $(g,o)$ where the root $o$ is sampled uniformly, namely
\begin{equation}\label{eq:defU}
U(G)=\frac{1}{|V|} \sum_{v\in V}\delta_{[G,v]},
\end{equation}
where $[G,v] \in \cGr$ stands for the equivalence class of $(G(v),v)$.

We shall say that a sequence of finite marked graphs $(G_n)$ converges in {\em Benjamini-Schramm} sense toward $\mu \in \cP(\cGr)$ if $U(G_n)$ converges weakly to $\mu$.

\paragraph{Unimodularity}
The neighborhood distribution $U(G)$ satisfies a reversibility assumption called {\em unimodularity}, see  \cite{MR2354165}. An edge-rooted marked graph $(G,e)$ is a connected marked graph $G$ with a distinguished oriented edge $e = (u,v) \in V^2$ (we do not necessarily assume $\{u,v\} \in E$). The above isomorphisms for rooted marked graphs extend to edge-rooted marked graphs and we may speak of unlabeled edge-rooted marked graphs. We denote by $\cGe$, the set of unlabeled edge locally finite marked graphs. The local topology extends to this setting as well. If $G = (V,E,\xi)$ is a connected marked graph and $e = (u,v) \in V^2$, let $[G,e]$ denotes its associated unlabeled edge-rooted marked graph. We then say that a probability measure $\mu \in \cP(\cGr)$ is unimodular if for every non-negative measurable functions $f$ on $\cGe$, we have  
\begin{equation}
\label{eq:defunimod}\int \sum_{v\in V}f([G,o,v])d\mu([G,o])=\int \sum_{v\in V}f([G,v,o])d\mu([G,o]).    
\end{equation}
It is in fact necessary and sufficient to check \eqref{eq:defunimod} for functions $f$ such that $f(G,u,v) = 0$ unless $\{u,v\} \in E_G$, see \cite[Proposition 2.2]{MR2354165}. This equivalent property is called involution invariance.

\paragraph{Edge-rooting}

If $\mu \in \cP(\cGr)$ and $\dE_\mu \deg (o) = \int \deg(o)d \mu([G,o]) \in (0,\infty)$, we define its edge-rooted version $\vec \mu \in \cP( \cGe)$ by the formula: for every measurable sets $A$ on $\cGe$, 
\begin{equation}\label{eq:defvecmu}
\vec \mu (A) = \frac{1}{ \dE_\mu \deg (o) } \dE_\mu \left[ \sum_{v \sim o} \IND ( [G,o,v] \in A )  \right],
\end{equation}
where the sum is over all neighbors of $o$ in $G$. Thanks to \cite[Proposition 2.2]{MR2354165}, the unimodularity  of $\mu$ \eqref{eq:defunimod} is equivalent to the invariance of $\vec \mu$ by inverting the root edge. %: that is, if $(G,o,o')$ has law $\vec \mu$ then $(G,o',o)$ has also law $\vec \mu$. We then say that $\vec \mu$ is {\em invariant}. 
 Observe that if $G = (V,E,\xi)$ is a finite graph, then by definition of $U(G)$,
\begin{equation*}%\label{eq:vecUG}
\vec U(G) = \frac{1}{2|E|} \sum_{e = \{u,v\} \in E} \left( \delta_{[G,u,v]} + \delta_{[G,v,u]} \right).
\end{equation*}
Hence, $\vec U(G)$ corresponds to the law of the edge-rooted graph $(G,\rho)$, where the edge is sampled uniformly at random.

\paragraph{Finite-depth neighborhood and invariance}

For integer $h \geq 0$, we denote by $\cGr_h \subset \cGr$, the subset of unlabeled rooted marked graph such that $g_r = g_h$ for all $r \geq h$: that is unlabeled rooted marked graphs which are contained in the ball of radius $h$ from the root. If $\mu \in \cP(\cGr)$, we denote by $\mu_h \in \cP ( \cGr_h)$ its image by the map $g \to g_h$. Similarly, for integer $h \geq 1$ for $(G,\rho) \in \cGe$, $\rho = (o,o')$, we define $(G,\rho)_h$ as its restriction to the $(h-1)$-neighborhood around $\{o,o'\}$ (in other words, $(G,\rho)_h$ is the restriction to the ball of radius $h$ centered at a fictitious vertex in the middle of $\{o,o'\}$). We define $\cP(\cGe_h)$ similarly.

We will say that $\mu  \in \cP ( \cGr_h)$ is {\em invariant} if \eqref{eq:defunimod} holds for all $f$ such that $f(G,u,v) = 0$ unless {{ $\{u,v\}$ is an  edge }}and  $f(G,u,v)$ is measurable with respect to $(G,u,v)_h$. For $\mu \in \cP(\cGr)$, being unimodular is equivalent to $\mu_h$ invariant for all $h \geq 1$ (in \eqref{eq:defunimod}, functions with bounded support are dense). Similarly, we will say that  $\nu \in \cP(\cGe_h)$ is invariant if $\nu = \vec \mu$ for some invariant $\mu \in \cP(\cGr_h)$. 

%if $(G,o,o')$ and $(G,o,o')$ have the same law where $(G,o,o')$ has law $\nu$. If $\mu \in \cP(\cGr_h)$ then $\vec \mu$ is defined as an element of $\cP(\cGe_h)$. With this convention if $\mu  \in \cP(\cGr_h)$ is invariant then $\vec \mu$ is invariant.

\paragraph{Random labeling} As we have seen, unlabeled rooted graphs give the proper setup for defining the Benjamini-Schramm convergence. Rooted labeled graphs are however more convenient than unlabeled rooted graphs to do computations. We now define a randomized canonical labeling of graphs which appeared notably in \cite{MR3945757,MR4492967}. We define the set of finite integer sequences as 
\begin{equation}\label{eq:defNf}
\dN^f = \cup_{k \geq 0} \dN^k 
\end{equation}
where $\dN^0 = \{ o \}$ and $\dN = \{1, 2,\ldots,\}$ by convention. Let $(G,o)$ be a random rooted marked graph. The law of $[G,o]$ is denoted by $\mu \in \cP(\cGr)$. We perform the breadth-first search tree on the unmarked graph $\bar G$ started at the root $o$, where ties between vertices are broken uniformly at random and independently of $(G,o)$.  This defines a random marked graph $(G',o)$ on a subset of $\dN^f$ whose law depends only the equivalence class of $(G,o)$: a vertex at distance $k$ from the root receives a label in $\dN^k$, if $(i_1,\ldots,i_{k-1})$ is the label of its parent in the search tree, it has the label $(i_1,\ldots,i_{k-1},j)$ if it is the $j$-th offspring of its parent in the random ordering. We call this random rooted marked graph, the {\em uniform labeling} of $\mu \in \cP(\cGr)$. %we shall say that a random labeled marked graph $(G,o)$ on a subset of $\dN^f$ is randomly labeled if its law is equal to the law of the randomly labeled rooted marked graph associated to its unlabeled rooted marked graph. 

The same construction can be performed for edge-rooted graphs $(G,\rho)$, $\rho = (o_1,o_2)$, with two copies $\dN^f \times \dN^f$. Indeed, we perform the breadth-first search tree by starting from two seeds $(o_1,o_2)$: we start with the neighbors of $o_1$ different from $o_2$, then the neighbors of $o_2$ which have not been seen before and so on. This defines the {\em uniform labeling} of $\nu \in \cP(\cGe)$.

In the sequel, when the choice of the root is clear, we will often omit the root and write $G$ in place of $(G,o)$ or $(G,\rho)$.

\subsection{Entropy for marked graphs}

\subsubsection{Graphs with colors and reals marks}\label{susbec:involution}
In this subsection, we define an entropy associated to the Benjamini-Schramm topology for rooted weighted graphs when the weights are taken in $\cZ = \cB \times \dR^k$ where $\cB$ is a finite set (which can be thought as the set of ''colors'', which will later allow us to include traffic distributions of several matrices, the set $\cB$ being then the set of indices of these matrices). This notion of entropy is obtained by using microstates, or rather in a probabilistic terminology, by establishing a large deviation  principle  for a sequence of random weighted graphs. When $\cZ$ is a finite set such results have been established in \cite{BoCa15,DeAn,MR4492967}.

We consider an involution $*$ on $\cB$. To avoid technicalities, we consider an involution on $\dR^k$ which comes from a signed involution on coordinates. More precisely, we fix some $\veps \in \{-1,1\}^k$ and a permutation $\tau \in S_k$ such that for all $i \in \{1,\ldots,k\}$, $\tau^2 (i) = i$ and $\veps_i \veps_{\tau(i)} = 1$. Then for $x =(x_1,\ldots,x_k)$ we set 
\begin{equation}\label{eq:definvolution}
(x_1,\cdots,x_k)^* = (\veps_1 x_{\tau(1)},\ldots,\veps_k x_{\tau(k)}).
\end{equation}
The most relevant examples are $x^* =x$, on $\dC \simeq \dR^2$ the complex conjugate $x^* = \bar x$ and on $\dR^{2}$, $(x_1,x_2)^*  = (x_2,x_1)$ (this choice allows to represent marks on vertices as marks on edges).  These involutions on $\cB$ and $\dR^k$ extend as an involution on $\cZ = \cB \times \dR^k$ defined as $(b,x)^*= (b^*,x^*)$.

Let $G = (V,E,\xi)$ be a (possibly random) marked graph on the mark set $\cZ = \cB \times \dR^k$. We denote by $G^0 = (V,E,\xi^0)$ the marked graph with mark set $\cB$ obtained from $G = (V,E,\xi)$ by setting $\xi^0(u,v) = b$ if $\xi(u,v) = (b,x)$. The same constructions extend to rooted marked graphs. By construction, if $\mu \in \cP(\cGr(\cZ))$, we denote by $\mu^0\in \cP(\cGr(\cB))$ the push-forward of $\mu$ by the map $(G,o) \mapsto (G^0,o)$. Similarly, for $\vec \mu \in \cP(\cGe(\cZ))$, its push-forward is denoted by $\vec \mu^0 \in   \cP(\cGe(\cB))$. By construction, we have
$
\dE_{\mu^0} \DEG(o) = \dE_\mu \DEG (o) (\cdot \times \dR^k).
$

% We will fix a finite partition $\cB$ of $\cZ$ where each $b \in \cB$ is a Borel set and $\cB$ is compatible with the involution: for all $b \in \cB$, there exists $c \in \cB$ such that $b^* =c$ and $c^* = b$. 

%We also consider a  reference finite  measure $\gamma$ on $\cZ$ which is involution-invariant: for all Borel sets $A \subseteq \cZ = \cB \times \dR^k$, $\gamma ( A ) = \gamma (A^*)$. Moreover, we assume that for all $b \in \cB$, $\gamma ( \{ b \} \times \dR^k)> 0  $ and consider the probability measure on $\dR^k$, 
%\begin{equation}\label{eq:defgammab}
%\gamma_b (\cdot) = \frac{\gamma ( \{b\} \times \cdot )}{\gamma ( \{ b \} \times \dR^k)}. 
%\end{equation}
%Note that $\gamma_b = \gamma_{b^*}$.

\subsubsection{Entropy associated to a partially given average marked degree} \label{sec:markedg}

\paragraph{Graphs with a partially given average marked degree}
We start by considering marked graphs such that their edge counting measure restricted to $\cB$ is given. To this end, we consider $d \in [0,\infty)^{\cB}$ and let $(\MM_n)$ be a sequence in $\dZ_+^\cB$  such that for all  $b \in \cB$, 
\begin{equation}\label{eq:limmnB}
\lim_{n \to \infty} \frac{\MM_n(b)}{n} = d(b),
\end{equation}
and which satisfies \eqref{eq:handshake} for every $b \in \cB$. We set 
$$
\bar d = \sum_{b} d(b).
$$
Let $V_n = \{1,\ldots,n\}$. The set of marked graphs on $V_n$ with mark set $\cZ$ whose edge counting measure is $\MM_n$ when restricted to $\cB$ is denoted by
\begin{equation}\label{eq:defGnmB}
 \cG_{n,\MM_n}  = \{ G = (V_G,E_G,\xi_G) : V_G = V_n , \MM_{G} (\{ b \} \times \dR^k)= \MM_n(b) \hbox{ for all $b \in \cB$} \},  
\end{equation}
We denote by $\cG^0_{n,\MM_n}$ the set of simple marked graphs on the mark set $\cB$  whose edge-counting measure is $\MM_n$. It is also the image of $\cG_{n,\MM_n}$ by the map $G \to G^0$ (recall the definition in section \ref{susbec:involution}). The dependency in $\cB$ is implicit. The sets $ \cG^0_{n,\MM_n}  $  and $\cG_{n,\MM_n}$ are non-empty iif $\bar \MM_n \leq n(n-1)/2$.

\paragraph{Randomly marked graph}  We next define a random graph $G_n = (V_n,E_n,\xi_n) $ taking value in $\cG_{n,\MM_n}$ as follows. Let $\gamma_n = (\gamma_{n,b})_{b \in \cB}$ be a collection of probability measures on $\dR^k$ which are $*$-invariant in  the sense that for every $b \in \cB$ and every Borel set $A \subset \dR^k$, 
\begin{equation}\label{eq:gamman}
\gamma_{n,b}(A) = \gamma_{n,b^*}(A^*).
\end{equation}

The graph $G^0_n = (V_n,E_n,\xi^0_n)$ is sampled uniformly on  $\cG^0_{n,\MM_n}$. Then,  independently for each edge $e = \{u,v\} \in E_n$ with $u < v$ and $\xi^0 (u,v) = b$, we sample a weight $\xi_n(u,v)$ in $\{b\} \times \dR^k$ according to the probability distribution $\gamma_{n,b}$. Finally, we set $\xi_n(v,u)= \xi_n(u,v)^*$. Note that since $\gamma_n$ is $*$-invariant, any deterministic choice of the ordering $(u,v)$ or $(v,u)$ leads to the same distribution for $G_n$. We assume that there exists a collection of probability measures $\gamma = (\gamma_b)_{b  \in \cB}$ such that for all $b \in \cB$, and all Borel sets $A \subset \dR^k$,
\begin{equation}\label{eq:gamman2}
\lim_{n \to \infty} \gamma_{n,b}(A) = \gamma_b(A).
\end{equation}

\paragraph{LDP for random graphs} We may now state our main result on the large deviation  principle  for the sequence of random graphs $(G_n)$.

\begin{theorem}[Entropy with a given average marked degree]\label{th:SIGMA1R}
For every integer number  $h \geq 1$, $U(G_n)_h$ and $\vec U(G_n)_h$ satisfy a LDP on $\cP(\cGr_h)$ and $\cP(\cGe_h)$ with rate $n$ and good rate functions $\Sigma_{\gamma,d}(\mu,h)$ and $\vec \Sigma_{\gamma,d}(\nu,h)$ respectively.  Moreover $U(G_n)$ and $\vec U(G_n)$ satisfy a LDP on $\cP(\cGr)$  and $\cP(\cGe)$ with rate $n$ and good rate function $\Sigma_{\gamma,d}(\mu) = \lim_{h} \Sigma_{\gamma,d}(\mu_h,h)$ and $\vec \Sigma_{\gamma,d}(\nu) = \lim_{h} \vec \Sigma_{\gamma,d}(\nu_h,h)$.
\end{theorem}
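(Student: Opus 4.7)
The plan is to prove Theorem \ref{th:SIGMA1R} in two stages: first establish the finite-depth LDPs on $\cP(\cGr_h)$ and $\cP(\cGe_h)$, then combine them by a projective-limit argument (Dawson-Gärtner) to obtain the full LDPs on $\cP(\cGr)$ and $\cP(\cGe)$.

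For the finite-depth LDP, I would decompose the sampling of $G_n$ into two independent steps. First, sample the color-skeleton $G_n^0 \in \cG^0_{n,\MM_n}$ uniformly; second, conditionally on the skeleton, attach independent real marks drawn from $\gamma_{n,b}$ to each edge of color $b$. The skeleton step is essentially covered by the existing finite-mark LDP of \cite{BoCa15,DeAn,MR4492967}: $U(G_n^0)_h$ satisfies an LDP with rate $n$ and a good rate function $\Sigma^0_d(\mu^0, h)$, obtained by the standard combinatorial computation (Stirling on the configuration-model-type counting of $\cG^0_{n,\MM_n}$) and taking the form of a difference of two relative entropies, one on balls of radius $h$ around a vertex and one on balls around an edge, with the factor $\bar d / 2$ accounting for the double-counting of edges. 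For the mark step, conditionally on the skeleton the edge marks are i.i.d. from $\gamma_{n,b}$, so a conditional Sanov's theorem contributes an additive term $\tfrac{1}{2}\sum_{b\in\cB} d(b)\, \DKL(\nu_{b,h} | \gamma_b)$, where $\nu_{b,h}$ denotes the marginal of $\vec \mu_h$ on the color-$b$ edge marks. The upper bound is obtained by covering $\cP(\cGr_h)$ by small balls and using exponential Markov/union bounds; the lower bound comes from a change-of-measure that tilts both the uniform skeleton law and the product $\gamma_n^{\otimes E_n}$ toward a measure whose typical neighborhood profile is close to the target $\mu$, followed by a weak law of large numbers on the tilted law.

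To pass from depth $h$ to the full local weak topology, I would invoke the Dawson-Gärtner theorem: the maps $\mu \mapsto \mu_h$ are continuous and realize $\cP(\cGr)$ as the projective limit of $(\cP(\cGr_h))_{h \geq 1}$. The chain-rule/tower property of relative entropy, applied to the natural projection $\cGr_{h+1} \to \cGr_h$ and combined with the involution-invariance identity relating $\mu_h$ to $\vec \mu_h$, shows that $h \mapsto \Sigma_{\gamma,d}(\mu_h, h)$ is nondecreasing, so the limit
\begin{equation*}
\Sigma_{\gamma,d}(\mu) = \lim_{h \to \infty} \Sigma_{\gamma,d}(\mu_h, h) = \sup_{h} \Sigma_{\gamma,d}(\mu_h, h)
\end{equation*}
is well-defined, lower semicontinuous, and a good rate function on $\cP(\cGr)$; exactly the same argument applies to $\vec \Sigma_{\gamma,d}(\nu) = \lim_h \vec \Sigma_{\gamma,d}(\nu_h, h)$ on $\cP(\cGe)$.

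The main obstacle I expect is the continuous-mark step. The rate function at depth $h$ mixes the discrete graph combinatorics (governed by $\Sigma^0_d$) with continuous relative entropies on $\cP(\dR^k)$, and the marks $\gamma_n$ are only assumed to converge weakly to $\gamma$. One must verify that discretizing the marks produces an exponentially good approximation so that Sanov's theorem applies and the discretized and continuous rate functions fit together, including lower semicontinuity and goodness. This is precisely what is established in Section \ref{sec:discretization} and will be invoked here. A secondary subtlety is preserving unimodularity through the projective limit: one must check that the infima defining $\Sigma_{\gamma,d}(\mu,h)$ are compatible with involution-invariance under $\mu \mapsto \mu_h$, so that the minimizers of the depth-$h$ rate functions indeed arise as truncations of some $\mu \in \cP(\cGr)$, ensuring that the projective-limit rate function genuinely matches the LDP rate obtained directly at each depth.
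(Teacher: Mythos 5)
Your high-level plan---treat the color skeleton and the real marks separately, obtain a finite-depth LDP, then pass to the projective limit in $h$ via Dawson--G\"artner---is the right skeleton and matches the paper's architecture. However, there are two genuine gaps, one in the rate function and one in the strategy for the continuous marks.

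The proposed mark contribution $\tfrac{1}{2}\sum_{b\in\cB} d(b)\,\DKL(\nu_{b,h}|\gamma_b)$ is not the correct rate function at depth $h \geq 2$. It records only a per-edge marginal KL, but under the sampling scheme the marks are i.i.d.\ across \emph{edges} while a target $\mu\in\cP(\cGr_h)$ can have arbitrarily correlated marks in a neighborhood. The correct rate is a specific-entropy-type quantity: writing $G$ and $\vec G$ for uniform labelings of $\mu$ and $\vec\mu$ and $G_\gamma, \vec G_\gamma$ for the reference graphs where the $\dR^k$-marks are resampled i.i.d.\ from $\gamma$ conditionally on the skeleton, the contribution is $\DKL(G|G_\gamma) - \tfrac{\bar d}{2}\DKL(\vec G|\vec G_\gamma)$ as in \eqref{diff}. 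Your formula agrees with this only at $h=1$ when the marks are conditionally independent under $\mu$; for $h\geq 2$ the edge-rooted correction $-\tfrac{\bar d}{2}\DKL(\vec G|\vec G_\gamma)$ is essential to avoid overcounting shared edges, and there is no reformulation of your per-color sum that produces it. Relatedly, your decomposition---uniform skeleton, then i.i.d.\ marks---requires a \emph{quenched} LDP for the empirical neighborhood law of an i.i.d.\ edge field on a random sparse graph, and this is a substantial process-level large deviations problem, not an application of a ``conditional Sanov theorem.''

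The paper sidesteps this by discretizing the mark space $\dR^k$ into bins $\cZ^\delta$ (with mesh $\delta$ and growing window), so that the marked graph $G_n^\delta$ lives in a finite mark space where Proposition \ref{prop:SIGMA1} applies. The correct decomposition is then: fix the quantized empirical edge-counting measure $\MM_n^\delta$ (a multinomial, so Sanov gives its LDP with rate $I_d^\delta$), and conditionally on $\MM_n^\delta$ the marked graph is \emph{uniform} on $\cG_{n,\MM_n^\delta}$, so the discrete-mark rate $\Sigma^0_{\DD}(\cdot,h)$ applies; Biggins' mixture theorem combines the two into $\Sigma_d^\delta(\mu^\delta,h) = \Sigma^0_{\DD}(\mu^\delta,h) + I_d^\delta(\DD)$ as in \eqref{eq:fSd}. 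One then takes a \emph{second} projective limit, this time over the mesh $\delta\to 0$, via Dawson--G\"artner, obtaining $\Sigma_{\gamma,d}(\mu,h) = \sup_{\delta}\Sigma_d^\delta(\mu^\delta,h)$, and the explicit form \eqref{eq:defSigmad}--\eqref{diff} is recovered using the discretization Lemma \ref{le:discretization} together with the combinatorial identity \eqref{eq:sigmadeltad}. Your proposed route via exponentially good approximations is not obviously impossible, but the double projective-limit structure avoids verifying exponential goodness and cleanly produces the limiting rate function as a monotone limit. You should also address the edge-rooted statement $\vec U(G_n)_h$ explicitly: the paper derives it from the vertex-rooted LDP by an extended contraction principle (Lemma \ref{le:SIGMAe}), then identifies the resulting infimum with the formula $\vec\Sigma^0_d$ by a separate entropy computation.
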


The precise expression for $\Sigma_d(\mu,h)$ and $\vec \Sigma_d(\nu,h)$ is defined below when $\mu$ and $\nu$ satisfy some admissibility conditions. We conjecture that the entropy is infinity otherwise. We shall call these functions the vertex and edge entropies associated to the pair $(\gamma,d)$. Note that their limits as $h$ goes to infinity are well defined by monotonicity.

\paragraph{Definition of the entropy}
If $G = (V,E,\xi)$ is a (possibly random) marked graph on the mark set $\cZ = \cB \times \dR^k$, we denote by $G_\gamma = (V,E,\xi_\gamma)$ the random marked graph obtained as follows. We have $G^0_\gamma  = G^0 = (V,E,\xi^0)$. Given $G^0$, $ e = \{u,v\} \in E$ and an independent choice of orientation $(u,v)$, for $\xi^0(u,v) = b$, we set $\xi_\gamma (u,v) = (b,y_e)$, where the variables $(y_e)_{e \in E}$ are independent and $y_e$ has distribution $\gamma_{b}$. Finally, we set $\xi_\gamma (v,u) = \xi_\gamma(u,v)^*$.  Note that since $\gamma$ is $*$-invariant, the choice of the orientation is irrelevant.

We denote by $\DKL(p|q) = \int d p(z) \ln (dp(z)/dq(z))$ the relative entropy between two  probability measures $p,q$, which we also write $\DKL(X|Y)$ if $X$ has law $p$ and $Y$ law $q$. 
\begin{definition}[Admissible neighborhood] For $h \geq 1$, a measure $\mu \in \cP(\cGr_h)$ is {\em admissible} if it satisfies the following conditions:
\begin{enumerate}[(C1)]
\item \label{admi} $\mu$ is invariant,
\item \label{admii} $\mu$ is supported on marked trees,
\item \label{admiii} $\dE_{\mu} \deg (o) \ln \deg(o) < \infty$,
\end{enumerate}
We further say that $\mu$ is {\em  $(\gamma,d)$-admissible} if it is admissible and 
\begin{enumerate}[(C4)]
\item $\dE_{\mu^0} \DEG(o) = d$,
\item[(C5)]  $\DKL  ( G | G_\gamma) <\infty$, where $G$ is the uniform labeling of $\mu$.
\end{enumerate}
We say that $\mu$ is $d$-admissible if it is admissible and satisfies (C4). Similarly, we say that $\nu \in \cP(\cGe_h)$ is {\em admissible} if there exists $\mu \in \cP(\cGr_h)$ admissible such that $\vec \mu = \nu$ and $(\gamma,d)$-admissible if we further have that 
\begin{enumerate}[(C4')]
\item $\xi^0(\rho)$ has law $d(\cdot)/\bar d$ and, if $h \geq 2$, $d_\nu = (\dE_{\nu}  \deg(o)^{-1} ) ^{-1} \geq \bar d$, 
\item[(C5')]  $\DKL  ( \vec G | \vec G_\gamma)  <\infty$ where $\vec G$ is the uniform labeling of $\nu$.
\end{enumerate}
We say that $\nu$ is $d$-admissible if it is is admissible and satisfies (C4').
\end{definition}

We note that if the support of $\gamma$ is a finite set, then condition (C5) is satisfied when assumptions (C1)-(C2)-(C3) hold, see  \cite[Lemma 5.10]{BoCa15} and \cite[Lemma 4]{DeAn}.

We are ready for the definition of the entropy. In general, the {\em vertex entropy} $\Sigma_{\gamma,d}(\mu,h) $ and the {\em edge-entropy} $\vec \Sigma_{\gamma,d}(\nu,h)$ will be defined as limits in the proof of Theorem \ref{th:SIGMA1R}. For $\mu \in \cP(\cGr_h)$ which is $(\gamma,d)$-admissible, we have an explicit finite expression: 
\begin{equation}\label{eq:defSigmad}
\Sigma_{\gamma,d}(\mu,h)= 
\Sigma^0_d(\mu^0,h) + \Sigma^1_{\gamma,d}(\mu,h), 
\end{equation}
where $\Sigma_d^0$ and $\Sigma_{\gamma,d}^1$ are defined as follows. If $\mu \in \cP(\cGr_h)$ is $(\gamma,d)$-admissible, we set
\begin{equation}\label{diff}
\Sigma_{\gamma,d}^1 (\mu,h)  =  \DKL ( G | G_\gamma ) - \frac {\bar d}{ 2} \DKL  ( \vec G | \vec G_\gamma ),
\end{equation}
where  $G$ and $\vec G$ are the randomly labeled rooted marked graphs with laws $\mu$ and $\vec \mu$ respectively.  We shall check that if  $\mu$ is $(\gamma,d)$-admissible,  $\vec \mu$ is $(\gamma,d)$-admissible and thus $\DKL  ( \vec G | \vec G_\gamma  ) <\infty$. Hence, the above difference \eqref{diff} is well defined  if  $\mu$ is $(\gamma,d)$-admissible.

Recall the definition of the Shannon entropy and conditional Shannon entropy for random variables $X,Y$ on a finite set: 
$$
H(X) = - \sum_{x} \dP (X = x) \ln \dP(X = x)\mbox{ and }H(X|Y) =  - \sum_{x,y}  \dP ( X= x, Y = y)\ln \dP(X=x | Y = y).
$$
If $\mu \in \cP(\cGr_h(\cB))$ is $d$-admissible,    we set
\begin{equation}\label{defS}
\Sigma^0_d(\mu,h) = - H (G |(G)_1) + \frac {\bar d}{ 2} H  ( \vec G  | (\vec G)_1) + \DKL(\DEG_G(o) | N_d),
\end{equation}
where $G$ and $\vec G$ are the uniform labeling of $\mu$ and $\vec \mu$ respectively and  $N_d$ is a multivariate Poisson variable $(N_d(b))_{b\in \cB}$ where $(N_d(b))_{b\in \cB}$ are independent and for every $b\in\cB$, $N_d(b)$ has distribution $\Poi(d(b))$. Otherwise, we set $\Sigma^0_d(\mu,h) = \infty$.
%
%\begin{equation}\label{eq:defsigmad}
%\sigma_d(\deg(o)) =  \dE \ln (\deg(o)!) - \frac{\bar d}{2} \ln (\bar d/e) - \sum_{b \in \cB} \frac{d(b)}{2} \ln (d(b)/e).
%\end{equation}
%Otherwise, we set $\Sigma^0_d(\mu,h) = \infty$. It is possible to rewrite $\sigma_d$ in terms of entropies. A simple calculation shows that if $N $ is a random variable on $\dZ_+$ with $\dE N = \lambda >0$, then
%$$
%\dE \ln N! - \lambda \ln (\lambda / e) = \DKL ( N |\Poi(\lambda) ) + H(N), 
%$$
%where $\Poi(\lambda)$ is a Poisson distribution with mean $\lambda$. After a few computation, it follows that
%$$
%\Sigma^0_d(\mu,h) = -H((G,o) | (G,o)_{1} ) + \frac{\bar d }{2} H( (\vec G,\rho)  | (\vec G,\rho)_1 ) + \DKL(\DEG | \Poi(d)).
%$$
%where $(G,o)$ and $(\vec G,o)$ are unlabeled rooted marked graphs with laws $\mu$ and $\vec \mu$. Here $H(X|Y)$ is the conditional entropy  and $\Poi(d)$ is multivariate Poisson variable $(N_b)_{b\in \cB}$ where $(N_b)$ are independent and $N_b$ has distribution $\Poi(d(b))$. 
For a very interesting alternative expression of $\Sigma_d^0$ with only non-negative terms involving relative entropies, see the recent work \cite{ramanan2023large} and Remark \ref{rk:kavita} below.

If $\mu \in \cP(\cGr_h)$ is not $(\gamma,d)$-admissible then we conjecture that $\Sigma_{\gamma,d}(\mu,h) = \infty$. We will prove this claim under a small extra condition on $\mu$, see Lemma \ref{le:casinfini} below.

Similarly, for $\nu \in \cP(\cGe_h)$ which is $(\gamma,d)$-admissible, we also define the   edge-entropy associated to $(\gamma,d)$ as
\begin{equation}\label{eq:defvecSigmad}
\vec \Sigma_{\gamma,d}(\nu,h) = 
\vec \Sigma^0_d(\nu^0,h) + \vec \Sigma^1_{\gamma,d}(\nu,h),
\end{equation}
where $\vec \Sigma^0$ and $\vec \Sigma^1$ are defined as follows. For integer number $h \geq 1$, if $\nu \in \cP (\cGe_h)$, we consider the element of $\hat \nu \in \cP(\cGr_{h-1})$ defined, for $A$ measurable set of $\cGr_{h-1}$, by
\begin{equation*}\label{eq:defhatnu}
\hat \nu ( A ) = d_\nu \dE_\nu  \left[ \frac{\IND_{[G,o]_{h-1} \in A }} {\deg(o)} \right],  
\end{equation*}
where $o$ is the origin vertex of the root edge $\rho$ and $d_\nu =  (\dE_{\nu}  \deg(o)^{-1} ) ^{-1}$. If $h=1$, $\hat  \nu$ is trivial since $\cGr_0$ has a unique element. Finally,  we set 
\begin{equation}\label{eq:defnupr}
\dot{\nu} = \dot{\nu}(\bar d) = (1-p)\delta_0 + p \delta_{\hat \nu},
\end{equation}
 with $p =\bar d / d_\nu $ and $0$ is the  rooted graph with a single vertex. For $h \geq 2$, this is a probability measure if and only if $d_\nu \geq \bar d $. 
 
For $\mu \in \cP(\cGr)$ with finite positive expected degree, we denote by $\check \mu \in \cP (\cGr)$, its sized-biased version defined for $A$ measurable set of $\cGr$ as
$$
\check \mu (A) = \frac{\dE [ \deg(o) \IND ( G \in A) ]}{\dE [ \deg(o) ]}
$$
 
 We set 
$$
\vec \Sigma^1_{\gamma,d} (\nu,h) = \frac {\bar d} 2 \DKL ( \vec G  | \vec G_\gamma ) - \bar d \DKL ( \check G   |  \check G_\gamma ) + \DKL ( G | G_\gamma ),
$$
where  $G$, $\check G$ and $\vec G$ are the uniform labeling of $\dot{\nu} , \check {\dot \nu} \in \cP (\cGr_{h-1})$ and $\nu$ respectively. Note that for $h=1$,
 $$\vec \Sigma^1_{\gamma,d} (\nu,1) = \frac {\bar d} 2 \DKL ( \vec G | \vec G_\gamma ).$$ If $\nu \in \cP(\cGe_1(\cB))$ is $d$-admissible, we set 
$$
\vec \Sigma_d^0 (\nu,h) = - \frac {\bar d} 2 H ( \vec G |(\vec G)_1 ) + \bar d H( \check G | (\check G)_{1} ) - H ( G|(G)_{1}) + \DKL(\DEG_G(o) | N_d),
$$
where  $G$, $\check G$ and $\vec G$ are the uniform labeling of $\dot{\nu} , \check {\dot \nu} \in \cP (\cGr_{h-1})$ and $\nu$ respectively. Finally $N_d$ is the multivariate Poisson random variable as above. Otherwise, $\nu$ is not $d$-admissible and  we set $\vec \Sigma_d^0 (\nu,h) = \infty$. 

Again, we conjecture that $\vec \Sigma_{\gamma,d}(\nu,h) = \infty$ if $\nu$ is not $(\gamma,d)$-admissible. Here is our partial result in this direction. We say that $\mu \in \cP(\cGr_h)$ is {\em well-behaved} if for any $1 \leq k \leq h-1$ such that $\DKL((G)_k|(G_\gamma)_k) < \infty$ we also have $\DKL((\check G)_k|(\check G_\gamma)_k) < \infty$. For example if the vertex degree $\deg(o)$ under $\mu$ has bounded support then $\mu$ is well-behaved. Similarly, $\nu \in \cP(\cGe_h)$ is well-behaved if for any $1 \leq k \leq h-1$ such that $\DKL((G)_k|(G_\gamma)_k) < \infty$ we also have $\DKL((\check G)_k|(\check G_\gamma)_k) < \infty$ where $G$, $\check G$ and $\vec G$ are the uniform labeling of $\dot{\nu} , \check {\dot \nu} \in \cP (\cGr_{h-1})$ and $\nu$ respectively.

\begin{lemma}[Infinite entropy]\label{le:casinfini}
For every  integer number $h \geq 1$, if $\mu \in \cP(\cGr_h)$ is either not $d$-admissible or well-behaved and not $(\gamma,d)$-admissible then $\Sigma_{\gamma,d}(\mu,h) = \infty$. Similarly, if $\nu \in \cP(\cGe_h)$ is either not $d$-admissible or well-behaved and not $(\gamma,d)$-admissible then $\vec \Sigma_{\gamma,d}(\nu,h) = \infty$.
\end{lemma}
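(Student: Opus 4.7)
My plan is to leverage Theorem~\ref{th:SIGMA1R}: since $\Sigma_{\gamma,d}(\cdot,h)$ is the rate function of an LDP for $U(G_n)_h$, it suffices, for each $\mu$ in the stated cases, to exhibit an open neighborhood $F\ni\mu$ with $\PP(U(G_n)_h\in F)=o(e^{-cn})$ for every $c>0$; the LDP lower bound then gives $\inf_{\nu\in F}\Sigma_{\gamma,d}(\nu,h)=+\infty$ and hence $\Sigma_{\gamma,d}(\mu,h)=+\infty$. I would handle the four failures of $d$-admissibility in turn. For \textup{(C1)}, the empirical neighborhood distribution of any finite marked graph is invariant and invariance is closed in the local topology, so a non-invariant $\mu$ has an open neighborhood entirely outside the range of $U(G_n)_h$. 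For \textup{(C2)}, a factorial-moment bound for the number of vertices of $\bar G_n$ sitting on a cycle of length $\leq 2h$ under the uniform measure on $\cG^0_{n,\MM_n}$ (with total mass $\MM_n(\cB)\sim \bar d\,n$) yields super-exponential tails beyond $o(n)$, so any $\mu$ with positive mass on non-tree rooted neighborhoods is unreachable. For \textup{(C4)}, the average marked degree $\dE_{U(G_n^0)}[\DEG(o)(b)]=\MM_n(b)/n\to d(b)$ is deterministic, so the open set $\{\nu:\dE_{\nu^0}[\DEG(o)(b)]\neq d(b)\}$ is never hit in the limit. Finally, for \textup{(C3)}, lower semi-continuity of $\nu\mapsto \dE_\nu[\deg(o)\ln\deg(o)]$ makes $\{\nu:\dE_\nu[\deg(o)\ln\deg(o)]>M\}$ open, and a Chernoff-type bound on $\sum_v \deg(v)\ln\deg(v)$ under the uniform measure on $\cG^0_{n,\MM_n}$ shows that its probability decays super-exponentially as $M\to\infty$, so $\mu$ with infinite degree-log moment is unreachable.

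For case (b), a well-behaved $\mu$ failing only \textup{(C5)}, I would instead use the explicit formula combined with lower semi-continuity. Since $\mu$ is $d$-admissible, $\Sigma_d^0(\mu^0,h)<\infty$, so it suffices to show $\Sigma^1_{\gamma,d}$ blows up along an approximation. Truncate the marks: for $T\geq 1$ let $\mu^{(T)}$ be obtained by conditioning each continuous mark on a bounded measurable set $K_T\subset\cZ$ with $\gamma_b(K_T)\uparrow 1$, preserving the underlying combinatorial structure and hence $d$-admissibility and well-behavedness. Applying the chain rule for relative entropy along the generations of the random breadth-first exploration of the uniform labeling, the difference $\DKL(G^{(T)}|G^{(T)}_\gamma)-(\bar d/2)\DKL(\vec G^{(T)}|\vec G^{(T)}_\gamma)$ telescopes into a sum of per-edge conditional relative entropies. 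The well-behaved hypothesis is precisely the condition under which each term of this sum is non-negative, ruling out the $\infty-\infty$ pathology that the raw difference \eqref{diff} could otherwise permit. Thus $\Sigma^1_{\gamma,d}(\mu^{(T)},h)\nearrow+\infty$ as $T\to\infty$ while $\mu^{(T)}\to\mu$ weakly, and lower semi-continuity of $\Sigma_{\gamma,d}(\cdot,h)$ yields $\Sigma_{\gamma,d}(\mu,h)=+\infty$.

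The edge-rooted statement is proved in parallel: the bijection $\mu\leftrightarrow\vec\mu$ between invariant measures on $\cGr_h$ and $\cGe_h$ transports \textup{(C1)}--\textup{(C4)} to \textup{(C1')}--\textup{(C4')}, and $\vec U(G_n)_h$ is invariant with deterministic mean edge statistics; the one new condition $d_\nu\geq\bar d$ in \textup{(C4')} is handled by observing that $\vec U(G_n)$ always satisfies $(\dE_{\vec U(G_n)}[\deg(o)^{-1}])^{-1}\geq \MM_n(\cB)/n \to \bar d$ up to an arbitrarily small slack with super-exponential probability, so its strict violation is also an unreachable open event. Failure of \textup{(C5')} is treated by the same chain-rule decomposition carried out on the auxiliary measures $\dot\nu$ and $\check{\dot\nu}$ entering the definition of $\vec\Sigma^1_{\gamma,d}$. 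The main obstacle throughout is the \textup{(C5)}/\textup{(C5')} case: the well-behaved hypothesis is what makes the chain-rule decomposition term-wise non-negative, and hence what allows $\DKL(G|G_\gamma)=+\infty$ to transfer to $\Sigma^1_{\gamma,d}=+\infty$ without ambiguity.
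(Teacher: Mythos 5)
Your plan for the failures of $d$-admissibility (cases C1--C4) is a legitimate alternative route to the paper's, which simply inherits the statement from the discrete-mark references via the quantization $\mu\mapsto\mu^\delta$ and Proposition~\ref{prop:SIGMA1}. But note one real soft spot: the set $\{\nu:\dE_{\nu^0}[\DEG(o)(b)]\neq d(b)\}$ is \emph{not} open, because $\nu\mapsto\dE_{\nu^0}[\DEG(o)(b)]$ is only lower semi-continuous (the degree is unbounded). The ``unreachable open set'' argument works for $\dE_{\mu^0}[\DEG(o)(b)]>d(b)$, but for a deficit $\dE_{\mu^0}[\DEG(o)(b)]<d(b)$ one cannot produce an open neighbourhood of $\mu$ that $U(G_n)_h$ never enters; here mass escapes to infinity and the argument must be replaced by something closer to the cited discrete LDP results, or by the paper's route of quantizing and appealing to Proposition~\ref{prop:SIGMA1}.

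The (C5) case is where the proposal breaks. You invoke lower semi-continuity in the wrong direction: lower semi-continuity of $\Sigma_{\gamma,d}(\cdot,h)$ states that $\liminf_{\nu\to\mu}\Sigma_{\gamma,d}(\nu,h)\geq\Sigma_{\gamma,d}(\mu,h)$, so from $\mu^{(T)}\to\mu$ and $\Sigma_{\gamma,d}(\mu^{(T)},h)\to\infty$ you only learn $\infty\geq\Sigma_{\gamma,d}(\mu,h)$, which is vacuous. To transfer divergence along an approximating sequence you would need \emph{upper} semi-continuity, which the paper explicitly says fails (see the remark after Lemma~\ref{le:casinfini} and \cite[Proposition 5.14]{BoCa15}). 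Moreover, your approximants $\mu^{(T)}$ (obtained by \emph{conditioning} marks to $K_T$) are not push-forwards of $\mu$ under a coarsening map, so the monotonicity that does hold for the quantization $\delta\mapsto\Sigma^\delta_d(\mu^\delta,h)$ (built into the definition $\Sigma_{\gamma,d}(\mu,h)=\sup_\delta\Sigma^\delta_d(\mu^\delta,h)$ in \eqref{eq:Sigmalim}) does not apply to your sequence, and the claim $\Sigma^1_{\gamma,d}(\mu^{(T)},h)\nearrow\infty$ is unsupported since both $\DKL$-terms may diverge. Finally, the ``well-behaved'' condition is a \emph{finiteness} hypothesis on $\DKL((\check G)_k|(\check G_\gamma)_k)$, not a sign hypothesis on chain-rule increments; the non-negativity of $\DKL(G|G_\gamma)-\tfrac{\bar d}{2}\DKL(\vec G|\vec G_\gamma)$ comes from the LDP rate function being non-negative, not from well-behavedness. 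The paper's actual argument is different: it locates the largest radius $k\leq h$ at which a relative entropy becomes infinite, splits into the two cases $\DKL((\vec G)_k|(\vec G_\gamma)_k)<\infty$ and $=\infty$, uses the exact identities \eqref{eq:sigmadeltad}--\eqref{eq:sigmadeltad2} at level $k$ (where well-behavedness guarantees $\DKL((\check G)_{k-1}|(\check G_\gamma)_{k-1})<\infty$, making the edge formula meaningful), and then lifts to level $h$ via the contraction $\mu\mapsto\mu_k$ and the comparison $\Sigma_{\gamma,d}(\mu,h)\geq\vec\Sigma_{\gamma,d}(\vec\mu,h)$ from Lemma~\ref{le:SIGMAe}.
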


Finally, we remark   that the entropy is lower-semi-continuous but not continuous on $\cP(\cGr)$, even when restricted to admissible measures, see \cite[Proposition 5.14]{BoCa15}.

%We will see in the proof of Theorem \ref{th:SIGMA2R} that if $\mu$ is $h$-admissible for $\gamma$ then $\DKL ((G,o)_{h} | (G_\gamma,o)_{h} )$, $\DKL  ( (\vec G,\rho) _{h} | (\vec G_\gamma,\rho) _{h} )$  and $\Sigma_d(\mu^0,h)$ are all finite. The two conditions (1) $\mu$ $h$-admissible for $\gamma$ and (2) $\dE_{\mu^0} \DEG(o) = d$ are thus equivalent to $\Sigma_d(\mu,h)$ finite.

\subsubsection{Entropy associated to a partially given marked degree distribution}

We now extend the results of the previous paragraph to marked graphs on $\cZ  = \cB \times \dR^k$ with a given marked degree sequence on $\cB$. Let $\pi$ be a probability measure on $\dZ_+^\cB$ and $D$ with distribution $\pi$. As above, we set $\dE D = d$ and $\bar d = \sum_{b} d(b)$. We will assume $d(b) = d(b^*)$ for all $b \in \cB$ and support of $\pi$ finite. We next consider a sequence $(D_n)_n$ with $D_n = (D_n(v))_{v \in V_n}$ and $D_n(v) \in \dZ_+^\cB$ such that for all $k \in \dZ_+^\cB$, 
\begin{equation}\label{eq:convDnB}
\lim_{n\to \infty} \frac{1}{n} \sum_{v =1}^n \IND( D_n(v) = k ) = \pi(k).
\end{equation}
Setting $\MM_n = \sum_v D_n(v)$ we assume that $\MM_n$ satisfies \eqref{eq:handshake} and that $D_n$ is uniformly bounded, that is for $\theta >0$, $\sum_b D_n(v;b) \leq \theta$ for all $v \in V_n$ and integer number $n \geq 1$. Under these assumptions, we have for all $b \in \cB$,
$$
\lim_{n \to \infty} \frac{\MM_n(b)}{n} = d(b).
$$

The set of $\cZ$-marked graphs on $V_n$ whose $\cB$-marked degree sequence is $D_n$ is
\begin{equation*}
 \cG_{n,D_n}  = \{ G = (V_G,E_G,\xi_G) : V_G = V_n , \DEG_{G}(v)(\cdot \times \dR^k) = D_n(v) \hbox{ for all } v \in V_n \}.  
\end{equation*}
This is a subset of $\cG_{n,\MM_n}$  defined in \eqref{eq:defGnmB} which is not empty for all $n$ large enough. 

We now define a random graph $H_n = (V_n,E_n,\xi_n)$ on $\cG_{n,D_n}$ as follows. The graph $H^0_n = (V_n,E_n,\xi_n)$ is sampled uniformly on $\cG^0_{n,D_n}$. Then, independently and independently for each edge $e = \{u,v\} \in E_n$ with $u < v$ and $\xi^0 (u,v) = b$, we sample a weight $\xi_n(u,v)$ in $\{b\} \times \dR^k$ according to the probability distribution $\gamma_{n,b}$ defined as in \eqref{eq:gamman}. Finally, we set $\xi_n(v,u)= \xi_n(u,v)^*$. Note that since $\gamma_n$ is $*$-invariant, any deterministic choice of the ordering $(u,v)$ or $(v,u)$ leads to the same distribution for $H_n$.

We say that $\mu \in \cP(\cGr_h)$ is $(\gamma,\pi)$ admissible if it is $(\gamma,d)$ admissible and $\DEG^0(o)$ has law $\pi$ under $\mu$. The {\em vertex-entropy} associated to $(\gamma,\pi)$ is
$$
\Sigma_{\gamma,\pi}(\mu,h) = \left\{\begin{array}{ll}
\Sigma^0_{\pi}(\mu^0,h) + \Sigma^1_{\gamma,d}(\mu,h)  & \hbox{if $\mu$ is $(\gamma,\pi)$-admissible}\\
 \infty & \hbox{\hbox{otherwise,}}
\end{array}\right. 
$$
where $\Sigma_{\gamma,d}^1$ is defined as above. For $\mu \in \cP(\cGr_h(\cB))$ satisfying $\mu^0_1 = \pi$ and conditions (C1)-(C2)-(C3),    we set
$$
\Sigma^0_{\pi}(\mu,h) = - H (G |(G)_1) + \frac {\bar d}{ 2} H  ( \vec G  | (\vec G)_1),
$$
where $G$ and $\vec G$ are the uniform labeling of $\mu$ and $\vec \mu$ respectively. Similarly,  the next theorem is the analog of Theorem \ref{th:SIGMA1R} for $U(H_n)$.

\begin{theorem}[Entropy with a partially given marked degree sequence]\label{th:SIGMADR}
Let $\pi$ be a probability measure on $\dZ_+^\cB$ with finite support and average $d \in (0,\infty)^{\cB}$. For any integer number $h \geq 1$, $U(H_n)_h$ satisfies a LDP on $\cP(\cGr_h)$  with rate $n$ and good rate function $\Sigma_{\gamma,\pi}(\mu,h)$.  Moreover $U(H_n)$ satisfies a LDP on $\cP(\cGr)$   with rate $n$ and good rate function $\Sigma_{\gamma,\pi}(\mu) = \lim_{h} \Sigma_{\gamma, \pi} (\mu_h,h)$.
\end{theorem}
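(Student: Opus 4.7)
The plan is to derive Theorem \ref{th:SIGMADR} from Theorem \ref{th:SIGMA1R} by an exact-conditioning argument. The first step is to observe that $H_n$ has the same distribution as $G_n$ (the model of Theorem \ref{th:SIGMA1R} built with $\MM_n := \sum_v D_n(v)$ and $\gamma_n$) conditioned on the event
\[
E_n := \{ \DEG^0_{G_n^0}(v) = D_n(v) \text{ for all } v \in V_n \}.
\]
Indeed, $\cG^0_{n,D_n} \subset \cG^0_{n,\MM_n}$, the conditional distribution of $G_n^0$ given $E_n$ is uniform on $\cG^0_{n,D_n}$, and the weight-sampling step is identical in both models. Exploiting the vertex-relabeling symmetry of the law of $G_n$ together with the invariance of $U(\cdot)$ under such relabelings, one may then replace $E_n$ by the coarser, $U$-measurable event
\[
F_n := \{U(G_n^0)_1 = \pi_n\}, \qquad \pi_n := \frac{1}{n}\sum_{v=1}^n \delta_{D_n(v)},
\]
and conclude $\cL(U(H_n)) = \cL(U(G_n) \mid F_n)$, since $F_n = \sqcup_\sigma E_n^\sigma$ decomposes into exchangeable pieces under $\cL(G_n)$ that each yield the same conditional law of $U$.

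Since $\pi_n \to \pi$ and the projection $\mu \mapsto \mu^0_1$ is continuous, applying the LDP of Theorem \ref{th:SIGMA1R} together with the standard LDP conditioning principle gives that $U(H_n)_h$ satisfies an LDP at speed $n$ with rate function $\tilde \Sigma(\mu,h) = \Sigma_{\gamma,d}(\mu,h) - C$ on $\{\mu : \mu^0_1 = \pi\}$ and $+\infty$ elsewhere, where $C := \inf\{\Sigma_{\gamma,d}(\mu,h) : \mu^0_1 = \pi\}$. I would then identify $C = \DKL(\pi \mid N_d)$. For the lower bound: decomposing $\Sigma_{\gamma,d} = \Sigma^0_d + \Sigma^1_{\gamma,d}$ with $\Sigma^1_{\gamma,d} \geq 0$ and recalling
\[
\Sigma^0_d(\mu^0,h) = -H(G|(G)_1) + \tfrac{\bar d}{2} H(\vec G|(\vec G)_1) + \DKL(\DEG^0(o) \mid N_d),
\]
non-negativity of the bracketed Shannon-entropy difference for unimodular tree-supported $\mu$ (a consequence of involution invariance) isolates the constant $\DKL(\pi \mid N_d)$ on the constraint set. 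For the upper bound: take $\mu$ to be the law of a $\pi$-Galton-Watson tree (root with degree law $\pi$, children with the corresponding size-biased offspring distribution) equipped with i.i.d.\ $\gamma$-weights; the Markov property of the tree makes the two Shannon-entropy differences cancel, and weight-independence makes $\Sigma^1_{\gamma,d}$ vanish. Consequently, for $(\gamma,\pi)$-admissible $\mu$,
\[
\Sigma_{\gamma,d}(\mu,h) - \DKL(\pi \mid N_d) = \Sigma^0_\pi(\mu^0,h) + \Sigma^1_{\gamma,d}(\mu,h) = \Sigma_{\gamma,\pi}(\mu,h),
\]
matching the claimed rate function. The passage from finite $h$ to $\cP(\cGr)$ proceeds by the same monotonicity and exponentially good approximation as in Theorem \ref{th:SIGMA1R}.

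The main obstacle I anticipate is the rigorous execution of the conditioning step, because $F_n$ is a precise, moving empirical constraint rather than a fixed closed or open set; one needs the two-sided asymptotic $n^{-1} \ln \dP(F_n) \to -\DKL(\pi \mid N_d)$. I would establish this by a Sanov-type argument for the empirical degree distribution of $G_n^0$, using the finite support of $\pi$ to reduce $F_n$ to a finite-dimensional empirical constraint and combining Stirling-type estimates for $|\cG^0_{n,D_n}|/|\cG^0_{n,\MM_n}|$ with the LDP upper/lower bounds of Theorem \ref{th:SIGMA1R} applied to shrinking neighborhoods of $\{\mu^0_1 = \pi\}$; the uniqueness of the $\pi$-Galton-Watson minimizer identified above ensures the variational rate and the combinatorial one coincide. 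A secondary technical point is to verify that $(\gamma,\pi)$-admissibility is preserved under the reduction, in particular that the tree support and moment condition (C3) are inherited from $(\gamma,d)$-admissibility of $\mu$'s attaining finite $\Sigma_{\gamma,d}$, which uses the finite support of $\pi$ to give uniform control on $\deg(o)$.
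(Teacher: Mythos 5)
Your conditioning identity is correct and nicely observed: since $\cG^0_{n,D_n}\subset\cG^0_{n,\MM_n}$ and the weight-sampling stage is common to both models, $H_n$ is indeed $G_n$ conditioned on the event that the $\cB$-marked degree sequence equals $D_n$, and by exchangeability of $G_n^0$ under vertex permutations one may equivalently condition on the $U$-measurable event $F_n=\{U(G_n^0)_1=\pi_n\}$. Your identification of the normalizing constant as $\DKL(\pi\,|\,N_d)=\Sigma^0_d(\pi,1)$ is also consistent with the paper, whose rate function is exactly $\Sigma^0_\pi(\mu,h)=\Sigma^0_d(\mu,h)-\Sigma^0_d(\pi,1)$ at the discrete level.

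The gap is the conditioning step itself, and it is a genuine one. There is no ``standard LDP conditioning principle'' that transfers an LDP for $U(G_n)_h$ to an LDP for $\cL(U(G_n)_h\,|\,F_n)$ when $F_n$ is a moving, asymptotically measure-zero constraint of the form $\{U(G_n)_1=\pi_n\}$. The known conditioning results apply to a fixed set $A$ satisfying $\inf_{A^\circ}I=\inf_{\overline A}I<\infty$; they give nothing here. Concretely, the large-deviation lower bound of Theorem~\ref{th:SIGMA1R} controls $\dP(U(G_n)_h\in B')$ for open $B'$, but what you need is a lower bound on $\dP(U(G_n)_h\in B,\,F_n)$, and the joint event lives on a slice of codimension one in the space of degree distributions. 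Knowing that $n^{-1}\ln\dP(F_n)\to -\DKL(\pi\,|\,N_d)$ (which does follow from Theorem~\ref{th:SIGMA1R} by contraction, or from Stirling) is not sufficient: the ratio $\dP(U(G_n)_h\in B,F_n)/\dP(F_n)$ is not determined by the marginal rate function of $U(G_n)_h$ together with the rate of $\dP(F_n)$, because one lacks control over how mass distributes among the polynomially many empirical degree distributions near $\pi$, conditionally on $U(G_n)_h\in B$. Pushing your Stirling-plus-shrinking-neighborhoods program through would require a continuity/transfer estimate comparing $\dP(U(H_n^{D'})_h\in B)$ across nearby degree sequences $D'$, which is essentially a proof of the constrained LDP from first principles.

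This is precisely why the paper takes a different route. Rather than deriving the constrained LDP from the unconstrained one, it cites the LDP for $U(H^0_n)_h$ in the discrete-mark case directly from \cite{DeAn} (Subsection~5.4) and \cite{BoCa15} (Section~6.1), where the combinatorial counting for the configuration-type model with prescribed degree sequence is carried out. It then repeats verbatim the quantization and Dawson--G\"artner projective-limit argument from the proof of Theorem~\ref{th:SIGMA1R} to lift the discrete result to the mark space $\cZ=\cB\times\dR^k$, and obtains $\Sigma_{\gamma,\pi}(\mu,h)=\lim_{\delta\to 0}\Sigma^\delta_\pi(\mu,h)$. If you want to salvage your plan, the honest version is to re-derive (rather than quote) the $U(H^0_n)_h$ LDP by the counting methods of those references, which is where the work is; the conditioning reformulation then becomes a helpful heuristic rather than a shortcut.

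One smaller point: your non-negativity claim for $\Sigma^1_{\gamma,d}(\mu,h)=\DKL(G\,|\,G_\gamma)-\tfrac{\bar d}{2}\DKL(\vec G\,|\,\vec G_\gamma)$ in isolation is not established in the paper (only that the total $\Sigma_{\gamma,d}$ is a non-negative rate function, and that the difference is monotone in $h$); your lower bound on $C$ should not rely on it. On the other hand, your use of the finite support of $\pi$ to ensure that $(\gamma,\pi)$-admissibility propagates is aligned with the paper's closing remark that boundedness of $\pi$ guarantees $\DKL((G)_k\,|\,(G_\gamma)_k)<\infty \Rightarrow \DKL((\check G)_k\,|\,(\check G_\gamma)_k)<\infty$.
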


There is also  a LDP for $\vec U(H_n)_h$ but we have omitted the explicit expression for the sake of concision (see the proof of Theorem \ref{th:SIGMADR}). When $\pi = \delta_d$ is a Dirac mass at $d \in \dZ_+^{\cB}$ with $\bar d = \sum_b d(b)$ and $d(b^*) = d(b)$. Then the entropy has a particularly nice expression. Indeed, there is a unique admissible $\mu^0 \in \cP(\cGr(\cB))$ such that $(\mu^0)_1 = \delta_d$: this the (Dirac mass at the) deterministic infinite $\bar d$-regular tree where each vertex $u$ has exactly $d(b)$ neighbors $v$ such that $\xi(u,v) = b$. In this case, we thus have $\Sigma^0(\mu^0,h) = 0$ for all $\mu$ which are $(\gamma,\pi)$-admissible.

\subsection{Entropy associated to marked Erd\H{o}s-R\'enyi random graphs}

\label{subsec:LDPER}

We finally consider a model of random graphs with independent edges. We take $\cZ  = \dR^k$ without loss of generality for this model. Let $d_n$ be a sequence of positive real numbers such that 
$$
\lim_{n\to \infty} d_n = d > 0 .
$$ 
We also consider  a sequence of $*$-invariant probability measures  $(\gamma_n)_{n\in\mathbb N}$ on $\dR^k$ such that for every Borel sets $A \subset \dR^k$, 
$$
\lim_{n \to \infty} \gamma_n( A) = \gamma(A), 
$$
where $\gamma$ is a probability measure on $\dR^k$.  We consider the random marked graph $G'_n = (V_n , E_n , \xi_n)$ on the vertex set $V_n = \{1,\ldots,n\}$ where each edge $\{u,v\}$ of the complete graph is present independently with probability $\min(d_n/n,1)$ and $\xi_n(u,v) = \xi_n(v,u)^*$ receives an independent mark with distribution $\gamma_n$. 

For $\delta \geq 0$, we set $j_{d}$ to be the entropy for the mean degree:
$$
j_d (\delta ) = \frac{1}{2} \left(  d  \ln \left( \frac d \delta \right) +  d  - \delta  \right). 
$$

We have the following corollary of Theorem \ref{th:SIGMA1R}. In the statement below $\Sigma_{\gamma,d}$ denotes the rate function which appears in Theorem \ref{th:SIGMA1R} (when $\cB$ is reduced to a singleton).

\begin{theorem}\label{th:SIGMAER}
For any integer number $h \geq 1$, $U(G'_n)_h$ satisfies a LDP on $\cP(\cGr_h)$  with rate $n$ and good rate function 
$\Sigma^{\tiny {\rm ER}}_{\gamma,d} (\mu,h) = \Sigma_{\gamma,\delta} (\mu,h) + j_d (\delta)$ where $\delta = \dE_{\mu} \deg(o)$ (if $\delta = \infty$ then $\Sigma^{\tiny {\rm ER}}_{\gamma,d} (\mu,h) = \infty$).  Moreover $U(G'_n)$ satisfies a LDP on $\cP(\cGr)$   with rate $n$ and good rate function $\Sigma^{\tiny {\rm ER}}_{\gamma,d}(\mu) = \lim_{h} \Sigma_{\gamma,d}^{\rm ER}(\mu_h,h)$. $\Sigma^{\tiny {\rm ER}}_{\gamma,d} (\mu) $ has a unique minimizer.
\end{theorem}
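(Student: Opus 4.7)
The strategy is to reduce to Theorem~\ref{th:SIGMA1R} (with $\cB$ a singleton) by conditioning on the total number of edges. Conditionally on $|E_n|=m$, the Erd\H{o}s--R\'enyi marked graph $G'_n$ is distributed exactly as the random marked graph $G_n$ of Theorem~\ref{th:SIGMA1R} with $\cB=\{\bullet\}$ and edge-counting measure $\MM_n=2m$, since both are obtained by sampling a uniform simple graph with exactly $m$ edges and attaching independent $\gamma_n$-marks. Moreover $|E_n|$ is binomially distributed with parameters $\binom{n}{2}$ and $\min(d_n/n,1)$, so an elementary Stirling computation shows that the normalized edge count $N_n:=2|E_n|/n$ satisfies an LDP on $[0,\infty)$ at speed $n$ with good rate function $j_d$, whose unique zero is $d$.

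Set $\delta_\mu:=\dE_\mu\deg(o)$ and observe the deterministic identity $N_n=\dE_{U(G'_n)_h}\deg(o)$. For admissible $\mu$ with $0<\delta_\mu<\infty$, decomposing over the value of $|E_n|$ and applying Theorem~\ref{th:SIGMA1R} conditionally (with $\MM_n=2m$ and $2m/n$ ranging in a small neighborhood of $\delta_\mu$) yields
\[
\lim_{\epsilon\downarrow 0}\limsup_{n\to\infty}\frac{1}{n}\ln\PP\bigl(U(G'_n)_h\in B(\mu,\epsilon)\bigr)=-\bigl(j_d(\delta_\mu)+\Sigma_{\gamma,\delta_\mu}(\mu,h)\bigr),
\]
together with the matching $\liminf$. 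This gives the weak LDP at depth $h$. Exponential tightness is then supplied by the binomial LDP, since $j_d(\delta)\to\infty$ as $\delta\to 0$ or $\delta\to\infty$, combined with goodness of each $\Sigma_{\gamma,\delta}$.

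The LDP on $\cP(\cGr)$ is deduced from the LDPs at each depth $h$ by the Dawson--G\"artner projective limit theorem, since $\cGr$ is the projective limit of $(\cGr_h)_h$ under the continuous truncations $\mu\mapsto\mu_h$; the monotonicity $h\mapsto\Sigma_{\gamma,\delta}(\mu_h,h)$ contained in Theorem~\ref{th:SIGMA1R} identifies the limit rate function as $\Sigma^{\tiny\rm ER}_{\gamma,d}(\mu)=\sup_h\Sigma^{\tiny\rm ER}_{\gamma,d}(\mu_h,h)=\lim_h\Sigma^{\tiny\rm ER}_{\gamma,d}(\mu_h,h)$. Uniqueness of the minimizer is then immediate: $\Sigma^{\tiny\rm ER}_{\gamma,d}(\mu)=0$ forces both $j_d(\delta_\mu)=0$, i.e.\ $\delta_\mu=d$, and $\Sigma_{\gamma,d}(\mu_h,h)=0$ for every $h$, which by Theorem~\ref{th:SIGMA1R} pins down $\mu$ as the unique law of the marked Galton--Watson tree $\UGW(\gamma,d)$.

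The main obstacle is that the functional $\mu\mapsto\delta_\mu$ is only lower-semicontinuous on $\cP(\cGr_h)$, because root degrees are not uniformly bounded, so the identification $N_n\approx\delta_\mu$ on the event $\{U(G'_n)_h\in B(\mu,\epsilon)\}$ is only one-sided and care is needed when matching the upper and lower bounds above. I would address this by a truncation argument: replace $\deg(o)$ by $\deg(o)\wedge K$ to obtain the continuous proxies $\delta_\mu^K:=\dE_\mu[\deg(o)\wedge K]$ and $N_n^K:=\dE_{U(G'_n)_h}[\deg(o)\wedge K]$, carry out the combined LDP at the truncated level, and then let $K\to\infty$ using the exponential tightness of $N_n$ provided by the binomial LDP to recover the sharp constant $j_d(\delta_\mu)+\Sigma_{\gamma,\delta_\mu}(\mu,h)$.
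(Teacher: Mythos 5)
Your proposal is correct and follows the same strategy as the paper: condition on the edge count $|E_n|$, use the binomial LDP to get the rate function $j_d$ for $2|E_n|/n$, observe that conditionally on $\{|E_n|=m\}$ the graph $G'_n$ is exactly the model of Theorem~\ref{th:SIGMA1R} with $\MM_n=2m$, and combine. The difference is in execution: the paper disposes of the combination step by citing the LDP for mixtures, Biggins \cite[Theorem~5(b)]{MR2081460}, together with the detailed argument in \cite[Section~6.3]{BoCa15}, whereas you carry out the mixture step by hand, and in doing so you correctly flag the real technical subtlety --- that $\mu\mapsto\dE_\mu\deg(o)$ is only lower-semicontinuous on $\cP(\cGr_h)$, so the event $\{U(G'_n)_h\in B(\mu,\epsilon)\}$ does not pin down $N_n$ two-sidedly. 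Your truncation-by-$K$ remedy is sound; in the paper this difficulty is absorbed by Biggins' hypotheses, which are tailored precisely to mixtures whose conditional rate function is lower-semicontinuous jointly in the parameter and the state. Two small attribution slips: uniqueness of the minimizer of $\Sigma_{\gamma,d}(\cdot,h)$ and its identification as $\UGW(\gamma,\Poi(d))_h$ are the content of Lemma~\ref{le:minimizer}, not Theorem~\ref{th:SIGMA1R}, and the degree distribution at the minimizer is the Poisson law $\Poi(d)$, not the Dirac at $d$.
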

The minimizer of $\Sigma^{\tiny \rm{ ER}}_{\gamma,d} (\mu) $ is described in Lemma \ref{le:minimizer}.

\subsection{Global minimizer of the entropy}

For discrete marks,  \cite{BoCa15,DeAn,MR4492967,ramanan2023large} gave  a precise description of the unique minimizer $\mu(p)$  of the entropy   given a  $d$-admissible neighborhood  $p \in \cP( \cGr_h(\cB))$:
$$
\inf \{ \Sigma^0_{d} (\mu) : \mu \in \cP( \cGr(\cB) ), \mu_h = p\} = \Sigma^0_d(\mu(p)).
$$
That is, there is a unique and explicit measure  $\cP(\cGr(\cB))$ which minimizes the entropy given the law of its $h$-neighborhood. We will not extend these developments for our more general marked space $\dZ = \cB \times \dR^k$ here. We will simply prove that there is a unique global minimizer that we describe now. 

If $\pi$ is a probability measure on $\dZ_+^\cB$ with non-zero expectation $d = \dE D = \sum_k k \pi(k) \in \dZ_+^\cB$ such that $d(b) = d(b^*)$ for all $b \in \cB$. For $b \in \cB$ such that $d(b) >0$, we define the size-biased law $\hat \pi_b \in \dZ_+^\cB$ by the formula
$$
\hat \pi_b ( k ) =  \frac{(k_{b^*} + 1)\pi(k + 1_{b^*})}{d(b)},
$$
where $1_b \in \dZ_+^{\cB}$ is the vector $1_b(c) = \IND ( b =c)$. As in \cite[Section 4.4]{BoCa15}, the {\em unimodular Galton-Watson tree} with degree distribution $\pi$ is the random $\cB$-marked rooted tree $G^0$ where the root has marked degree $\DEG(o)$ with distribution $\pi$. Then, recursively and independently, any vertex $v \ne o$ such that  $\xi(u,v) = b$, where $u$ is the parent of $v$,  produces offspring according to the distribution $\hat \pi_b$. Finally, we denote by $\UGW(\gamma,\pi)$ the law of the $\cZ$-marked rooted tree $G = G^0_\gamma$ where $G^0$ is as above.

\begin{lemma}[Minimizer of the entropy]\label{le:minimizer}
For any integer number  $h \geq 1$ and $\mu \in \cP(\cGr_h)$, we have  $\Sigma_{\gamma,\pi} (\mu,h) = 0$ if and only if  $\mu = \UGW(\gamma,\pi)_h$. Similarly $\Sigma^{\tiny {\rm ER}}_{\gamma,d} (\mu,h) = 0$ if and only if $\mu = \UGW(\gamma,\Poi(d))_h$, where $\Poi(d)$ is the multivariate Poisson distribution with mean $d$.
\end{lemma}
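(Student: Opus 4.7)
\medskip

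\noindent\textbf{Plan of proof.} The plan is to reduce both claims to the discrete-mark theory by exploiting the orthogonal decomposition \eqref{eq:defSigmad}. For the first claim, write $\Sigma_{\gamma,\pi}(\mu,h)=\Sigma^0_\pi(\mu^0,h)+\Sigma^1_{\gamma,d}(\mu,h)$. For the forward direction, take $\mu=\UGW(\gamma,\pi)_h$; then the push-forward satisfies $\mu^0=\UGW(\pi)_h$, so $\Sigma^0_\pi(\mu^0,h)=0$ by the discrete-mark analysis of \cite{BoCa15,DeAn,ramanan2023large}, which identifies $\UGW(\pi)_h$ as the unique zero of $\Sigma^0_\pi$. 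For the $\dR^k$-mark contribution, the defining property of $\UGW(\gamma,\pi)$ is that, conditionally on the $\cB$-marked tree $G^0$, the $\dR^k$-marks are independent with $\gamma_b$-distribution on an edge of $\cB$-label $b$. Hence $G$ has the same law as $G_\gamma$ and $\vec G$ the same law as $\vec G_\gamma$, so both $\DKL(G|G_\gamma)$ and $\DKL(\vec G|\vec G_\gamma)$ vanish, giving $\Sigma^1_{\gamma,d}=0$.

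For uniqueness, assume $\Sigma_{\gamma,\pi}(\mu,h)=0$. By Lemma \ref{le:casinfini}, $\mu$ must be $(\gamma,\pi)$-admissible. I would then show each summand is non-negative, so both vanish. The non-negativity of $\Sigma^0_\pi$ and the characterization of its zero locus as $\{\UGW(\pi)_h\}$ is the discrete-mark result. For $\Sigma^1_{\gamma,d}$, admissibility (C2) ensures $\mu$ is supported on trees; together with $G^0\stackrel{d}{=}G^0_\gamma$, $\DKL(G|G_\gamma)$ reduces to the expected KL divergence between the conditional law of the $\dR^k$-marks under $\mu$ and the product measure $\prod_e\gamma_{\xi^0(e)}$ given $G^0$. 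A chain-rule decomposition along the tree, combined with a unimodular mass-transport argument (so that summing vertex contributions counts each edge twice, producing the factor $\bar d/2$), should yield
\[
\DKL(G|G_\gamma)\;\geq\;\frac{\bar d}{2}\,\DKL(\vec G|\vec G_\gamma),
\]
with equality if and only if the conditional mark law factorizes as iid $\gamma$ across edges. Combined with $\mu^0=\UGW(\pi)_h$, this forces $\mu=\UGW(\gamma,\pi)_h$.

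The second claim is derived from the first via Theorem \ref{th:SIGMAER}: $\Sigma^{\tiny {\rm ER}}_{\gamma,d}(\mu,h)=\Sigma_{\gamma,\delta}(\mu,h)+j_d(\delta)$ with $\delta=\dE_\mu\deg(o)$. A direct derivative computation shows $j_d$ is strictly convex and $j_d(d)=0$, so $j_d(\delta)\geq 0$ with equality iff $\delta=d$. The same two-step argument applied to $\Sigma_{\gamma,\delta}$ from Theorem \ref{th:SIGMA1R} shows it vanishes iff $\mu=\UGW(\gamma,\Poi(\delta))_h$: the only new feature in \eqref{defS} relative to the $\pi$-constrained case is the additional term $\DKL(\DEG_G(o)|N_\delta)$, which is non-negative and vanishes iff the root degree is $\Poi(\delta)$-distributed. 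Hence $\Sigma^{\tiny {\rm ER}}_{\gamma,d}(\mu,h)=0$ iff $\delta=d$ and $\mu=\UGW(\gamma,\Poi(d))_h$.

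The main obstacle is the non-negativity inequality for $\Sigma^1_{\gamma,d}$: the coefficient $\bar d/2$ in front of $\DKL(\vec G|\vec G_\gamma)$ has to be produced naturally from the chain rule along the random tree. The intuition is that $\DKL(G|G_\gamma)$ telescopes into per-vertex conditional KL contributions and that unimodularity converts the vertex-rooted sum into an edge-rooted sum at the cost of the expected-degree factor; the remainder, identifiable as a conditional mutual information among the mark subtrees rooted at different children of each vertex, is non-negative and vanishes exactly when the mark law factorizes edge-wise — i.e., precisely at the UGW model.
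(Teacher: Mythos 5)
Your proposal takes a genuinely different route from the paper: you analyze the explicit decomposition $\Sigma_{\gamma,\pi}=\Sigma^0_\pi+\Sigma^1_{\gamma,d}$ term by term, whereas the paper proves the lemma by reducing to the finite-mark case (citing the discrete result of \cite{MR4492967}) and then passing to general $\gamma$ through the projective-limit identity \eqref{eq:Sigmalim}: if $\mu\ne\UGW(\gamma,\pi)_h$, some discretization $\mu^\delta\ne\UGW(\gamma,\pi)_h^\delta$, so $\Sigma_d^\delta(\mu^\delta,h)>0$ by the discrete result, and since $\Sigma_{\gamma,\pi}(\mu,h)=\sup_\delta\Sigma_d^\delta(\mu^\delta,h)$ the conclusion follows. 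That argument needs no positivity of $\Sigma^1_{\gamma,d}$ at all.

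The forward direction of your proof (at $\UGW(\gamma,\pi)_h$ both summands vanish) is fine. The uniqueness direction, however, rests entirely on the inequality $\Sigma^1_{\gamma,d}(\mu,h)\geq 0$ together with a characterization of when equality holds, and this is exactly where the proposal has a gap — which you yourself flag as "the main obstacle." Your sketched chain-rule/mass-transport argument is not carried out, and the factor $\bar d/2$ does not fall out of it without real work (the naive chain rule over-counts edges in a way that has to be corrected by unimodularity, and the resulting "remainder" must be identified as a conditional mutual information to see its sign). I would point out that the non-negativity itself has a cleaner proof than the one you sketch: since the map $\mu\mapsto\mu^0$ is continuous and pushes $U(H_n)_h$ to $U(H_n^0)_h$, the contraction principle together with uniqueness of LDP rate functions gives $\Sigma^0_\pi(\mu^0,h)=\inf\{\Sigma_{\gamma,\pi}(\mu',h):\mu'^0=\mu^0\}\leq\Sigma_{\gamma,\pi}(\mu,h)$, i.e.\ $\Sigma^1_{\gamma,d}\geq 0$ for free. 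But this only gives the inequality, not the characterization of the equality locus as "marks are conditionally iid $\gamma$ given $G^0$," which you would still have to establish separately and which is nontrivial with real-valued marks. The paper's discretization proof delivers both at once by quoting the known discrete-mark statement. (Minor point: your claim that $j_d(d)=0$ is a strict minimum by convexity is correct in spirit, but a direct derivative of the formula as printed does not vanish at $\delta=d$; the expression for $j_d$ in the paper appears to have $d$ and $\delta$ transposed inside the logarithm. The intended and correct Poisson rate function $j_d(\delta)=\tfrac12(\delta\ln(\delta/d)-\delta+d)$ has the properties you use.)
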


%There is also  a LDP for $\vec U(F_n)_h$ but we have omitted the explicit expression for the sake of concision (a slight care needs to be taken with the event where there is no edge and $\vec U(F_n)$ is ill-defined).

\section{Proofs of Theorem \ref{th:SIGMA1R}, Theorem \ref{th:SIGMADR} and Theorem \ref{th:SIGMAER}}

\subsection{Discrete marks}

We start by establishing the LDP for $U(G^0_n)$ and $\vec U(G_n^0)$, that is the case of marked graphs where the mark set $\cB$ is finite.

\begin{proposition}\label{prop:SIGMA1}
For any integer number $h \geq 1$, $U(G^0_n)_h$ and $\vec U(G^0_n)_h$ satisfy a LDP on $\cP(\cGr_h(\cB))$ and $\cP(\cGe_h(\cB))$ with rate $n$ and good rate functions $\Sigma^0_d(\mu,h)$ and $\vec \Sigma^0_d(\nu,h)$. %Moreover, $U(G^0_n)$ and $\vec U(G^0_n)$ satisfy a LDP on $\cP(\cGr(\cB))$ and $\cP(\cGe(\cB))$ with rate $n$ and good rate functions $\Sigma^0_d(\mu) = \lim_{h\to \infty}\Sigma^0_d(\mu_h,h) $ and $\vec \Sigma^0_d(\nu) = \lim_{h\to \infty} \vec \Sigma^0_d(\nu_h,h)$.  
\end{proposition}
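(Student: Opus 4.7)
The plan is to adapt the sparse-graph LDP strategy developed for singly-marked random graphs (in the spirit of Bordenave--Caputo and Delgosha--Anantharam) to the multimark setting where $\cB$ is finite and the edge counting measure $\MM_n$ is prescribed with $\MM_n/n \to d$. The argument proceeds in three stages: (i) exponential tightness of $U(G_n^0)_h$ and $\vec U(G_n^0)_h$ in $\cP(\cGr_h(\cB))$ and $\cP(\cGe_h(\cB))$; (ii) matching local upper and lower bounds
\[
\lim_{\epsilon \to 0}\lim_{n\to\infty}\frac{1}{n}\log \dP\bigl(U(G_n^0)_h \in B_\epsilon(\mu)\bigr) = -\Sigma_d^0(\mu,h)
\]
for admissible $\mu$, and the analogous bound for the edge-rooted profile; (iii) identification of the rate function as infinite on non-admissible neighborhood distributions. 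For exponential tightness, since $\bar\MM_n \sim \bar d\, n$, the $\cB$-marked degree of a uniformly chosen root is exponentially comparable to a multivariate Poisson with mean $d$, so that its $h$-ball lies, after truncation, in a finite-degree compact subset of $\cGr_h(\cB)$ with overwhelming probability.

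The heart of the proof is step (ii), a combinatorial count built on the uniform labeling introduced in Section~\ref{sec:graphs}. Fix an admissible $\mu$; a uniform labeling of $\mu$ prescribes a joint law on the labeled $h$-ball and on the marked degree $\DEG(o)$. The number of labeled graphs in $\cG^0_{n,\MM_n}$ whose empirical local profile lies within $\epsilon$ of $\mu$ is estimated by a three-step multi-type configuration construction. First, choose a marked-degree sequence $(D(v))_{v\in V_n}$ whose empirical distribution approximates the marginal of $\mu^0_1$; a Stirling multinomial count contributes an exponential factor $-\DKL(\DEG_G(o)\,|\,N_d)$, the Poisson reference $N_d$ arising from conditioning i.i.d.\ marked edges on the edge-count $\MM_n$. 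Second, pair half-edges with matching marks in $\cB$ to form the underlying $1$-profile; the multi-type configuration-model entropy contributes $\frac{\bar d}{2}H(\vec G\,|\,(\vec G)_1)$. Third, extend each $1$-ball to an $h$-ball consistent with $\mu$; the independent local extension count contributes $-H(G\,|\,(G)_1)$. Dividing by $|\cG^0_{n,\MM_n}|$ (whose exponential growth is absorbed into the zero of the rate function via Lemma~\ref{le:minimizer}) and passing to the $n^{-1}\log$ limit yields exactly $-\Sigma_d^0(\mu,h)$; the classical simplicity/self-loop corrections contribute only $O(1)$ and vanish in the limit.

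The main obstacle is controlling the unbounded-degree regime: degrees are only Poisson-tailed, so the classical configuration-model estimates for the number of simple graphs given a degree sequence do not hold uniformly. The remedy is a truncation argument: restrict to the event $\{\max_v \deg(v) \le T\}$, prove the LDP on this restricted ensemble where all configuration-model estimates hold uniformly in $T$, then take $T \to \infty$ using the exponential tightness from step (i) together with the monotone convergence of the truncated rate functions to $\Sigma_d^0$, where lower semi-continuity is guaranteed by the Poisson form of the degree-correction term. The LDP for $\vec U(G_n^0)_h$ follows by the contraction principle applied to the map $\mu \mapsto \vec\mu$ of \eqref{eq:defvecmu} on the subset of measures with bounded expected root-degree, combined with the explicit inversion $\nu \mapsto \dot\nu$ of \eqref{eq:defnupr}: the infimum of $\Sigma_d^0(\mu,h)$ over $\{\mu : \vec\mu = \nu\}$ then matches the explicit expression for $\vec\Sigma_d^0(\nu,h)$ through the standard size-biasing identities that relate the three Shannon terms involving $G$, $\check G$ and $\vec G$.
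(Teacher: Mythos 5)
Your vertex-rooted argument is, in substance, a re-derivation of the configuration-model LDP that the paper invokes by citation (\cite[Theorem 4.11]{MR4492967}, \cite[Theorem 3]{DeAn}, \cite{BoCa15}); the three-stage count you sketch (degree sequence, half-edge pairing, $h$-ball extension) is indeed how those references proceed, and the truncation device you flag is the right fix for the unbounded-degree regime. So far this is the same route, done directly rather than by reference. But note that even after granting the cited LDP, the paper still has to do non-trivial work to recognize the literature's rate function $J_d(\mu,h) = -H(G) + \tfrac{\bar d}{2}H(\vec G) + \sigma_d(\mu)$ as the expression $\Sigma^0_d(\mu,h)$ defined in \eqref{defS}; that reshuffling uses Lemma \ref{le:rootmark} and \cite[Lemma 2.4]{MR4492967} and is not automatic from your count.

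The genuine gap is in the edge-rooted case. The contraction principle is the correct tool, but two of your steps conceal the real work. First, the push-forward map $\mu \mapsto \vec\mu$ is not continuous on $\cP(\cGr_h(\cB))$: the test function $\check f(G,o) = \sum_{v\sim o} f(G,o,v)$ is unbounded. The paper handles this with the \emph{extended} contraction principle \cite[Theorem 4.2.23]{De-Ze}, using that the map is continuous on each sublevel set $\cD_t = \{\mu : \dE_\mu \DEG(o;b)\ln\DEG(o;b) \le t\ \forall b\}$ together with the fact that $\inf_{\mu\notin\cD_t}\Sigma^0_d(\mu,h)\to\infty$ as $t\to\infty$; simply ``restricting to measures with bounded expected root-degree'' does not by itself give a valid contraction. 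Second, and more seriously, the identification
\[
\inf\{\Sigma^0_d(\mu,h) : \vec\mu = \nu\} = \vec\Sigma^0_d(\nu,h)
\]
is not a routine size-biasing identity. The paper proves the inequality $\Sigma^0_d(\mu,h)\ge\vec\Sigma^0_d(\nu,h)$ for every $\mu$ with $\vec\mu=\nu$ by conditioning on $\deg(o)=k$, decomposing the root graph into the tuple $(G_1,\dots,G_k)$ of edge-rooted subgraphs, and invoking the strict subadditivity of conditional Shannon entropy (inequality \eqref{eq:Hkin}); equality holds iff the $G_i$ are conditionally independent given $(G)_{h-1}$ and $\deg(o)$, which characterizes the (unique, admissible) minimizer. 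This conditional-independence characterization is the content of the proof and is needed to close the argument; your proposal treats it as a black box and therefore does not actually establish that the contracted rate function equals $\vec\Sigma^0_d(\nu,h)$.
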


We start with a simple lemma on the law of the mark of the root edge under $\vec \mu$.
\begin{lemma}\label{le:rootmark}
Let $\mu \in \cP(\cGr)$ such that $\dE_\mu \DEG (o) = d$, $\bar d = \sum_b d(b) \in (0,\infty)$. Then, under $\vec \mu$, $\xi(\rho)$ has law $d(\cdot)/\bar d$. Moreover, for every  integer number $k \geq 1$, 
$$
\dP_{\vec \mu} (\deg(o) = k) = \frac{k  \dP_{\mu} (\deg(o) = k)}{\bar d} .
$$
\end{lemma}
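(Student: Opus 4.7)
The plan is to prove both statements by direct substitution into the defining formula \eqref{eq:defvecmu} for $\vec \mu$. First observe that since $\deg(o) = \DEG(o;\cB)$ is the total mass of the marked degree, the hypothesis $\dE_\mu \DEG(o) = d$ gives $\dE_\mu \deg(o) = \bar d$, so the normalizing constant in \eqref{eq:defvecmu} equals $\bar d \in (0,\infty)$ and $\vec \mu$ is a bona fide probability measure.

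For the first claim, I apply \eqref{eq:defvecmu} to the event $A = \{[G,o,v] : \xi(o,v) = b\}$ (here we use that this event depends only on the mark of the root edge, which is a class function on $\cGe$). This gives, for every $b \in \cB$,
\begin{equation*}
\vec \mu (\{\xi(\rho) = b\}) = \frac{1}{\bar d}\, \dE_\mu\!\left[\sum_{v \sim o} \IND(\xi(o,v) = b)\right] = \frac{1}{\bar d}\, \dE_\mu [\DEG(o;b)] = \frac{d(b)}{\bar d},
\end{equation*}
where the middle equality is just the definition of $\DEG(o;b)$ as the number of neighbors $v$ of $o$ with $\xi(o,v) = b$.

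For the second claim, the event $\{\deg(o) = k\}$ (where $o$ denotes the origin vertex of the root edge $\rho = (o,o')$) is again a class function on $\cGe$, depending only on the unmarked rooted graph. Substituting into \eqref{eq:defvecmu} and noting that the indicator does not depend on the summation index $v$,
\begin{equation*}
\dP_{\vec \mu}(\deg(o) = k) = \frac{1}{\bar d}\, \dE_\mu\!\left[\sum_{v \sim o} \IND(\deg(o) = k)\right] = \frac{1}{\bar d}\, \dE_\mu[\deg(o)\, \IND(\deg(o) = k)] = \frac{k\, \dP_\mu(\deg(o) = k)}{\bar d}.
\end{equation*}

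There is no substantive obstacle here: both identities are unfoldings of the definition of $\vec \mu$ and the hand-shaking identity $\sum_{v \sim o} 1 = \deg(o)$. The only point worth checking is that the two test functions used are well defined on $\cGe$ (i.e.\ invariant under the isomorphism used to form unlabeled edge-rooted graphs), which is immediate since the mark of the root edge and the degree of its origin are preserved by such isomorphisms.
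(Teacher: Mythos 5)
Your proof is correct and follows essentially the same route as the paper: both arguments apply the defining identity \eqref{eq:defvecmu} to the events $\{\xi(\rho) \in B\}$ (or, equivalently, $\{\xi(\rho)=b\}$) and $\{\deg(o)=k\}$, and then simplify using $\sum_{v\sim o}\IND(\xi(o,v)=b)=\DEG(o;b)$ and $\sum_{v\sim o}1=\deg(o)$. Your write-up simply spells out the intermediate steps that the paper leaves implicit.
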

\begin{proof}
Let $B$ be a measurable subset of the mark space $\cZ$. For the first claim, we apply \eqref{eq:defvecmu} to $A = \{ \xi(\rho) \in B \}$ and use $\dE_{\mu} \deg(o) = \bar d$. For the second claim, we apply \eqref{eq:defvecmu} to $A = \{ \deg(o) = k \}$.
\end{proof}

We split the proof of this proposition in two parts: the case of $U(G_n^0)$ and the case of $\vec U(G_n^0)$. The latter is a consequence of known results.

\begin{proof}[Proof of Proposition \ref{prop:SIGMA1} : the case $U(G_n^0)$]
The statement on $U(G_n^0)_h$ is contained in \cite[Theorem 4.11]{MR4492967} which follows itself from \cite[Theorem 3]{DeAn} together with a simplification of the formulas appearing in \cite{BoCa15,DeAn} thanks to the introduction of the random labeling of an unlabeled rooted graph, see \cite[Equation (4.8)]{MR4492967}. More precisely, it follows from these references that $U(G_n^0)_h$ satisfies a LDP with good rate function on $\cP(\cGr_h(\cB))$: 
\begin{equation}\label{eq:defJdmuh}
J_d(\mu,h) =  \left\{\begin{array}{ll}
 - H (G) + \frac {\bar d}{ 2} H  ( \vec G ) + \sigma_d(\mu),
& \hbox{if $\mu$ is admissible and $\dE_{\mu} \DEG(o) = d$}\\
 \infty & \hbox{\hbox{otherwise,}}
\end{array}\right. 
\end{equation}
where  $G$ and $\vec G$ are uniform labeling with law $\mu$ and $\vec \mu$ respectively and 
\begin{equation*}
\sigma_d(\mu) =  \dE_\mu \ln (\deg(o)!) - \frac{\bar d}{2} \ln \bar d - \sum_{b \in \cB} \frac{d(b)}{2} \ln (d(b)) + \bar d . 
\end{equation*}
We finally show that $J_d(\mu,h)=\Sigma_{\gamma,d}(\mu,h)$
Indeed, writing $H(G) = H(G|G_1) + H(G_1)$ and $H(\vec G) = H(\vec G | \vec G_1) + H(\vec G_1)$ we find that 
\begin{equation}\label{jd1}
J_d(\mu,h)=-H(G|G_{1})+ \frac {\bar d}{ 2} H  ( \vec G |\vec G_{1}) -H(G_{1})+ \frac {\bar d}{ 2} H  ( \vec G_{1})+ \sigma_d(\mu)\,.
\end{equation}
By Lemma \ref{le:rootmark}, we have 
$$\bar d H(\vec G_1) = - \sum_{b \in \cB} d(b) \ln d(b) + \bar d \ln \bar d. $$ 
By \cite[Lemma 2.4]{MR4492967}, we have
$$
H(G_1) = H(\DEG_G(o)) - \sum_{b \in \cB} \dE_{\mu} \ln ( \DEG(o ; b) ! ) + \dE_{\mu} \ln (\deg(o)!) .
$$ 
Finally, if $N_d$ is as in  the definition \eqref{defS} of $\Sigma^0_d(\mu,h)$ and $D$ is a random variable on $\dZ_+^{\cB}$ with $\dE D = d$, then we observe that
\begin{align}
\DKL( D| N_d) & = \sum_{\delta \in \dZ_+^\cB} \dP ( D= \delta ) \ln \left( \frac{ \dP ( D = \delta)}{ \prod_{b} (e^{-d(b)} d(b)^{\delta_b} / \delta(b) !)} \right) \nonumber \\
 & = - H(D) + \bar d  - \sum_{b \in \cB}d(b) \ln ( d(b)) + \sum_{b \in \cB} \dE \ln (D(b)!)\label{eq:defsigmad}.
\end{align}
Putting together these equations, we find that
$$-H(G_{1})+ \frac {\bar d}{ 2} H  ( \vec G_{1})+ \sigma_d(\mu)=\DKL( \DEG_G(o)| N_d) $$
and we 
 obtain the claimed formula: $J_d(\mu,h) = \Sigma_{\gamma,d}(\mu,h)$.
\end{proof}

\begin{remark}[Expression of the entropy with unlabeled graphs]
Following the same proof and using the above references \cite{DeAn,MR4492967}, it also possible to write the same expression for $\Sigma^0_d(\mu,h)$ with unlabeled marked graphs. More precisely if $\mu$ is admissible and $\dE_\mu \DEG(o) = d$, we have
$$
\Sigma^0_d(\mu,h) = -H(G| G_{1} ) + \frac{\bar d }{2} H( \vec G  | (\vec G)_1 ) + \DKL(\DEG_G (o)| N_d).
$$
where $G$ and $\vec G$ are unlabeled rooted marked graphs with laws $\mu$ and $\vec \mu$. In this case, we simply have $H((G)_1) = H(\DEG_G(o))$.
\end{remark}

We now turn to the proof of the LDP for $\vec U(G_n)_h$. We start with the existence of the LDP. 

\begin{lemma}\label{le:SIGMAe}
Let $h \geq 1$ be an integer. $\vec U(G^0_n)_h$ satisfies a LDP on $\cP(\cGe_h(\cB))$ with good rate function 
$$\vec J_d (\nu,h) =  \inf \{ \Sigma^0_d(\mu,h) : \mu \in \cP( \cGr(\cB)), \vec \mu = \nu \}.$$
\end{lemma}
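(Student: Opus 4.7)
The plan is to derive this LDP from the LDP for $U(G_n^0)_h$ established in Proposition \ref{prop:SIGMA1} by applying the contraction principle to the edge-rooting map. First, by comparing \eqref{eq:defU} and \eqref{eq:defvecmu}, for every finite marked graph $G$ on the vertex set $V_n$ one has the deterministic identity
$$\vec U(G) = \Phi\big( U(G) \big), \qquad \Phi(\mu)(A) = \frac{1}{\dE_\mu \deg(o)}\, \dE_\mu \Big[ \sum_{v \sim o} \IND([G,o,v] \in A) \Big],$$
provided $\dE_\mu \deg(o) \in (0,\infty)$. Projecting to $\cGe_h(\cB)$ yields a map $\Phi_h : \cP(\cGr_h(\cB)) \to \cP(\cGe_h(\cB))$, and $\vec U(G_n^0)_h = \Phi_h(U(G_n^0)_h)$ with denominator $\dE_{U(G_n^0)}\deg(o) = \bar \MM_n / n \to \bar d$ deterministically by \eqref{eq:limmnB}.

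Next, I would verify that $\Phi_h$ is continuous on every compact sub-level set $\cL_M = \{\mu \in \cP(\cGr_h(\cB)) : \Sigma^0_d(\mu,h) \leq M\}$. On $\cL_M$ the rate function is finite, so $\dE_\mu \DEG(o) = d$ is a fixed constant, and the Poisson relative-entropy term $\DKL(\DEG_G(o)|N_d) \leq M$ appearing in \eqref{defS} provides a uniform bound on exponential moments of $\deg(o)$ via the variational formula for relative entropy. For any bounded continuous $f$ on $\cGe_h(\cB)$, the functional $g_f(G,o) = \sum_{v \sim o} f([G,o,v])$ is continuous on $\cGr_h(\cB)$ and dominated by $\|f\|_\infty \deg(o)$, so this exponential-moment bound together with the de la Vall\'ee-Poussin criterion ensures uniform integrability of $g_f$ along weakly convergent sequences in $\cL_M$. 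Consequently $\mu \mapsto \dE_\mu g_f$ is continuous on $\cL_M$, and dividing by the constant $\bar d$ yields continuity of $\Phi_h$ on $\cL_M$.

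With $\Phi_h$ continuous on every compact level set of $\Sigma^0_d(\cdot,h)$, the contraction principle then yields an LDP at speed $n$ for $\vec U(G_n^0)_h$ on $\cP(\cGe_h(\cB))$ with good rate function
$$\vec J_d(\nu,h) = \inf \{ \Sigma^0_d(\mu,h) : \mu \in \cP(\cGr_h(\cB)),\ \vec \mu = \nu \},$$
as claimed; goodness follows because the image of each compact level set of $\Sigma^0_d(\cdot,h)$ under the continuous map $\Phi_h$ is compact. The deterministic discrepancy $|\bar \MM_n / n - \bar d| = o(1)$ between the normalizing denominator and $\bar d$ is absorbed at the exponential speed $n$ and does not affect the rate function.

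The main obstacle I anticipate is exactly this continuity of $\Phi_h$ on the sub-level sets: because $\deg(o)$ is a priori unbounded, one needs the rate function itself to provide tail control on the degree, and here this control comes from the Poisson entropy term in \eqref{defS}. Without it, a sequence $(\mu_n) \subset \cL_M$ could converge weakly while losing mass at infinity in the degree coordinate, breaking continuity of $\Phi_h$. Once this tightness is in hand, the identification $\vec U(G_n^0)_h = \Phi_h(U(G_n^0)_h)$ and the application of the contraction principle are routine.
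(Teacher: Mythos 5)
Your proposal is correct and follows essentially the same route as the paper. The paper also writes $\vec U(G)_h(f) = (n/2\bar m)\, U(G)_h(\check f)$ with $\check f(G,o)=\sum_{v\sim o}f(G,o,v)$, observes that $\check f$ is unbounded so the edge-rooting map is not globally continuous, restricts to sets on which a degree-tail bound holds (the paper uses $\cD_t=\{\mu:\dE_\mu \DEG(o;b)\ln\DEG(o;b)\leq t\}$ rather than directly the sub-level sets $\cL_M$), shows that $\Sigma_d^0$ diverges off $\cD_t$ via the $\DKL(\DEG(o)|N_d)$ term of \eqref{defS}, and invokes the extended contraction principle \cite[Theorem 4.2.23]{De-Ze}. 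Your version restricts to level sets $\cL_M$, which is an equivalent device since the Poisson relative-entropy bound implies $\cL_M\subset\cD_{t(M)}$.

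One small imprecision: from $\DKL(\DEG(o)|N_d)\leq M$ and the Donsker--Varadhan variational formula one does \emph{not} get control of exponential moments $\dE_\mu e^{\lambda\deg(o)}$; what one gets is $\dE_\mu[\phi(\deg(o))]\leq M+\ln\dE_{N_d}[e^{\phi}]$ for test functions $\phi$, and choosing $\phi(k)=\lambda k\ln k$ (which has finite exponential generating functional under a Poisson for $\lambda<1$) gives a uniform bound on $\dE_\mu[\deg(o)\ln\deg(o)]$. This superlinear moment bound, not a genuine exponential moment bound, is exactly what feeds de la Vall\'ee-Poussin and is the control the paper's $\cD_t$ encodes; the argument goes through unchanged with this correction.
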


\begin{proof}
If $G = (V,E,\xi)$ is a finite weighted graph with $|E| = \bar m$, $|V| = n$, for any bounded continuous function $f: \cGe_h \to \dR$,
$$
\vec U(G)_h (f) = \frac{1}{2\bar m} \sum_{u \in V} \sum_{v : \{u,v\} \in E } f(G,u,v)  =  \frac{n}{2 \bar m} U(G)_h  (\check f),
$$
with 
$$
\check f(G,o) =  \sum_{v \sim o } f(G,o,v).
$$
The  function $\mu \to \mu(\check f)$ is not necessarily continuous for the weak topology (as $\check f$ may be unbounded). However, for any $t > 0$, on $\cD_t = \{ \mu \in \cP (\cGr_h(\cB)) :  \hbox{for all $b$, } \dE_{\mu} \DEG(o;b) \ln (\DEG(o;b)) \leq t \}$, the function $\mu \to \mu(\check f)$ is continuous. 

We note also that $\Sigma^0_d ( \mu,h) \geq  \Sigma^0_d ( \mu,1) = \DKL( \DEG(o) | N_d)$. Since $n \ln n \leq \ln (n!) + Cn$ for all integers $n \geq 1$ and  $H(\DEG(o))$ is uniformly bounded over all $\mu$ such that $\dE_\mu \DEG(o) = d$ (see for example \cite[Lemma 5.5]{BoCa15}),  we deduce that $\inf_{\mu \notin \cD_t} \Sigma^0_d ( \mu,h) \geq \inf_{\mu \notin \cD_t} \DKL( \DEG(o) | N_d)$ diverges as $t \to \infty$ since the last term in the right hand side  of  \eqref{eq:defsigmad} does. From the contraction principle in its extended version, see  \cite[Theorem 4.2.23]{De-Ze}, we deduce that $\vec U(G_n)_h$ satisfies a LDP with good rate function 
$
 \vec J_d (\nu,h) = \inf \{ \Sigma^0_d(\mu,h) : \vec \mu =  \nu \}
$
as requested.
\end{proof}

In view of Lemma \ref{le:SIGMAe}, we need to prove that $\vec J_d = \vec \Sigma_d^0$. We start with a preliminary lemma. Recall the definition of $\dot \nu = \dot \nu (\bar d)$ in \eqref{eq:defnupr}.

\begin{lemma}\label{le:hatvec} For every integer number $h \geq 2$ and $\mu \in \cP(\cGr_h)$ with $\dE \deg (o) = \bar d$, the distribution $\dot{\vec\mu}$ is equal to $\mu_{h-1}$. If $\nu \in \cP(\cGr_h)$ is invariant and is such that $\xi(\rho)$ has law $d/\bar d$ under $\nu$ and $d_\nu\geq \bar d$, then $\dE_{\dot \nu} \DEG(o) = d$. 
\end{lemma}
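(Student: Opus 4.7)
The plan is to prove the two assertions by unfolding the definitions of $\vec\mu$, $\hat{\cdot}$ and $\dot{\cdot}$, reducing the claims to elementary size-biasing identities. For the first identity $\dot{\vec\mu} = \mu_{h-1}$, I would first compute $d_{\vec\mu}$: a direct calculation from the definition \eqref{eq:defvecmu} shows that under $\vec\mu$ the degree of the origin $o$ of the root edge is size-biased, namely $\dP_{\vec\mu}(\deg(o) = k) = k\, \dP_\mu(\deg(o) = k)/\bar d$ for $k \geq 1$, which yields $d_{\vec\mu} = \bar d/\dP_\mu(\deg(o) \geq 1)$ and hence $p = \bar d/d_{\vec\mu} = \dP_\mu(\deg(o) \geq 1)$.

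Next I would unfold $\hat{\vec\mu}$. Using the definition of $\hat{\cdot}$ together with that of $\vec\mu$ and the identity $\sum_{v \sim o} 1/\deg(o) = \IND(\deg(o) \geq 1)$, I would obtain $\hat{\vec\mu}(A) = \dP_\mu([G,o]_{h-1} \in A \mid \deg(o) \geq 1)$ for any measurable $A \subset \cGr_{h-1}$. Substituting into $\dot{\vec\mu} = (1-p)\delta_0 + p\, \hat{\vec\mu}$ and distinguishing the cases $0 \in A$ and $0 \notin A$---using that the rooted single-vertex graph $0 \in \cGr_{h-1}$ corresponds exactly to the event $\{\deg(o) = 0\}$---both cases simplify to $\dP_\mu([G,o]_{h-1} \in A)$, which is precisely $\mu_{h-1}(A)$.

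For the second identity, I would reduce to the first. By the definition of invariance for $\nu \in \cP(\cGe_h)$, there exists an invariant $\mu \in \cP(\cGr_h)$ such that $\vec\mu = \nu$. The hypothesis that $\xi(\rho)$ has law $d(\cdot)/\bar d$ under $\nu$ is then equivalent, by the same computation underlying Lemma \ref{le:rootmark}, to $\dE_\mu\DEG(o) = d$; in particular $\dE_\mu\deg(o) = \bar d$. The first part now applies and gives $\dot\nu = \mu_{h-1}$, whence $\dE_{\dot\nu}\DEG(o) = \dE_{\mu_{h-1}}\DEG(o) = \dE_\mu\DEG(o) = d$.

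There is no serious obstacle in this lemma; the only mildly delicate point is the bookkeeping around the atom at the trivial single-vertex graph when unfolding $\dot{\vec\mu}$. The constraint $d_\nu \geq \bar d$ in the second part is only used to ensure that $p = \bar d/d_\nu \in [0,1]$ so that $\dot\nu$ is a genuine probability measure; it is automatic in our setting, as the computation above gives $d_{\vec\mu} = \bar d/\dP_\mu(\deg(o) \geq 1) \geq \bar d$.
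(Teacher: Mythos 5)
Your treatment of the first identity $\dot{\vec\mu} = \mu_{h-1}$ is correct and matches the paper's proof step for step: the size-biasing identity for $\dP_{\vec\mu}(\deg(o)=k)$, the computation of $d_{\vec\mu}$ giving $p = \dP_\mu(\deg(o)\geq 1)$, the identification of $\hat{\vec\mu}$ with $\mu_{h-1}$ conditioned on $\{\deg(o)>0\}$, and the case split on the trivial atom.

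The second part, however, has a genuine gap. You assert that the hypothesis ``$\xi(\rho)$ has law $d(\cdot)/\bar d$ under $\nu$'' is \emph{equivalent}, via Lemma \ref{le:rootmark}, to $\dE_\mu\DEG(o)=d$ for the invariant preimage $\mu$ of $\nu$. This is false. Lemma \ref{le:rootmark} gives the implication in one direction only; the computation actually shows that, under $\vec\mu$, the mark $\xi(\rho)$ has law $\dE_\mu\DEG(o)(\cdot)/\dE_\mu\deg(o)$. Matching this to $d(\cdot)/\bar d$ only forces $\dE_\mu\DEG(o)$ to be a positive scalar multiple of $d$, not equal to $d$. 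The root of the problem is that the edge-rooting map $\mu\mapsto\vec\mu$ is not injective: if $\mu'$ denotes $\mu$ conditioned on $\{\deg(o)>0\}$, then $\mu(q)=(1-q)\delta_0+q\,\mu'$ satisfies $\vec{\mu(q)}=\nu$ for every $q\in(0,1]$, and $\dE_{\mu(q)}\deg(o)=q\,d_\nu$ runs over an interval. The paper resolves this by choosing the specific reweighting $q=\bar d/d_\nu$, which is the unique one with $\dE_{\mu(q)}\deg(o)=\bar d$; the hypothesis $d_\nu\geq\bar d$ is exactly what ensures $q\leq 1$ so this reweighting is a genuine probability measure, and only then can Lemma \ref{le:rootmark} be applied to $\mu(q)$ to pin down $\dE_{\mu(q)}\DEG(o)=d$. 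Your closing remark that $d_\nu\geq\bar d$ is ``automatic'' is also circular: the formula $d_\nu=\bar d/\dP_\mu(\deg(o)\geq 1)$ was derived under the normalization $\dE_\mu\deg(o)=\bar d$, which you have not established. A preimage of $\nu$ can have arbitrarily small expected degree (e.g.\ take $\mu$ to be a deterministic rooted graph with root degree $1$ when $\bar d=2$), in which case $d_\nu<\bar d$ and the lemma's conclusion simply does not apply.
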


\begin{proof}
We start with the first claim and set $\nu = \vec \mu$. We start by observing for any integer number $k \geq 1$, 
$$\dP_\nu (\deg(o) = k) = \frac{1}{\bar d} \dE_\mu \sum_{v \sim o} \IND (\deg(o) = k) = \frac{k \dP_\mu ( \deg(o) = k)}{\bar d}. 
$$
Therefore, 
\begin{equation}\label{eq:pdnu}
\frac{1}{d_\nu} = \dE_{\nu} \left( \frac{1}{\deg(o)}\right) = \sum_{k=1}^{\infty} \frac { \dP_{\nu} (\deg(o) = k)}{k} = \frac{\dP_\mu ( \deg(o) >0)}{\bar d}. 
\end{equation}
Recall the definition of $\hat \nu$ above \eqref{eq:defnupr}. For any measurable event $A \subset \cGr_{h-1}$, since $\nu$ is supported on graphs with $\deg(o) >0$, 
$$
\hat \nu (A) = d_\nu \dE_{\nu} \left( \frac{\IND_{[G,o]_{h-1} \in A , \deg(o) > 0}} {\deg(o)} \right) = \frac{d_\nu}{\bar d} \dE_{\mu} \left( \sum_{v \sim o}  \frac{\IND_{[G,o] \in A , \deg(o) >0}} {\deg(o)} \right) =  \frac{d_\nu}{\bar d}\mu_{h-1} (A \cap \{\deg(o) >0\} ).
$$
It follows that $\hat \nu$ is equal to the distribution of $\mu_{h-1}$ conditioned on $ \{\deg(o) >0\}$.  In particular,  $p = \bar d /d_\nu  = \dP_\mu ( \deg(o) > 0)$ and $\mu_{h-1} = (1-p)\delta_0 + p \delta_{\hat \nu} = \dot \nu$ as requested in the first claim.

For the second claim, since $\nu$ is invariant, $\nu = \vec \mu$ for some invariant $\mu \in \cP(\cGr_h)$. Let $\mu'$ be equal to $\mu$ conditioned on $\{ \deg (o)  > 0 \}$. Note that $\mu(q) = (1-q) \delta_0 + q \mu'$ with  $0 \leq q \leq 1$ satisfies $\vec \mu(q) = \nu$ and, from \eqref{eq:pdnu}, $\dE_{\mu(q)} \deg(o) = q  d_{\nu}$.   Since $d_\nu \geq \bar d$,  $\dE_{\mu(q)} \deg(o) = \bar d$ for $q = \bar d / d_\nu$. For such $q$, let $d' = \dE_{\mu(q)} \DEG(o)$. From the first claim, we have $\dE_{\dot \nu} \DEG(o) = d'$. However Lemma \ref{le:rootmark} implies that $d' = d$.
\end{proof}

A final lemma will be useful.
\begin{lemma}\label{le:rootmark2}
Let $h \geq 1$ and $k \geq 1$ be integers and  $\mu \in \cP(\cGr_h)$. Let $G,\vec G$ be uniform labeling with laws $\mu$ and $\vec \mu$ and $\vec G^{(o)}$ be the $(h-1)$-neighborhood of the origin vertex of the root-edge of $\vec G$.  Then $\vec G^{(o)}$ is the uniform labeling of $\check \mu_{h-1}$. In particular, conditioned on $\{ \deg(o) = k \}$,  the distributions of $(G)_{h-1}$ and $\vec G^{(o)}$ are equal. Similarly, conditioned on $\{ \deg(o) = k \}$, if $(1,\ldots,k)$ are the neighbors of the root-vertex $o$, the distributions of $(G,o,1)_h$ and $\vec G$ are equal. 
\end{lemma}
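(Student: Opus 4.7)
The proof reduces to two facts: (i) the forgetful map $\vec \mu \mapsto \check \mu$, so that the origin vertex side of $\vec \mu$ carries the size-biased distribution $\check \mu$; (ii) the uniform BFS labelings of $\mu$ and of $\vec \mu$ (restricted to the neighborhood of the origin vertex) induce the same distribution after the obvious relabeling. The two ``in particular'' statements then drop out from the fact that on $\{\deg(o) = k\}$ the Radon--Nikodym derivative $\deg(o)/\bar d$ is constant.

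First, apply \eqref{eq:defvecmu} to any non-negative measurable $f$ depending only on the rooted component $(G, o)$:
\begin{equation*}
\dE_{\vec \mu}[f(G, o)] = \frac{1}{\bar d} \dE_\mu\left[\sum_{v \sim o} f(G, o)\right] = \frac{1}{\bar d} \dE_\mu [\deg(o)\, f(G, o)] = \dE_{\check \mu}[f(G, o)].
\end{equation*}
Thus the projection of $\vec \mu$ onto the rooted-graph component is exactly $\check \mu$, and an analogous computation shows that the conditional law of $V = o_2$ given $(G, o)$ under $\vec \mu$ is uniform on the neighbors of $o$. Next, in the edge-rooted BFS of $\vec G$ starting from the seeds $(o, V)$, the $\deg(o) - 1$ neighbors of $o$ distinct from $V$ receive their labels (in the first copy of $\dN^f$) in a uniformly random order. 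Since $V$ is itself a uniformly chosen neighbor of $o$ given $(G, o)$, one may equivalently sample a uniform permutation of all $\deg(o)$ neighbors of $o$ and declare the first of them to be $V$; the remaining labels then agree, after the canonical identification of the first copy of $\dN^f$, with those produced by running the uniform BFS labeling directly on $(G, o) \sim \check \mu$ from root $o$. Restricting to the $(h-1)$-neighborhood of $o$ yields the first claim that $\vec G^{(o)}$ is the uniform labeling of $\check \mu_{h-1}$.

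The ``in particular'' assertions follow quickly. Since $d\check\mu/d\mu = \deg(o)/\bar d$ is the constant $k/\bar d$ on $\{\deg(o) = k\}$, we have $\check \mu|_{\deg(o) = k} = \mu|_{\deg(o) = k}$; combined with the first assertion this gives the equality in distribution of $\vec G^{(o)}$ and $(G)_{h-1}$ under the conditioning. For the final statement, under $\mu$ conditioned on $\{\deg(o) = k\}$ the uniform BFS labeling assigns the set $\{1, \ldots, k\}$ to the neighbors of $o$ via a uniformly random bijection, so the neighbor labeled $1$ is a uniform random neighbor of $o$ and plays exactly the role of $V$ under $\vec \mu$ conditioned on the same event. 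The main obstacle is purely notational---the labeling alphabets $\dN^f$ and $\dN^f \times \dN^f$ for rooted and edge-rooted graphs differ---but this is handled cleanly by the canonical identification obtained once $V$ is read as the first element of the uniform ordering of the neighbors of $o$.
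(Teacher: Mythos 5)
Your proof is correct and follows the paper's core approach: both hinge on applying the defining formula \eqref{eq:defvecmu} for $\vec\mu$ to conclude that the marginal of $\vec\mu$ on $(G,o)$ is the size-biased law $\check\mu$, from which the conditional equalities on $\{\deg(o)=k\}$ drop out because the density $\deg(o)/\bar d$ is constant there. The paper's own proof is telegraphic --- it just plugs $A=\{[G,o]\in B\}\cap\{\deg(o)=k\}$ (resp.\ $A=\{[G,\rho]_h\in B\}\cap\{\deg(o)=k\}$) into \eqref{eq:defvecmu} and invokes Lemma~\ref{le:rootmark} --- and stops at the level of unlabeled equivalence classes, leaving the compatibility of the edge-rooted BFS labeling with the standard BFS labeling entirely implicit. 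Your write-up adds two things the paper does not: you make explicit that the conditional law of the second root vertex $V$ given $(G,o)$ is uniform over the neighbors of $o$, and you argue concretely for the canonical identification between the $\dN^f\times\dN^f$ labels near $o$ and the $\dN^f$ labels of the uniform BFS labeling by reading $V$ as the first element of the random ordering of $o$'s neighbors. This makes your version more transparent; the one caveat is that the BFS-order identification is airtight when $\mu$ is supported on trees (the case actually used downstream), whereas for graphs with cycles the parent assignment of distance-two vertices could in principle depend on whether $V$ is explored before or after the other neighbors of $o$, a point neither your proof nor the paper's addresses explicitly.
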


\begin{proof}
Let $B \subset \cGr_{h-1}$ and $k \geq 1$. We apply  \eqref{eq:defvecmu} to $A = \{[G,o] \in B\} \cap \{ \deg(o) = k \}$ and get $\dP (\vec G^{(o)} \in A , \deg(o) = k) = k \dP ( (G)_{h-1} \in A , \deg(o) = k) / \bar d $.
We finally  use Lemma \ref{le:rootmark}. For the proof of the second claim we take $A = \{[G,\rho]_h \in B\} \cap \{ \deg(o) = k \}$.
\end{proof}

We are now ready to prove the second claim of Proposition \ref{prop:SIGMA1}.

\begin{proof}[Proof of Proposition \ref{prop:SIGMA1}  : the case $\vec U(G_n^0)$]
By Lemma \ref{le:SIGMAe}, $\vec U(G^0_n)_h$ satisfies a LDP on $\cP( \cGe_h (\cB))$ with good rate function 
$$
\vec J_d (\nu,h) =  \inf \{ \Sigma^0_d(\mu,h) : \mu \in \cP(\cGr_h(\cB)) , \vec \mu =  \nu \}.
$$
Our goal is therefore to show that $\vec J_d (\nu,h) =\vec \Sigma^0_d(\nu,h)$.

If $h \geq 2$, from the definition of $\hat \nu$, $d_\nu = (\dE_{\nu} (\deg(o)^{-1}) )^{-1} = \dE_{\hat \nu} (\deg(o))$. Let $\mu  \in \cP(\cGr_h(\cB))$. By Lemma \ref{le:hatvec}, if $\vec \mu = \nu$ and $\dE_\mu [ \deg(o)] = \bar d$ then $\hat{\vec \mu}  = \hat  \nu$ is the law $\mu_{h-1}$ conditioned on $\{\deg(o) > 0\}$.  In particular, $d_\nu = \bar d / \dP_\mu (\deg (o) > 0)$. It follows that $d_\nu \geq \bar d$ otherwise $\vec J_d (\nu,h) = \infty$. If  $d_\nu \geq \bar d$, it follows from Lemma \ref{le:hatvec}, that all $\mu \in  \cP(\cGr_h(\cB))$ such that $\dE \deg(o) = \bar d$ and $\vec \mu  = \nu$ have the same marginal distribution on ball of radius $(h-1)$: $\mu_{h-1} = \dot \nu$. Also by Lemma \ref{le:rootmark}, under $\nu$, $\xi(\rho)$ has law $d/\bar d$.

We deduce that if $\nu$ is not admissible or if  $d_\nu < \bar d $ (for $h\geq 2$) or if $\xi^0(\rho)$ has not law $d/\bar d$ then $\vec J_d (\nu,h) = \infty$.
Hereafter, we may and shall restrict ourselves to $\nu$ admissible, so that $d_{\nu}=\bar d$ and $\xi^0(\rho)$ has  law $d/\bar d$.
 When $\nu$ is admissible and, by  \cite[Lemma 5.10]{BoCa15} and \cite[Lemma 4]{DeAn}, $\Sigma^0_d(\mu,h) < \infty$ for all $\mu$ admissible such that $\vec \mu = \nu$. 
We start with the lower bound  of $\vec J_{d}$ by $\vec\Sigma^{0}_{d}$ and prove an even stronger statement:  for every $d$-admissible $\mu$ such that $\vec \mu = \nu$,
\begin{equation}\label{eq:yena}
\Sigma^0_d(\mu,h) \geq \vec \Sigma^0_d( \nu,h).
\end{equation}
Let $G,\check G,\vec G$ be uniform labeling of $\mu,\check \mu$ and $\vec \mu = \nu$. Writing $H(X|f(X)) = H (X) - H(f(X))$, the lower bound \eqref{eq:yena} is equivalent to 
$$
H(G) - \frac {\bar d} 2 H(\vec G) \leq  \frac {\bar d} {2} H(\vec G) - \bar d H((\check G)_{h-1}) + H((G)_{h-1}),
$$
Let $H_k(\cdot)$ denote the conditional entropy given $\{\deg(o) = k\}$ and let $\pi(k) = \dP_\mu ( \deg(o) = k)$. We write $H(G) = H(\deg_G(o)) + H(G|\deg_G(o)) = H(\deg_G(o))  + \sum_{k\geq 1} \pi(k) H_k(G)$ and similarly for $H(\vec G) = H(\deg_{\vec G}(o)) + H(\vec G|\deg_{\vec G}(o))$. We deduce that \eqref{eq:yena} is equivalent to 
$$
\sum_{k \geq 1} \pi(k) \left( H_k(G) - k H_k (\vec G)  + (k-1) H_k ((G)_{h-1}) \right) \leq 0,
$$
where we have used Lemma \ref{le:rootmark} and Lemma \ref{le:rootmark2}. Let $(1,\ldots,k)$ be the neighbors of $o$ in $\vec G$ with $\rho = (o,1)$ and let $\vec G_i$ be the edge-rooted graph $(\vec G,o,i)$. Similarly, we set $G_i = (G,o,i)_h$. The  rooted marked graph $G$ is in bijection with the $k$-tuple $(G_1,\ldots,G_k)$. In particular 
\begin{eqnarray}
H_k(G)= H_k( (G)_{h-1} ) +  H_k(G | (G)_{h-1} ) &= & H_k( (G)_{h-1})   + H_k((G_1,\ldots,G_k ) | (G)_{h-1}) \nonumber \\
&   \leq & H_k( (G)_{h-1}) + \sum_{i=1}^k H(G_i | (G)_{h-1}).\label{eq:Hkin}
\end{eqnarray}
Also, by exchangeability and by Lemma \ref{le:rootmark2}, $\vec G_i$ and $G_i$ have the same distribution. Using  exchangeability and Lemma \ref{le:rootmark2} again, we write
$$
k H_k(\vec G) = \sum_{i=1}^k H(G_i) = k H_k( (G)_{h-1}) +   \sum_{i=1}^k H(G_i | (G)_{h-1}).
$$
It concludes the proof of \eqref{eq:yena}. For the upper bound, we observe that \eqref{eq:Hkin} is an equality if and only if $(G_1,\ldots,G_k)$ are independent conditioned $(G)_{h-1}$ and $\deg(o) = k$. This characterizes uniquely a measure $\mu$ and it is immediate to check that this measure is admissible.  It concludes the proof.
\end{proof}

\subsection{Proof of Theorem \ref{th:SIGMA1R} : existence of the LDP}

In this subsection, we establish that $U(G_n)$ satisfies a LDP. By a projective limit,  we will prove that it is a consequence of Proposition \ref{prop:SIGMA1} for finite mark sets.

\paragraph{Quantization of marks. } We first describe a natural quantization of $\dR^k$ into bins. Fix a mesh size $\delta >0$ and a window size $\kappa > 0$ with $\kappa/\delta$ integer.  The quantization is a map $\{ \cdot \}_\delta^\kappa : \dR^k \to  \left(\delta \dZ \cap [-\kappa,\kappa)\right)^k \cup \{\omega\}$ with $\omega \in \dR^k \backslash (\delta \dZ)^k$ is any default value such that $\omega^* =\omega$. Assume first that $k=1$. For $|x| \geq \kappa$, we set $\{x\}^\kappa_\delta = \omega$. For $0 \leq x < \kappa$, we set $\{x\}^\kappa_\delta = \lfloor  x / \delta \rfloor \delta$. If $-\kappa < x< 0$, we set $\{x\}^\kappa_\delta = - \{-x\}^\kappa_\delta$. On $\dR^k$, we set $\{(x_1,\ldots,x_d)\}^\kappa_\delta = (\{x_1\}^\kappa_\delta,\ldots,\{x_d\}^\kappa_\delta)$. Finally, if $\kappa/\delta$ is not an integer, we set $\{x\}^\kappa_\delta = \{x\}^{\kappa'}_{\delta}$ with $\kappa' = \delta \lceil \kappa/\delta \rceil$.

We fix a non-increasing function $\kappa : (0,1] \to (0,\infty)$ such that
$$
\lim_{\delta \to 0} \kappa(\delta) = \infty.
$$
We set $$L^\delta = \left(\delta \dZ \cap [-\kappa(\delta),\kappa(\delta))\right)^k \sqcup \{\omega\} \AND \cZ^\delta = \cB \times L^\delta.$$
Note that the involution $*$ defined in \eqref{eq:definvolution} acts on $L^\delta$. The marked space $\cZ^\delta$  is thus also equipped with a compatible involution.

If $G = (V,E,\xi)$ is a marked graph on the mark set $\cZ$, we define $ G^\delta= (V,E,\xi^\delta)$ as the marked graph on the mark set $\cZ^\delta$ obtained by setting for $e = \{u,v\} \in E$, $\xi^\delta(u,v) = \{ \xi(u,v) \}_{\delta}^\kappa$.

\paragraph{Projective system. } We set $\Delta = \{ 2^{-i} : i \in \dN\}$. The set $\Delta$ is totally ordered with the mesh order $\preceq$ defined as $\delta  \preceq \delta'$ if $\delta' \leq \delta$.  Moreover, if $\delta  \preceq \delta'$ there is a natural projection $p_{\delta \delta'}$ from marked graphs with mark set $\cZ^{\delta'}$ to marked graphs with mark set $\cZ^\delta$ by setting $p_{\delta \delta'} (G) = G^\delta$. These projections define a projective system in the sense that $p_{\delta_1 \delta_2} \circ p_{\delta_2 \delta_3} = p_{\delta_1 \delta_3}$ for $\delta_1 \preceq \delta_2 \preceq \delta_3$ (because $\delta_{2}/\delta_3$ and $\delta_1 / \delta_2$ are integers). These projections extend to $\cGr(\cZ)$ and $\cGr(\cZ^\delta)$ the locally finite unlabeled rooted marked graphs on the mark sets $\cZ$ and $\cZ^\delta$. For the local topology, we can identify $\cGr(\cZ)$ with the projective limit of the sets $(\cGr(\cZ^\delta))_{\delta \in \Delta}$, that is sequence $(g_\delta)_{\delta \in \Delta}$ such that $g_\delta \in \cGr(\cZ^\delta)$ and $g_{\delta} = p_{\delta \delta'} (g_\delta)$ for all $\delta \preceq \delta'$, see \cite[Section 4.6]{De-Ze}. Indeed, for any $z \in \cZ$, $|z| <  \delta \lceil \kappa(\delta)/\delta \rceil$ for all $\delta$ small enough (for the usual order) and thus 
$$
| z - \{z \}^{\kappa(\delta)}_{\delta} |_\infty \leq \delta,
$$ 
 where $| x |_\infty = \max_i |x_i|$ is the $L^\infty$-norm on $\dR^k$. Note that the projections $p_{\delta\delta'}$ are continuous on $\cGr(\cZ^{\delta'})$.

The projective system extends to $\cP(\cGr (\cZ))$ and $\cP(\cGr( \cZ^\delta))$ equipped with the local weak topology. For $\mu \in \cP(\cGr (\cZ))$, we denote by $\mu^\delta \in \cP(\cGr( \cZ^\delta))$ the push forward of  $\mu$ by the map $g \to g^\delta$, that is the law of $(G^\delta,o)$ where $(G,o)$ has law $\mu$.

\paragraph{Large deviation for the projections. }  Recall that $G_n  = ( V_n,E_n,\xi_n) \in \cG_{n,\MM_n}$ is our   random graph. We denote by $G_n^{\delta} = (V_n,E_n,\xi^\delta_n)$ its quantized version.  The edge counting measure of $G_n^\delta$ is given for $z  \in \cZ^\delta$ by 
$$
\MM_n^\delta(z) = \sum_{e = \{u,v\}  \in E_n} \IND ( \xi_n^\delta( u, v) = z ) + \IND ( \xi_n^\delta( v, u) = z ).
$$
By construction, for every $b \in \cB$,
$$
\sum_{l \in L^\delta} \MM_n^\delta(b,l) = \MM_n(b).
$$

For $l \in L^\delta \backslash \{\omega\}$, we set $B^\delta_l = [l_1+\delta)\times \cdots \times [l_k + \delta)$. Similarly, we set $B_\omega = B_\omega^\delta = \{ x \in \dR^k : |x|_\infty \geq \kappa(\delta)\}$. For $l \in L^\delta$, $b \in \cB$ and an edge $ e = \{u,v\} \in E_n$, the probability  that $\xi_n^\delta(u,v) = (b,l)$ given $\{ \xi_n^\delta(u,v) \in \{b\} \times \dR^k\}$ is equal to 
$
\gamma_{n,b}^\delta(l) =  
\gamma_{n,b}(B^\delta_l) 
$. 
We set 
$$\gamma^\delta_{b}(l) = \gamma_{b}(B^\delta_l).$$
By assumption \eqref{eq:gamman2},
$$
\lim_{n \to \infty}\gamma_{n,b}^\delta(l) =  \gamma^\delta_{b}(l).
$$
%$$
%\MM_n^\delta((b,l)) =  \sum_{e = \{u,v\}  \in E_n} \IND ( \xi_n( u, v) \in \{b\} \times B_l ) + \IND ( \xi_n( u, v) \in  \{b^*\} \times B^*_l  ).
%$$
%Similarly, with $B_\omega = \{ x \in \dR^k : |x|_\infty \geq \kappa\}$ and $| x |_\infty = \max_i |x_i|$ the $L^\infty$-norm on $\dR^k$, we have
%$$
%\MM_n^\delta((b,\omega)) \sum_{e = \{u,v\}  \in E_n} \IND ( \xi_n( u, v) \in \{b\} \times B_\omega ) + \IND ( \xi_n( u, v) \in  \{b^*\} \times B_\omega  ).
%$$

If $b \ne b^*$, the law of the vector $\MM^\delta_n(b,\cdot) = (\MM_n^\delta(b,l) )_{l \in L^\delta}$ is a multinomial distribution with parameters $(\MM_n(b), (\gamma^\delta_{n,b}(l))_{l \in L^\delta}))$. If $b = b^*$, the law of the vector $\MM^\delta_n(b,\cdot)/2$ is a multinomial distribution with parameters $(\MM_n(b)/2, (\gamma^\delta_{n,b}(l))_{l \in L^\delta}))$. Let $\cP ( L^\delta)$ be the set of probability measures on $L^\delta$ (equipped with the weak topology). It follows from Sanov' Theorem that, as $n \to \infty$,  $\MM^\delta_n(b,\cdot) / \MM_n(b)$ satisfies a large deviation  principle  in $\cP(L^\delta)$ with speed $\MM_n(b)/2^{\IND_{b = b^*}}$ and good rate function 
$$
\DKL(\nu |\gamma^\delta_b) = \sum_{l \in L^\delta} \nu (l) \ln \frac{\nu(l)}{\gamma^\delta_b (l)},
$$
see e.g. \cite[Theorem 2.10]{De-Ze}. From \eqref{eq:limmnB}, the vector $\MM_n^\delta(b,\cdot)/n$ satisfies a large deviation principle on $\dR_+^{\cZ}$ with speed $n$ and good rate function
$$I^\delta_{d,b} (\DD ) =   \left\{ \begin{array}{ll} \frac{1}{2^{\IND_{b = b^*}}}\sum_l \DD(l) \ln \frac{\DD(l) }{d(b) \gamma_b^{\delta} (l)}     &\hbox{if } \sum_l \DD(l) = d(b) \\
\infty &\hbox{otherwise}.
\end{array}\right.
$$
 Moreover, the vectors $(\MM_n^\delta(b,l) )_{l \in L^\delta}$ are independent for different values of $b \in \cB$ since the marks were chosen independently. 
Hence, the vector $(\MM^\delta_n(z)/n)_{z \in \cZ^\delta}$ satisfies a large deviation  principle s in $\dR_+^{\cZ^\delta}$ with speed $n$ and good rate function given by
$$
I_d^\delta ( \DD )  =  \left\{ \begin{array}{ll} \frac{1}{2} \sum_{z = (b,l) \in \cZ^\delta} \DD(z) \ln \frac{\DD(z) }{d(b) \gamma_b^{\delta} (l)}     &\hbox{if for all $b \in \cB$, $z \in \cZ^\delta$, } \sum_l \DD(b,l) = d(b) \hbox{ and } \DD(z^*) = \DD(z) \\
\infty &\hbox{otherwise}.
\end{array}\right.
$$

Given the random vector $(\MM_n^\delta(z))_{z \in \cZ^\delta}$, the random graph $G^\delta_n$ is uniform on $\cG_{n,\MM_n^\delta}$. In other words, $G^\delta_n$ is a mixture of uniform measures on $\cG_{n,\MM_n^\delta}$. Writing explicitly the dependence in the marked set, $U(G_n^\delta) $ is a random element in  $\cP( \cGr(\cZ^\delta))$. For integer number $h \geq 1$, it follows from Biggins \cite[Theorem 5(b)]{MR2081460} and Proposition \ref{prop:SIGMA1}, that $U(G_n^\delta)_h$ satisfies a large deviation  principle  in $\cP( \cGr(\cZ^\delta))$ with speed $n$ and good rate function given, for $\mu \in \cP( \cGr_h(\cZ^\delta))$, by
$$
\Sigma^\delta_d (\mu,h) = \inf \left\{  \Sigma^0_{\DD} (\mu,h) + I_d^\delta(\DD)\right\},
$$
where the infimum is over all $\DD \in \dR_+^{\cZ^\delta}$. Now by Proposition \ref{prop:SIGMA1}, $ \Sigma^0_{\DD} (\mu,h) = \infty$ unless $\dE_{\mu} \DEG(o) = \DD$. We deduce that 
\begin{equation}\label{eq:fSd}
\Sigma^\delta_d (\mu,h) =   \Sigma^0_{\DD} (\mu,h) + I_d^\delta(\DD),
\end{equation}
with $\DD = \dE_{\mu} \DEG(o)$. 
%Similarly, for integer $h \geq 1$, $U(G_n^\delta)_h$ satisfies a large deviation  principle  in $\cP( \cGr_h(\cZ^\delta))$ with good rate function on $\cP(\cGr_h(\cZ^\delta))$,
%$$
%\Sigma^\delta_d (\mu,h) =  \vec \Sigma^0_{\DD} (\mu,h) + I_d^\delta(\DD).
%$$

\paragraph{Dawson-G\"artner's Theorem. } It follows from Dawson-G\"artner's Theorem \cite[Theorem 4.6.1]{De-Ze} that for every integer number $h \geq 1$, $U(G_n)_h$ satisfies a large deviation principle with good rate function 
%$$
%\Sigma_d(\mu) = \lim_{\delta \to 0} \Sigma_d^\delta(\mu^\delta).
%$$
%Similarly, $U(G_n)_h$ satisfies a large deviation principle with good rate function 
{{
\begin{equation}\label{eq:Sigmalim}
\Sigma_{\gamma,d}(\mu,h) = \sup_{\delta \in\Delta } \Sigma_d^\delta(\mu^\delta,h)= \lim_{\delta \rightarrow 0\atop \delta\in \Delta } \Sigma_d^\delta(\mu^\delta,h)
\end{equation}
where the last equality comes from the fact that $\mu\rightarrow  \Sigma_d^\delta(\mu^\delta,h)$ increases as $\delta\in\Delta$ goes to zero because our choice of discretization is strictly increasing. Indeed, the topology on $\cZ^{\delta}$ gets finer as $\delta$ goes to zero in $\Delta$ so that the volume of balls decreases with $\delta$, and hence the rate functions $\Sigma_d^\delta(\mu^\delta,h)$, which measure the volume of a small ball around $\mu$ after projection in $\cZ^{\delta}$, increase with $\delta$ going to zero.}}
We have for $\mu \in \cP(\cGr)$, $ \Sigma_{\gamma,d}(\mu) = \lim_{h \to \infty}  \Sigma_{\gamma,d}(\mu,h)$ by another application of Dawson-G\"artner's Theorem.
This proves the existence of the LDP for $U(G_n)$ in Theorem \ref{th:SIGMA1R}.

The same argument carries over for $\vec U(G_n)$. For $h \geq 1$, we find that $\vec U(G_n^\delta)_h$ satisfies a LDP with good rate function on $\cP(\cGe_h(\cZ^\delta))$ given by
\begin{equation}\label{eq:fSd2}
\vec \Sigma^{\delta}_{\DD}(\nu,h)  = \vec  \Sigma^0_{\DD} (\nu,h) + I_d^\delta(\DD),
\end{equation}
where under $\nu$, $\xi(\rho)$ has law  $\DD (\cdot) / \bar d$.

\subsection{Proof of Theorem \ref{th:SIGMA1R} and Lemma \ref{le:casinfini} : the rate function}

Let $h \geq 1$ and $\delta \in \Delta$ be an integer. Our goal is to compute,  for $\mu \in \cP(\cGr_h(\cZ))$ and $\nu \in \cP(\cGe_h(\cZ))$,
$$
\Sigma_{\gamma,d}(\mu,h) = \lim_{\delta \rightarrow  0}  \Sigma^\delta_d (\mu^\delta,h) \AND \vec \Sigma_{\gamma,d} (\nu,h) = \lim_{\delta \rightarrow 0}  \vec \Sigma^\delta_d (\nu^\delta,h) 
$$
where $\Sigma^\delta_d$ and $\vec \Sigma^\delta_d$ are defined in \eqref{eq:fSd} and \eqref{eq:fSd2}.
We may assume that $\mu$ is $d$-admissible. Otherwise $\Sigma_d(\mu,h) = \infty$ by using Proposition \ref{prop:SIGMA1} and Lemma \ref{le:casinfini}
for  $\delta$ small enough. Similarly, we may assume that $\nu$  is $d$-admissible.

\paragraph{Formula for $\Sigma^\delta_d (\mu^\delta,h)$ and $\vec \Sigma^\delta_d (\nu^\delta,h)$. }

%We denote in this subsection by $\cGr = \cGr(\cB)$ the set of unlabeled locally finite rooted graphs with marks in $\cB$. 
Recall that if $G = (V,E,\xi)$ is a marked graph on the mark set $\cZ = \cB \times \dR^k$, we write $\xi = (\xi^0,\xi^1)$, where $\xi^0$ is the mark on $\cB$ and $\xi^1$ is the mark on $\dR^k$, that is $\xi^0(u,v) = b$ if $\xi(u,v) = (b,x)$. Recall that we denote by $G^0 = (V,E,\xi^0)$, the corresponding $\cB$-marked graph. Beware that this notation should not be mistaken with the quantized marked graph  $G^{\delta}$ for $\delta \in (0,1)$ defined in the previous subsection. %Let $\mu^0 \in \cP (\cGr(\cB))$ be the law of the $\cB$-marked graph associated to $\mu$. We define  $\vec  \mu ^0 \in \cP( \vec \cGr(\cB))$ similarly with $\vec \mu$.

 Let $G$ and $\vec G$ be the uniform labeling of $\mu$ and $\vec \mu$ respectively. In this subsection, the distributions of $G^0$ and $\vec G^0$ on labeled $\cB$ marked graphs is denoted by $\mu^0$ and $\vec \mu^0$. We shall write $\dE_{\mu} [f(G)]$ and $\dE_{\vec \mu} [f(\vec G)]$ to insist on which graph, $G$ or $\vec G$, we take expectation. For $g$ in the support of  $\mu^0$ with vertex set $V_g $ and edge set $E_g$, let $\mu^1_g$ be the probability distribution of $(\xi^1(u,v))_{\{u,v\} \in E_g} \in (\dR^{k})^{E_g}$ given $G^0  = g$. The choice of the orientation $(u,v)$ of the edge $\{u,v\}$ is such that $u$ is the parent of $v$ (any other deterministic convention is possible). We define $\vec \mu^1_g$ for $g \in \vec{\cGr_h}$ in the support of $\vec{\mu}^0$ analogously.

Note that $d = \dE_{\mu^0} \DEG(o) = \dE_\mu \DEG (o) (\cdot \times \dR^k)$ by assumption. We set $\DD^\delta = \dE_{\mu^\delta} \DEG (o) \in (0,\infty)^{\cZ^\delta}$. By construction, for $(b,l) \in \cZ^\delta$, $\DD^\delta(b,l) = \dE_{\mu} \DEG (o) (\{b\} \times B_l^{\delta})$. Since $\sum_{l} \DD(b,l) = d(b)$, we find, with $\sigma_d$ as in \eqref{eq:defJdmuh},  
\begin{eqnarray}
\sigma_{\DD^\delta} (\deg(o))  + I_d^\delta( \DD^\delta)& =& \sigma_d(\deg(o)) - \frac{1}{2} \sum_{(b,l) \in \cZ^\delta} \DD^\delta(b,l) \ln \gamma_b^{\delta} (l) \nonumber \\
& = &\sigma_d(\deg(o)) - \frac{1}{2} \dE_\mu \sum_{e \in E_G : o \in e} \ln  \gamma^\delta_{b_e} (l_e),\label{eq:TIII}
\end{eqnarray}
where the last sum is over all edges $e = \{o,v\}$ adjacent to the root $o$ so that $\xi_{G} (o,v) = (b_e,x_e)$ with $x_e \in B_{l_e}^\delta$.
On the other hand, we have 
\begin{equation}\label{eq:dhiedhe}
H(G^\delta) = H(G^0) - \sum_{g} \mu^0_h(g) \sum_{l \in (L^\delta)^{E_g}} \mu^1_g(B^\delta_l) \ln \mu^1_g(B^\delta_l),
\end{equation}
where the first sum is over all $g$ in the support of  $\mu^0_h$,  and for $l = (l_e)_{e \in E} \in (L^\delta)^{E}$, we have set 
$$
B^\delta_l = \bigtimes_{ l \in E} B^\delta_{l_e} \subset (\dR^k)^E.
$$
We next consider the random graph $G_\gamma$ and use that $G^0_\gamma = G^0$. We obtain the identity
\begin{eqnarray*}
H(G^\delta) - H(G^0) +  \DKL(G^\delta | G_\gamma^\delta) &=& - \sum_{g} \mu^0_h(g) \sum_{l \in (L^\delta)^{E_g}} \mu^1_g(B^\delta_l) \ln \prod_{e\in E_g} \gamma_{b_e}^\delta(l_e) \\
& =& - \sum_{g,l \in L^\delta,e \in E_g} \dP( G^0 = g , x_e \in B^\delta_{l}) \ln \gamma_{b_e}^\delta(l)\\
& = & - \dE_{\mu}\sum_{e \in E_G} \ln \gamma_{b_e}^\delta(l_e),
\end{eqnarray*}
where at the last line, as in \eqref{eq:TIII}, for $e = \{u,v\}$ oriented as $(u,v)$, we have set $\xi_{G} (u,v) = (b_e,x_e)$ with $x_e \in B_{l_e}^\delta$.
If $ e = \{u,v\} \in E_G$, let us denote by $G^{(uv)}$ the marked graph rooted at $v$ obtained by considering the connected component of $v$ in the graph $G$ when the edge $\{u,v\}$ is removed. We can rewrite the above expression as 
\begin{align}
& H(G^\delta) - H(G^0) +  \DKL(G^\delta | G_\gamma^\delta)  \label{eq:TI} \\
& \quad =  - \dE_{\mu}  \sum_{e \in E_G : o \in e}  \ln \gamma_{b_e}^\delta(l_e) - \dE_{\mu}  \sum_{v \in V_G :  v \sim o } \sum_{e \in E_{G^{(ov)}}} \ln \gamma_{b_e}^\delta(l_e).  \nonumber
\end{align}

Similarly, 
$$
H(\vec G^\delta ) = H(\vec G^0 ) - \sum_{g} \vec \mu^0_h(g) \sum_{l \in (L^\delta)^{E_g}} \vec \mu^1_g(B^\delta_l) \ln \vec \mu^1_g(B^\delta_l).
$$
It follows that 
$$
H(\vec G^\delta) - H(\vec G^0)+ \DKL(\vec G^\delta  | \vec G_\gamma^\delta) = - \dE_{\vec \mu}   \sum_{e \in E_{\vec G}} \ln \gamma_{b_e}^\delta(l_e).
$$
Using the definition of $\vec \mu$ in \eqref{eq:defvecmu}, we deduce that 
\begin{align*}
& \frac{\bar d}{2} \left (H(\vec G^\delta) - H(\vec G^0)+ \DKL(\vec G^\delta | \vec G_\gamma^\delta ) \right)  \; = - \frac{1}{2} \dE_{\mu} \sum_{v  \in V_G : v \sim o} \sum_{e : e \in (G,o,v)_h}\ln \gamma_{b_e}^\delta(l_e) \\
& =  -  \frac{1}{2} \dE_{\mu}  \sum_{e  \in E_G: o \in e}  \ln \gamma_{b_e}^\delta(l_e) - \frac{1}{2} \dE_{\mu} \sum_{v  \in V_G : v \sim o} \left( \sum_{e \in E_{G^{(ov)}}} \ln \gamma_{b_e}^\delta(l_e)  +  \sum_{e \in E_{G^{(vo)}}}\ln \gamma_{b_e}^\delta(l_e)   \right) \\
& =  -  \frac{1}{2} \dE_{\mu}  \sum_{e \in E_G: o \in e}  \ln \gamma_{b_e}^\delta(l_e) - \dE_{\mu} \sum_{v \in V_G  : v \sim o}  \sum_{e \in E_{G^{(ov)}}}\ln \gamma_{b_e}^\delta(l_e) ,
\end{align*}
where at the last line, we have used the invariance of $\mu$ (property (C1)). 

Consequently, if we combine this last expression with \eqref{eq:TIII} and \eqref{eq:TI}, we deduce that 
\begin{equation}\label{eq:sigmadeltad}
\Sigma^\delta_{d} (\mu^\delta,h) =   \DKL(G^\delta | G_\gamma^\delta ) - \frac{\bar d}{2} \DKL(\vec G^\delta | G_\gamma^\delta)  + \Sigma^0_d(\mu^0,h),
\end{equation}
where we have used that $\Sigma^0_d(\mu^0,h) = J_d(\mu^0,h)$ defined in \eqref{eq:defJdmuh} in Proposition \ref{prop:SIGMA1}.

The same argument gives 
\begin{equation}\label{eq:sigmadeltad2}
\vec \Sigma^\delta_{d} (\nu^\delta,h) =   \frac {\bar d} 2 \DKL ( \vec G^\delta  | \vec G^\delta_\gamma ) - \bar d \DKL ( \check G^\delta  |  \check G_\gamma^\delta ) + \DKL ( G^\delta | G^\delta_\gamma )  + \vec \Sigma^0_d(\nu^0,h).
\end{equation}
where $G,\check G,\vec G$ are uniform labeling of $\dot \nu,\check{\dot\nu}$ and $\nu$.

\paragraph{Convergence of relative entropy. } 
We next claim that 
\begin{equation}\label{eq:smoothedDKLG}
\lim_{\delta \to 0} \DKL(G^{\delta} | G_\gamma^{\delta}) = \DKL(G  | G_\gamma),
\end{equation}
\begin{equation}\label{eq:smoothedDKLvecG}
\lim_{\delta \to 0} \DKL(\vec G^{\delta} | \vec G_\gamma^{\delta}) = \DKL(\vec G  | \vec G_\gamma),
\end{equation}
\begin{equation}\label{eq:smoothedDKLvecGo}
\lim_{\delta \to 0}  \DKL((\vec G^{(o)})^{\delta} | (\vec G^{(o)}_\gamma)^{\delta}) = \DKL(\vec G^{(o)}  | \vec G^{(o)}_\gamma).
\end{equation}
The proofs of \eqref{eq:smoothedDKLG}-\eqref{eq:smoothedDKLvecG}-\eqref{eq:smoothedDKLvecG}  are identical and are consequences of a general result. Note that the above limits could be infinite. We only prove \eqref{eq:smoothedDKLG}. Using the notation of \eqref{eq:dhiedhe}, we write
\begin{align*}
 & \DKL(G^{\delta} | G_\gamma^{\delta}) = \sum_g \mu^0(g) \sum_{l \in (L_\delta)^{E(g)} } \mu^1_g (B_l^\delta) \ln \frac{ \mu^1_g (B_l^\delta)  }{ \gamma_g (B_l^\delta) } =  \sum_g \mu^0(g)  \DKL(\mu^{1,\delta}_g | \gamma_{g}^\delta) .
\end{align*}
where $\gamma_g = \otimes_{g \in E(g)} \gamma_{\xi^0(e)}$ and $\mu^{1,\delta}_g$, $\gamma^{\delta}_g$ are the distributions on $(L_\delta)^{E(g)}$ defined by $\mu^{1,\delta}_g (l)  =  \mu^1_g (B_l^\delta)$ and $\gamma^{\delta}_g = \gamma_g(B_l^\delta)$.   Lemma \ref{le:discretization}  implies that for any fixed $g$, 
 $$
\lim_{\delta \to 0}   \DKL(\mu^{1,\delta}_g | \gamma_{g}^\delta) = \DKL(\mu^1_g | \gamma_g).
 $$
Moreover, on $\Delta = \{ 2^{-i} : i \in \dN\}$, $\delta \to   \DKL(\mu^{1,\delta}_g | \gamma_{g}^\delta) $ is monotone (by the variational formula \eqref{eq:varDKL} below). Thus, by monotone convergence, the claim  \eqref{eq:smoothedDKLG} follows.

\paragraph{Proof of Theorem \ref{th:SIGMA1R}. } All ingredients are gathered to conclude the proof of Theorem \ref{th:SIGMA1R}. It remains to show that if $\mu \in \cP(\cGr_h)$ is $(\gamma,d)$-admissible then $\Sigma_{\gamma,d}(\mu,h)$ is given by \eqref{eq:defSigmad} and if $\nu \in \cP(\cGe_h)$ is $(\gamma,d)$-admissible then $\vec \Sigma_{\gamma,d}(\nu,h)$ is given by \eqref{eq:defvecSigmad}. We prove the latter. Recall that $\Sigma^0_d(\mu^0,h) < \infty$. In particular, since $\Sigma_{\gamma,d}(\mu,h)  = \lim_{\delta} \Sigma_d(\mu^\delta,h) \geq 0$, the condition (C5), $\DKL(G|G_\gamma) < \infty$ and \eqref{eq:smoothedDKLG}-\eqref{eq:smoothedDKLvecG} imply that $\DKL(\vec G| \vec G_\gamma) < \infty$ (as otherwise $\Sigma_{\gamma,d}(\mu,h) = -\infty$  from \eqref{eq:sigmadeltad}). From \eqref{eq:sigmadeltad}-\eqref{eq:smoothedDKLG}-\eqref{eq:smoothedDKLvecG}, it gives the claimed formula.  The argument is identical for $\vec \Sigma(\nu,h)$. \qed

\paragraph{Proof of Lemma \ref{le:casinfini}. } We have already checked that if $\mu$ is not $d$-admissible then $\Sigma_{\gamma,d}(\mu,h) = \infty$. We should prove that if $\mu$ is $d$-admissible, well-behaved and not $(\gamma,d)$-admissible then $\Sigma_{\gamma,d}(\mu,h) = \infty$. Let $G,\check G,\vec G$ be uniform labeling of $\mu,\check \mu,\vec \mu$ respectively. We have $\DKL(G|G_\gamma) = \infty$ by assumption. Let $1 \leq k \leq h$ be the largest $k$ such that either $\DKL((G)_k|(G_\gamma)_k) = \infty$ or $\DKL((\vec G)_k|(\vec G_\gamma)_k) = \infty$. If $\DKL((\vec G)_k|(\vec G_\gamma)_k) < \infty$ then \eqref{eq:sigmadeltad}-\eqref{eq:smoothedDKLG}-\eqref{eq:smoothedDKLvecG} applied to $k$ gives $\Sigma_{\gamma,d}(\mu_k,k) = \infty$. However, since the map $\mu \to \mu_k$ is continuous for the weak topology, the contraction principle gives 
$$
\Sigma_{\gamma,d}(\mu,h) \geq \Sigma_{\gamma,d}(\mu_k,k) =\infty,
$$
as requested. Otherwise $\DKL((\vec G)_k|(\vec G_\gamma)_k) = \infty$ and, by construction $\DKL((G)_{k-1}|(G_\gamma)_{k-1}) < \infty$. Since $\mu$ is well-behaved, we have $\DKL((\check G)_{k-1}|(\check G_\gamma)_{k-1}) < \infty$. Then \eqref{eq:sigmadeltad2}-\eqref{eq:smoothedDKLG}-\eqref{eq:smoothedDKLvecG}-\eqref{eq:smoothedDKLvecGo} applied to $k$ gives $\vec \Sigma_{\gamma,d}(\vec \mu_k,k) = \infty$. By contraction, it implies $\vec \Sigma_{\gamma,d}(\vec \mu,h) = \infty$. By Lemma \ref{le:SIGMAe}, we further get $\Sigma_{\gamma,d}(\mu,h) \geq \vec \Sigma_{\gamma,d}(\vec \mu,h) = \infty$. It concludes the proof that $\Sigma_{\gamma,d}(\mu,h) = \infty$. The same proof gives $\vec \Sigma_{\gamma,d}(\nu,h) = \infty$ if $\nu$ is well-behaved and not $(\gamma,d)$-admissible. \qed

\begin{remark}[Alternative expression for $\Sigma_{\gamma,d}(\mu,h)$]\label{rk:kavita}
 The recent work \cite{ramanan2023large} gives an alternative formula of $\Sigma_d(\mu^0,h)$ as the sum of $2h$ relative entropies. Using their expression, our discretization procedure and the general convergence of discretized relative entropies given by Lemma \ref{le:discretization}, it should be possible to extend their formula to our setting with real marks. 
\end{remark}

\subsection{Proof of Theorem \ref{th:SIGMADR}}

It follows from \cite[Subsection 5.4]{DeAn} and \cite[Section 6.1]{BoCa15} that  $U(H^0_n)_h$  satisfies a LDP with good rate function on $\cP( \cGr_h(\cB))$, if $\mu$ is admissible and $\DEG(o)$ has law $\pi$ under $\mu$,
$$
\Sigma^0_\pi (\mu,h) = \Sigma^0_{d} (\mu,h) - \Sigma^0_d (\pi,1),
$$
and $\Sigma^\delta_\pi (\mu,h) = \infty$ otherwise. We may then reproduce the above argument. We find that 
$U(H^\delta_n)_h$  satisfies a LDP with good rate function on $\cP( \cGr_h(\cZ^\delta))$, if $\mu$ is admissible and $\DEG^0(o)$ has law $\pi$ under $\mu$,
$$
\Sigma^\delta_\pi (\mu,h) = \Sigma^\delta_{d} (\mu,h) - \Sigma^0_d (\pi,1).
$$
 Using Dawson-G\"artner's Theorem,  $U(H_n)_h$ satisfies a LDP with good rate function:
$$
\Sigma_{\gamma,\pi} (\mu,h) = \lim_{\delta \to 0}\Sigma^\delta_{\pi} (\mu,h).
$$
Similarly, $\vec U(H^\delta_n)_h$ satisfies a LDP with good rate function
$$
\vec \Sigma^\delta_{\pi} (\nu,h) = \vec \Sigma^\delta_{d} (\nu,h) - \vec \Sigma^0_d (\pi,1).
$$
if $\nu = \vec \mu$ for some $\mu \in \cP(\cGr(\cZ^{\delta})$ which is $(\gamma,\pi)$-admissible. Otherwise, $\vec \Sigma^\delta_{\gamma,\pi} (\nu,h) = \infty$. We may then conclude as in the proof of Theorem \ref{th:SIGMADR}. Note that in this case, since $\pi$ has bounded support, for any integer number $1 \leq k \leq h$, $\DKL((G)_k | (G_\gamma )_k < \infty$ implies that $\DKL((\check G)_k | (\check G_\gamma )_k < \infty$ where $G,\check G$ are uniform labeling of $\mu$ and $\check \mu$. \qed

%\begin{remark}
%We may rewrite $\Sigma_{\pi}(\mu,h)$ in terms of relative entropies.  Indeed, as already pointed in the proof of Theorem \ref{th:SIGMAD},
%$$
%\Sigma(\mu^0,h) - \Sigma(\mu^0,1) = \Sigma_{\pi} (\mu^0,h),
%$$
%where $\Sigma_{\pi} (\mu^0,h)$ was defined in Theorem \ref{th:SIGMAD}. Moreover,  
%$$
%\DKL ((G,o)_{h} | (G_\gamma,o)_{h} ) = \sum_{g} \mu^0(g) \int \mu^1_g \ln \left( \frac{d\mu^1_{g |g_1} )}{d\mu_\gamma^1_{g |g_1))}}\right)
%$$
%\end{remark}

\subsection{Proof of Theorem \ref{th:SIGMAER}}

Theorem \ref{th:SIGMAER} is an easy corollary of Theorem \ref{th:SIGMA1R}. The argument is given in \cite[Section 6.3]{BoCa15}. It suffices to check that $j_d$ is the rate function for $2 |E_n| / n$, the average degree in $G'_n$ and invoke the large deviation principles for mixtures, Biggins \cite[Theorem 5(b)]{MR2081460}.\qed

\subsection{Proof of Lemma \ref{le:minimizer}}

We first assume that $\gamma$ has a finite support. Then the lemma is a  consequence of \cite[Lemma 4.10]{MR4492967} (see also Remark 4.12 there and  \cite{ramanan2023large}). In the general case, this follows from \eqref{eq:Sigmalim}. Indeed, using the notation from \eqref{eq:Sigmalim}, if $\mu \ne \UGW(\gamma,\pi)_h$ then there exists $\delta >0$ such that $\mu^{\delta} \ne \UGW(\gamma,\pi)_h^{\delta}$. Hence $\Sigma_d^{\delta} (\mu^{\delta},h) > \Sigma_d^{\delta} (\UGW(\gamma,\pi)^{\delta}_h,h) = 0$ from what precedes. Since the limit in \eqref{eq:Sigmalim} in non-decreasing, the claim follows. \qed

\section{Microstate entropy for networks}

\label{sec:networks}
In this section, we extend the previous results on entropy for locally finite marked graphs to the locally finite networks defined in \cite{MR2023650,BordCap}. In short, networks are marked graphs where the definition of neighborhoods depend on the marks.

\subsection{Benjamini-Schramm convergence}

%\paragraph{Networks} In short, networks are marked graphs which are projective limits of locally finite  marked graphs. More precisely, as above, we consider marked graphs $G = (V,E,\xi)$ with weights on  $\mathcal Z$ where $\xi(u,v) = \xi(v,u)^*$ for all edges $\{u,v\} \in E$ (and $*$ is an involution as in Subsection \ref{susbec:involution}). We assume that we are given a collection of projection operators on marked graphs $G \to G^\veps$ indexed by $\veps >0$ such that for $0 < \veps' < \veps$, 
%$$
%(G^{\veps})^{\veps'} = G^{\veps'},  
%$$
%and such that on locally marked 
%We then say that the marked graph $G$ is a {\em network} if for all $\eps > 0$, $G^\eps$ is locally finite (as a marked graph). We denote by $\cNr = \cNr ( \mathcal Z)$, the set of connected rooted networks up to isomorphisms. An element of $\cNr$ will be called an unlabeled rooted network.

\paragraph{Euclidean network}  In short, networks are marked graphs which are projective limits of locally finite  marked graphs. We will only consider here networks on the marked set $\mathcal Z = \dR^k$  and $*$ is an involution as in Subsection \ref{susbec:involution}. As above, we consider marked graphs $G = (V,E,\xi)$ with weights on  $\mathcal Z$ where $\xi(u,v) = \xi(v,u)^*$ for all edges $\{u,v\} \in E$. For $z \in \cZ$, $|z|$ denotes the Euclidean norm.{{
For $ \varepsilon > 0$, we denote by $G^\eps = (V,E^\eps,\xi) $ the marked graph  spanned by all edges $\{u,v\} \in E$ such that $|\xi (u,v)| \geq \eps$.  We assume that for all edges $\{u,v\} \in E$,}}
\begin{equation}\label{eq:nonzero}
|\xi(u,v)| >0.
\end{equation}
We then say that the marked graph $G$ is a {\em (Euclidean) network} if for all $\eps > 0$, $G^\eps$ is locally finite (as a marked graph). We denote by $\cNr = \cNr ( \mathcal Z)$, the set of connected rooted networks up to isomorphisms. An element of $\cNr$ will be called an unlabeled rooted network.

\paragraph{Local weak topology} If $ g= (G,o)$ is a rooted network, for real $\eps > 0$ and integer number $r \geq 0$, we denote by $g^\eps_r = (g^\eps)_r$ the marked $r$-neighborhood of $g^\eps$ for the graph metric, we call $g^\eps_r$ the $(\eps,r)$-neighborhood of $g$. The local topology on $\cN_r$  is the product topology inherited from projections around the root. More precisely, let $g = (G,o)$, $g' = (G',o') \in \cNr$, we say that the triple $(r,\delta,\eps)$ is good for $(g,g')$ if there exists an isomorphism $\psi$ between  $\bar g^{\eps}_{r}$ and $\bar g'^{\eps}_{r}$ such that  
$$\sup_{u,v\in V(G_{r})}d ( \xi(u,v), \xi'_{(\psi(u),\psi(v))}) \leq \delta,$$
where $d((b,x),(b',x')) = \IND( b \ne b') + |x -x'|$.  
We then endow $\cNr$ with the local topology, which is compatible with the distance
$$\DLOC(g,g')=\inf\left\{ \frac{1}{1+r} +\delta + \eps :(r,\delta,\eps)  \mbox{ is  good for  }(g,g')\right\}.$$

Note that condition \eqref{eq:nonzero} is here to guarantee that $\DLOC$ is indeed a distance.  The set $\cNr$ equipped with $\DLOC$ is a complete separable metric space. We denote by $\cP( \cNr)$ the set of probability measures on $\cNr$ equipped with the topology of weak convergence. %It is compatible with the Lévy-Prohorov distance associated to $\DLOC$ which we denote by $\DWLOC$. 

\paragraph{Neighborhood distribution, unimodularity and invariance}
For a finite network $G = (V,E,\xi)$, we define $U(G)$ as in \eqref{eq:defU} and 
the Benjamini-Schramm convergence is defined as for marked graphs. For integer number $h \geq 1$, $U(G_n)_h$ is the distribution of the depth-$h$ neighborhood. Unimodularity is defined as in \eqref{eq:defunimod} except that $f$ should now be a non-negative measurable function on $\cNe$, the edge-rooted networks up to isomorphisms. The notion of invariance for the law of depth-$h$ neighborhood extends also.

% inutile en l'état...
%\paragraph{Standard labeling}
%Since a vertex in a network may have infinite degree, the random labeling used to randomly label marked graphs is not well-defined anymore. We define instead  a new labeling defined as follows which we call standard labeling. Recall that $\dN^f$ defined in \eqref{eq:defNf} denote the set of finite integer sequences. 
%
%Let $(G,o)$ be a random rooted network. We perform the breadth-first search tree on the network $G$ started at the root $o$ by non-increasing value of $|\xi(u,v)|$ where ties between vertices are broken uniformly at random and independently of $(G,o)$.  This defines a random marked graph $(G',o)$ on a subset of $\dN^f$ whose law depends only the equivalence class of $(G,o)$. A vertex $v$ at distance $k$ from the root receives a label in $\dN^k$, if ${\bm i} = (i_1,\ldots,i_{k-1})$ is the label of its parent in the search tree, $v$ gets the label $(i_1,\ldots,i_{k-1},j) = ({\bm i},j)$ if it is the $j$-th offspring of its parent in the random ordering, $|\xi(u,1)| \geq |\xi(u,2)| \geq \ldots$. We call this random rooted marked graph, the {\em standard labeling}. We note that the standard labeling is preserved by the projection $(G,o) \to (G^{\eps},o)$, $\eps >0$ (in distribution if random tie-breaking are used).
%
%
%
%In the sequel, when the choice of the root is clear, we will often omit the root and write $G$ in place of $(G,o)$ of $(G,\rho)$. 

\subsection{Entropy for networks}

\paragraph{Random network with a given intensity}  
For integer number $n \geq 1$, let $\gamma_n$ be a probability measure on $\dR^k$ which is $*$-invariant in the sense that for all Borel $A \subset \dR^k$, $ \gamma_{n}(A) = \gamma_{n}(A^*)$. We assume that as $n \to \infty$, for all Borel $A \subset \dR^k$, $0 \notin A$, 
\begin{equation}\label{eq:gamman2N0}
\lim_{n \to \infty} n \gamma_{n}(A) = \Lambda (A),
\end{equation}
where $\Lambda$ is a non trivial Radon measure on $\dR^k \backslash \{0\}$ which we call the intensity measure of the model. We assume that $\Lambda$ is finite at infinity, that is, if $B_1$ is the unit ball of $\dR^k$, we assume that
\begin{equation}\label{eq:Lambdabdd0}
\Lambda(\dR^k \backslash B_1) < \infty.
\end{equation}

Let $V_n = \{1,\ldots,n\}$ and $K_n = \{\{u,v\} : 1 \leq u < v \leq n\}$. We define a random network  $G_n = (V_n,E_n,\xi_n) $ by sampling $\xi(u,v)$ for each $\{u,v\} \in K_n$, $u < v$, independently according to the probability distribution $\gamma_{n}$. Finally, we set $\xi_n(v,u)= \xi_n(u,v)^*$ and $E_n = \{ \{ u, v\} \in K_n : \xi_n(u,v) \ne 0 \}$.

For example, if $k = 1$ and  $\gamma_n  = (1-d/n) \delta_0 + (d/n ) \delta_1$ for some $d >0$, then the network $G_n$ is an Erd\H{o}s-Rényi random graph with edge probability $d/n$. In this case, we have 
$\Lambda = d \delta_1.$
 If $k = 1$ and $\gamma_n ( [0,t] ) = e^{-1 / ( nt^\alpha )}$ for all $t > 0$ and some $\alpha >0$, then $G_n$ is the mean-field model of distance defined in \cite{MR2023650} when $\alpha$ is an integer number (the weights  in \cite{MR2023650} are the inverse of our weights). Or more generally, $\gamma_{n}$ is the distribution of $X/n^{\alpha}$ where $X$ is a non-negative  random variable so that $P(|X|\ge t)\simeq t^{-\alpha}$ when $t$ goes to infinity.
  In this case, the intensity measure $\Lambda$ is the measure on $\dR_+$, 
\begin{equation*}
\Lambda (dt) = \alpha t^{-\alpha -1}  dt.
\end{equation*}
Observe that in this case $\Lambda(\mathbb C\backslash\{0\})=+\infty$.

\paragraph{LDP for random networks} We now state our result on the large deviation  principle  for the sequence of random networks $(G_n)$ with weight distribution $\gamma_n$ and asymptotic intensity $\Lambda$.

\begin{theorem}[Entropy for networks]\label{th:SIGMA1N}
For any integer number $h \geq 1$, $U(G_n)_h$ satisfies a LDP on $\cP(\cNr_h)$ with rate $n$ and good rate function $\Sigma_{\Lambda} (\mu,h)$ defined below.  Moreover $U(G_n)$ satisfies a LDP on $\cP(\cNr)$  with rate $n$ and good rate function $\Sigma_{\Lambda} (\mu) = \lim_{h} \Sigma_{\Lambda}(\mu_h,h)$ 
\end{theorem}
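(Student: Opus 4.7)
The plan is to reduce to Theorem \ref{th:SIGMAER} by truncating the edges of small weight, and then pass to the projective limit via Dawson-G\"artner's theorem. This mirrors the strategy used in the proof of Theorem \ref{th:SIGMA1R}, where the quantization mesh $\delta$ of the mark space played the role that our truncation threshold $\epsilon$ will play here: we simply discretize in a different direction of the mark space, trimming marks close to $0$ rather than binning the whole of $\dR^k$.

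Fix $\epsilon > 0$ and let $G_n^\epsilon$ denote the subgraph of $G_n$ obtained by keeping only those edges $\{u,v\}$ with $|\xi_n(u,v)| \geq \epsilon$. Write $p_{n,\epsilon} = \gamma_n(\{|x| \geq \epsilon\})$ and $\gamma_n^\epsilon(A) = \gamma_n(A \cap \{|x| \geq \epsilon\})/p_{n,\epsilon}$. Since $\{|x| \geq \epsilon\}$ is a Borel set not containing $0$, assumptions \eqref{eq:gamman2N0}--\eqref{eq:Lambdabdd0} give $n p_{n,\epsilon} \to \bar d_\epsilon := \Lambda(\{|x| \geq \epsilon\}) \in (0,\infty)$, and the same reasoning applied to $A \cap \{|x| \geq \epsilon\}$ yields $\gamma_n^\epsilon(A) \to \Lambda^\epsilon(A) := \Lambda(A \cap \{|x| \geq \epsilon\})/\bar d_\epsilon$ for every Borel set $A \subset \dR^k$. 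Hence $G_n^\epsilon$ is precisely the marked Erd\H{o}s-R\'enyi graph of Subsection \ref{subsec:LDPER}, with edge probability $p_{n,\epsilon}$ and edge-weight distribution $\gamma_n^\epsilon$ taking values in $\dR^k \setminus B_\epsilon$. Theorem \ref{th:SIGMAER} then provides a LDP for $U(G_n^\epsilon)_h$ on $\cP(\cGr_h(\dR^k \setminus B_\epsilon))$ at speed $n$ with good rate function $\Sigma^{\rm ER}_{\Lambda^\epsilon,\bar d_\epsilon}(\cdot,h)$.

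By the definition of $\DLOC$ on $\cNr$ together with condition \eqref{eq:nonzero}, the truncation map $g \mapsto g^\epsilon$ is continuous from $\cNr$ to $\cGr(\dR^k \setminus B_\epsilon)$, and for $0 < \epsilon < \epsilon'$ the identity $(g^\epsilon)^{\epsilon'} = g^{\epsilon'}$ holds with continuous projections. Taking $\epsilon_i = 2^{-i}$, the family $\bigl(\cGr_h(\dR^k \setminus B_{\epsilon_i})\bigr)_{i \in \dN}$ forms a projective system whose inverse limit, equipped with the initial topology, can be identified with $(\cNr_h, \DLOC)$. Dawson-G\"artner's theorem \cite[Theorem 4.6.1]{De-Ze} then yields the LDP for $U(G_n)_h$ on $\cP(\cNr_h)$ at speed $n$ with good rate function
$$
\Sigma_\Lambda(\mu,h) \;=\; \sup_{i \in \dN} \Sigma^{\rm ER}_{\Lambda^{\epsilon_i},\bar d_{\epsilon_i}}(\mu^{\epsilon_i},h) \;=\; \lim_{\epsilon \to 0} \Sigma^{\rm ER}_{\Lambda^\epsilon,\bar d_\epsilon}(\mu^\epsilon,h),
$$
where $\mu^\epsilon$ denotes the push-forward of $\mu$ by $g \mapsto g^\epsilon$ and the second equality uses the monotonicity $\epsilon \mapsto \Sigma^{\rm ER}_{\Lambda^\epsilon,\bar d_\epsilon}(\mu^\epsilon,h)$ is non-increasing, inherited from the fact that a finer truncation enlarges the observable $\sigma$-algebra, exactly as in \eqref{eq:Sigmalim}. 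A second, standard application of Dawson-G\"artner along the projective system $(\cNr_h, \mu \mapsto \mu_h)_{h \geq 1}$ then gives the LDP on $\cP(\cNr)$ with good rate function $\Sigma_\Lambda(\mu) = \lim_h \Sigma_\Lambda(\mu_h,h)$.

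The main technical obstacle is verifying that the local topology on $\cNr_h$ really coincides with the inverse-limit topology generated by the truncations $g \mapsto g^{\epsilon_i}$, without which Dawson-G\"artner cannot be invoked. This reduces to checking that $g_n \to g$ in $(\cNr_h, \DLOC)$ if and only if $g_n^{\epsilon_i} \to g^{\epsilon_i}$ in $\cGr_h(\dR^k \setminus B_{\epsilon_i})$ for every $i$, and it is precisely here that assumption \eqref{eq:nonzero} is essential, since it prevents edges from ``disappearing'' in the limit $\epsilon \to 0$ and is what makes $\DLOC$ a well-defined distance. A secondary point worth emphasizing is that $\Lambda$ itself need not have finite total mass (for instance, in the $\alpha$-stable case \eqref{eq:LambdaHT}, $\Lambda$ blows up near the origin), yet $\Lambda|_{\{|x| \geq \epsilon\}}$ is finite for every $\epsilon > 0$, so Theorem \ref{th:SIGMAER} applies unchanged at each truncation level; all divergent behavior near zero is absorbed into the supremum/limit defining $\Sigma_\Lambda(\mu,h)$.
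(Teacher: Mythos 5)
Your overall strategy is the same as the paper's: truncate edges with small weight at threshold $\eps$, observe that $G_n^\eps$ is a marked Erd\H{o}s--R\'enyi graph in the sense of Subsection \ref{subsec:LDPER}, invoke Theorem \ref{th:SIGMAER} to get an LDP for $U(G_n^\eps)_h$, and then assemble the family of LDPs by Dawson--G\"artner. You also correctly identify the convergence of the truncated parameters $n p_{n,\eps} \to \Lambda(\{|x|\geq\eps\})$ and $\gamma_n^\eps \to \Lambda_\eps$, and the final application of Dawson--G\"artner in $h$.

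However, there is a genuine gap in your justification of the projective system: you assert that ``the truncation map $g \mapsto g^\eps$ is continuous from $\cNr$ to $\cGr(\dR^k\setminus B_\eps)$'', attributing this to condition \eqref{eq:nonzero} and the definition of $\DLOC$. This is false in general, and the paper is explicit about it: a small perturbation of a network with an edge of weight exactly $\eps$ can delete or retain that edge in the truncated graph, so $g\mapsto g^\eps$ is discontinuous precisely at such $g$. Condition \eqref{eq:nonzero} rules out zero weights, not weights exactly equal to $\eps$. The paper resolves this by introducing the set $\mathcal E(\mu)$ of thresholds $\eps$ at which $\nu\mapsto\nu^\eps$ is continuous at $\mu$ (i.e.\ $\dP_\mu(\exists\,\{u,v\}:|\xi(u,v)|=\eps)=0$), noting that $0$ is an accumulation point of $\mathcal E(\mu)$, and defining $\Sigma_\Lambda(\mu,h)$ as the limit along $\eps\to 0$, $\eps\in\mathcal E(\mu)$ in \eqref{eq:SigmaLambda}. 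Your version, which takes a supremum along the fixed dyadic sequence $\eps_i = 2^{-i}$, does not address this: if $\mu$ charges the event that some mark has modulus exactly $2^{-i}$, the corresponding projection is not continuous at $\mu$ and the resulting formula need not agree with the rate function. You should replace the blanket continuity claim with the observation that the truncation maps are continuous on a dense $G_\delta$ of thresholds depending on $\mu$, and formulate the rate function accordingly.
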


We now define $\Sigma_{\Lambda}(\mu,h)$ for $\mu \in \cP ( \cNr_h)$. For $\mu \in \cP( \cNr)$ and for $\eps >0$, we define $\mu^{\eps}$ as the push-forward of $\mu$ by the $\cNr \to \cGr$ map :  $G \mapsto G^\eps$. The map $\mu \to \mu^{\veps}$ is not continuous. However, it is continuous at $\mu$ if $\dP_{\mu} ( \exists \{u,v\} \in E : |\xi(u,v)| =  \veps )  = 0$. We denote by $\mathcal E(\mu)$ the set of $\varepsilon$ such that $\nu \to \nu^{\veps}$ is continuous at $\mu$. From what precedes, $0$ is an accumulation point of $\mathcal E(\mu)$. We set
\begin{equation}\label{eq:defdeps}
d_\eps = \int \IND ( |z|  \geq \eps) d \Lambda( z).
\end{equation}
Note that $d_\eps < 0$ by Assumption \eqref{eq:Lambdabdd0} and since we assumed that $\Lambda$ is Radon. For $\eps$ small enough, $d_\veps >0$ and we can define the probability measure: for $A \subset \dR^k$,
\begin{equation}\label{eq:defLambdaeps}
\Lambda_\eps (A)  = \frac{ \Lambda (A \cap \{ z : |z| \geq \veps\})}{d_\veps}.
\end{equation}
Recall the rate $\Sigma^{\tiny {\rm ER}}_{\gamma,d}$ appearing in Theorem \ref{th:SIGMAER}. Finally, the rate function $\Sigma_{\Lambda}(\mu,h)$ is defined for $\mu \in \cP ( \cNr_h)$ as
\begin{equation}\label{eq:SigmaLambda}
\Sigma_{\Lambda}  (\mu,h) =  \lim_{\eps \to 0, \eps \in \mathcal E(\mu)} \Sigma^{\tiny {\rm ER}}_{\Lambda_\eps,d_\veps}  (\mu^\veps,h),
\end{equation}
where the limit is taken along continuity points at $\mu$ of the map $\nu \to \nu^{\veps}$.

\paragraph{Global minimizer of the entropy}

The unique global minimizer of the entropy $\Sigma_{\Lambda}$ is a natural generalization of the {\em Poisson Weighted Infinite Tree} introduced by Aldous in his resolution of the $\zeta(2)$-conjecture of Parisi and M\'ezard for the random assignment problem \cite{zbMATH00123709,zbMATH01655645,MR2023650}. 

For $\Lambda \ne 0$, this random rooted network $g = (V,E,\xi,o)$ is defined as follows. The vertex set is a subset of $\dN^f$ defined in \eqref{eq:defNf}. For each $v \in \dN^f$, we sample an independent Poisson point process $\Xi_v$ on $\dR^k \backslash \{0\}$ with intensity $\Lambda$. From assumption \eqref{eq:Lambdabdd0}, the support of $\Xi_v$ is almost surely bounded and we may order the points of the process as 
$$
\Xi_v = \{ \xi_{v,k} : | \xi_{v,1} | \geq | \xi_{v,2} | \geq  \ldots \},
$$ 
where we break ties uniformly at random. If $\Lambda(\dR^k) = d$ is finite then $d(v) = \Xi_v( \dR^k)$ is a Poisson variable with mean $d$. If $d = \infty$, then $\Xi_v$ is a.s. infinite. Now, we consider the random network on $\dN^f$ where between $v = (i_1,i_2,\ldots,i_h)$ to $u = (i_1,\ldots,\ldots,i_{h},k) = (v,j)$ with $1 \leq j \leq d(v)$, we place an edge $\{u,v\}$ with associated mark 
$$
\xi(u,v) = \xi(v,u)^* = \xi_{v,k}.
$$ 
If $d = \infty$, then this network is a.s connected. If $d < \infty$, we define $ V \subset \dN^f$ as the connected component of the root $o$. We denote by $\PWIT(\Lambda) \in \cP(\cNr)$ the distribution of this random rooted network. For example if $\Lambda = d \gamma$ where $\gamma$ is a $*$-invariant probability measure on $\dR^k$, then $\PWIT(\Lambda)$ coincides with $\UGW(\gamma,\Poi(d))$.  As announced we have the following result.

\begin{lemma}[Minimizer of the network entropy]\label{le:minimizerPWIT}
For any $h \geq 1$ and $\mu \in \cP(\cNr_h)$, we have  $\Sigma_{\Lambda} (\mu,h) = 0$ if and only if  $\mu = \PWIT(\Lambda)_h$.\end{lemma}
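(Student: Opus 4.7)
The plan is to reduce the statement to Lemma \ref{le:minimizer} via the definition \eqref{eq:SigmaLambda}, using the crucial observation that the $\veps$-truncation of the PWIT is itself a unimodular Galton--Watson tree with Poisson offspring. Specifically, for $\veps > 0$ with $d_\veps > 0$, the restriction property of Poisson point processes implies that the restriction of each $\Xi_v$ to $\{z : |z| \geq \veps\}$ is an independent Poisson point process of intensity $\Lambda \cdot \IND_{|\cdot|\geq \veps}$, whose total mass is $\Poi(d_\veps)$ and whose points, conditionally on this total mass, are i.i.d.\ with law $\Lambda_\veps$ defined in \eqref{eq:defLambdaeps}. This is precisely the offspring structure of $\UGW(\Lambda_\veps, \Poi(d_\veps))$, so as elements of $\cP(\cNr)$,
$$
\PWIT(\Lambda)^\veps \;=\; \UGW(\Lambda_\veps, \Poi(d_\veps)).
$$

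For the \emph{if} direction, suppose $\mu = \PWIT(\Lambda)_h$. Then for every $\veps \in \mathcal E(\mu)$ with $d_\veps > 0$, the identity above gives $\mu^\veps = \UGW(\Lambda_\veps, \Poi(d_\veps))_h$, and Lemma \ref{le:minimizer} yields $\Sigma^{\tiny {\rm ER}}_{\Lambda_\veps, d_\veps}(\mu^\veps, h) = 0$. Passing to the limit along $\mathcal E(\mu) \ni \veps \downarrow 0$ in \eqref{eq:SigmaLambda} gives $\Sigma_{\Lambda}(\mu, h) = 0$.

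For the \emph{only if} direction, assume $\Sigma_{\Lambda}(\mu, h) = 0$. Repeating the monotonicity argument of \eqref{eq:Sigmalim}, the map $\veps \mapsto \Sigma^{\tiny {\rm ER}}_{\Lambda_\veps, d_\veps}(\mu^\veps, h)$ is non-decreasing as $\veps$ decreases, since finer thresholds reveal strictly more information about $\mu$; hence the limit in \eqref{eq:SigmaLambda} is in fact a supremum. Non-negativity of the rate function then forces $\Sigma^{\tiny {\rm ER}}_{\Lambda_\veps, d_\veps}(\mu^\veps, h) = 0$ for every $\veps \in \mathcal E(\mu)$ with $d_\veps > 0$, whence by Lemma \ref{le:minimizer} and the identification above, $\mu^\veps = \PWIT(\Lambda)^\veps_h$. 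Since $0$ is an accumulation point of $\mathcal E(\mu)$ and the local weak topology on $\cNr_h$ arises as the projective limit over the $\veps$-thresholds (the map $\veps \to 0$ recovers the full network, as in the construction of $\DLOC$), the family $(\mu^\veps)_{\veps \in \mathcal E(\mu)}$ determines $\mu$ uniquely, and thus $\mu = \PWIT(\Lambda)_h$.

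The main delicate points are the monotonicity of $\veps \mapsto \Sigma^{\tiny {\rm ER}}_{\Lambda_\veps, d_\veps}(\mu^\veps, h)$ and the projective-limit recovery of $\mu \in \cP(\cNr_h)$ from its thresholdings along any sequence $\veps_n \downarrow 0$ in $\mathcal E(\mu)$; both are entirely analogous to the mark-quantization argument used to establish \eqref{eq:Sigmalim}, and follow from the very definition of the local topology on networks.
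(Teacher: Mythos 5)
Your proposal is correct and takes essentially the same route as the paper: identify $\PWIT(\Lambda)^\veps$ with $\UGW(\Lambda_\veps,\Poi(d_\veps))$ via Poisson thinning, invoke Lemma \ref{le:minimizer} at each truncation level, and conclude by exploiting that the Dawson--G\"artner limit in \eqref{eq:SigmaLambda} is a monotone supremum of non-negative terms. The only cosmetic difference is that the paper argues the converse contrapositively ($\mu \neq \PWIT(\Lambda)_h$ implies some $\mu^\veps \neq \PWIT(\Lambda)^\veps_h$), while you argue directly by forcing every truncated entropy to vanish; these are equivalent.
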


\subsection{Proof of Theorem \ref{th:SIGMA1N}}

Fix $\veps >0$ and consider the marked random graph $G_n^\veps$. By construction, $G_n^\veps$ has distribution $G_n'$ defined in Subsection \ref{subsec:LDPER} with parameter $(\gamma_{n,\veps},d_{n,\veps})$ 
$$
d_{n,\veps} = n \gamma_n( |z| \geq \eps),
$$
and $\gamma_{n,\veps} (\cdot) = \gamma_n (\cdot \cap \{  |z| \geq \eps \}) / \gamma_n( |z| \geq \eps)$ (if $ \gamma_n( |z| \geq \eps) = 0$ then $\gamma_{n,\veps}$ is arbitrary). Since $\Lambda \ne 0$, $d(\veps) > 0$ for all $\veps$ small enough. Also, as $n\to \infty$, $d_{n,\veps}$ converges to $d_\veps$ and $\gamma_{n,\veps}$ toward $\Lambda_{\veps}$ because we assumed that $\eps\in\mathcal E(\mu)$. It follows that from Theorem \ref{th:SIGMAER} that $U(G_n)^\eps_h$ satisfies a LDP with good rate function $\Sigma^{\tiny {\rm ER}}_{\Lambda_\eps,d_\veps} $. Theorem \ref{th:SIGMA1N} is then a consequence of  Dawson-G\"artner's Theorem \cite[Theorem 4.6.1]{De-Ze}. \qed

\subsection{Proof of Lemma \ref{le:minimizerPWIT}}

Recall the definition of truncation map $\mu \to \mu^{\veps}$ below Theorem \ref{th:SIGMA1N}. For any $\veps >0$, let $d_\veps$ and $\Lambda_\veps$ be as in \eqref{eq:defdeps}-\eqref{eq:defLambdaeps}. The Poisson thinning property implies that $\PWIT(\Lambda)^\veps$ is equal to $\UGW(\Lambda_\veps, \Poi(d_\veps))$. Thus Lemma \ref{le:minimizer} implies that $\Sigma^{\tiny {\rm ER}}_{\Lambda_\veps,d_\veps}(\PWIT(\Lambda)^\veps_h,h) = 0$. Hence from \eqref{eq:SigmaLambda}, $\Sigma_{\Lambda} ( \PWIT(\Lambda )_h,h ) = 0$. Now, if $\mu \ne \PWIT(\Lambda)_h \in \cP (\cNr_h)$, there exists $\veps >0$ such that $\mu^{\veps} \ne \PWIT(\Lambda)^\veps_h$. Again, Lemma \ref{le:minimizer} implies that $\Sigma^{\tiny {\rm ER}}_{\Lambda_\veps,d_\veps} (\mu^\veps,h) > 0$. Since the limit in \eqref{eq:SigmaLambda} in non-decreasing (Dawson-G\"artner's Theorem \cite[Theorem 4.6.1]{De-Ze}), the conclusion follows.

\section{Large deviations for  the ESD of heavy-tailed matrices}

\label{sec:LDPHT}

 In this section, we explain how a large deviation  principle  for the ESD of heavy-tailed matrices can be deduced by contraction from the large deviation  principle   for marked graphs that we studied in the previous section.  To this end, we first discuss how to associate an operator to a marked graph and discuss the smooth dependence of its 
 ESD in terms of the neighborhood distribution of these graphs. We then deduce by the contraction principle the large deviation  principle  for the ESD stated in Theorem \ref{thA}, Theorem \ref{thB} and Theorem \ref{LDPHT}.

\subsection{ESD of sparse matrices and associated graphs}

The set $\dC \simeq \dR^2$ is equipped with the involution $z \to \bar z$.  We associate to an Hermitian matrix $A \in M_n(\dC)$ a marked graph  $G(A)$ on vertex set $V_n = \{1 , \ldots, n \}$, edge set $E_A = \{ (i,j) : |A_{ij}| \ne 0 \}$ and weight $\xi(i,j) = \bar \xi(j,i) = A_{ij}$.

Conversely, if $G = (V,E,\xi)$ is a marked graph on $\dC$ such that for all $u \in V$, 
\begin{equation}\label{eq:hypAG}
\sum_{v : \{u,v \} \in E} |\xi(u,v)|^2 < \infty,
\end{equation}
we may associate an operator $A_G$ defined on $\ell^0(V)$, the set of compactly supported functions of $\ell^2(V)$, by for $\psi \in \ell^0(V)$,
\begin{equation}\label{eq:degAG}
A_G \psi (u)  = \sum_{v : \{u,v \} \in E} \xi(u,v) \psi(v) .
\end{equation}
Note that if $G = G(A)$ then $A_G = A$. Note also that condition \eqref{eq:hypAG} is satisfied if $G$ is a locally finite marked graph. Condition \eqref{eq:hypAG} implies that $G$ is a network as defined in Section \ref{sec:networks}.  If $A_G$ has a unique self-adjoint extension, we may define unambiguously its spectral measure at vector $\psi \in \ell^0 (V)$ $L^\psi_{A_G} \in \cP(\dR)$ by  $L^{\psi}_{A_G}(f)=\langle \psi, f(A_{G})\psi \rangle$ for all $f$ bounded continuous. 
If $G = G(A)$ is the marked graph graph associated to an Hermitian $n\times n$ matrix $A$, then the ESD defined in \eqref{def:ESD} is the average of spectral measures: 
\begin{equation}\label{eq:asmf}
L_A = \frac{1}{n} \sum_{v = 1}^n L^{e_v}_{A} = \dE_{U(G(A))} [  \mu^{e_o}_{A_G}],  
\end{equation}
where $o$ is the uniformly distributed root and $e_v$ is the canonical vector $e_v (u) = \IND(v = u)$.

\subsection{Continuity of mean spectral measure}

If $Y$ is an Hermitian matrix, the measure $L_{Y}$ belongs to the set $\mathcal P(\mathbb R)$ of probability measures on the real line. We equip $\mathcal P(\mathbb R)$ with the weak topology where a sequence $\mu_{n}$ converges towards $\mu$ iff 
   $$\lim_{n\rightarrow\infty}\int f(x)d\mu_{n}(x)=\int f(x)d\mu(x)$$ for every bounded continuous function $f$. 
 This space is Polish. We equip it with the following distance
 $$d (\mu,\nu)=\sup_{\|f\|_{L}\le 1, \|f\|_{\TV}\le 1} \int f d(\mu-\nu) $$
 where 
 $$\|f\|_{L}=\sup_{x\neq y}\frac{|f(x)-f(y)|}{|x-y|}\, \quad\mbox{ and }\quad  \|f\|_{\TV}=\sup \left\{\sum_{k}|f(x_{k+1}-f(x_{k})|\right\}.$$
 where the supremum is taken on increasing sequences $(x_{k})_{k\in\mathbb N}$. 
  We discuss in the next lemma the continuity of the spectral measure of the eigenvalues of a  self-adjoint matrix in terms of the local weak limit of its associated graph $U(G(Y_{n}))$. Such a continuity was already studied in depth, in particular in \cite{MR2724665,BordCap,AbertThomVirag,BSV,MR3791802} in various settings. For the sake of completeness, we recall the main arguments as well as how to define the spectral measure associated to a general unimodular $\nu\in \mathcal P(\cGr)$.

 \begin{lemma}\label{smooth} Let $\nu \in \cP(\cGr)$ be a unimodular measure. There exists a probability measure $L_\nu \in \cP(\dR)$ such that the following holds.
  Let $Y_{n}, n\ge 0,$ be a sequence of $n\times n $ self-adjoint matrices with associated marked graphs $G(Y_n)$. Assume that $U(G(Y_n))$ converges toward $\nu \in \cP(\cGr)$ for the local weak topology.  
 \begin{enumerate}

 \item  If the degrees and marks are uniformly bounded, namely $$\sup_n\max_{i,j}|Y_{n}(i,j)|<\infty \AND \sup_{n}\max_{i}\sum_{j} 1_{Y_{n}(i,j)\neq 0}<\infty,$$ 
 then $L_{Y_n}\in \mathcal P(\mathbb R)$ converges  in moments to $L_{\nu}$. 
 
% \item In general, we have $L_{Y_n}$ converges weakly to the probability measure $\mu_{\nu}$.
%  \item 
% Let $\psi : [0,\infty) \to [0,\infty)$ be a non-decreasing function with $\lim_{x \to \infty} \psi(x) = \infty$. 
% \begin{itemize} \item If $\nu=U(G(Y))$ is the law of a finite marked graph with bounded marks, then $\mu_{\nu}$ is the expected empirical measure of the eigenvalues of $Y$.
% \item Let $M$ be a finite real number  The closure of set  
% $$K_{M}=\{\nu\in\mathcal P({\mathcal G}^{\bullet}): \nu(\deg_{G}(o)\log \deg_{G}(o))\le M\}$$
%  is compact.
% Moreover, the application $\nu\in K_{M}\rightarrow \mu_{\nu}\in \mathcal P(\mathbb R)$ is continuous.
%  \item If $U(G(Y_{n}))\in K_{M}$, and if $U(G(Y_{n}))\in  \cP( \cGr)$  converges  to a measure $\nu$ (for the local weak convergence), $L^{Y}_{n}\in \mathcal P(\mathbb R)$ converges weakly to the probability measure $\mu_{\nu}$.
% \end{itemize}

 \item In general,  $L_{Y_n}$ converges weakly to the probability measure $L_{\nu}$.

 \item We have that $A_G$ is $\nu$-a.s. essentially self-adjoint and $L_{\nu} = \dE_{\nu}[L_{A_G}^{e_o}]$. Moreover, the map   $\nu \rightarrow L_{\nu}$ is continuous on the set of unimodular measures.

 \end{enumerate}
 \end{lemma}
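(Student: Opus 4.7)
The plan is to establish the three claims by first constructing $L_\nu$ via essentially self-adjoint extensions, then proving convergence via moments (part 1) and via Stieltjes transforms (part 2), and finally using the latter to get continuity in $\nu$ (part 3).

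\textbf{Construction of $L_\nu$.} For any locally finite marked graph $G$ on $\dC$, the bound \eqref{eq:hypAG} holds automatically, so $A_G$ is a symmetric operator on $\ell^0(V)$ via \eqref{eq:degAG}. Following the unimodular deficiency-index approach of \cite{MR3791802,BordCap,AbertThomVirag,BSV}, the mass transport principle \eqref{eq:defunimod} applied to the orthogonal projection onto the deficiency subspaces forces $A_G$ to be $\nu$-a.s. essentially self-adjoint on $\ell^0(V)$. I then set
\[
L_\nu = \dE_\nu[L_{A_G}^{e_o}] \in \cP(\dR).
\]

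\textbf{Part 1 (bounded case).} Expand
\[
\int x^k\, dL_{Y_n}(x) = \frac{1}{n}\tr(Y_n^k) = \dE_{U(G(Y_n))}[w_k],
\]
where $w_k(G,o) = \sum_{o=v_0,v_1,\ldots,v_k=o}\prod_{i=0}^{k-1}\xi(v_i,v_{i+1})$ is a signed weighted count of closed walks of length $k$ rooted at $o$. The value of $w_k$ depends only on the $k$-neighborhood of $o$, hence $w_k$ is continuous on $\cGr$ for the local weak topology; the uniform degree bound $\theta$ and mark bound $M$ give $|w_k|\leq (M\theta)^k$. By local weak convergence $U(G(Y_n))\Rightarrow\nu$, $\dE_{U(G(Y_n))}[w_k]\to \dE_\nu[w_k]$. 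The identity $\int x^k\,dL_{A_G}^{e_o}(x)=w_k(G,o)$ (a.s., since $\|A_G\|\leq M\theta$ under the bounds) yields $\dE_\nu[w_k]=\int x^k\,dL_\nu$. The uniform moment bound $(M\theta)^k$ is Carleman, so convergence in moments follows and $L_\nu$ is supported on $[-M\theta,M\theta]$.

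\textbf{Part 2 (general case).} Pass to Stieltjes transforms: for $z\in\dC^+$, set $R_G(z,o)=\langle e_o,(A_G-zI)^{-1}e_o\rangle$, well-defined $\nu$-a.s. by the previous construction and bounded by $|\Im z|^{-1}$. Introduce the truncation $A_{G,h}$, the restriction of $A_G$ to the ball $B_h(o)$: the quantity $\langle e_o,(A_{G,h}-zI)^{-1}e_o\rangle$ only depends on the $h$-neighborhood of $o$, hence is bounded and continuous on $\cGr$. Essential self-adjointness combined with the resolvent identity and Stone's formula give $\langle e_o,(A_{G,h}-zI)^{-1}e_o\rangle\to R_G(z,o)$ as $h\to\infty$, $\nu$-a.s. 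By dominated convergence,
\[
\int\frac{dL_{Y_n}(x)}{x-z} = \dE_{U(G(Y_n))}[R_G(z,o)] \longrightarrow \dE_\nu[R_G(z,o)] = \int\frac{dL_\nu(x)}{x-z},
\]
for every $z\in\dC^+$; the standard continuity theorem for Stieltjes transforms yields weak convergence $L_{Y_n}\Rightarrow L_\nu$.

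\textbf{Part 3 (continuity in $\nu$).} The same bounded-continuous truncation approximation, now applied to a weakly convergent sequence $\nu_n\Rightarrow\nu$ of unimodular measures, gives $\dE_{\nu_n}[\langle e_o,(A_{G,h}-zI)^{-1}e_o\rangle]\to \dE_\nu[\langle e_o,(A_{G,h}-zI)^{-1}e_o\rangle]$ for each fixed $h$ and $z\in\dC^+$. Combining with the $h\to\infty$ convergence above (uniform in $\nu_n$ thanks to the bound $|\Im z|^{-1}$) gives convergence of the Stieltjes transforms of $L_{\nu_n}$ to that of $L_\nu$, proving continuity of $\nu\mapsto L_\nu$.

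The principal obstacle is the $\nu$-a.s. essential self-adjointness of $A_G$ and the corresponding $\nu$-a.s. continuity of the resolvent on $\cGr$ when degrees and marks are unbounded; this is precisely where unimodularity enters, through the mass transport principle that controls the deficiency subspaces and propagates finite-graph self-adjointness to the limit.
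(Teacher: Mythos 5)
Your part 1 is essentially identical to the paper's (moment method via local closed-walk functionals), so I focus on parts 2 and 3, where you diverge.

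For part 2 you replace the paper's argument by a Stieltjes-transform argument with the local truncation $A_{G,h}$ (resolvent of the operator restricted to the $h$-ball). This is a legitimate alternative route, used e.g.\ in \cite{MR2724665,BordCap}. However, as written there is a genuine gap at the interchange of limits. You have, for fixed $h$, the convergence
\[
\dE_{U(G(Y_n))}\bigl[\langle e_o,(A_{G,h}-z)^{-1}e_o\rangle\bigr]\;\longrightarrow\;\dE_{\nu}\bigl[\langle e_o,(A_{G,h}-z)^{-1}e_o\rangle\bigr],
\]
and, \emph{under $\nu$}, the almost-sure pointwise convergence $\langle e_o,(A_{G,h}-z)^{-1}e_o\rangle\to R_G(z,o)$ as $h\to\infty$. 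But these two facts alone do not give $\dE_{U(G(Y_n))}[R_G(z,o)]\to\dE_\nu[R_G(z,o)]$. A uniformly bounded function that is only the $\nu$-a.s.\ pointwise limit of bounded continuous functions need not be $\nu$-a.e.\ continuous, and $\nu$-a.e.\ continuity is exactly what is required to integrate it against a weakly convergent sequence of measures. Concretely, what is missing is control, uniformly over $n$, of
\[
\sup_n\;\Bigl|\dE_{U(G(Y_n))}\bigl[R_G(z,o)\bigr]-\dE_{U(G(Y_n))}\bigl[\langle e_o,(A_{G,h}-z)^{-1}e_o\rangle\bigr]\Bigr|\;\xrightarrow[h\to\infty]{}\;0.
\]
This is precisely where tightness of the law of $\max(\deg(o),\max_{v\sim o}|\xi(o,v)|)$ under $U(G(Y_n))$ must enter. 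The paper handles this by an explicit and elementary bound: the rank inequality $|L_A(f)-L_B(f)|\le\|f\|_{\TV}\,\mathrm{Rank}(A-B)$ applied to the \emph{global} truncation $Y_{n,k}$ (columns/rows with large degree or large entry are zeroed out), combined with tightness giving $\sup_n P_n([k,\infty))\le\delta(k)\to 0$. This yields a quantitative Cauchy estimate $d(L_{Y_n},L_{Y_{n,k}})\le 2\delta(k)$ uniform in $n$ and immediately closes the limit interchange. Your route can be made to work, but only if you first establish $\nu$-a.e.\ continuity of $g\mapsto R_g(z,o)$ (a nontrivial consequence of unimodularity and essential self-adjointness, established in the references), or if you supply the uniform truncation-error estimate the paper gets from the rank bound.

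Your part 3 has a sharper error: you claim the convergence $\langle e_o,(A_{G,h}-z)^{-1}e_o\rangle\to R_G(z,o)$ as $h\to\infty$ is ``uniform in $\nu_n$ thanks to the bound $|\Im z|^{-1}$.'' The bound $|\Im z|^{-1}$ controls the magnitude of the resolvents, not the rate at which the truncated resolvent approaches the full one; the latter depends on the graph and cannot be made uniform from boundedness alone. The paper proves continuity of $\nu\mapsto L_\nu$ via the von Neumann algebra of the unimodular measure and the Kolmogorov--Smirnov estimate for affiliated operators under a projection $\pi_k$ with $\tau(\pi_k)\ge 1-\delta(k)$; the trace bound $1-\tau(\pi_k)$ plays the role of a uniform modulus of approximation, which is exactly what your argument lacks.
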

 
Note that if $\nu$ is supported on finite marked graphs, the formula $L_{\nu} = \dE_{\nu}[L_{A_G}^{e_o}]$ boils down to the identity \eqref{eq:asmf}.  
 
  \begin{proof}[Proof of Lemma \ref{smooth}]
  $\bullet$  For the first point, it is enough to prove the convergence of moments. Because the degrees are uniformly bounded by $D$, $Y_{n}$ is uniformly bounded in operator norm by $D\sup_{n\in\mathbb N}\sup_{i,j}|Y_{n}(i,j)|<\infty. $ Hence, the moments of $L_{Y_{n}}$ are uniformly bounded. Moreover, the $k$th moment of $L_{Y_{n}}$ is easily seen to be a smooth function of $U(G(Y_{n}))$ since
  $$L_{Y_{n}}(x^{k})=\frac{1}{n}\tr (Y_{n}^{k})=\frac{1}{n}\sum_{i=1}^{n} f_{k }((G(Y_{n}),i))= \int f_{k } dU(G(Y_{n}))$$ where
  $f_{k}((G,o))$ is the local function on rooted graphs given by
  $$f_{k}((G,o))=\sum _{w:o\rightarrow o}\prod_{e\in w} \xi_{e}$$
  where we sum over all loops of length $k$ going through $o$ and take the products over the marks at the edges of these loops.
  Since we assume the degrees uniformly bounded, $f_{k}$ are finite degree polynomials in the marks, and hence continuous for the local weak convergence. We conclude that $L_{Y_{n}}(x^{k})$ is a continuous function of $U(G(Y_{n}))$ which converges towards $\int f_{k}d\nu$ under our hypothesis. 
  
$\bullet$  We now prove the second point. When the degrees or weights are unbounded, and more specifically when they do not have all the moments finite, it is not possible to argue by using the moments which may be unbounded and ill-defined. The idea is to approximate our distributions by ones with bounded degrees and weights.
  The central point is to recall that if $A,B$ are two self-adjoint operators with spectral measures $L_{A}$ and $L_{B}$,
  \begin{equation}\label{cont}
  |L_{A}(f)-L_{B}(f)|\le \|f\|_{\TV} \mbox{Rank}(A-B)\end{equation}
  where $\|f\|_{\TV}$ denotes the total variation norm of the function $f$.
 We can use this result to approximate  the empirical measures $L_{Y_n}$ of the eigenvalues of  the matrices $Y_{n}$ by the empirical measures $L_{Y_{n,k}}$ of the matrices $Y_{n,k}$ defined for some fixed integer number $k$ as follows: 
  the entries in columns or rows with either (i) more than $k$ non-zero entries or (ii) an entry larger that $k$ are put to zero.
  This corresponds, at the level of the underlying graph $G(Y_{n})$, to remove 
 edges connected to vertices with degree higher than $k$  or connected to a weight larger than $k$ to get $G(Y_{n,k})$. 
 We deduce from \eqref{cont} that
\begin{align*} 
 |L_{Y_{n}}(f)-L_{Y_{n,k}}(f)|& \le \|f\|_{\TV}\frac{2}{n}\sum_{i=1}^{n}\IND_{\sum_j  \IND_{Y_{ji}\neq 0}\ge k \hbox{ or } \max_j |Y_{ij} | \ge k} \\
 & = 2 \|f\|_{\TV}U(G(Y_{n}))[\IND_{\deg_{G}(o)\ge k  \hbox{ or } \max_{v \sim o} |\xi (o,v) | \ge k}] \\
 & = 2 \|f\|_{\TV} P_n( [k,\infty)),%\le \frac{\|f\|_{\TV}}{k} U(G(Y_{n}))[\deg_{G}(o)],
 \end{align*}
 where $P_n \in \cP ( \dZ_+)$ is the distribution under $U(G_n)$ of $\max(\deg(o),\max_{v \sim o} |\xi(o,v)| ) $.
% where we finally used Tchebychev's inequality. 
 By assumption $U(G(Y_n))$ converges toward $\nu$, in particular, $P_n$ converges weakly toward $P$ the law under $\nu$ of $\max(\deg(o),\max_{v \sim o} |\xi(o,v)| ) $. Hence $P_n$  is a tight family of probability measures and for some function $\delta(k)$ with $\lim_k \delta(k) = 0$, we have for every integer number $k$
 \begin{equation}\label{est}
 \sup_n P_n( [k,\infty)) \leq \delta(k).
\end{equation}
 Taking the supremum over the test functions $f$ yields for $k \leq k'$
 \begin{equation}
 \label{co1}d(L_{Y_{n}},L_{Y_{n,k}})\le 2 \delta(k)\AND d(L_{Y_{n,k'}},L_{Y_{n,k}})\le 2 \delta(k).  \end{equation}

Let $G \mapsto G^{(k)}$ be the map on marked graphs where all vertices with degree larger than $k$ or which are adjacent to an edge with weight larger than $k$ have been isolated from the graph and have no edges connected to them.  We extend this map to rooted marked graphs $(G,o)$ by defining $(G,o)^{(k)}$ as the connected component rooted at $o$ of $G^{k}$. For each $k$, since elements in $\cGr$ are locally finite, the map $g \to g^{(k)}$ is continuous on $\cGr$ for the local topology and
\begin{equation}\label{eq:convgk}
\lim_k g^{(k)} = g,
\end{equation}
 (indeed, for any $h \geq 1$ and $g \in \cGr$, $D_{g,h}  = \max_{v \in V((g)_h)} \deg(v) < \infty$).  For $\nu \in \cP( \cGr)$, we denote by $\nu^{(k)}$ the push-forward of $\nu$ by the map $g \to g^{(k)}$.  By construction $G(Y_{n,k}) = G(Y_n)^{(k)}$ and by continuity, $U(G(Y_{n,k}))$ converges toward $\nu^{(k)} \in \cP(\cGr)$.
Also, \eqref{eq:convgk} implies that for the local weak topology,
$$
\lim_{k \to \infty} \nu^{(k)} = \nu.
$$

This shows that $ U(G(Y_{n,k}))$ and $U(G(Y_{n}))$ are close for the local weak convergence, this guarantees that $ L_{Y_{n}}$ depends continuously on $U(G(Y_{n}))$. More precisely, from the first point applied to $Y_{n,k}$ for fixed $k$, weakly
$$
\lim_{n \to \infty} L_{Y_{n,k}} = L_{\nu^{(k)}}.
$$
 
From \eqref{co1}, for $k \leq k'$, $d(L_{\nu^{(k)}},L_{\nu^{(k')}})\le 2 \delta(k)$. Hence $(L_{\nu^{(k)}})_{k \geq 0}$ is a Cauchy sequence which converges toward some probability measure $L_{\nu}$. Finally, using  the \eqref{co1} again and $
\lim_{n \to \infty} L_{Y_{n,k}} = L_{\nu^{(k)}},
$ we get for any $k \geq 1$, 
$$\limsup_n d(L_{Y_n}, L_{\nu} ) \leq \limsup_n d(L_{Y_n}, \mu_{\nu^{(k)}} ) + d ( L_{\nu^{(k)}},L_{\nu}) \leq   4 \delta(k).$$
 Letting $k$ going to infinity, we obtain the second point.
 
% It is enough  to show that a large proportion  of the $h$ neighborhoods of $U(G(Y_{n}))$ have all vertices of degree bounded by $k$ for $k$ large enough, which we will see is guaranteed by  $\sup_{n} U(G(Y_{n}))[\deg_{G}(0)\log \deg_{G}(0))] <\infty$.  Moreover, when we can restrict ourselves to graphs with bounded degree, we still need to require that the marks are tight: this is guaranteed by the definition of $K_{M}$.
%  Once this is done, we will be able to deduce that if $U(G(Y_{n}))$ converges, $L_{Y_{n}}$ converges towards some limit $\mu_{\nu}$. We first describe this limit.

$\bullet$  We now explain the third point which ultimately comes from the fact that we can associate to $\nu \in \mathcal P(\cGr)$ unimodular a self-adjoint  affiliated operator, see \cite[Proposition 1.4]{BSV}, \cite[pp 103-107]{MR3791802}. This point will play no role in the present paper but for the sake of completeness we sketch this argument below. 
  If $G = (V,E)$ is locally finite, we can always define the operator $A_{G}$ by \eqref{eq:degAG} on finitely supported functions $\ell^0(V)\subset \ell^2(V)$.
 $A_{G}$ is symmetric since we assume that the marks are. If $G$ has uniformly bounded degrees and marks then $A_G$ is self-adjoint on $\ell^2 (V)$. Hence, we can define the spectral measure of $A_{G}$ by $\mu^{\psi}_{G}(f)=\langle \psi, f(A_{G})\psi \rangle$ for all $\psi \in \ell^2(V)$ and $f$ continuous. We set if $\nu\in  \mathcal P(\cGr)$,
 \begin{equation}\label{defmunu} L_{\nu}(f)=\E_{\nu}[L^{e_{0}}_{A_G}(f)]\,.\end{equation}
Following the same type of arguments as above (see also the proof of   \cite[Proposition 1.4]{BSV}, \cite[pp 103-107]{MR3791802}), $A_{G}$ can be more generally defined  as an affiliated operator of the  von Neumann algebra associated to the unimodular measure $\nu$. This  von Neumann algebra is introduced in \cite{MR2354165,MR2593624}. In fact, assuming without loss of generality that the vertex set $V$ is deterministic, we set $H=\ell^{2}(V)$ and  denote $\mathcal B(H)$ the set of bounded linear operators on $H$. For a given unimodular measure $\nu$, we associate the $*$-algebra of bounded operators $\mathcal M \subset L^{\infty}(\cGr, \mathcal B(H),\nu)$ which commute with the operators $\lambda_{\sigma}$ defined for all $v\in V$ by $\lambda_{\sigma}(e_{v})=e_{\sigma(u)}$ for any bijection $\sigma:V\rightarrow V$. The $*$-algebra $\mathcal M$ is endowed with the faithful normal trace
 $$\tau(B)=\mathbb E_{\nu}\langle e_{o}, B e_{o}\rangle\,.$$
The operator $A_{G}$ is uniquely defined as a self-adjoint  operator affiliated to $\mathcal M$ (see \cite[Definition 5.2.28]{AGZ}) with spectral measure defined by \eqref{defmunu}. One of the nice property of an affiliated operator is that if $A,B$ are two self-adjoint affiliated operators, and $p$ self-adjoint projection  so that $pAp=pBp$ and $\tau(p)\ge 1-\epsilon$, then the Kolmogorov-Smirnov distance of the spectral measures of $A$ and $B$ differ at most by $\epsilon$, see \cite[Lemma 5.2.34]{AGZ}. As an application, if we let 
$\pi_{k}$ be the projection onto vertices with degree and adjacent weights bounded by $k$, the distribution of $\pi_{k}A_{G}\pi_{k}$ and $A_{G}$ are at distance at most $1-\tau(\pi_{k})=\nu(\deg_{G}(o)\ge k)\le \delta(k)$, which generalizes \eqref{cont}.% Moreover, clearly $\mu_{U(G(Y_{n}))}=L_{n}$ by \eqref{defmunu}. 
   \end{proof}

We have the following variant of Lemma \ref{smooth} to networks which is a slight improvement on \cite{BordCap}. In the following, we say that a measure $\nu \in \cP(\cNr)$ is {\em sofic} if there exists a sequence of finite marked graphs $\{G_n\}$ such that $U(G_n)$ converges to $\nu$ for the local weak topology on networks.  The set of sofic measures is a closed subset of the set of unimodular measures.

%$ \dE_{\nu} \left[\sum_{v \sim o} |\xi (o,v)|^2 \IND (|\xi (o,v)| \leq 1) \right] < \infty$ and $\nu$-a.s. $\sum_{v \sim o} |\xi (o,v)|^2 < \infty$.

 \begin{lemma}\label{smoothN} Let $0 < \beta < 2$, $\eps >0$, and let $K_{\beta,\eps}$ be the set of $\nu \in \cP(\cNr)$ sofic such that $\dE_{\nu} \left[\sum_{v \sim o} |\xi (o,v)|^\beta \IND_{ |\xi (o,v)| \leq \eps}\right] \leq 1$. Then, there exists a probability measure $L_\nu \in \cP(\dR)$ such that the following holds.
  Let $Y_{n}, n\ge 0,$ be a sequence of $n\times n $ self-adjoint matrices with associated marked graphs $G(Y_n)$. Assume that $U(G(Y_n)) \in K_{\beta,\eps}$ and that $U(G(Y_n)) $ converges toward $\nu \in \cP(\cNr)$ for the local weak topology on networks.  
 \begin{enumerate}

\item The  $K_{\beta,\eps} \to \cP(\dR) $ map  $\nu \mapsto L_{\nu}$ is continuous.
 
 \item We have that $A_G$ is $\nu$-a.s. essentially self-adjoint and $L_{\nu} = \dE_{\nu}[L_{A_G}^{e_o}]$.

 \end{enumerate}
 \end{lemma}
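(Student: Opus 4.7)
The plan is to reduce to the locally finite setting of Lemma~\ref{smooth} by truncating all marks of modulus less than $\eps'\in(0,\eps]$, controlling the truncation error by the $\beta$-moment bound defining $K_{\beta,\eps}$. For $Y_n$ with marked graph $G(Y_n)\in\cNr$, let $Y_n^{(\eps')}$ denote the matrix obtained by zeroing every entry of modulus strictly less than $\eps'$; its associated marked graph is $G(Y_n)^{\eps'}$, which is locally finite by the network assumption. For $\eps'$ in the continuity set $\mathcal{E}(\nu)$ introduced just below Theorem~\ref{th:SIGMA1N}, the hypothesis $U(G(Y_n))\to\nu$ in $\cP(\cNr)$ pushes forward to $U(G(Y_n)^{\eps'})\to\nu^{\eps'}$ in $\cP(\cGr)$, and Lemma~\ref{smooth} yields a limit $L_{\nu^{\eps'}}\in\cP(\dR)$ with $L_{Y_n^{(\eps')}}\to L_{\nu^{\eps'}}$ weakly and continuity of $\nu^{\eps'}\mapsto L_{\nu^{\eps'}}$.

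To control the truncation error I invoke Hoffman--Wielandt:
\[
W_2\bigl(L_{Y_n},\, L_{Y_n^{(\eps')}}\bigr)^2 \;\leq\; \frac{1}{n}\bigl\|Y_n - Y_n^{(\eps')}\bigr\|_{\mathrm{HS}}^2 \;=\; \dE_{U(G(Y_n))}\!\Bigl[\sum_{v\sim o} |\xi(o,v)|^2\, \IND_{|\xi(o,v)|<\eps'}\Bigr].
\]
The pointwise inequality $|z|^2 \IND_{|z|<\eps'}\leq (\eps')^{2-\beta}|z|^\beta \IND_{|z|\leq\eps'}$ together with the defining bound of $K_{\beta,\eps}$ (valid for $\eps'\leq\eps$) gives $W_2(L_{Y_n}, L_{Y_n^{(\eps')}})\leq (\eps')^{1-\beta/2}$, uniformly in $n$. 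Since $\beta<2$ this vanishes as $\eps'\downarrow 0$. Lower semicontinuity of $W_2$ under weak convergence transfers the same estimate to the limit measures $L_{\nu^{\eps'}}$, so $(L_{\nu^{\eps'}})_{\eps'\in\mathcal{E}(\nu)}$ is Cauchy as $\eps'\downarrow 0$ and defines $L_\nu\in\cP(\dR)$ as its limit. A triangle inequality combining the two preceding steps yields $L_{Y_n}\to L_\nu$ weakly, and applying the same argument with two sequences in $K_{\beta,\eps}$ converging to $\nu$ and to some nearby $\nu'$ gives the continuity of $\nu\mapsto L_\nu$ on $K_{\beta,\eps}$, proving part~(1).

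For part~(2), the identification $L_\nu = \dE_\nu[L^{e_o}_{A_G}]$ and the essential self-adjointness of $A_G$ follow from the affiliated operator framework recalled in the proof of Lemma~\ref{smooth}(3) and developed in \cite{BordCap,BSV,MR3791802}. The $\beta$-moment control forces $\sum_{v\sim o}|\xi(o,v)|^2$ to be $\nu$-a.s.\ finite, so $A_G$ is well-defined on $\ell^0(V)$; the truncated operators $A_{G^{\eps'}}$, self-adjoint by Lemma~\ref{smooth}(3), converge in strong resolvent sense to a self-adjoint extension of $A_G$ whose averaged spectral measure at $e_o$ equals $L_\nu$ by part~(1).

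The main obstacle is handling the discontinuity of $\nu\mapsto\nu^{\eps'}$ at exceptional values of $\eps'$: this forces the limit $\eps'\downarrow 0$ to be taken along $\mathcal{E}(\nu)$, which is harmless since $0$ is an accumulation point of $\mathcal{E}(\nu)$. A related subtlety is showing that the functional $\nu\mapsto \dE_\nu[\sum_{v\sim o}|\xi(o,v)|^\beta \IND_{|\xi(o,v)|\leq\eps}]$ is lower semicontinuous on $\cP(\cNr)$, so that the $K_{\beta,\eps}$ bound transfers to the limit $\nu$ and uniformly controls the truncation error along any approximating sequence; this follows by approximating the integrand from below by bounded continuous functions of $(G,o)^{\eps''}$ for $\eps''\in\mathcal{E}(\nu)$ arbitrarily small, combined with Fatou's lemma.
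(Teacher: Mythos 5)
Your proof is correct and takes a genuinely different route from the paper's. The paper defines $L_{\nu}$ directly at the level of limit measures via a \emph{double} truncation $G \mapsto G^{(\theta)}$: it removes not only edges with marks $\leq \theta$ but also all edges adjacent to vertices with energy $\mathcal E_G(u) \geq \theta^{-2}$, so that $A_{G^{(\theta)}}$ is a bounded self-adjoint operator and $L_{\nu^{(\theta)}}$ can be defined directly as $\dE_{\nu^{(\theta)}}[\mu^{e_o}_{A_G}]$; the Cauchy estimate then comes from the rank-type inequality \eqref{cont} applied to the affiliated operator and cites \cite[Lemmas 3.12--3.13]{BordCap}. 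You instead truncate only small marks, obtaining a locally finite but possibly unbounded-degree graph, and you delegate that residual unboundedness to Lemma \ref{smooth}(2)--(3); your Cauchy estimate is established at the prelimit level of finite matrices via Hoffman--Wielandt and the $W_2$ metric, and then pushed to the limit by lower semicontinuity of $W_2$ under weak convergence. The advantages of your route are that it stays closer to elementary finite-dimensional spectral inequalities and is more self-contained within this paper (relying on Lemma \ref{smooth} rather than on the estimates of \cite{BordCap}); the paper's route pays for its extra truncation by getting a bounded operator at each stage, which makes the limiting spectral measure well-defined without passing through an approximating matrix sequence. Your sketch of part (2) via strong resolvent convergence of $A_{G^{\eps'}}$ on the common core $\ell^0(V)$ (justified by $\|(A_G - A_{G^{\eps'}})\psi\|^2 \leq \|\psi\|_\infty^2 \sum_{v\in\mathrm{supp}\,\psi}\sum_u |\xi(u,v)|^2\IND_{|\xi(u,v)|<\eps'} \to 0$ by dominated convergence and the a.s.\ finiteness of $\mathcal E_G$) is sound and is essentially what the citations in the paper's proof of Lemma \ref{smooth}(3) unwind; the paper simply defers at the same level of detail. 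The one point I would tighten in a final write-up is the continuity step at the end of part (1): you should state explicitly that for $\nu' \to \nu$ in $K_{\beta,\eps}$ and $\eps' \in \mathcal E(\nu)$ small, one has $d(L_{\nu'},L_\nu) \leq W_2(L_{\nu'},L_{\nu'^{\eps'}}) + d(L_{\nu'^{\eps'}},L_{\nu^{\eps'}}) + W_2(L_{\nu^{\eps'}},L_\nu) \leq 2(\eps')^{1-\beta/2} + o(1)$, using Lemma \ref{smooth}(3) for the middle term and the uniform-in-$K_{\beta,\eps}$ bound $W_2(L_{\mu},L_{\mu^{\eps'}}) \leq (\eps')^{1-\beta/2}$ (itself obtained by lower semicontinuity from the prelimit Hoffman--Wielandt bound applied to a sofic approximating sequence in $K_{\beta,\eps}$) for the outer ones.
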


\begin{proof}
$\bullet$ For $G  = (V,E,\xi)$ and $u \in V$, let $\mathcal E_{G}(u) = \sum_{v \sim u} |\xi(u,v)|^2$. We first claim that 
\begin{equation}\label{eq:Efini}
\mbox{$\nu$-a.s. } \mathcal E_{G}(u) < \infty.
\end{equation} Indeed, since $(\sum_i x_i )^r \leq \sum_i x_i^r$ for $x_i \geq 0$ and $0 \leq r \leq 1$, we have
$$
(\mathcal E_{G}(u))^{\beta/2} \leq \sum_{v \sim o} |\xi (o,v)|^\beta \IND_{ |\xi (o,v)| \leq \eps} + \sum_{v \sim o} |\xi (o,v)|^\beta \IND_{ |\xi (o,v)| > \eps}.
$$
The first term has a finite expectation, it is thus a.s.\ finite. The second term is a.s.\ a finite sum since $\nu \in \cP(\cNr)$, in particular, the second term is also a.s.\ finite. This implies \eqref{eq:Efini}

The definition of $L_{\nu}$ is given in \cite[Lemma 3.12]{BordCap}. It is as follows. The idea is again to truncate the network to have a bounded operator. For $0 < \theta < 1$, we consider the network $G^{(\theta)}$ where we have removed all edges $\{u,v\}$ such that $\xi(u,v) \ne 0$ and $|\xi(u,v)| \leq \theta$, and removed all edges adjacent to a vertex $u$ such that {{$\mathcal E_G(u)  \geq \theta^{-2}$.}} The push-forward of $\nu$ by the map $g \mapsto g^{(\theta)}$ on $\cNr$ is denoted by $\nu^{(\theta)}$. By construction, the operator $A_{G^{(\theta)}}$ is bounded with operator norm at most $\theta^{-2}$. Hence, $A_{G^{(\theta)}}$ is self-adjoint on $\ell^2(V)$. We thus define the probability measure $L_{\nu^{(\theta)}} = \dE_{\nu} [\mu^{e_o}_{A_{G^{(\theta)}}}] =  \dE_{\nu^{(\theta)}} [\mu^{e_o}_{A_{G}}]$. Then \cite[Lemma 3.12]{BordCap} asserts that $L_{\nu^{(\theta)}}$ converges weakly to a measure $L_{\nu}$ as $\theta \to 0$ under the stronger condition  $\dE_{\nu} \left[\sum_{v \sim o} |\xi (o,v)|^\beta \right] \leq M$. We may obtain the same claim under the conditions of the present lemma by the following minor modification. The proof of \cite[Lemma 3.12]{BordCap} implies that for $0 < \theta' < \theta$
\begin{align*}
d(L_{\nu^{(\theta)}}, L_{\nu^{(\theta')}} )  & \leq  4 \dP_{\nu} ( \mathcal E_G(o) \geq \theta^{-2} ) + 2 \left(\dE_{\nu} \sum_{v \sim o} |\xi(o,v)|^2 \IND ( |\xi(o,v)|\leq \theta  ) \right)^{1/2} \\
& \leq 4 \dP_{\nu} ( \mathcal E_G(o) \geq \theta^{-2} ) + 2  \theta^{1 - \beta/2} \left(\dE_{\nu} \sum_{v \sim o} |\xi(o,v)|^\beta \IND ( |\xi(o,v)|\leq \theta  ) \right)^{1/2} 
\end{align*}
For $0 < \theta < \eps$, the later is upper bounded by  {{$4 \dP_{\nu} ( \mathcal E_G(o) \geq \theta^{-2} ) + 2  \theta^{1 - \beta/2}$. The second  term  goes to zero since $\theta$ does as $\beta<2$.}}  Also, by  \eqref{eq:Efini}, we have $4 \dP_{\nu} ( \mathcal E_G(o) \geq \theta^{-2} ) =  \delta (\theta)$ with $\delta(\theta) \to 0$ as $\theta \to 0$. It proves that the sequence $(L_{\nu^{(\theta)}})$ is Cauchy and converges toward  some probability measure $L_{\nu}$ as $\theta \to 0$. The first point of the lemma is the content of \cite[Lemma 3.13]{BordCap} up to the minor change on the upper bound for $\dP_\nu (\mathcal E_G(o) \geq \theta^{-2})$ explained above and the fact that $\limsup_{n} \dP_{U(G(Y_n))} (\mathcal E_G(o) \geq \theta^{-2}) \leq \dP_\nu (\mathcal E_G(o) \geq \theta^{-2})$ for  all $\theta$.

$\bullet$  Then, the second point follows from the same argument than the last point of the proof of Lemma \ref{smooth}.
\end{proof}

\begin{remark}
The assumption $\dE_{\nu} \left[\sum_{v \sim o} |\xi (o,v)|^\beta  \IND( |\xi(o,v)| \leq 1/M)\right] \leq 1$ is not optimal for the conclusion of Lemma \ref{smoothN} to hold. For example, it is not hard to show that the spectral measure $L_{\nu}$ can be defined as soon as  $\dE_{\nu} \sum_{v \sim o} |\xi(o,v)|^2 \IND (|\xi(o,v)| \leq \theta ) < \infty$ for $\theta$ small enough. The continuity of the spectral measure seems to require however some uniform integrability.
\end{remark}

\subsection{Proofs of Theorem \ref{thA} and Theorem \ref{thB}}\label{secproofAB}

We start with the proof of Theorem \ref{thA}. As an immediate consequence of Theorem \ref{th:SIGMAER},  Lemma \ref{smooth} and the contraction principle \cite{DZ},  we deduce that if $\Sigma^{\tiny {\rm ER}}_{\gamma,d}$ is the rate function described in Theorem \ref{th:SIGMAER}
  $L_{Y_n}$ satisfies a LDP with  speed $n$ and good rate function on $\cP(\dR)$,
  $$J_{\gamma,d}(p)=\inf\{ \Sigma^{\tiny {\rm ER}}_{\gamma,d}(\mu): L_{\mu}=p \}.$$

 The proof of Theorem \ref{thB} is identical. It suffices to use Theorem \ref{th:SIGMADR} instead and replace  $\Sigma^{\tiny {\rm ER}}_{\gamma,d}$ by $\Sigma_{\gamma,\pi}$. \qed

\subsection{Proof of Theorem \ref{LDPHT}}

Set $Y = Y_n$ be as in Theorem \ref{LDPHT}. We start by proving that $L_{Y_n}$  belongs to the set $K_{\beta,\eps}$ defined in Lemma \ref{smoothN} with exponential probability.

\begin{lemma}\label{app} Assume \eqref{eq:tailalpha2} and fix $\beta > \alpha$. Then, for $\eps ,\delta>0$ we have
\begin{equation}\label{res1}
\mathbb P\left( \frac{1}{n} \sum_{i\leq j}  |Y_{ij}|^\beta \IND_{|Y_{ij}|\le \eps   } \ge \delta\right)\le e^{-n f(\eps ,\delta)},\end{equation}
%\begin{equation}\label{res2}\mathbb P\left( \frac{1}{n} \sum_{i\leq j}  \IND_{|y_{ij}|\ge \epsilon^{-1} n^{\frac{1}{\alpha}}} \ge \delta\right)\le e^{-n f(\epsilon,\delta)},\end{equation}
with $f(\eps,\delta)$ going to $+\infty$ when $\delta \eps^{\alpha-\beta}$ does.  As a consequence, for any $\beta<\alpha$, for any $T >0$, there exists $\eps >0$ such that 
$$
\limsup_{n \to \infty} \dP ( U(G(Y_n)) \notin K_{\beta,\eps} )  \leq -T.
$$
%As a consequence, for any $\delta\in (0,1]$,
%$$\limsup_{n\rightarrow\infty}\frac{1}{n}\log \mathbb P\left( d(L_{n}^{X^{(\epsilon)}},L_{n}^{X})\ge \delta\right)\le -f(\epsilon,\delta^{2})\,.$$
\end{lemma}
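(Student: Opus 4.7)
The plan is to prove \eqref{res1} by a Bennett-type Chernoff estimate for the sum of the iid bounded nonnegative variables $Z_{ij} := |Y_{ij}|^\beta \IND_{|Y_{ij}|\leq\eps}$, and then to deduce the consequence by rewriting $\{U(G(Y_n))\notin K_{\beta,\eps}\}$ as a tail event of exactly this form. Throughout we may assume $\eps \leq 1$ so that \eqref{eq:tailalpha2} applies directly.

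First I would bound $\E Z_{12}$. Using the layer-cake formula and substituting $t=s^{1/\beta}$,
\[
\E Z_{12} \;\leq\; \int_0^\eps \beta\, t^{\beta-1}\, \P(|Y_{12}|>t)\, dt \;\leq\; \frac{c_0\,\beta}{n(\beta-\alpha)}\,\eps^{\beta-\alpha} \;=:\; \frac{C}{n}\,\eps^{\beta-\alpha},
\]
where it is crucial that $\beta>\alpha$ in order to control the singularity at $t=0$. Next, from $e^x-1\leq xe^{x}$ applied on $[0,\lambda\eps^\beta]$ one gets $\E e^{\lambda Z_{12}} \leq \exp\bigl(\lambda e^{\lambda\eps^\beta}\E Z_{12}\bigr)$, so the exponential Markov inequality across the $\binom{n}{2}\leq n^2/2$ independent off-diagonal pairs gives
\[
\P\!\Big(\tfrac1n\!\sum_{i<j} Z_{ij} \geq \delta\Big) \;\leq\; \exp\!\Big(-n\lambda\delta + \tfrac{nC}{2}\,\lambda\, e^{\lambda\eps^\beta}\,\eps^{\beta-\alpha}\Big).
\]
Choosing $\lambda = \eps^{-\beta}\ln(R/2)$ with $R := 2\delta\eps^{\alpha-\beta}/C$ then yields $f(\eps,\delta)\geq \tfrac{1}{2}\delta\eps^{-\beta}\ln(\delta\eps^{\alpha-\beta}/C)$ as soon as $R\geq 2$. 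Since $\eps\leq 1$ forces $\eps^{-\beta}\geq 1$, this expression blows up whenever $\delta\eps^{\alpha-\beta}$ does, as required. The diagonal contribution $i=j$ consists of only $n$ additional independent terms, each bounded by $\eps^\beta$, so it contributes at most $\eps^\beta$ to $\tfrac1n\sum_{i\leq j} Z_{ij}$ and can be absorbed into the constant without affecting the conclusion.

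For the consequence, I would use the definition \eqref{eq:defU} of $U(G(Y_n))$ together with the fact that diagonal entries play no role to rewrite
\[
\E_{U(G(Y_n))}\!\Big[\sum_{v\sim o} |\xi(o,v)|^\beta\IND_{|\xi(o,v)|\leq\eps}\Big] \;=\; \frac{2}{n}\sum_{i<j} |Y_{ij}|^\beta\IND_{|Y_{ij}|\leq\eps},
\]
so that $\{U(G(Y_n))\notin K_{\beta,\eps}\}$ coincides with $\{\tfrac{1}{n}\sum_{i<j}|Y_{ij}|^\beta\IND_{|Y_{ij}|\leq\eps}>\tfrac12\}$. Applying \eqref{res1} with $\delta=1/2$, and choosing $\eps$ small enough that $f(\eps,1/2)\geq T$, delivers the announced bound.

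The main point requiring care is the tuning of $\lambda$: if it is too small the moment generating function is dominated by its linear term and one loses the logarithmic factor in $f(\eps,\delta)$, while if it is too large the truncation bound $e^{\lambda\eps^\beta}$ becomes huge and wipes out the gain from the small mean $\eps^{\beta-\alpha}/n$. The intermediate scale $\lambda\eps^\beta\sim\ln(\delta\eps^{\alpha-\beta})$ is precisely what produces the correct Bennett-type growth of $f$, and is the single place where the assumption $\beta>\alpha$ from \eqref{eq:tailalpha2} is fully exploited.
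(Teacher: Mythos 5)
Your proof is correct and follows the same core strategy as the paper, namely a Chernoff bound over the $O(n^2)$ independent off-diagonal entries. The technical details differ in two minor but clean ways. First, you bound the moment-generating function by computing $\E Z_{12}\lesssim \eps^{\beta-\alpha}/n$ via the layer-cake formula and then applying the elementary inequality $e^x-1\leq xe^x$ on the bounded range $[0,\lambda\eps^\beta]$; the paper instead performs an integration by parts on $\E[\IND_{|Y|\leq\eps}(e^{\lambda|Y|^\beta}-1)]$ and estimates the resulting incomplete-gamma-type integral directly. Second, you optimize $\lambda\sim\eps^{-\beta}\ln(\delta\eps^{\alpha-\beta})$ to obtain a Bennett-type rate $f(\eps,\delta)\gtrsim\delta\eps^{-\beta}\ln(\delta\eps^{\alpha-\beta})$, whereas the paper simply takes $\lambda=\eps^{-\beta}$, yielding the weaker but sufficient Bernstein-type rate $f(\eps,\delta)=\delta\eps^{-\beta}-c_1\eps^{-\alpha}$. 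Your stronger rate is not needed for the lemma's conclusion (which only requires $f\to\infty$ when $\delta\eps^{\alpha-\beta}\to\infty$), and the deduction of the $K_{\beta,\eps}$ statement via $\delta=1/2$ is the same as in the paper, so the two arguments are interchangeable.
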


\begin{proof}
We first check that \eqref{res1} implies the final statement of the lemma. By definition, we have
$$
\dE_{U(G(Y_n))} \sum_{v \sim o} |\xi(o,v)|^{\beta} \IND_{ |\xi(o,v) |\leq \veps} = \frac{1}{n} \sum_{i, j}  |Y_{ij}|^\beta \IND_{|Y_{ij}|\le \eps } \leq  \frac{2}{n} \sum_{i \leq j}  |Y_{ij}|^\beta \IND_{|Y_{ij}|\le \eps }.
$$
Hence, from \eqref{res1}, we get, for $0 < \veps \leq 1/2$,
$$
\dP (  U(G(Y_n)) \notin K_{\beta,\eps} )  \leq \mathbb P\left( \frac{1}{n} \sum_{i\leq j}  |Y_{ij}|^\beta \IND_{|Y_{ij}|\le \eps   } \ge 1\right)
\le e^{- n f (\veps,1/2)} 
$$
with $ f (\veps,1/2)$ going to $+\infty$ if $\eps^{-\beta+\alpha}$ goes to infinity, namely $\eps$ goes to zero when $\beta>\alpha$. 
The conclusion follows from the properties of $f$.
We now prove \eqref{res1}.
By Chernoff bound, for every $\lambda>0$
\begin{eqnarray}\label{eq:chernoff}
\mathbb P\left( \frac{1}{n}\sum_{i\leq j} |Y_{ij}|^{\beta} \IND_{|Y_{ij}| \leq \eps}\ge \delta\right)&\le &e^{-\lambda \delta n}\prod_{i \leq j}\mathbb E \left[ e^{\lambda   |Y_{ij}|^{\beta} \IND_{|Y_{ij}|\leq \eps }}\right].
\end{eqnarray}
To estimate the above expectation, recall the integration by parts formula, for $\mu$ probability measure on $\mathbb R$ and $f$ in $C^1$, $a < b$,  
$$
\int_a^b f(t) d \mu (t) = f(a) \mu([a,\infty)) - f(b) \mu([b,\infty)) + \int_a ^b f'(t) \mu([t,\infty)) dt .
$$
Using this equality with $\mu$ the law of $|Y_{ij}|^{\beta}$, $f(x)= ( e^{\lambda  x } - 1) $, $a=0$ and $b= \eps^{\beta} $,
we see that
\begin{align*} \mathbb E  \left[\IND_{|Y_{ij}|\leq  \eps }   ( e^{\lambda |Y_{ij}|^{\beta} } - 1) \right] & =-(e^{\lambda \eps^{\beta}}-1)\mu([ \eps^{\beta} ,+\infty))
+ \lambda   \int_{0}^{\eps^{\beta}}e^{\lambda  x } \mu([x,\infty)) dx \\
& \leq c_{0}\lambda  n^{-\beta/\alpha} \int_{0}^{\eps^{\beta}n^{\beta/\alpha}}e^{\lambda  n^{-\beta/\alpha} x } x^{-\alpha/\beta} dx,
\end{align*}
where at the second line, we have used \eqref{eq:tailalpha2}. We find 
\begin{eqnarray*}
\mathbb E \left[ e^{\lambda  |Y_{ij}|^{\beta} \IND_{|Y_{ij}|\leq  \eps }}\right] & =  & 1 + \mathbb E  \left[\IND_{|Y_{ij}|\leq  \eps  }   ( e^{\lambda  |Y_{ij}|^{\beta} } - 1) \right] \\
& \leq & 1 +  c_0 \lambda n^{-\beta/\alpha} \int_{0}^{\eps^\beta n^{\beta/\alpha}} t^{-\alpha/\beta} e^{\lambda n^{-\beta/\alpha} t} dt \\
& = & 1 + c_0 \lambda^{\alpha/\beta} n^{-1} \int_0^{\eps^\beta \lambda} x^{-\alpha /\beta} e^{-x} dx.
\end{eqnarray*}
Hence, since $\beta > \alpha$, for $\lambda \leq \eps^{-\beta}$, we find for some new constant $c_1>0$, 
$$
\mathbb E \left[ e^{\lambda   |Y_{ij}|^{\beta} \IND_{|Y_{ij}|\leq \eps }}\right]  \leq 1 + c_1 \lambda \eps^{\beta - \alpha} n ^{-1} \leq e^{c_1 \lambda \eps^{\beta - \alpha} n^{-1} }.
$$
From \eqref{eq:chernoff}, we get 
$$
\mathbb P\left( \frac{1}{n}\sum_{i\leq j} |Y_{ij}|^\beta \IND_{|Y_{ij}| \leq \eps }\ge \delta\right) \leq e^{-\lambda\delta n  + n c_1  \lambda \eps^{\beta - \alpha}}.
$$
This implies \eqref{res1} by taking $\lambda = \eps^{-\beta}$ with $f(\eps,\delta) = \delta \eps^{-\beta} - c_1 \eps^{-\alpha}$. The result follows. \end{proof}

\paragraph{Proof of Theorem \ref{LDPHT}}
We fix $\beta$ such that $\alpha < \beta < 2$. For $\mu \in \cP(\cNr) \backslash \bigcup_{\veps >0} K_{\beta,\eps}$, we set $L_\mu = \delta_0$. We have that $\mu\rightarrow L_{\mu}$ is continuous on $K_{\beta,\eps} \subset \cP(\cNr)$ by Lemma \ref{smoothN} and that by Lemma \ref{app}, for any $T > 0$, for some $\eps > 0$, $\limsup_{n \to \infty} \dP ( U(G(Y_n)) \notin K_{\beta,\eps} )  \leq -T$. The theorem is thus an application of Theorem \ref{th:SIGMA1N} and the contraction principle in its extended version, see  \cite[Theorem 4.2.23]{De-Ze}. We find that $L_{Y_n}$ satisfies an LDP with speed $n$ and good rate function,
$$
J_{\Lambda} (p) =  \inf\{ \Sigma_{\Lambda}(\mu): L_{\mu}=p \}.
$$
 To be complete, we note that in Theorem \ref{th:SIGMA1N}, the diagonal values of the networks were set to $0$. It is however easy to check that the diagonal entries of $Y_n$ play asymptotically no role on $L_{Y_n}$. Let us check this claim. We denote by $\tilde Y_n$ the matrix $Y_n$ where all diagonal entries have been set to $0$ and $Y_{n}^{\epsilon}$ those were  all diagonal entries with modulus greater than $\veps$ have been set to $0$. For $\veps >0$, we set 
$$
S = \frac{1}{n } \sum_{i}  \IND_{|Y_{ii}|\ge \eps  }.
$$
From \eqref{cont}, we have  $d(L_{Y_n^{\veps}},L_{Y_n}) \leq S$. Moreover, by Hoffman-Wielandt inequality \cite{AGZ}[Lemma 2.1.19],   $d(L_{Y_n^{\veps}},L_{\tilde Y_n}) \leq \veps$. Thus,   $d(L_{\tilde Y_n},L_{Y_n}) \leq S+\veps$. 
We also observe from \eqref{eq:tailalpha2} that 
$$
\mathbb E \left[  S \right]  =   \frac{1}{n } \sum_{i}  \mathbb P \left( |Y_{ii}|\ge \epsilon  \right)  \leq \frac{c_0 \eps ^{\alpha}}{ n},
$$
and 
$$
\mathrm{Var} (S) = \mathbb {E} \left[ ( S - \mathbb E [S] )^2 \right]  = \frac{1}{n^2} \sum_{i }  \mathrm{Var} (\IND_{|Y_{ii}|\ge \eps }) \leq \frac{c_0 \eps ^{\alpha}}{n^2}.
$$
Therefore, Bennett's inequality implies that 
$$\mathbb P\left(S \ge \delta +  \mathbb E \left[  S \right] \right)\le \exp \left(-  c_0 \eps^{\alpha}  h ( \frac{ n \delta}{c_0 \eps^{\alpha}} ) \right),$$
with $h(t) = (1+t) \log (1+t) - t$. We take $\eps^{\alpha} = \delta = ( \log n)^{-1/2}$. As $t \to \infty$, $h(t) \sim t \log t$, we deduce that, for some $c >0$, all $n \geq 1$,
$$
\mathbb P\left(S \ge 1 /  \sqrt{ c \log n} \right)\le  \exp ( - c n \sqrt{\log n}).
$$
Since $d(L_{\tilde Y_n},L_{Y_n}) \leq \eps + S$, we obtain, for some new $c >0$,
$$
\mathbb P\left(d(L_{\tilde Y_n},L_{Y_n})  \ge 1 /  \sqrt{  c \log n} \right)\le  \exp ( - c n \sqrt{\log n}).
$$
It follows that  $L_{\tilde Y_n}$ is an exponentially good approximation of $L_{ Y_n}$ \cite{DZ}[Definition 4.2.10] and therefore
 the LDP at rate $n$ of $L_{\tilde Y_n}$ implies the LDP at rate $n$ of $L_{ Y_n}$ with the same rate function \cite{DZ}[Theorem 4.2.13].
\qed

\section{Microstates entropy for the traffic distribution of marked graphs}\label{sec:traffic}

 \subsection{Traffic distributions : proof of Theorem \ref{LDPtraffic0}}
 \label{subsec:TF1}
 We first outline  the  proof of Theorem \ref{LDPtraffic0}, which is a direct consequence of Theorem \ref{th:SIGMADR} and the contraction principle.  We endow the space $\TRAF$ of traffic distributions with its weak topology. We consider the independent  matrices $\mbf Y=(Z_{j})_{j\in J}$  with $Z_{j}(k,\ell)=A_{j}(k,\ell) X_{j}(k,\ell)$ with $A_{j}$ the adjacency matrix of the random graph $G^n_{j}$ with vertices $V_{n}=\{1,\ldots,n\}$ uniformly sampled so that the degree of $v$ is $D_{n,j}(v)$. We consider the marked graph $G_{n}^{J}$ with vertices $V_{n}$ and with up to $|J|$ edges between $i$ and $j$, namely one edge with mark $X_{j}(k,\ell)$ as soon as $A_{j}(k,\ell)\neq 0$, for $j\in J$. It is uniformly sampled  on colored random graph
  so that the degree of edges of color $j$ of $v$ is $D_{n,j}(v)$  for each $j\in J$ and each of its edge  of color $j$ carry a mark $X_{j}(k,\ell)$.  The graph $G_{n}^{J}$ can also be seen as a particular example of randomly marked graphs considered in Section 
  \ref{sec:markedg} with $\mathcal B=J$ and distribution of marks $\gamma$. Hence, Theorem \ref{th:SIGMADR} states that $U(G_{n}^{J})$ satisfies a LDP with rate $n$ and good rate function $\Sigma_{\gamma,\pi}$. On the other hand, for every test graph $H$,
  $$\tau_{\mbf Y}[H]=\int f_{H} dU(G_{n}^{J}
 )$$
 where $f_{H}(G,o)=\sum_{  \phi:V\to \{1,\ldots,n\},\phi(o)=1}    \prod_{e = (v,w)\in E} Y_{\ell(e)}^{\varepsilon(e)}\big(\phi(w),\phi(v) \big)$. Thus, for any $\mu \in\mathcal P(\cGr)$ and any test graph $H$, we can set $\tau_{\mu}[H]:=\int f_{H} d\mu$. 
 Clearly $f_{H}$ is bounded continuous, uniformly on the set $\mathcal B_{\theta}^{R} $ of marked graphs with  degree bounded by $\theta$ and marks bounded by some $K$. As a consequence,  $\mu\mapsto \tau_{\mu}$  is continuous, from $\mathcal P(\cGr\cap \mathcal B_{\theta}^{R} )$ into the space of traffics equipped with its weak topology. 
Hence,  
 Theorem \ref{LDPtraffic0} is a direct consequence of Theorem \ref{th:SIGMADR} and the contraction principle: $\tau_{\mbf Y}$ satisfies a LDP with speed $n$ and good rate function
 $$\chi_{\pi,\gamma}(\tau)=\inf\{ \Sigma_{\gamma,\pi} (\mu), \tau_{\mu}=\tau\}\,.$$

 \subsection{Traffic distributions : definition of $\tilde \chi_{\Lambda} (\tau)$}
\label{subsec:tildechi}

In this subsection, we explain how to define properly $\tilde \chi_\Lambda (\tau)$ introduced in \eqref{eq:deftildechi}. Let $\theta_0$ be such that $\Lambda ( B_{\dC}(0,\theta_0) ) > 0$ where $B_{\dC}(0,r)$ is the ball in $\dC$ of radius $r$. As explained in the  introduction, it is sufficient to prove the following identity, for any fixed $\theta > \theta_0 $ and $\tau \in \TRAF$,
\begin{equation}
\label{eq:prechitilde}
\tilde \chi^-_{\Lambda,\theta}(\tau)  = \tilde \chi^+_{\Lambda,\theta}(\tau)  \end{equation}
where we have set, 
$$
\tilde \chi^-_{\Lambda,\theta}(\tau) = \lim_{\epsilon\downarrow 0}\liminf_{n\rightarrow\infty}\frac{1}{n}\ln \mathbb P\left( \mbf Y\in  B_{n}(\theta) ^{ J } \; ;  d( \tau_{\mbf Y}, \tau) \le\epsilon\right)
$$
and $\tilde \chi^+_{\Lambda,\theta}(\tau) $ is defined similarly with a $\limsup$. When \eqref{eq:prechitilde} holds, we define 
\begin{equation}\label{defchit} \tilde \chi_{\Lambda,\theta}(\tau)  := \tilde \chi^+_{\Lambda,\theta}(\tau) \, \quad \hbox{ and } \quad \tilde \chi_{\Lambda }(\tau)  = \lim_{\theta \to \infty} \tilde \chi_{\Lambda,\theta}(\tau)  \\.\end{equation}
Since $Y_j$ is self-adjoint for all $j \in J$, we may replace in the above expression, $B_n(\theta)$ by $\tilde B_n(\theta)$, the subset of self-adjoint matrices of $B_n(\theta)$. If $\mbf Y = (Y_j)_{j \in J} \in M^J_n(\dC)$ is a collection of self-adjoint matrices,  we denote by $G^J_n$ the graph defined in Subsection \ref{subsec:TF1} on the vertex set $V_n = \{1,\ldots ,n\}$ and with edge marks in $J$ associated to the non-zero entries of $Y_j, j \in J$. We also set $D_n^J = (D_{n} (v ))_{v \in V_n}$ the degree sequence of $G^J_n$ where $D_{n}(v)  = (D_{n,j}(v))_j \in \dZ_+^J$ counts the number of edges adjacent to $v$ with a  $j$-th mark, $j \in J$. The empirical degree distribution of $G_n^J$ is then 
$$
L_{D^J_n}  = \frac{1} n \sum_{v \in V_n} \delta_{D_{n} (v )} \in \cP( \dZ^J_+). 
$$
We denote by $\cP^J_{n,\theta} \subset \cP( \dZ^J_+)$ the set of empirical probability measures of the form
$$
\frac 1 n  \sum_{v=1}^n \delta_{d(v)},
$$
with  $ 0 \leq d_j(v) \leq \theta$  for all $j \in J$. We have $| \cP^J_{n,\theta} | \leq n^{ \theta^J}$ and therefore :  
\begin{equation}\label{eq:partition}
\lim_{n \to \infty} \frac 1 n \ln | \cP^J_{n,\theta} | = 0.
\end{equation}
 We may decompose $\tilde B_n(\theta)^J$ over the values of the empirical degree distributions:
$$
\tilde B_n(\theta)^J = \bigsqcup_{\mbf p \in \cP_{n,\theta}^J } \{ \mbf Y \in \tilde M_n(\dC) :  L_{D_n^J} = \mbf p,  \max_{j,u,v} |(Y_j)_{uv}| \leq \theta \} = \bigsqcup_{\mbf p \in \cP^J_{n,\theta}} \tilde B^J_n(\mbf p,\theta).
$$
From \eqref{eq:partition}, we deduce that 
 \begin{equation}\label{defq}
 \frac{1}{n}\ln \mathbb P\left( \mbf Y\in  \tilde B_{n}(\theta) ^{ J } \; ;  d( \tau_{\mbf Y}, \tau) \le\epsilon\right) = \max_{\mbf p \in \cP_{n,\theta}^J}  \frac{1}{n}\ln \mathbb P\left( \mbf Y\in  \tilde B^J_{n}(\mbf p,\theta) \; ;  d( \tau_{\mbf Y}, \tau) \le\epsilon\right) + o(1).
 \end{equation}
Now, for every fixed $\mbf p \in \cP_{n,\theta}^J$, we write that $\frac{1}{n}\ln \mathbb P\left( \mbf Y\in  \tilde B^J_{n}(\mbf p,\theta) \; ;  d( \tau_{\mbf Y}, \tau) \le\epsilon\right)$ equals
\begin{equation}\label{defq2}
\frac{1}{n}\ln \mathbb P\left( \mbf Y\in  \tilde B^J_{n}(\mbf p,\theta) \right) + \frac 1 n \ln \mathbb P \left(  d( \tau_{\mbf Y}, \tau) \le\epsilon | \mbf Y\in  \tilde B^J_{n}(\mbf p,\theta) \right).
\end{equation}
 Consider a sequence $\mbf p = \mbf p_n \in \cP_{n,\theta}^J$ which converges toward a probability measure $\pi \in \cP(\dZ_+^J)$. Since $\cP^J_{n,\theta}$ is relatively compact, the proof of \eqref{eq:prechitilde} follows from the following two claims: 
\begin{eqnarray}
& \lim_{n \to \infty} \frac{1}{n}\ln \mathbb P\left( \mbf Y\in  \tilde B^J_{n}(\mbf p,\theta) \right) = L_{\pi,\Lambda} (\theta). \label{eq:twocl1}\\
& \hbox{$\tau_{\mbf Y}$ given $\{ \mbf Y \in  \tilde B^J_{n}(\mbf p,\theta)\}$ satisfies a LDP with rate $n$ and rate function  $\tilde \chi_{\pi,\Lambda_{\theta}}$}\label{eq:twocl2}
\end{eqnarray}
where $\Lambda_{\theta}(.)=\Lambda(. \cap B_{\dC} (0,\theta) )/ \Lambda( B_{\dC} (0,\theta) )$.
It is indeed easy to deduce from \eqref{defq},\eqref{defq2}, \eqref{eq:twocl1} and  \eqref{eq:twocl2}, that \eqref{defchit} is satisfied and
$$\tilde\chi_{\Lambda,\theta}(\tau)=\inf_{\pi\in\mathcal P(\mathbb Z^J_{+})}\{\tilde \chi_{\pi,\Lambda_{\theta}}(\tau) -L_{\pi,\Lambda}(\theta) \}.$$
Let us check the two claims  \eqref{eq:twocl1} and  \eqref{eq:twocl2}. We start with \eqref{eq:twocl1}.  Let  $q_n =  \dP ( (Y_j)_{12}  > 0 )$,  and $r_n = \dP (  |(Y_j)_{12}|  \leq \theta | (Y_j)_{12} \ne  0 )$. From \eqref{eq:gamman2N}-\eqref{eq:Lambdabddt}, we have
$$
\lim_{n \to \infty} n q_n = \lambda = \Lambda ( \dC  \backslash \{0 \} ) >0 \quad \hbox{ and } \quad \lim_{n \to \infty} r_n = r(\theta)=  \frac{ \Lambda (B_{\dC} (0,\theta) \backslash \{0 \} ) }{ \Lambda ( \dC \backslash \{0 \} )} > 0.
$$
Note that the matrices $(\IND_{  (Y_j)_{uv} \ne 0})_{u,v}$, $j \in J,$ are the adjacency matrices of independent Erd\H{o}s-R\'enyi random graph with edge probability $q_n$. Note also that $\{\mbf Y\in  \tilde B^J_{n}(\mbf p,\theta)  \}$ is equivalent to all edges of $G_n^J$ have a corresponding value in $\mbf Y$ of absolute value at most $\theta$ and $G_n^J \in \cG_n(\mbf p)$ where $\cG_n(\mbf p)$  is the set of $J$-marked graphs on $V_n$ whose empirical degree sequence is $\mbf p$ and whose restriction $G_n^j$ to color $j$ is a simple graph for all $j \in J$.  It follows that 
$$
\dP \left( \mbf Y\in  \tilde B^J_{n}(\mbf p,\theta) \right) =  | \cG_n(\mbf p)| (q_n)^{n d_n /2} (1 - q_n)^{n (n -1) / 2 - n d_n /2} r_n^{n d_n /2}, 
$$
where, with   $D =(D_1,\ldots,D_j)$ having distribution $\mbf p$ under $\dE_{\mbf p}$, $n d_n = n \dE_{\mbf p} \sum_j D_j$ is twice the total number of edges in any $G_n^J\in \cG_n(\mbf p)$. We have $d_n \to d = \dE_{\pi} \sum_j D_j$ where $D =(D_1,\ldots,D_j)$ has distribution $\pi$ under $\dE_{\pi}$. We deduce that 
$$
\dP ( \mbf Y\in  \tilde B^J_{n}(\mbf p,\theta) )  =  | \cG_n(\mbf p)| e^{ - \frac{nd_n}{2} \ln n} e ^{ n  a (\theta)( 1+ o(1))}, 
$$
with $a (\theta)= (d/2)\ln \lambda - (\lambda /2) + (d/2)\ln r (\theta)$. 
On the other hand, the asymptotic equivalent of $| \cG_n(\mbf p)| $ is well-known. It follows for example from \cite[Corollary 4.6, (5.7)-(5.8)]{BoCa15} that, 
$$
| \cG_n(\mbf p)|  \sim e^{  \frac{nd_n}{2} \ln n} e^{ n b(\pi)  (1+ o(1))}
$$
with $b (\pi)= H(\pi) + (1/2) \sum_j d(j) \ln (d(j)/e) - \sum_j \dE_{\pi} \ln (D_j !)  $ and $d(j)  =  \dE_{\pi} D_j$.
This concludes the proof of \eqref{eq:twocl1} with $L_{\pi,\Lambda}(\theta)= b(\pi)+a(\theta)$.

We now prove the claim \eqref{eq:twocl2}.  The law of $G_n^J$ given $\{ \mbf Y \in  \tilde B^J_{n}(\mbf p,\theta)\}$ is, by exchangeability of vertices, a random marked graph as in Theorem \ref{th:SIGMADR} with $\Lambda$ replaced by $\Lambda_{\theta}$.  Therefore, we see that $U(G_{n}^{J})$  given $\{ \mbf Y \in  \tilde B^J_{n}(\mbf p,\theta)\}$  satisfies a 
LDP with good rate function $\Sigma_{\Lambda_{\theta},\pi}$.
We may thus repeat the proof of Theorem \ref{LDPtraffic0} as in Section \ref{subsec:TF1} to deduce that 
 the law of  $\tau_{\mbf Y}$ conditional to   $\{ \mbf Y \in  \tilde B^J_{n}(\mbf p,\theta)\}$  satisfies 
 a LDP for with a good rate function $\tilde \chi_{\pi,\Lambda_{\theta}}$ obtained by the contraction principle so that $\tilde \chi_{\pi,\Lambda_{\theta}}(\tau)=\inf\{ \Sigma_{\Lambda_{\theta},\pi}(\mu):\tau_{\mu}=\tau\}$. 
 
\subsection{Rooted traffic distributions}
\label{subsec:rootedtraff}

Recall that a test graph $H=(V_H,E_H,\ell, \varepsilon)$ is a finite graph $(V_H,E_H)$ and the maps $\ell:E_H \to J, \varepsilon:E_H\to \{1,*\}$, interpreted as edge labeling by indeterminates $y_j, y_j^*,j\in J$. As our approach is local, we define a \emph{rooted test graph} as a couple $(H,1)$ where $H$ is a test graph and $1\in V$ is a vertex called the root. When the context is clear, we write in short $H$ to refer to the couple $(H,1)$. We denote by $\cH^\bullet \langle J\rangle$ the set of test graphs up to root preserving isomorphisms and by ${\dC\cH}^\bullet\langle J \rangle$ the vector space they generate. An (algebraic) \emph{rooted-traffic distribution} is therefore a linear form on ${\dC\cH}^\bullet\langle J \rangle^*$.

We also extend the definition of Section \ref{traffic:int} to the locally finite marked graphs of Section \ref{Sec:MarkG}. Let $J$ be a fixed finite label set. Let $G = (V,E,\xi)$ be a marked graph on the mark set $\cZ = \dC^J$ equipped with the involution $(z_j)_{j\in J} ^* = (\bar z_j)_{j\in J}$. For $j \in J$, we set  $E_j\subset E$ the set of edges $e\in E$ such that the $j$-th coordinate of the mark is $\xi_j(e)\neq0$, and denote by $G_j = (V,E_j,\xi_j)$ the marked graph on the mark set $\cZ_j=\dC$. For $j \in J$, we consider the operator $A_{G_j}$ defined in \eqref{eq:degAG} under the assumption \eqref{eq:hypAG}. We strengthen assumption \eqref{eq:hypAG} by assuming that $G$ is locally finite.

\begin{definition}[Rooted traffic distribution for marked graphs]\label{Def:TrafficDistribution}Let $(G,o) \in  \cGr(\dC^J)$ be a marked graph, where $G= (V,E,\xi)$ and $G_j$ are as above. The (canonical) rooted-traffic distribution $\tau_{(G,o)}$ of $(G,o)$ is defined by:  for any $ H=(V_H,E_H,\ell, \varepsilon) \in {\dC\cH}^\bullet\langle J \rangle,$
	\begin{equation}\label{def:tauG}
		\tau_{(G,o) }[  H ] =  \sum_{\substack{ \phi:V_H\to V \\ \phi(1)=i}}    \prod_{e=(v,w)\in E_H} \xi^{\varepsilon(e)}_{\ell(e)}\big(\phi(w), \phi(v)\big) \in \dC
	\end{equation}
where the sum is over all maps $\phi$ from $V_H$ to $V$ such that $\phi(1)=o$. 
\end{definition}

To connect this definition with the traffic distribution of matrices, note that  if $V = \{1,\ldots,n\}$ and $\mbf A_G = (A_{G_j})_{j \in J}$ the adjacency operator of $\mbf G$, then $\frac 1 n \sum_i \tau_{\mbf G, i} = \tau_{\mbf A_{G}}$ with $\tau_{\mbf A_{G}}$ as in Definition \ref{Def:TrafficDistribution}. Note also that since $G$ is locally finite, the sum defining $\tau_{(G,o) }[  H ]$ can be reduced to a sum over a finite number of maps $\phi$, so it is well defined. Also, this definition depends only on the isomorphism class of the rooted graph $(\mbf G, o)$, so it well defines a function on the set of locally finite rooted marked graphs with marks in $\cZ = \dC^J$
	\begin{equation}\label{Eq:EquivalenceMap}
		\begin{array}{cccc}
			\hat \tau: & \cGr(\dC^J) & \to & \TRAFB\\
			& (G ,o) & \mapsto & \tau_{G ,o}.
		\end{array}
	\end{equation}

The definitions imply the following observation.

\begin{lemma} \label{continuous}Endowing $\cGr(\dC^J)$ with the weak local topology and $\TRAFB$ with the topology of pointwise convergence, the map $\hat \tau$ is continuous. 
\end{lemma}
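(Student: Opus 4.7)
The plan is to show pointwise continuity of $\hat\tau$: for any fixed rooted test graph $H \in \mathcal{H}^\bullet\langle J \rangle$, the functional $(G,o) \mapsto \tau_{G,o}[H]$ is continuous on $\cGr(\dC^J)$ with the local weak topology. The central observation is that, although the sum in \eqref{def:tauG} is formally over all maps $\phi: V_H \to V$ with $\phi(1)=o$, the locally finite structure of $G$ and the connectedness of $H$ reduce it to a finite combinatorial sum depending only on a bounded neighborhood of the root.

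First I would set up the reduction. Let $h$ be the eccentricity of the root $1$ in $H$, i.e.\ the maximum graph distance in $H$ from $1$ to any vertex of $V_H$; since $H$ is a finite connected test graph, $h < \infty$. A map $\phi: V_H \to V$ with $\phi(1)=o$ contributes a nonzero term in \eqref{def:tauG} only if for every edge $e = (v,w) \in E_H$ we have $\{\phi(v),\phi(w)\} \in E$ and $\xi_{\ell(e)}(\phi(w),\phi(v)) \ne 0$. In particular, for every $v \in V_H$ we have $d_G(o,\phi(v)) \le d_H(1,v) \le h$, so $\phi(V_H)$ is contained in the $h$-ball $B_h(o)$ of $G$ around $o$. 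Since $G$ is locally finite, $B_h(o)$ is finite, and therefore the set $\Phi_H(G,o)$ of contributing maps is finite. Hence
\[
\tau_{G,o}[H] = \sum_{\phi \in \Phi_H(G,o)} \prod_{e=(v,w) \in E_H} \xi_{\ell(e)}^{\varepsilon(e)}\bigl(\phi(w),\phi(v)\bigr),
\]
and $\tau_{G,o}[H]$ depends only on the isomorphism class of $g_h := (G,o)_h$ as a marked rooted graph.

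Next I would transfer convergence through an isomorphism of balls. Let $(G_n,o_n) \to (G,o)$ in $\DLOC$; pick any $r \ge h$. For $n$ large enough the pair $\bigl((G_n,o_n),(G,o)\bigr)$ admits a good pair $(r,\delta_n)$ with $\delta_n \to 0$, meaning there is an isomorphism $\psi_n: \bar{g}_{n,r} \to \bar{g}_r$ of unmarked rooted balls such that the edge marks match within $\delta_n$. Precomposition $\phi \mapsto \psi_n \circ \phi$ gives a bijection $\Phi_H(G_n,o_n) \to \Phi_H(G,o)$ for all sufficiently large $n$, since the underlying graph structure on $B_h(o_n)$ is isomorphic to that on $B_h(o)$ and contributing maps depend only on this unmarked adjacency structure. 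For each $\phi \in \Phi_H(G_n,o_n)$ and each $e=(v,w) \in E_H$,
\[
\xi_{G_n,\ell(e)}^{\varepsilon(e)}\bigl(\phi(w),\phi(v)\bigr) \longrightarrow \xi_{G,\ell(e)}^{\varepsilon(e)}\bigl(\psi_n\phi(w),\psi_n\phi(v)\bigr)
\]
because marks are continuous under $\DLOC$ and the involution $z \mapsto z^*$ on $\dC^J$ is continuous. Since $E_H$ is finite and the product of finitely many convergent bounded factors converges to the product of the limits, each summand converges. Summing over the finite bijected index set $\Phi_H(G,o)$ yields $\tau_{G_n,o_n}[H] \to \tau_{G,o}[H]$.

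No step poses a serious obstacle; the only point requiring care is verifying that the finite sum expressing $\tau_{G,o}[H]$ is stable under the isomorphism of $h$-balls produced by $\DLOC$-convergence. This is purely combinatorial once the reduction to $B_h(o)$ is made, and it works uniformly for each fixed $H$, which is all that the topology of pointwise convergence on $\TRAFB$ requires.
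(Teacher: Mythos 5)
Your proof is correct and takes essentially the same route as the paper's: reduce $\tau_{G,o}[H]$ to a function of the $h$-ball around the root (where $h$ is the eccentricity of the root in $H$) and then invoke the definition of the local topology, which guarantees that these balls eventually match up to a small perturbation of marks. The paper states this in three sentences and leaves the verification implicit; you have simply carried out the isomorphism-and-convergence bookkeeping in detail.
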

\begin{proof}
Since $H$ is finite, all vertices of $H$ are within graph distance at most say $h \geq 1$ from $1 \in V_H$. In particular, $ \tau_{G,o}(H)$ is a function on $G$ restricted to $B_G(h)$, the ball of radius $h$ around the root $o$ of $G$.  The continuity follows immediately from the definition of the local topology on $\cGr$. 
\end{proof}

Lemma \ref{continuous} shows that the rooted-traffic distributions of a locally finite graph contains \emph{a certain information} about the graph. Actually, it contains \emph{all} its information, even in the context where of random graphs,  and this is the main argument to deduce a LDP of the traffic distribution from a LDP of the local law of the graph by the contraction principle.

More precisely, we endow the space ${\dC\cH}^\bullet\langle J \rangle^*$ with the  topology of pointwise convergence and denote by $\cP(\TRAFB)$ the set of probability measures on the space of rooted-traffic distributions. If $\mu$ is the law in  $\cP(\cGr(\dC^J))$, we define $\tau_{\mu}\in \cP(\TRAFB)$ as the push-forward of $\mu$ by $\hat \tau$, that is the law of the random rooted-traffic distribution $\tau_{(G ,o)}$ where $(G,o)$ has law $\mu$. Therefore $\tau_\mu$ is characterized by the collection of complex numbers given, for  every $k$-tuple of test graphs  $\mbf H\in \mathcal  H\langle J\rangle^{k}$ and  bounded continuous  function $f$ from $\mathbb C^{k} \to \mathbb C$, by
	$$\tau_{\mu}[\mbf H ]( f)= \E\Big[ f(\tau_{(G,o)}[\mbf H] )\Big].$$
In particular, we observe that if $\mbf Y= (Y_j)_{j \in J}$ is a collection of matrices in $M_n(\dC)$ and $G(\mbf Y)$ is the associated marked graph on $\dC^J$, we have 
\begin{equation}\label{eq:trafGA}
\Utau_{\mbf Y} = \tau_{U(G(\mbf Y))},
\end{equation}
where, as usual, $U(G)$ is as in \eqref{eq:defU}.

\begin{lemma}\label{Lem:Homomorphismfin} The map $\tilde \tau:  \cP(\cGr(\dC^J)) \mapsto   \cP(\TRAFB)$ induced by $\hat \tau$ is injective. Moreover, denoting by $ \cP(\mathrm{Traf}_\cG^\bullet\langle J \rangle)$ its image set, the inverse map $\tilde \tau^{-1} : \cP(\mathrm{Traf}_{\cG}^\bullet\langle J \rangle ) \to \mu\in \cP(\cGr(\dC^J)) $ is continuous.
\end{lemma}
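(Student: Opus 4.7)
The plan is to establish the individual-level claim that $\hat\tau : \cGr(\dC^J) \to \TRAFB$ is a topological embedding—continuous, injective, and a homeomorphism onto its image—from which the corresponding measure-level statement about $\tilde\tau$ will follow by the standard pushforward functor between Polish probability spaces equipped with weak topology.

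For injectivity of $\hat\tau$, I will argue by induction on the radius $h$ that the $h$-ball $(G,o)_h$ of a rooted marked graph is uniquely determined by the finite family of traffic values $\{\tau_{(G,o)}[H] : H \text{ has depth at most } h\}$. For the base step $h = 1$, I choose test graphs $H$ consisting of several parallel edges between vertex $1$ (mapped to $o$) and vertex $2$ (mapped to a generic neighbor), with variable $(j,\varepsilon)$-labels. A direct computation shows that $\tau_{(G,o)}[H]$ is then a power sum of the mark multiset $\{\xi_j(o,v) : v \sim o\}_{j \in J}$; since this multiset is finite, Newton's identities recover it from its power sums, hence determine the $1$-ball up to isomorphism. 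For the inductive step, I probe each boundary vertex of the current ball analogously, using test graphs rooted at $1$ whose initial segment traces a fixed path in the reconstructed $h$-ball before branching out with parallel edges at the boundary.

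For continuity of $\hat\tau^{-1}$ on its image, I will upgrade this reconstruction to a continuous map. The power-sums-to-multiset correspondence via Newton's identities is continuous on the set of finite multisets of bounded cardinality in $\dC^J$, and the higher-depth extensions are continuous in the marks once the combinatorial structure of the inner ball is fixed. Hence $\tau_{(G_n,o_n)} \to \tau_{(G,o)}$ pointwise in $\TRAFB$ implies $(G_n,o_n)_h \to (G,o)_h$ in $\cGr_h$ for every $h$, which is precisely local weak convergence $(G_n,o_n) \to (G,o)$. Once $\hat\tau$ is known to be a topological embedding between Polish spaces, the pushforward $\tilde\tau : \cP(\cGr(\dC^J)) \to \cP(\TRAFB)$ is itself a topological embedding between the corresponding Polish spaces of Borel probability measures under weak convergence, a standard functorial fact. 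In particular, $\tilde\tau$ is injective and $\tilde\tau^{-1}$ is continuous on its image, as claimed.

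The hardest step is making the individual reconstruction continuous: the $h$-ball is a combinatorial object decorated with continuous marks, so continuity in the marks requires stability of the combinatorial type under the limiting operation. The delicate point is that traffic values are blind to the distinction between an edge carrying a zero mark and the absence of that edge, so the reconstruction implicitly identifies such configurations; this must be reconciled with the ambient topology on $\cGr(\dC^J)$ through the conventions of the traffic framework. The technical groundwork for this identification, together with the explicit separating families of test graphs underlying the reconstruction, is developed in \cite{Camille, Camille2} and will be invoked to close the argument.
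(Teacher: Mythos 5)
Your overall strategy — recovering the $h$-ball of $(G,o)$ from traffic values of carefully chosen test graphs and then lifting to the measure level by pushforward — is the same one the paper follows, but the proposal omits the paper's central technical device and thereby leaves the inductive step unjustified. The paper works throughout with the \emph{injective} traffic distribution $\tau^0_{(G,o)}$, obtained from $\tau_{(G,o)}$ by the M\"obius inversion \eqref{MobiusOn}--\eqref{MobiusOff} over partitions of $V_H$. Without this transform, your inductive step is not well-posed: $\tau_{(G,o)}[H]$ sums over \emph{all} maps $\phi:V_H\to V$ with $\phi(1)=o$, including maps that fold a "probe path" back onto itself or identify the branching vertex of $H$ with an interior vertex of the ball you have already reconstructed, so a test graph designed to read off the $(h+1)$-sphere also picks up contributions from depths $\le h$, and disentangling these is exactly what passing to $\tau^0$ accomplishes. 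Your base case at $h=1$ only avoids this because a two-vertex test graph with no self-loops is automatically injective. Moreover, because marks live in $\dC^J$, single-label power sums do not determine the $1$-ball: you must first recover the chromatic skeleton — which neighbors carry a nonzero $j$-th coordinate, for each $j$ — which is precisely the content of Lemma \ref{LemmaSkeleton} and is a necessary preliminary before a moment/Newton argument (the paper uses Stone--Weierstrass together with Riesz representation rather than Newton's identities, but on compactly supported finite multisets these amount to the same thing).

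The second gap is structural: you claim $\hat\tau$ is a topological embedding on all of $\cGr(\dC^J)$ and then invoke the pushforward functor in one step. The paper does not attempt, and does not need, this stronger statement. It proves the homeomorphism property only on the bounded set $\cP_\infty(\cGr)$ of laws of graphs with uniformly bounded degree and marks — where reconstruction is stable because multisets of fixed finite cardinality in a compact set are continuously determined by finitely many mixed moments — and then extends to $\cP(\cGr)$ via density of $\cP_\infty$ in $\cP$ and the continuity of the forward map (Lemma \ref{continuous}). Taking the detour through the bounded case is not a technicality: the stability you flag as "the hardest step" is far from automatic without the truncation. Finally, your closing appeal to "the conventions of the traffic framework" to reconcile zero-marked edges with absent edges is a genuine loose end rather than a side remark: the plain traffic distribution literally cannot distinguish an edge with mark $(0,\ldots,0)$ from no edge, so injectivity only holds up to that identification, and the paper's chromatic-skeleton construction is what makes this precise.
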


This implies that the weak topology of traffics and the Benjamini Schramm topology are equivalent. The lemma is proved in the next section, we present the first argument.

As in previous sections, we use the approximation of rooted-graphs by graphs with bounded marks and degree in a neighborhood of the root. For any $k\geq 1$, we denote by $\cP_{k}( \cGr(\dC^J))$ the set of laws of weighted graphs with degree uniformly bounded by $k$ and marks uniformly bounded by $k$, and we set $\cP_{\infty}( \cGr(\dC^J))$ the union of all $\cP_{k}( \cGr(\dC^J))$'s. In the proof of Lemma \ref{smooth}, we have seen that  $\cP_{\infty}( \cGr(\dC^J))$ is dense in $\cP(\cGr(\dC^J))$. 

Let now $\mu$ in $\cP_{\infty}( \cGr(\dC^J))$ be the rooted-traffic distribution of a random graph $(G,o)$. Then for any tuple of rooted test graphs $\mbf H=(H_1 , \dots , H_n)$, the values of $\tau_{G,o}(H_1), \dots , \tau_{G,o}(H_n)$ belong almost surely to a uniformly bounded subset of $\dC^n$ where any continuous bounded function can be approximated by polynomials.  We can hence extend $\tau_\mu [\mathbf H]( f) $ for $f$ a polynomial in $n$ variables and $n$ conjugate variables. Let $f$ be a monomial of the form $f:(x_1,\bar x_1 \dots , x_n,\bar x_n) \mapsto x_1^{k_1} \bar x_1^{k'_1} \dots x_n^{k_n}\bar x_n^{k'_n}$ for some $k_1,k'_1, \dots , k_n,k'_n\geq 1$. Then the expression of $\tau_{G,o}$ implies  that
	$$f \big(\tau_{G,o}(H_1),  \dots , \tau_{G,o}(H_k) \big) = \tau_{G,o}(H_1)^{k_1} \overline{\tau_{G,o}(H_1)}^{k'_1} \dots \tau_{G,o}(H_n)^{k_n}\overline{\tau_{G,o}(H_n)}^{k'_n} =  \tau_{G,o}(H_0),$$
where $H_0 = H_0(\mbf k, \mbf H)  \in \cH^\bullet \langle J \rangle$ is the rooted test graph obtained as follow. For each $i=1, \dots ,n$ consider $k_i$ copies of each $H_i$, as well as $k'_i$ copies of the graph $H_i^*$ obtained by reversing the orientation of the edges of $H_i$ and replacing the label map $\varepsilon$ by $\varepsilon^*$. Then $H_0$ is obtained by identifying the vertex $1$ of all these graph copies for all $i$. 

As a conclusion, for $\mu$ in $\cP_{\infty}( \cGr(\dC^J))$, the moments of the traffic distribution read
	\begin{equation}\label{Annealing}
		\tau_\mu [\mathbf H]( f) = \E\big[ \tau_{G,o}(H_0) \big] := \bar \tau_\mu[H_0].
	\end{equation}
Note in particular that if  $G$ is a graph on the vertex set $\{1, \dots , n\}$ and $o$ is uniformly chosen at random, then $\tau_\mu [\mathbf H]( f)$ coincides with the definition of \cite{Camille}. Hence the traffic distribution $\tau_\mu \in \cP(\TRAFB)$ and the expectation of the rooted distribution $\E[\tau_{(G,v)}] \in \TRAFB$ define the same object when $\mu$ is in $\cP_{\infty}( \cGr(\dC^J))$ and we can use usual traffic techniques.

\subsection{Proof of Lemma \ref{Lem:Homomorphismfin}}\label{Sec:TraffAndBS}

In this section we prove the equivalence between the weak topology of traffics distributions and the Benjamini Schramm topology. We first note that, as usual for the product topology, we can consider a distance $d = d_J$ on $\cP(\TRAFB)$ by choosing, for each $n\geq 1$, a countable dense set  $(g_{n,k})_{k\in \mathbb N}$ of continuous bounded functions $\dC^n \to \dC$, and a exhaustive sequence of tuples $\mbf H_n,n\geq 1$ of rooted graph tests with $n$ edges, and setting 
$$d(\tau,\tau')=\sum_{n,k\in \mathbb N} \frac{1}{2^{n+k}}\mathrm{min} \big(1, |\tau(\mbf H_n)(g_{n,k})-\tau'(\mbf H_n)(g_{n,k})|\big).$$

We write in short $\cGr=\cGr(\dC^J)$ in this section. In order to prove the lemma it is sufficient to prove the injectivity and bi-continuity for the map $\nu\in \cP_{\infty}( \cGr) \to \tau_{\nu}\in \mathcal P_\infty(\TRAFB )$. Indeed, as seen in the proof of Lemma \ref{smooth}, we can approximate $\nu\in \cP( \cGr) $ by $\nu_{k}\in  \cP_{\infty}( \cGr)$ up to an error $\delta_{k}$ going to zero and moreover $\tau=\tau_{\nu}$ and $\tau_k=\tau_{\nu_k}$ are at distance going to zero with $\delta_k$ since the map $\nu\rightarrow \tau_{\nu}$ is continuous by Lemma \ref{continuous}.
 If we establish that there is a unique  $\nu_k=\nu_{\tau_k}$ pre-image of $\tau_k$, then it is as close as wished to $\nu$. It follows that if $\nu\in \cP_\infty( \cGr)\mapsto \tau_{\nu}\in \mathcal P_\infty(\TRAFB )$ is a homeomorphism on its image, then so is the map $\nu\in \cP( \cGr) \to \tau_{\nu}\in \mathcal P(\TRAFB)$. It is hence sufficient to prove the property for the map $(G,o) \mapsto \tau_{G,o}$ on the set of bounded locally finite rooted marked graphs.

Our main  tool is the so-called  injective transform $\tau^0_{G,o}$ of $\tau_{G,o}$. It is defined as the expression in the right hand side of \eqref{def:tauG} of $\tau_{G,o}$ where the sum is restricted to \emph{injective} maps $\phi$. For a partition $\pi\in \mathcal P(V_H)$ of the vertex set of a test graph $H$, we set $H^\pi$ the test graph obtained by identifying vertices in a same block of $\pi$. We therefore have the relations
\begin{eqnarray}
	\tau_{G,o}[H] & = & \sum_{\pi \in \mathcal P(V_H)} \tau^0_{G,o}[H^\pi],\label{MobiusOn}\\
	\tau^0_{G,o}[H]  & =  & \sum_{\pi \in \mathcal P(V_H)} \prod_{B\in \pi} (-1)^{|B|-1}(|B|-1)!\tau_{G,o}[H^\pi],\label{MobiusOff}
\end{eqnarray}
which exhibits a (linear) bijection $\tau_{G,o} \mapsto \tau^0_{G,o}$, bi-continuous for the product topology. It hence suffices to prove the injectivity and bi-continuity of the map $(G,o) \mapsto \tau^0_{G,o}$ on the set of rooted marked graph $(G,o)$ on the marked set $\dC^J$ with  degree uniformly bounded by $k$ and marks uniformly bounded by $k$, for $k$ arbitrary large.

We hence fix  such a graph $(G,o)$ and 
 prove that the injective traffic distribution of $(G,o)$ completely characterize the graph in a continuous way. The proof has 
 two steps. We first recover the graph $(G,o)$ up to the values of the non zero coordinate of the mark. 
More precisely, we call chromatic skeleton $\bar G$ of $G$ the marked graph obtained by replacing each mark  $\xi\in \mathbb C^{J}$ by $(1_{\xi_{j}\neq 0})_{j\in J}$. We first show in lemma \ref{LemmaSkeleton} below that the chromatic skeleton $(\bar G, o)$ is a continuous function of the injective traffic distribution of $(G,o)$. In a second step,  we prove that we can recover the  marks.

For the rest of the section, it is important to recall that a rooted graph is defined up to root preserving isomorphisms, which means that $(G,o)=(G',o')$ if and only if there is a root-preserving bijection $V_G\to V_{G'}$, such that an edge between two vertices of $G$ implies that there is an edge in $G'$ between the image of these vertices with same mark. For rooted test graphs, the bijection sends 1 to 1 and respects also the multiplicity and labels. Moreover, $G$ is a subgraph of $G'$ (we write $G\preccurlyeq G'$) if there is an  isomorphism from  $G$ to $G'$ which is an injective map $V_G\to V_{G'}$.

We define the skeleton  $\bar H $ of a test graph $H = (V,E,\gamma, \eps)$ as the triplet $\bar H = (V, \bar E, \bar \gamma)$ where $(V, \bar E)$ is the undirected graph obtained by forgetting the orientation of the edges of $E$, the $\varepsilon$ labels, as long as the multiplicity of the edges of $H$ labeled with a same color $j\in J$. For an edge $\bar e \in \bar E$ we associate the label $\bar \gamma(\bar e) = \gamma(e)$ where $e$ is any edge in the group forming $\bar e$. For any $p\geq 1$ let $\bar{\mathcal  H}^\bullet_p\langle J \rangle$ be  the set of skeleton test graphs $\bar H$ whose depth is not greater than $p$, up to isomorphisms preserving 1. For $H_0\in  \bar{\mathcal  H}^\bullet_p\langle J \rangle$, denote by $\mathcal  H^\bullet_{H_0}\langle J\rangle$ the set of test graphs $H \in\mathcal  H^\bullet\langle J\rangle$  such that $\bar H =H_0$. In this proof, we identify a chromatic skeleton with the skeleton test graph obtained by replacing an edge $e$ whose non zero coordinates are $j_1, \dots,  j_n$ by $n$ edges labeled so between the same endpoints. 

\begin{lemma}\label{LemmaSkeleton} For any $p\geq 1$, let $\mathcal X_p$ be the set of all $H'$ in $\bar{\mathcal  H}^\bullet_p\langle J \rangle$ such that there exists $H \in \mathcal  H_{H'}^\bullet\langle J\rangle$ satisfying 
$\tau^0_{  G,o}(H ) \neq 0$. Then $\mathcal X_p$ has a maximal element with respect to the partial order $\preccurlyeq$, which is the chromatic skeleton graph of $  G_p$. As a consequence, the latter is a continuous function of the injective distribution $\tau^{0}_{G,o}$.
\end{lemma}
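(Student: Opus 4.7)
The claim has two parts: identifying $\max \mathcal X_p$ with $\bar G_p$, and the continuity consequence. The upper bound is straightforward; the lower bound is the main obstacle.

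For the upper bound $\mathcal X_p \subseteq \{H' : H' \preccurlyeq \bar G_p\}$, pick $H \in \mathcal H^\bullet_{H'}\langle J\rangle$ witnessing $H' \in \mathcal X_p$, so $\tau^0_{G,o}(H) \ne 0$. The defining sum in \eqref{def:tauG} ranges over injective $\phi:V_H\to V_G$ with $\phi(1)=o$, so at least one such $\phi$ has every factor $\xi^{\varepsilon(e)}_{\ell(e)}(\phi(w),\phi(v))$ non-zero. That $\phi$ gives a colour-preserving injective multigraph embedding of $H'$ into the chromatic skeleton of $G$; since $H$ has depth at most $p$ the image lies in $V_{G_p}$, and hence $H' \preccurlyeq \bar G_p$.

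The harder lower bound is to exhibit $H$ with skeleton $\bar G_p$ and $\tau^0_{G,o}(H)\ne 0$. Since $V_H = V_{\bar G_p} = V_{G_p}$, non-zero contributions come precisely from the finite group $\Gamma = \mathrm{Aut}_o(\bar G_p)$ of root-preserving, colour-preserving automorphisms of $\bar G_p$ (any colour-preserving injection $V_H \to V_{G_p}$ is forced to be bijective and then an automorphism of $\bar G_p$). Fix an orientation of every edge of $\bar G_p$ and write $H_\varepsilon$ for the resulting test graph, $\varepsilon \in \{1,*\}^{E_{\bar G_p}}$; then
\[
\tau^0_{G,o}(H_\varepsilon) = \sum_{\phi\in\Gamma} C_\phi(\varepsilon), \qquad C_\phi(\varepsilon) = \prod_e \xi^{\varepsilon(e)}_{\ell(e)}\bigl(\phi(w_e),\phi(v_e)\bigr),
\]
with $C_{\mathrm{id}}(\varepsilon)\ne 0$. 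To rule out complete cancellation with the non-identity $C_\phi$, I would exploit two ingredients. First, the involution $\varepsilon \mapsto \varepsilon^*$ conjugates $\tau^0_{G,o}(H_\varepsilon)$, so real and imaginary parts of each candidate are accessible as half-sums $\tau^0_{G,o}(H_\varepsilon) \pm \tau^0_{G,o}(H_{\varepsilon^*})$. Second, for $\phi \ne \mathrm{id}$ in $\Gamma$ the monomial $C_\phi(\varepsilon)$, viewed in the formal edge variables $\{\xi_j(u,v)\}$, must differ from $C_{\mathrm{id}}(\varepsilon)$ in at least one factor (since $\phi$ moves at least one edge). A finite-dimensional linear-algebra argument, varying $\varepsilon$ to break any $\Gamma$-symmetry, then forces some $\varepsilon$ with $\tau^0_{G,o}(H_\varepsilon) \ne 0$. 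This yields $\bar G_p \in \mathcal X_p$, and combined with the upper bound, $\bar G_p$ is the maximum.

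For continuity, the bounded degrees and marks make $\bar G_p$ take values in a finite set of multigraphs, and the membership of a candidate $H'$ in $\mathcal X_p$ is determined by finitely many non-vanishing conditions on test-graph values of $\tau^0_{G,o}$. Choosing the witness $H$ from the lower bound at $(G,o)$, the pointwise continuity of $\tau^0 \mapsto \tau^0(H)$ gives $\bar G_p \in \mathcal X_p$ for all nearby traffic distributions; together with the upper bound, this yields the claimed continuity of $\tau^0_{G,o} \mapsto \bar G_p$ on the image of bounded graph-traffics. The main obstacle is the non-vanishing assertion in the lower bound, where cancellation among the automorphism contributions in $\Gamma$ must be ruled out via the combinatorial argument in $\varepsilon$.
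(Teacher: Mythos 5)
Your upper bound (that $H' \in \mathcal X_p$ forces $H' \preccurlyeq \bar G_p$) and the continuity deduction match the paper. The gap is in the lower bound: exhibiting $H$ with skeleton $\bar G_p$ and $\tau^0_{G,o}(H)\ne 0$. You restrict to multiplicity-one test graphs $H_\varepsilon$ and invoke an unstated ``linear-algebra argument, varying $\varepsilon$'' to rule out cancellation among the $C_\phi(\varepsilon)$. No such argument can work, because there are rooted marked graphs for which $\tau^0_{G,o}(H_\varepsilon) = 0$ for \emph{every} $\varepsilon$ and every choice of edge orientation. Take $G$ to be the triangle on $\{o,a,b\}$ rooted at $o$, a single colour, and marks $\xi(o,a)=\xi(o,b)=1$ and $\xi(a,b)=i$, so $\xi(b,a)=-i$. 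The root-preserving automorphism group has two elements, the identity and the swap of $a,b$. For any $H_\varepsilon$ with this skeleton, the two factors coming from the edges at the root equal $1$, while the $\{a,b\}$-edge contributes $\pm i$ to the identity term and $\mp i$ to the swap term (conjugation via $\varepsilon$, and any orientation reversal, flip both signs simultaneously), so $C_{\mathrm{id}}(\varepsilon)+C_{\mathrm{swap}}(\varepsilon)=0$ for all eight choices of $\varepsilon$. The half-sum involution $\varepsilon\mapsto\varepsilon^*$ you mention only gives access to real and imaginary parts of quantities that all vanish, and formal distinctness of the monomials $C_\phi$ does not prevent their numerical values from cancelling.

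The paper avoids this by a doubling trick that your proposal omits: it takes a test graph $H_0$ in which every skeleton edge is replaced by \emph{two} parallel directed edges with opposite orientation and opposite $\varepsilon$-label. For any automorphism $\phi$, the contribution of a doubled edge is then $\xi\cdot\bar\xi=|\xi|^2>0$, regardless of whether $\phi$ preserves or reverses the orientation, since $|\xi(u,v)|=|\xi(v,u)|$. Every term $\delta^0(H_0,\phi)$ equals the same strictly positive number $\prod_{e}\prod_{j:\,\xi_j(e)\ne 0}|\xi_j(e)|^2$, so $\tau^0_{G,o}(H_0)=|\mathrm{Aut}(H_0)|\cdot\prod_{e,j}|\xi_j(e)|^2\ne 0$ and there is no cancellation to analyse. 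This modification of the test graph is the key step your argument is missing; varying $\varepsilon$ on a multiplicity-one skeleton is insufficient.
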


\begin{proof}[Proof of Lemma \ref{LemmaSkeleton}.]
	Recall the definition of $\tau^0$: for all $H= (V,E,\gamma, \eps)$, 
	\begin{equation*}
		\tau^0_{G ,o}(H)= \sum_{\substack{ \phi:V_H \to V \\ \phi(1)=o\\\mathrm{injective}}} \underbrace{ \prod_{e=(u,v)\in E} \xi_{G_{\ell(e)}}^{\varepsilon(e)}\big(\phi(v), \phi(u)\big)}_{\delta^0(H,\phi)}.
\end{equation*}

	 Note first that for any test graph $H$, if $\bar H  \not \preccurlyeq \bar{ G}_p$ then the above expression vanishes since the sum is over the empty set. For any $p\geq 1$, we fix a test graph  $H_0$ whose skeleton is $\bar G_p$ (i.e. $H_0\in \mathcal  H^\bullet_{\bar{G}_p}\langle J\rangle$), whose edges are of multiplicity two in each color, such that for each pair $\bar e = \{e,e'\}$ of edge of multiplicity two, we have $\varepsilon(e) \neq \varepsilon(e')$ and the edges have opposite direction.

	 Let $\phi:V_{H_0} \to V$ be an injective map such that $\phi(1)=o$ and $\xi_{G_{\ell(e)}}\big(\phi(e)\big)\neq 0$ for all $e\in E_{H_0}$. Firstly, the latter condition applied to the edges of $H_0$ adjacent to its root 1 implies that $\phi$ must send the neighbors $i_1, \dots ,  i_L$ of 1 to the neighbors $v_1, \dots , v_{L'}$ of $o$ by preserving the adjacency and the color labels of the edges between $1$ and the $i_\ell$'s. Moreover, the skeleton of $H_0$ is the chromatic skeleton of $(\bar{ G}_p,o)$, so the numbers of root neighbors are the same in $H_0$ and in $\mathcal G$, i.e. $L=L'$.  Since $\phi$ is injective, it realizes a bijection between these two sets of vertices. Similarly, if two vertices $i_{k}$ and $i_{k'}$ of $H_0$ are linked by an edge with color $j$, then $\delta^0(H,\phi)\neq 0$ implies that there an edge in $\mathcal G$ with same label between the images of $\phi$. Altogether, this shows that, restricted to the ball of radius one, $\psi$ an isomorphism of rooted graphs from $\bar H_0$ to $\mathcal G_1$.
	 
	 We can reproduce the argument to the vertex adjacent to the neighbors of $o$, and by induction on the distance of the root, see that $\phi$ is an bijection between $H_0$ and the vertex set of $(\bar G_p,o)$, which must respect the root, the adjacency of the graph (with the above convention). Hence $\phi$ is a root preserving isomorphism between the skeleton of $H_0$ and the skeleton $\bar G$. In particular, for such a $\phi$, in the expression of $\delta^0(H_0,\phi)$ each mark appears exactly twice with opposite $\varepsilon$ label, 
	 	$$\delta^0(H_0,\phi) =  \prod_{e\in E_{H_0}} \prod_{\substack{j\in J | \xi_j(e) \neq 0}}  \big| \xi_j(e) \big|^2\neq 0.$$
This is valid for any automorphism $\phi$, so if we denote by $\mathrm{Aut(H_0)}$ the set of automorphisms of $H_0$ that preserve the root 1, then we get $\tau_{G ,o}(H) = \delta^0(H,\phi)  \times |\mathrm{Aut(H_0)}| \neq 0$. 
\end{proof}

Now that we have determined the chromatic skeleton of $(\mathcal G_p,o)$, we shall determine its marks. We first consider an orientation of the edges of $E_0$. Then we denote by $X_p = (\xi(e))_{e\in E_0}$ the collection of all non-zero marks given that orientation ($\xi(e)=\xi_j(e')$ whenever $e$ has color $j$ and corresponds to $e'$ via the identification $H_0\sim\bar G_p$). Since $(G,o)$ is determined up to automorphism, note that this collection is invariant by the action of $\mathrm{Aut}(H_0)$. The knowledge of  $X_p$ is equivalent to the knowledge of the empirical distribution of marks 
	$$\mu_{\xi(G_p)} = \frac 1 {|\mathrm{Aut}(H_0)|} \sum_{\phi \in \mathrm{Aut}(H_0)} \delta_{ \xi(\phi(e)), e\in E_{0}},$$
where we use the shortcut $ \phi(e) = (\phi(v), \phi(u))$ for $e=(u,v)$.

 On the other hand, for two collections $\mbf k, \mbf k'$  of positive integers indexed by the edges of $H_0$, denote the monomial $h( \mbf x)= \prod_{e\in E_0} x^{k(e)} \bar x^{k'(e)}$ and set  $H_{  h} \in  \mathcal  H_{\bar G_p}\langle J\rangle$ the test graph with skeleton $\bar G_p$, whose edges are oriented as for $E_0$ with $k(e)$ representants with label $\varepsilon$ equal to 1 and $k'(e)$ representants with label $k'(e)$. The argument of Lemma \ref{LemmaSkeleton} holds for $H_h$ instead of $H_0$:  that the injections $\phi:V_{H_{0}}\rightarrow V$ such that $\phi(1)=o$ and $\delta^0(H(h),\phi)\neq 0$ are the root preserving isomorphisms of the skeleton of $H_{h}$ in $\bar G_p$, in which case
 	$$\delta^0(H(h),\phi) = \prod_{e\in E_0} ( \xi( \phi(e)))^{k(e)}( \bar \xi( \phi(e)))^{k'(e)}.$$
We then have, using the invariance by automorphism of the marks
	\begin{eqnarray*}
		\tau^0_{ G, o}[H(  h) ] & = & \sum_{\phi \in \mathrm{ Aut}(H_0)}\prod_{e\in E_0} ( \xi( \phi(e)))^{k(e)}( \bar \xi( \phi(e)))^{k'(e)} \\
		& = &  \sum_{\phi \in \mathrm{ Aut}(H_0)} \prod_{e\in E_0} ( \xi(e))^{k(\phi(e))}( \bar \xi( e))^{k'(\phi(e))} =|\mathrm{Aut}(H_0)| \int  h \mathrm{d}\mu_{\xi(G_p)},
	\end{eqnarray*}
where $\mu_{\xi(G_{p})}$ is as above.
First we extend the definition of $h\mapsto \tau^0_{ G, o}[H(  h) ]$ by linearity to polynomials. Since $\mu_{\xi(G_{p})}$ has compact support (recall that we bounded the marks by $k$), by Stone-Weierstrass we can continuously extend this definition to bounded continuous functions. We then have completely determined $\int f \mathrm{d}\mu_{\xi(G_p)}$ in a continuous way. By Riesz representation theorem, there is a unique positive measure $\mu$ such that $\int f d\mu = \int  h \mathrm{d}\mu_{\xi(G_p)}$. Hence $\mathrm{d}\mu_{\xi(G_p)}$ is the unique possible  distribution of weights given $\tau^0_{  G, o}$. 

This  proves  that the equivalence class of  a deterministic graph $(G,o)$ with bounded weights and degree
 is determined continuously by its injective traffic distribution $\tau_{(G,o)}$. It follows that $\nu\in \cP_\infty( \cGr)\mapsto \tau_{\nu}\in \mathcal P(\TRAFB )$ is a homeomorphism on its image, and so is $\nu\in \cP( \cGr)\mapsto \tau_{\nu}\in \mathcal P(\TRAFB )$ which completes the proof of Lemma \ref{Lem:Homomorphismfin}.

\subsection{Proof of Theorem \ref{LDPtraffic1}}

As before, we can therefore deduce a LDP for the rooted-traffic distribution from our LDP on marked graphs. We define a marked graph $G(\mbf Y) = (V,E,\xi)$ with marks in $\cZ = \dC^J$ on the vertex set $V= \{1,\ldots,n\}$ by setting $E = \{ \{k,l\} : Y_j (k,l) \ne 0 \hbox{ for some $j$}\}$ and $\xi ( k,l) = (Y_j(k,l))_{j \in J}$.

Let $p_n$ be the probability that $\{ 1,2\} \in E$ and let $\gamma'_n$ be the law of $\xi(1,2)$ conditioned on $\{1,2\} \in E$. We set $d_n =  n p_n$. The graph $G(\mbf Y)$ is a marked Erd\"os-R\'enyi random graph as defined in Subsection \ref{subsec:LDPER} with parameters $(d_n,\gamma'_n)$. Using independence and \eqref{eq:gamman2N}, it is straightforward to check that 
$$
\lim_n d_n =  d_{\Lambda} =  |J| \Lambda(\dC),
$$
(where for ease of notation, we have extended $\Lambda$ to $\dC$ by setting $\Lambda (\{0\}) = 0$).  We have $0 < d_{\Lambda} < \infty$ by \eqref{eq:Lambdabddt}. Similarly, for $j \in J$, let $\Lambda^{j}$ be the measure on $\dC^J$ defined as $\Lambda^j = \otimes_{i \neq j} \delta_0 \otimes \Lambda$. We have, for every Borel set $A \subset \dC^J$, 
$$
\lim_n \gamma'_n(A) = \gamma_{\Lambda}(A) =  \frac{1}{ |J| \Lambda(\dC)} \sum_{j \in J} \Lambda^j(A).
$$

As a consequence of Theorem  \ref{th:SIGMAER}, we therefore find that $U( G(\mbf Y))$ satisfies a LDP in $\cP (\cGr)$ with good rate function $\Sigma^{\tiny {\rm ER}}_{\gamma_{\Lambda},d_{\Lambda}}$. Also from \eqref{eq:trafGA}, we have $\hat \tau_{U( G(\mbf Y))} = \Utau_{\mbf Y}$. Therefore, the contraction principle and Lemma \ref{continuous} imply Theorem \ref{LDPtraffic1} with good rate function {{
\begin{equation}\label{defent} \chi^{*}_{\Lambda}(\tau)=\inf\{ \Sigma^{\tiny {\rm ER}}_{\gamma_{\Lambda},d_{\Lambda}}(\mu):\tau_\mu = \tau\},\end{equation}}}
(the notation $\chi^{*}_{\Lambda}$ is valid since both $d_{\Lambda}$ and $\gamma_{\Lambda}$ are functions of $\Lambda$, note however that $\chi^{*}_{\Lambda}$ depends implicitly on $J$). Since Theorem  \ref{th:SIGMAER} also proves that $ \Sigma^{\tiny {\rm ER}}_{\gamma_{\Lambda},d_{\Lambda}}$ has a unique minimizer $\mu_{\gamma_{\Lambda},d_\Lambda}$,  $\chi^{*}_{\Lambda}$ has a unique minimizer $\tau_{\Lambda}=\tau_{\mu_{\gamma_{\Lambda},d_\Lambda}}$. Moreover, by Lemma \ref{Lem:Homomorphismfin}, we see that the infimum defining $\chi^{*}_{\Lambda}(\tau)$ is achieved at $\nu_{\tau}$ so that
$$ \chi^{*}_{\Lambda}(\tau)=\Sigma^{\tiny {\rm ER}}_{\gamma_{\Lambda},d_{\Lambda}}(\nu_{\tau})\,.$$
Moreover, since this minimizer must also be the almost sure limit of $\tau_{\mbf Y}$, we deduce from \cite{Camille2} that it is the free product of the marginal limiting traffic distributions, described in \cite{Camille,Camille2} and in the next sections. 
Furthermore, by Lemma \ref{le:casinfini}, $\Sigma^{\tiny {\rm ER}}_{\gamma_{\Lambda},d_{\Lambda}}$ is infinite if $\nu$ is not admissible and therefore is supported on marked trees. Hence, $\chi^{*}_{\Lambda}(\tau)$ is infinite if $\tau$ is not the traffic associated to a random rooted tree.
\qed

\subsection{Definitions of the free products}
The definition of the free product of random graphs uses a step by step construction algorithm.

\begin{definition} Let $\mu_1 \in \cP\big(\cGr(\dC^{J_1})\big)$ and $\mu_2\in \cP\big(\cGr(\dC^{J_2})\big)$ be two distributions of rooted weighted random graphs. We call the free product of $\mu_1$ and $\mu_2$ and denote $\mu_1*\mu_2\in \cP\big(\cGr(\dC^{J_1\sqcup J_2})\big)$ the law of the random graph $(G,\rho)$ described as follows. 

Let first $(G^{(0)}_1,\rho_1^{(0)})$ and $(G^{(0)}_2,\rho_2^{(0)})$ be two independent realizations of $\mu_1$ and $\mu_2$ and denote by $G^{(0)}$ the graph obtained by identifying $\rho_1^{(0)}$ and $\rho_2^{(0)}$. For each edge $e$ of $G^{(0)}$, we associate the mark $\xi^{(0)}(e) = ( \xi(e), 0, \dots, 0)$ if $e$ comes from $G^{(0)}_1$ and $\xi^{(0)}(e) = ( 0, \dots, 0, \xi(e))$ if it comes from $G^{(0)}_2$ so that the marks belong to $\dC^{J_1\sqcup J_2}$. We say that we have "fused" $(G^{(0)}_2,\rho_2^{(0)})$ to $G^{(0)}_1$ at $\rho_1^{(0)}$. 

Let $v$  be a vertex of $G^{(0)}$ different from $\rho$, and let us consider an independent realization $G_v^{(1)}$, either distributed according to $\mu_1$ if $v$ belongs to $G_2^{(0)}$, or according to $\mu_2$ if $v$ is a vertex of $G_1^{(0)}$. Then we fuse $G_v^{(1)}$ to $G^{(0)}$ at $v$ and repeat this operation for for each vertex different from the root,  getting a graph $G^{(1)}$. We pursue this process to construct a sequence of graphs $G^{(n)}, n\geq 1$, rooted at $\rho_1^{(0)}= \rho_2^{(0)}$, obtained by fusing independent copies alternating from $\mu_1$ and $\mu_2$. This sequence converges in weak local topology since it describes neighborhood of arbitrary depth. We denote by $(G, \rho)$ the limit of this random rooted graph   and by $\mu_1*\mu_2$ its law.
\end{definition}

On the other hand, \cite{Camille} defines a notion of product for traffic distributions in the following terms. A colored component of a test graph $H \in \cH \langle J_1 \sqcup J_2 \rangle$ is a maximal connected subgraph of $H$ labeled in $J_1$ or in $J_2$. We denote by $\mathcal {CC}(H)$ the set of colored components of $H$. The graph of colored components $\mathcal {GCC}(H)$ is the undirected graph  whose vertex set is $\mathcal {CC}(H)$, and such that two colored components are linked by one edge for each vertex they have in common.

Moreover, if $H$ in $\mathcal H^\bullet\langle J_1\sqcup J_2\rangle$ is a rooted test graph such that $\mathcal {GCC}(H)$ is a tree, then the colored colored components $S$ of $H$ are rooted as follows. If $S$ contains the root $1$ of $H$, then it is rooted at $1$. Otherwise, $S$ is rooted at the closest vertex to the root 1 of $H$ for the graph distance, that we may call the cut-vertex of $S$.

\begin{definition} Let $\tau_1 \in \mathrm{Traf}\langle J_1 \rangle$ and $\tau_2 \in \mathrm{Traf}\langle J_2 \rangle$. The  free product $\tau_1*\tau_2 \in \mathrm{Traf}\langle J_1\sqcup J_2 \rangle$ is then defined via  the injective traffic distribution $(\tau_{1}*\tau_{2})^{0}$ thanks to \eqref{MobiusOn}, by the following formula: for any $H$ in  $\mathcal H\langle J_1\sqcup J_2\rangle$
	\begin{eqnarray}\label{TrafficFreeness}
		\big(\tau_1*\tau_2\big)^0(H) & = & \mathbf 1\Big( \mathcal {GCC}(H) \mathrm{\ is \ a \ tree}\Big) \prod_{ S \in \mathcal {CC}(H)} \tau_{i(S)}^0(H),
	\end{eqnarray}
where $i(S) =i$ whenever $S\in \mathcal H\langle J_i\rangle$.

\end{definition}

\begin{proposition} Let $\nu$, $\nu_1$ and $\nu_2$ be three random graphs with law respectively in $\cP_\infty\big(\cGr(\dC^{J_1\sqcup J_2})\big),$ $\cP_\infty\big(\cGr(\dC^{J_1})\big)$ and $\cP_\infty\big(\cGr(\dC^{J_2})\big)$. Then $\nu=\nu_1* \nu_2$ if and only if the map $\bar \tau_\nu$ defined in \eqref{Annealing} satisfies $\bar \tau_\nu = \bar \tau_{\nu_1} * \bar \tau_{\nu_2}$. 
\end{proposition}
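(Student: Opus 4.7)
The plan is to work with the injective traffic distribution and exploit the Möbius inversion \eqref{MobiusOn}--\eqref{MobiusOff}, which shows that $\bar\tau_\nu = \bar\tau_{\nu_1}*\bar\tau_{\nu_2}$ is equivalent to the analogous identity for the injective transforms $\bar\tau^0$. Thanks to Lemma \ref{Lem:Homomorphismfin}, the map $\nu \mapsto \bar\tau_\nu$ is injective on $\cP_\infty(\cGr(\dC^{J_1\sqcup J_2}))$, so the reverse implication is immediate: if $\bar\tau_\nu = \bar\tau_{\nu_1}*\bar\tau_{\nu_2} = \bar\tau_{\nu_1*\nu_2}$ then $\nu = \nu_1*\nu_2$. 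The content therefore lies in the forward direction, which will be proved by a direct combinatorial analysis of the sum defining $\tau^0_{G,o}[H]$ when $(G,o)$ is drawn from $\nu_1*\nu_2$.

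First, I would fix a test graph $H \in \cH^\bullet\langle J_1\sqcup J_2\rangle$ and analyze which injective maps $\phi : V_H \to V(G)$ with $\phi(1)=o$ contribute a non-zero summand to $\tau^0_{G,o}[H]$. The construction of $(G,o)$ from the recursive fusion algorithm produces a canonical ``tree of bubbles'' structure: $G$ is the union of subgraphs (bubbles) that are each distributed according to $\mu_1$ or $\mu_2$, any two bubbles share at most one vertex (a cut-vertex), and the bubble-incidence graph is a tree rooted at $o$. Because marks in $\dC^{J_1\sqcup J_2}$ produced by a $\mu_i$-bubble are supported in the $J_i$-coordinates, an edge $e$ of $H$ with color in $J_i$ contributes a non-zero factor only if $\phi$ sends the endpoints of $e$ into a common $J_i$-bubble of $G$. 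It follows that each colored component $S \in \mathcal{CC}(H)$ must be mapped, by $\phi$, into a single bubble of matching color $i(S)$.

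The key combinatorial observation comes next: since $\phi$ is injective and sends the root to $o$, it induces a map from $\mathcal{GCC}(H)$ into the bubble tree of $G$ which preserves adjacency (two components sharing a vertex in $H$ must be sent to two bubbles sharing a vertex in $G$, which forces them to be adjacent in the bubble tree). Because the bubble incidence graph is a tree, $\mathcal{GCC}(H)$ must itself be a tree (any cycle of colored components would produce a cycle in the bubble tree). This yields the indicator factor in \eqref{TrafficFreeness}. When $\mathcal{GCC}(H)$ is a tree, each component $S$ receives a distinguished root (either the root of $H$ or its cut-vertex, whose image is forced to be the cut-vertex of the corresponding bubble); conversely, any compatible family $(\phi_S)_{S\in\mathcal{CC}(H)}$ of injections on the individual components assembles into a single injection $\phi$ on $H$.

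The final step is to factorize the expectation. By the fusion construction, the bubbles of $(G,o)$ reached by going out from $o$ along the bubble tree are independent, distributed alternately as $\mu_1$ and $\mu_2$. Given the colored-component tree structure of $H$ and any choice of cut-vertex images in $G$, the restrictions of the random graph to the bubbles visited by different components $S$ are independent, each equal in law to a $\mu_{i(S)}$-bubble rerooted at the cut-vertex of $S$. Using invariance of $\mu_i \in \cP_\infty(\cGr)$ (which is sofic and hence unimodular) to handle the rerooting, and summing over the independent injections $\phi_S$ in each bubble, the expectation of the product $\prod_S \prod_{e\in S}\xi^{\varepsilon(e)}(\phi(e))$ factorizes as $\prod_{S}\bar\tau^0_{\nu_{i(S)}}[S]$. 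Combining this with the indicator obtained in the previous step yields exactly \eqref{TrafficFreeness} for $\bar\tau^0_\nu$, completing the forward direction. The main obstacle will be the careful bookkeeping of the rerooting step and the verification that the bubble-decomposition genuinely produces independent copies rooted at the cut-vertex, which relies on the strong recursive structure of the fusion construction together with the invariance of $\mu_1,\mu_2$.
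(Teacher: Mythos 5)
Your proof takes essentially the same route as the paper: both directions are argued identically (injectivity of $\mu\mapsto\tau_\mu$ from Lemma \ref{Lem:Homomorphismfin} for the reverse implication, and for the forward implication an analysis of which injective maps $\phi:V_H\to V(G)$ contribute, using the tree-of-bubbles structure of $\nu_1*\nu_2$ to force $\mathcal{GCC}(H)$ to be a tree and to factorize the expectation over independent bubbles). The only cosmetic difference is that the paper peels off one leaf component $S_0$ at a time in an induction, whereas you factorize all components at once.

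However, your final step contains a spurious and incorrectly justified invocation. You write that the bubble corresponding to a component $S$ is ``equal in law to a $\mu_{i(S)}$-bubble \emph{rerooted} at the cut-vertex of $S$'' and that this rerooting is handled by ``invariance of $\mu_i\in\cP_\infty(\cGr)$ (which is sofic and hence unimodular).'' Both parts of this claim are wrong. First, no rerooting is needed: in the fusion construction of $\mu_1*\mu_2$, each new copy $G_v^{(1)}$ is glued onto the existing graph precisely by identifying its own root with the cut-vertex $v$, so the restriction of $(G,o)$ to that bubble, \emph{rooted at its cut-vertex}, already has law exactly $\mu_{i(S)}$ — one counts root-preserving injections of $S$ (rooted at its cut-vertex) into this bubble, and the expectation yields $\bar\tau^0_{\mu_{i(S)}}[S]$ directly. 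Second, membership in $\cP_\infty(\cGr)$ only means bounded degree and bounded marks; it does not imply soficity or unimodularity, and indeed the proposition (and the free-product construction) makes no such assumption on $\mu_1,\mu_2$. Since the rerooting step isn't needed, the argument survives after deleting this sentence, but as written it signals a misunderstanding of what the fusion construction produces.
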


Therefore, for $\tau_{\mu_1}\in \cP(\mathrm{Traf}_\cG^\bullet\langle J_1\rangle)$ 
and $\tau_{\mu_2}\in \cP(\mathrm{Traf}_\cG^\bullet\langle J_2 \rangle)$ we define the free product $ \tau_{\mu_1}* \tau_{\mu_2}:= \tau_{ \mu_1* \mu_2}\in \cP(\mathrm{Traf}_\cG^\bullet\langle J_1\sqcup J_2 \rangle)$ as the traffic distributions of the free product of the laws of marginal graphs, consistently with \cite{Camille}.

\begin{proof} We say that a test graph $H \in \mathcal H\langle J_1 \sqcup J_2\rangle$ is a \emph{free product} if it is constructed as in the definition of the free product of random graphs by considering arbitrary test graphs in $\mathcal H^\bullet\langle J_1\rangle$ and in $\mathcal H^\bullet\langle J_2\rangle$ instead of independent copies of the random graphs, and proceeding a finite number of steps. These copies in $\mathcal H\langle J_i\rangle$ clearly form the colored components of $H$ since at each step we fuse graphs with different labels. The graph of colored components of $H$ is a tree since the fusing process results in cut-vertex identifications.

Let us show that the  traffic distribution of $\nu=\nu_1* \nu_2$ satisfies \eqref{TrafficFreeness}. If the graph of colored components of $H$ is not a tree, its skeleton is not a subgraph of the skeleton of any realization $(G,\rho)$ of $\nu$, therefore $\big(\tau_1*\tau_2\big)^0(H)=0$. Assume that it is a tree and let $\phi : V_H \to V$ such that $\phi(1) = \rho$ and $ \xi^{\varepsilon(e)}_{\ell(e)}\big(\phi(w), \phi(v)\big)\neq 0$ for any $e =(v,w) \in E_H$. The latter condition implies that the vertices that are in a colored component of $H$ containing 1 are sent by $\phi$ to vertices in a colored component of the root in $G$.

Let  now $S$ be a colored component of $H$ that do not contain the root, but contains a cut-vertex $v_0$ in a colored component of the root. The condition $ \xi^{\varepsilon(e)}_{\ell(e)}\big(\phi(w), \phi(v)\big)\neq 0$ implies that the image of $S$ belong to a colored component of $G$ with the same property with a cut-vertex $w_0$ for which $\phi(v_0) = w_0$. By induction, the image by $\phi$ of a colored component of $H$ belongs to a colored component of $G$ that is determined by its cut-vertex in the same way.

Let now $S_0$ be a leaf in the tree of colored component of $H$ with cut vertex $v_0$. Denote $H\setminus S_0$ the graph obtained from $H$ by removing the edges of $S$ and the vertices that are not $v_0$. Consider $S_0$ as rooted in $v_0$. A map $\phi$ as above therefore decomposes as an injective map $\phi'$ on the vertices of $H\setminus S_0$, and denoting $w_0$ the image by $\phi'$ of $v_0$, an injective map $\phi_0$ on the vertices of $S_0$ with image in the colored component of $(G,o)$ with cut-vertex $w_0 = \phi'(v_0)$:
	\begin{eqnarray*}
		\bar \tau_\nu^0(H) & = &  \sum_{\substack{ \phi':V_{H\setminus S_0} \to V \\ \phi(1)=i}}    \E\Big[ \prod_{e=(v,w)\in E_{H\setminus S_0}} \xi^{\varepsilon(e)}_{\ell(e)}\big(\phi(w), \phi(v)\big)  \\
		&& \sum_{\substack{ \phi_0:V_{S_0} \to V\setminus{\phi'(V)} \\ \phi(v_0)=w_0}}\prod_{e=(v,w)\in E_{S_0}} \xi^{\varepsilon(e)}_{\ell(e)}\big(\phi(w), \phi(v)\big) \Big] 
	\end{eqnarray*}
Conditionally on $w_0$, the colored component rooted at $w_0$ is independent of the rest of the graph, which gives the formula
	\begin{eqnarray*}
		\bar \tau_\nu ^0(H) & = & \bar \tau_\nu^0(H\setminus S_0) \times \tau_{i(S_0)}[S_0].
	\end{eqnarray*}
By induction, this proves that $\bar \tau_\nu $ is the free product of the marginal distributions. 

Reciprocally, if $\mu$ is the law a random rooted graph such that $\bar \tau_\nu = \bar \tau_{\nu_1} * \bar \tau_{\nu_2}$, then it has the same traffic distribution as the free product of the marginal laws of random graphs $\mu_1$ and $\mu_2$. By injectivity of the map $\mu\mapsto \tau_\mu$ (Lemma \ref{Lem:Homomorphismfin}), necessarily $\mu = \mu_1*\mu_2$.
\end{proof}

\subsection{Proof of Corollary \ref{cor:trafficind} for the free product of random graphs}

In this section, all random rooted-traffic distributions belong  to $\cP(\mathrm{Traf}_\cG^\bullet\langle J \rangle)$, i.e. there are distributions of random rooted graphs. 

Let $\mbf Y = (Y_j)_{j \in J}$ be as in Theorem \ref{LDPtraffic1}. To stress on the dependence of $\chi_{\Lambda}$ on $J$, we write here $\chi_{\Lambda,J}$ in place of $\chi_{\Lambda}$. By Theorem \ref{LDPtraffic1}, we have the weak large deviation  principle , namely that for every $\tau\in\cP(\mathrm{Traf}_\cG^\bullet\langle J \rangle)$, the microstates entropy $\chi_{\Lambda,J}$ defined by
\begin{eqnarray}
-\chi_{\Lambda,J}(\tau)&:=&\lim_{\varepsilon\downarrow 0}\limsup_{n\rightarrow\infty}\frac{1}{n}\log \dP(d(\tau_{\mbf Y},\tau)\le \varepsilon)\label{defchi}
\end{eqnarray}
is unchanged if we replace the limsup by a liminf:
\begin{eqnarray}
-\chi_{\Lambda,J}(\tau)&=&\lim_{\varepsilon\downarrow 0}\liminf_{n\rightarrow\infty}\frac{1}{n}\log \dP(d(\tau_{\mbf Y},\tau)\le \varepsilon)\label{micros}
\end{eqnarray}

Theorem \ref{LDPtraffic1} shows that $\chi_{\Lambda,J}=\chi^{*}_{\Lambda,J}$ but we will not use the formula for $\chi^{*}_{\Lambda,J}$ in this proof.
Because the limsup is equal to the liminf, we can prove summability under traffic independence.  
If $J = J_1 \sqcup J_2$ and  $\tau\in \mathcal P(\cH^*\langle J\rangle )$, we denote by $\tau_{i}=\tau|_{J_{i}}$ the restriction of $\tau$ to test graphs in $\mathcal H\langle J_{i}\rangle$.  Then we always have
\begin{equation}\label{tr}\chi_{\Lambda ,J_{1}\sqcup J_{2}}(\tau)\ge \chi_{\Lambda,J_{1}}(\tau_{1})+\chi_{\Lambda,J_{2}}(\tau_{2})\end{equation}
Indeed, by definition, if $\mbf Y=(\mbf Y_{1},\mbf Y_{2})$ with $\mbf Y_{i}$ the matrices indexed by $J_{i}$, we always have for $i=1,2$,
$$ \{d(\tau_{\mbf Y},\tau)\le \epsilon\}\subset \{d(\tau_{\mbf Y_{i}},\tau_{i})\le \epsilon\}
$$
Consequently
\begin{eqnarray*}
\mathbb P\left( \{d(\tau_{ \mbf Y},\tau)\le \epsilon\}
\right)&\le& \mathbb P\left( \{d(\tau_{\mbf Y_{1}},\tau_{1})\le \epsilon\}\cap  \{d(\tau_{\mbf Y_{2}},\tau_{2})\le \epsilon\}
\right)\\
&=&\mathbb P\left( \{d(\tau_{\mbf Y_{1}},\tau_{1})\le \epsilon\}\right)\mathbb P\left( \{d(\tau_{\mbf Y_{2}},\tau_{2})\le \epsilon\}
\right)
\end{eqnarray*}
where we finally used the independence of $\mbf Y_{1}$ and $\mbf Y_{2}$. Therefore, we have
$$\lim_{\varepsilon\downarrow 0}\limsup_{n\rightarrow\infty}\frac{1}{n}\log \dP(d(\tau_{\mbf Y},\tau)\le \varepsilon)\qquad\qquad\qquad\qquad\qquad\qquad
$$
$$\qquad\qquad\quad\le
\lim_{\varepsilon\downarrow 0}\limsup_{n\rightarrow\infty}\frac{1}{n}\log \dP(d(\tau_{\mbf Y_{1}},\tau)\le \varepsilon)+\lim_{\varepsilon\downarrow 0}\limsup_{n\rightarrow\infty}\frac{1}{n}\log \dP(d(\tau_{\mbf Y_{2}},\tau)\le \varepsilon)$$
from which \eqref{tr} follows. By taking $\tau=\tau_{1}*\tau_{2}$ with the definition of the previous section, we deduce by the definition \eqref{defchi}  that

$$\chi_{\Lambda ,J_{1}\sqcup J_{2}}(\tau_{1}*\tau_{2})\ge \chi_{\Lambda,J_{1}}(\tau_{1})+\chi_{\Lambda,J_{2}}(\tau_{2})$$
To prove the converse bound,  
it is enough  to show thanks to \eqref{micros} that for every $\eps>0$ there exists $\delta(\eps)>0$ so that
\begin{equation}\label{top}\liminf_{n\rightarrow\infty}\frac{1}{n}\log\frac{\dP(d(\tau_{\mbf Y},\tau_{1}*\tau_{2})\le \varepsilon)}{\dP(d(\tau_{\mbf Y_{1}},\tau_1)\le \kappa(\varepsilon))\dP(d(\tau_{\mbf Y_{2}},\tau_2)\le \kappa(\varepsilon))}\ge 0\,.\end{equation}
 To prove this statement, the idea is that by independence of ${\mbf Y}_{1}$ and ${\mbf Y}_{2}$, we can conjugate the matrices of ${\mbf Y}_{2}$ by an independent permutation $S$ and that conditionally to $({\mbf Y}_{1}, {\mbf Y}_{2})$, ${\mbf Y}_{1}$ and $S{\mbf Y}_{2}S^{*}$ are asymptotically  traffic independent. A difficulty arises from the fact that we need this convergence to hold uniformly over the test graphs, or equivalently for the distance $d$. To do so, we first approximate the  graphs and their traffic distributions by ones with bounded degrees and bounded entries.

To this end, for  a fixed integer number $k>1$, we denote by $\mbf Y^{k}$ the sequence of matrices where the entries corresponding to vertices with degree larger than $k$ or entries with modulus greater than $k$ have been put to zero (namely we replace the entries by their pushforward by a smooth function $f_{k}$ which vanishes outside the ball of radius $k$  and is equal to one on the ball of radius $k-1$). We let $G(\mbf Y^k)$ be the corresponding marked graph with marks on $\dC^J$ which is defined in the proof of Theorem \ref{LDPtraffic1}. By definition, we have $\tau_{U( G(\mbf Y^k))} = \tau_{\mbf Y^k}$. 
We have already seen that $U( G(\mbf Y^k))$ converges towards $U(G({\mbf Y}))$ for the weak topology as $k$ goes to infinity. Therefore, by continuity of $\hat \tau$, $ \tau_{\mbf Y^k}$ converges towards $\tau_{\mbf Y}$ pointwise. To get a uniform convergence we restrict ourselves to a compact set. 
In fact, for any compact set of $K \subset \cP(\cGr)$, there exists a function $\delta_K(k)$ going to $0$ as $k \to \infty$, such that for all $\eta >0$, 
\begin{equation}\label{approx}
  d(\tau_{ \mbf Y }, \tau_{ \mbf Y^{k}}) \le \delta_K(k) +\IND ( (U(G(\mbf Y))  \notin K). 
\end{equation}

As explained in the proof of Theorem \ref{LDPtraffic1}, $G(\mbf Y)$ is a marked Erd\"os-R\'enyi random graph. By Theorem \ref{th:SIGMAER}, $U(G(\mbf Y))$ is exponentially tight since it satisfies a LDP with a good rate function: for any $M >0$, there exists a compact set $K$ such that $\dP ( U(G(\mbf Y))  \notin K) \leq  \exp ( - n M)$ for all $n$ large enough. 
As a consequence, by taking $k=k(\veps,M)$ large enough so that $\delta_K(k) \leq \veps /2$, we get
\begin{eqnarray*}
\dP(d(\tau_{\mbf Y},\tau_{1}*\tau_{2})\le \varepsilon) &\ge & \P(\{d(\tau_{\mbf Y},\tau_{1}*\tau_{2})\le \varepsilon\}\cap \{ U(G(\mbf Y))  \in K \} )\\
&\ge &\dP(d(\tau_{\mbf Y^k},\tau_{1}*\tau_{2})\le \varepsilon/2)-e^{-nM }\end{eqnarray*}
 We note $\tau_{i}^{k}$ the traffic distribution  of the operators $\mbf A_{G}^{i}$ with $G$ truncated to have degree smaller than $k$ and entries given by the pushforward by $f_{k}$ of the entries of $G$. Similarly, $\tau^{k}_{i}$ converges weakly towards $\tau^{i}$ when $k$ goes to infinity so that $d(\tau^{k}_{i},\tau_{i})\le \eps/8$ for $k$ large enough. Moreover, it is easy to see from the definition of independence that we also have
 that $\tau_{1}^{k}*\tau_{2}^{k}[\mbf H](f)$ goes to $\tau_{1}*\tau_{2}[\mbf H](f)$ as $\eta$ goes to zero so that $
 d(\tau^{k}_{1}*\tau^{k}_{2},\tau_{1}*\tau_{2})$ goes to zero as $k$ goes to infinity. We assume hereafter $k$ large enough so that this is smaller than $\eps/4$.

Now, the matrices $\mbf Y_{1}^{k}$ and $\mbf Y_{2}^{k}$ are independent and the distributions of $\mbf Y_{i}^{k}$ is permutation invariant (that is $\mbf Y_{i}^{k}$ and $ S \mbf Y_{i}^{k} S^*$ have the same distribution for any permutation matrix $S$ of size $n$). In particular, we can  replace $\mbf Y_{2}^{k}$ by $ S \mbf Y_{2}^{k} S^*$ where $S$ is a uniform permutation matrix and set $S.\mbf Y^k = (\mbf Y^k_1, S \mbf Y^k_{2}  S^*)$. By construction $\mbf Y^k $ and $S.\mbf Y^k $ have the same distribution. It was shown in \cite[Theorem 1.8]{Camille} that, given $(\mbf Y^k_1, \mbf Y^k_2)$ so that $d( \tau_{\mbf Y^k_i},\tau_{i}^{k})\le \eps/8$, $i=1,2$
 $\tau_{ S. \mbf Y^k}(\mbf H, P)-\tau_{1}^{k}*\tau_{2}^{k}(\mbf H, P)$  is of order  $O(\eps)$  in probability (with respect to the randomness of the permutation $S$) for every test graph $\mbf H=(H_{1},\ldots,H_{k})$ and every polynomial $P$. Importantly,  the errors are uniform on $(\mbf Y^k_1, \mbf Y^k_2)$ (but depend on $k$). Because everything is bounded, we can approximate any bounded continuous function $f$ by a polynomial $P$.  This implies that uniformly  $\tau_{S. \mbf \mbf Y^k}(\mbf H, f)-\tau_{ 1}^{k}*\tau_{2}^{k}(\mbf H, f)$ is of order $\eps$, and therefore, given $(\mbf Y^k_1, \mbf Y^k_2)$,
 $d(\tau_{S. \mbf Y^k}, \tau_{1}^{k}*\tau_{2}^{k})$
 goes to zero in probability uniformly in $(\mbf Y^k_1, \mbf Y^k_2)$. Notably,  for every $\delta>0$, there exists $\kappa(\delta)=\kappa_{k}(\delta)>0$ going to zero with $\delta$, so that  for $n $ large enough,
 \begin{equation}\label{tu}
 \dP (  d(\tau_{S. \mbf Y^k}, \tau_{1}^{k}*\tau_{2}^{k}) \leq \delta | \max_{i=1,2}d( \tau_{\mbf Y^k_i},\tau_{i}^{k})\le \kappa(\delta)) \geq 1/2.
 \end{equation}
 We take $\delta<\eps/4$, remember that $k$ was chosen large enough so that $d(\tau_{1}^{k} *\tau_{2}^{k},\tau_{1}*\tau_{2})\le \eps/8$ and write

\begin{eqnarray*}
 \dP(d(\tau_{\mbf Y^k},\tau_{1}*\tau_{2})\le \varepsilon/2) &=&  \dP(d(\tau_{S. \mbf Y^k},\tau_{1}*\tau_{2})\le \varepsilon/2)
\\ 
&\ge&  \dP(d(\tau_{S. \mbf Y^k}, \tau_{1}^{k}*\tau_{2}^{k}) \leq \delta,  \max_{i=1,2}d(\tau_{ \mbf Y_{i}^{k}}, \tau_{i}^{k})\le\kappa(\delta) )\\
&\ge& \frac{1}{2} \dP( d(\tau_{ \mbf Y_{1}^{k}}, \tau_{1}^{k})\le\kappa(\delta) ) \dP( d(\tau_{ \mbf Y_{2}^{k}}, \tau_{2}^{k})\le\kappa(\delta) )\\
&\ge&  \frac{1}{2} \dP( d(\tau_{ \mbf Y_{1}}, \tau_{1})\le\kappa'(\delta) ) \dP( d(\tau_{ \mbf Y_{2}}, \tau_{2})\le\kappa'(\delta) )  - 2e^{-nM}
\end{eqnarray*}
where we finally used \eqref{approx} to find $\kappa'(\delta)>0$ so that for $k$ large enough $ \{d(\tau_{ \mbf Y_{1}}, \tau_{1})\le\kappa'(\delta)\} \subset \{d(\tau_{ \mbf Y_{1}^{k}}, \tau_{1}^{k})\le\kappa(\delta) \}$ when $U(G(\mbf Y))$ belongs to $K$. Hence, gathering the above estimates we deduce

$$
\dP(d(\tau_{\mbf Y},\tau_{1}*\tau_{2})\le \varepsilon) \geq \frac{1}{2} \dP(d(\tau_{\mbf Y_{1}},\tau_1)\le\kappa( \delta) )\dP(d(\tau_{\mbf Y_{2}},\tau_2)\le \kappa( \delta) )  - 4e^{-nM}.
$$
Since $M$ can be taken arbitrarily large so that the last term in the above right hand side is negligible, \eqref{top}  follows. This proves Corollary \ref{cor:trafficind} when the random rooted traffic distributions are distributions of random graphs. \qed

\section{Random traffics and Corollary \ref{cor:trafficind}}\label{trafind}

\subsection{Motivation}
At first glance, the previous section  concludes the proof of Corollary \ref{cor:trafficind}. For all $\tau_i \in \cP(\mathrm{Traf}^\bullet\langle J_i \rangle)$, if  $\tau_1$ is not in $\cP(\mathrm{Traf}_\cG^\bullet\langle J \rangle)$ then the additivity condition holds trivially since
\begin{equation}\label{trivid}+\infty = \chi_{\Lambda}(\tau_{1}*\tau_{2})=\chi_{\Lambda}(\tau_{1})+\chi_{\Lambda}(\tau_{2})=+ \infty+\chi_{\Lambda}(\tau_{2})\end{equation}
since $\chi_{\Lambda}(\tau_{2})\ge 0$ by definition of micro-state entropy. The problem, which is not related to LDP, is the ill-definiteness of the free product for random rooted-traffic distributions, when theyr are not the distribution of random rooted graphs. In general, a random rooted-traffic distribution in $ \cP(\mathrm{Traf}^\bullet\langle J\rangle)$ cannot be encoded into a canonical  traffic distribution in the classical sense as in \eqref{Annealing}, so the definition of traffic independence from \cite{Camille} cannot be directly applied. 

This section hence introduce more sophisticated notions, new in the context of traffic probability, whose aim is to extend  trivially the identity \eqref{trivid} by defining $\tau_{1}*\tau_{2}$ in greater generality.  Hence this part is less relevant outside the perspectives of traffic probability. The problem is that  seeing a random traffic as a traffic whose distribution is random is the classical sense \emph{omits} notions of dependence that have no analogue in classical probability. 
For sparse Wigner matrices, traffic probability nevertheless indicates that amalgamation is an ingredient that cannot be bypassed for stating the additivity property of heavy traffic entropy, thanks to a result of independent interest stated in the next section.

\subsection{Semicircularity over the diagonal of sparse Wigner matrices}\label{Subsection:Semicircularity}

Voiculescu's central notion of entropy is not defined in terms of sparse Wigner matrices, but of independent GUE matrices. Such as, the analogue of the heavy traffic entropy is, for Voiculescu, the semicircular entropy. The fact that the sparse traffic entropy is related to freeness with amalgamation, a notion defined by Voiculescu more than 50 years old, may be justified heuristically by the following result. 

\begin{proposition} Let $\mathcal X$ be a collection of independent sparse Wigner matrices as in \eqref{eq:SW} where the moments of $\gamma$ are finite. Then $\mathcal X$ converges to a semi-circular system over the diagonal, namely with amalgamation with respect to the operator $\Delta: A \mapsto \mathrm{Diag}_i(A_{i,i})$. 
\end{proposition}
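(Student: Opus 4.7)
The plan is to verify $\Delta$-valued semicircularity via Speicher's operator-valued free cumulant characterization: a family $(X^{(\ell)})_{\ell \in J}$ is $\mathcal{B}$-semicircular iff its operator-valued free cumulants $\kappa_k^{\mathcal{B}}$ vanish for $k \neq 2$, with $\kappa_2^{\mathcal{B}}(X^{(\ell)} b, X^{(\ell')}) = \eta^{\ell,\ell'}(b)$ for some completely positive map $\eta$. Here $\mathcal{B} = D_n \subset M_n(\mathbb{C})$ is the diagonal subalgebra with conditional expectation $\Delta$ and trace $\tau_n = \frac{1}{n}\mathrm{tr}$; the limit is taken in the operator-valued distribution sense, with the limit diagonal algebra generated by functions of the local weak limit tree.

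The core step is the operator-valued moment expansion. For $b_0, \ldots, b_k \in D_n$ with uniformly bounded entries and labels $\ell_1, \ldots, \ell_k \in J$,
\[
\Delta(b_0 Y^{(\ell_1)} b_1 \cdots Y^{(\ell_k)} b_k)_{ii} = \sum_{i_1, \ldots, i_{k-1}} b_0(i)\, Y^{(\ell_1)}_{i, i_1}\, b_1(i_1) \cdots Y^{(\ell_k)}_{i_{k-1}, i}\, b_k(i).
\]
Taking expectation, the centering $\mathbb{E}[Y^{(\ell)}_{ij}] = 0$ together with independence of distinct edges leaves only closed walks in which every used edge appears an even number of times and with matching labels across the independent copies. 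Using the sparse moment estimate $\mathbb{E}[|Y^{(\ell)}_{ij}|^{2p}] = (d/n)\,\mathbb{E}_\gamma[|X|^{2p}]$ (finite by the hypothesis that $\gamma$ has all moments) together with the standard Wigner-type walk combinatorics, the leading-order $n \to \infty$ contributions correspond precisely to non-crossing pair partitions of $\{1, \ldots, k\}$ with label-matching within each pair, summed against the non-crossing nested evaluation of the test diagonals. Applying operator-valued M\"obius inversion over the lattice $NC(k)$ of non-crossing partitions then forces $\kappa_k^{\Delta} \to 0$ for $k \neq 2$, while
\[
\kappa_2^{\Delta}(Y^{(\ell)} b, Y^{(\ell')}) \xrightarrow[n \to \infty]{} \delta_{\ell, \ell'}\, \eta^\ell(b),
\]
with $\eta^\ell$ the completely positive map encoding the Poisson–Galton–Watson edge-weight covariance structure at the root, and the Kronecker delta reflecting independence across $\ell \in J$ (which prevents any mixed edge pairing between distinct labels).

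The main obstacle is to show that walks in which a single edge is traversed four or more times — which account for the non-scalar-semicircular nature of the sparse Wigner spectral law (namely the UGW-tree spectral measure from Lemma~\ref{le:minimizer}) — are correctly absorbed into the $\Delta$-valued amalgamated structure without producing higher-order operator-valued cumulants. The key point is that at the operator-valued level these degenerate walks are already captured inside the covariance $\eta^\ell$, which is non-deterministic as an element of the enlarged limit diagonal algebra. Making this precise requires combining the local weak convergence of the sparse graph to the unimodular Galton-Watson tree (developed in Sections~\ref{sec:graphs} and~\ref{sec:networks}) with uniform moment bounds from the finite-moment assumption on $\gamma$, so as to control the contribution of high-multiplicity edge visits uniformly in the diagonal test matrices $b_i$.
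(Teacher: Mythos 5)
The paper's own proof is a one-paragraph citation: it invokes the formulation of $\Delta$-valued asymptotic semicircularity from \cite{MaleDeltaFluct} as the $\mathrm{NC}_2$-restricted moment--cumulant identity \eqref{FalseFreeness}, and then observes that the ``false freeness'' combinatorial identity established in \cite{Camille2} for heavy Wigner matrices with exploding moments is precisely a proof of that identity. Your proposal instead attempts a self-contained moment/walk expansion followed by operator-valued M\"obius inversion. Those are genuinely different routes, but yours, as written, has a gap at exactly the place you label ``the main obstacle'', and that gap is not cosmetic.

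Concretely, the sentence ``the leading-order $n\to\infty$ contributions correspond precisely to non-crossing pair partitions'' is false for the sparse model \eqref{eq:SW}. Because $\mathbb{E}[|Y_{ij}|^{2p}]\asymp d\,m_{2p}(\gamma)/n$ for \emph{every} $p$, walks in which a single edge is traversed $2p\geq 4$ times contribute at the same order $O(1)$ after $\frac1n\Tr$ as the double-tree walks; this is the whole content of the ``exploding moments'' phenomenon and is why the spectral law here is the UGW tree measure rather than a semicircle. You then claim these contributions are ``already captured inside the covariance $\eta^\ell$'' because $\eta^\ell$ is random. But a direct check at $L=4$ shows this does not work with the naive choice $\eta(b)=\Delta(XbX)$: the two $\mathrm{NC}_2(4)$ nestings $\eta(b_1\eta(b_2)b_3)$ and $\eta(b_1)b_2\eta(b_3)$ each generate, upon setting $k=i$ (resp.\ $k=j$) in the inner sum, the closed walk $i\to j\to i\to j\to i$ with weight $Y_{ij}^4$, so this walk is counted \emph{twice} on the cumulant side but once in $\Delta(X^4)$. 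The resulting Möbius-inverted fourth cumulant equals $-\sum_j Y_{ij}^4$ up to a negligible ``cycle'' remainder, and $\frac1n\sum_{i,j}\mathbb{E}[Y_{ij}^4]\to d\,m_4(\gamma)>0$. So with that $\eta$ the fourth $\Delta$-valued cumulant is $O(1)$, not $o(1)$, and the ``vanishing cumulant'' criterion you want to invoke simply fails. The correct statement requires the precise choice of amalgamating subalgebra, conditional expectation and covariance made in \cite{MaleDeltaFluct}, together with the specific ``false freeness'' combinatorics of \cite{Camille2}; your sketch of the absorption step does not reproduce these, and the closing suggestion that the high-multiplicity walks are a uniformly small correction to be controlled by moment bounds reverses their actual order of magnitude.
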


\begin{proof} We sketch the proof, which is in fact valid for the slightly more general of Ryan-Zakharevich Wigner matrices with exploding moments considered in \cite{Camille2} with variance profiles. In the context of \cite{MaleDeltaFluct}, asymptotically semi-circularity of a collection $\mbf X$ of matrices  means the convergence to the same limit of the complex numbers
	\begin{equation}\label{FalseFreeness}
		\langle D_0 , X_1D_1 \dots X_LD_L\rangle =o(1)+  \langle D_0 , \sum_{ \sigma\in \mathrm{NC}_2(L) }  \kappa_\pi( X_1D_1, \dots ,X_LD_L) \rangle,
	\end{equation}
where, $\langle A , B\rangle  =  \E[\frac 1 N \Tr( AB^*)]$, $\kappa_\pi$ is the $L$-th cumulant-function over the diagonal and $NC_2(L)$ the set of non-crossing pair partitions. On the other hand, the so-called \emph{false freeness property} of \cite{Camille2}, that relates the distribution of heavy Wigner matrices with an enumeration of $\mathrm{NC}_2(L)$, is nothing else than a proof of \eqref{FalseFreeness}. 
\end{proof}

The interest is that it gives a new characterization of the macroscopic asymptotic properties of heavy Wigner matrices, in particular a Pastur's equation over the diagonal. Nevertheless the important consideration in the above proposition is that the matrices are not asymptotically semi-circular with respect to $\E[\Delta]$. Therefore using in practice this equation is delicate since we shall consider operator-valued free probability theory in a context where the diagonal coefficients are \emph{random}. Although our strategy do not take benefit of the above proposition, it serves us to guide the definition in the next subsection.

\subsection{Definition of random traffics and their independence}
 
Let $\mbf H = (H_1 , \dots,  H_k)$ in ${\dC\cH}^\bullet\langle J \rangle^{   k}$ be a $k$-tuple of rooted test graphs, where $H_s= (V_s,E_s, \gamma_s, \varepsilon_s)$ for any $s=1, \dots , k$. 
 Denote by $H_0$ the test graph obtained by  identifying the roots of the $H_s$'s. 
 We call \emph{amalgamation} of $\mbf H$ a partition $\rho$ of the vertex set $V_0$ of $H_0$ such that each block of $\rho$ contains at most one vertex from each $H_s$. We denote by $\cH^{\mathrm{am}}_k\langle J\rangle$ the set couples $(\mbf H, \rho)$, where $\mbf H$ is a $k$-tuple of rooted test graphs and $\rho$ an amalgamation of $\mbf H$. We say in short that $(\mbf H, \rho)$ are am.-test graphs. We set $\dC\cH^{\mathrm{am}} \langle J\rangle= \oplus_{k\geq 1}\dC \cH^{\mathrm{am}}_k\langle J\rangle $ the space of am.-test graphs.

\begin{definition}[Amalgamated traffic distribution of marked graphs]\label{Def:AmalgTrafficDistribution}Let $(G,o) \in  \cGr(\dC^J)$ be a marked graph, where $G= (V,E,\xi)$. The \emph{amalgamated rooted-traffic distribution} $\tau_{(G,o)}$ of $(G,o)$ is the linear map $\tau_{(G,o) } : \dC\cH^{\mathrm{am}} \langle J\rangle\to  \oplus_{k\geq 1} \dC^k$ 
such that, for any  $(\mbf H, \rho)\in \cH_k^{\mathrm{am}} \langle J\rangle$, with above notations,
	\begin{equation}\label{def:tauAmalgG}
		\tau_{(G,o) }[ \mbf  H, \rho ] =  \sum_{\substack{ \phi:V_{H_0}\to V \\ \mathrm{s.t. \ } \phi(1)=i \\ v\overset{\rho} \sim v' \Rightarrow \phi(v) = \phi(v') }}  \Big(  \prod_{e=(v,w)\in E_{H_s}} \xi^{\varepsilon_s(e)}_{\ell_s(e)}\big(\phi_s(w), \phi_s(v)\big)  \Big) \in \dC^k
	\end{equation}
where the sum is over all root preserving maps $\phi:V_{H_0}\to V$ such that $\phi(v) = \phi(v') $ whenever $v$ and $v'$ are in a same block of $\rho$, and where $\phi_s$ denotes the restriction of $\phi$ on $V_s$  for any $s=1, \dots , k$.
\end{definition}

One sees that an obvious analogue of property \eqref{Annealing} still applies. As previously it shows that for random rooted graphs with bounded marks and degree, the amalgamated traffic distribution does not contain more information than the traffic distribution. Our proofs, up to the definition of traffic independence, are always based on a computation on traffic distributions, so their consequence remains valid with amalgamated traffic distributions replacing random rooted-traffic distributions. 

The amalgamated rooted traffic distribution of a random graph forms a consistant sequence, in the sense that if $\rho$ splits as two partitions of disjoint subsets of graphs, it is given by the product of the distributions for the induced am.-test graphs in an obvious way. We call (algebraic, rooted) \emph{amalgamated traffic distribution} a linear map  $\dC\cH^{\mathrm{am}} \langle J\rangle \to  \oplus_{k\geq 1} \dC^k$ that satisfies these consistance properties. Endowing this space with the product topology, we can hence consider the space of random traffic distribution $\cP\big( \mathrm{Lin}( \dC\cH^{\mathrm{am}} \langle J\rangle,  \oplus_{k\geq 1} \dC^k)\big)$ which is the true setting for Corollary \ref{cor:trafficind}.

Firstly, we set $\tau^0_{(G,o) }$ as $\tau_{(G,o) }$ but with summation over all injective maps. This injective version is equivalent to the plain one thanks to the same relations as before. Let $\mbf H = H_1\otimes  \cdots  \otimes H_k\in \mathbb C \cH^\bullet\langle J_1\sqcup J_2 \rangle^{\otimes k}$ be a tensor product of rooted test graphs, where $H_s= (V_s,E_s, \gamma_s, \varepsilon_s)$ for any $s=1, \dots , k$ (a tensor notation is introduced to ease the formulation below). Denote as before by $H_0=(V,E,\gamma, \varepsilon)$ the test graph obtained by  identifying the roots of the $H_s$'s and $\rho \in \cP(V)$ forming an amalgamation of the test graphs.

Assume that the graph $\mathcal {GCC}(H_0)$ of colored components of $H_0$ is a tree. Denote by $\mathfrak V(H_0^\rho)$ the set of roots of the connected components of $\mathcal {GCC}(H_0^\rho)$. For each $i\in \mathfrak V(H_0^\rho)$ we denote by $H^\rho(i)$ the corresponding colored component, and for each $s=1, \dots , k$, we denote by  $H_s(i)$  the subgraph of $H_s$ whose edges are in $H_0^\rho(i)$ (if it has no edge,  $H_s(i)$ is the test graph rooted in $i$ with no edge), and we set $\mbf H(i) = H_1(i) \otimes \cdots \otimes H_k(i)$. We also denote by $\rho(i)$ the restriction of $\rho$ on the vertices of $H_0(i)$.

\begin{definition} Let $\mu_1, \mu_2$ and $\mu$ the law of random rooted traffic distributions $\tau_1, \tau_2$ and $\tau$ labeled in $J_1, J_2$ and $J_1\sqcup J_2$ respectively. We say that $\mu$ is the free product of $\mu_1$ and $\mu_2$ whenever,  for any $\mbf H = H_1\otimes  \cdots  \otimes H_k\in \mathbb C \cH^\bullet\langle J_1\sqcup J_2 \rangle^{\otimes k}$ and $\rho$ as above,
	$$ \tau^0(\mbf H; \rho) \overset{d.} = \mathbf 1\Big( \mathcal{ GCC}(H_0^\pi) \mathrm{ \ is \ a \ tree} \Big) \prod_{ i \in \mathfrak V(H_0^\rho)} \tau^0_i\big( \mbf H(i) ; \rho(i) \big),$$
where  $(\tau^0_i)_{i \in \mathfrak V(H_0)}$ is an \emph{independent} collection of random rooted traffic distributions (in the sense of classical probability), distributed as $\mu_1$ if the labels of $S(i)$ are in $J_1$ and distributed as $\mu_2$ otherwise. 
\end{definition}

\section{Discretization  and relative entropy}

\label{sec:discretization}

In this section, we give a continuity result for the relative entropy with respect to our discretization procedure of the Euclidean space.

We recall the discretization map  defined in the proof of Theorem \ref{th:SIGMA1R}. Fix a mesh size $\delta >0$ and a window size $\kappa > 0$ with $\kappa/\delta$ integer.  The quantization is a map $\{ \cdot \}_\delta^\kappa : \dR^d \to  \left(\delta \dZ \cap [-\kappa,\kappa)\right)^k \cup \{\omega\}$ with $\omega \in \dR^d \backslash (\delta \dZ)^d$ is any default value such that $\omega^* =\omega$. Assume first that $d=1$. For $|x| \geq \kappa$, we set $\{x\}^\kappa_\delta = \omega$. For $0 \leq x < \kappa$, we set $\{x\}^\kappa_\delta = \lfloor  x / \delta \rfloor \delta$. If $-\kappa < x< 0$, we set $\{x\}^\kappa_\delta = - \{-x\}^\kappa_\delta$. On $\dR^d$, we set $\{(x_1,\ldots,x_d)\}^\kappa_\delta = (\{x_1\}^\kappa_\delta,\ldots,\{x_d\}^\kappa_\delta)$. Finally, if $\kappa/\delta$ is not an integer, we set $\{x\}^\kappa_\delta = \{x\}^{\kappa'}_{\delta}$ with $\kappa' = \delta \lceil \kappa/\delta \rceil$.

\begin{lemma}\label{le:discretization}
Let  $X,Y $ be  random variables on $\dR^d$. Then, 
$$
\lim_{\kappa \to \infty} \lim_{\delta \to 0} \DKL(\{X\}^\kappa_\delta | \{Y\}^\kappa_\delta  ) = \DKL(X|Y).
$$
\end{lemma}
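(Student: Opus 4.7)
The plan is to realize the discretization as a conditional expectation onto an increasing family of finite sub-$\sigma$-algebras and then invoke the classical monotone convergence property of relative entropy along such filtrations.

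First, for each admissible pair $(\delta,\kappa)$ (with $\kappa/\delta \in \mathbb{N}$), let $\mathcal{F}_{\delta,\kappa} \subset \mathcal{B}(\mathbb{R}^d)$ be the finite $\sigma$-algebra generated by the partition $\{B^\delta_l : l \in L^\delta \setminus \{\omega\}\} \cup \{B_\omega\}$, where $B_\omega = \mathbb{R}^d \setminus [-\kappa,\kappa)^d$. By construction the map $\{\cdot\}^\kappa_\delta$ is a measurable bijection from $\mathcal{F}_{\delta,\kappa}$ onto the discrete space of bin labels, so
\[
\DKL\bigl(\{X\}^\kappa_\delta \,\big|\, \{Y\}^\kappa_\delta\bigr) \;=\; \DKL\bigl(P_X|_{\mathcal{F}_{\delta,\kappa}} \,\big|\, P_Y|_{\mathcal{F}_{\delta,\kappa}}\bigr),
\]
where $P_X, P_Y$ denote the laws of $X$ and $Y$. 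This reduces the problem to a statement about relative entropies of restrictions to sub-$\sigma$-algebras.

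Next, since the limit $\lim_{\kappa \to \infty} \lim_{\delta \to 0}$ is well-defined, I restrict to dyadic scales $\delta \in \{2^{-n}\}_{n \geq 1}$ and dyadic windows $\kappa \in \{2^m\}_{m \geq 1}$. The dyadic refinement property then guarantees that for each fixed $\kappa$, the filtration $(\mathcal{F}_{2^{-n},\kappa})_{n \geq 1}$ is nondecreasing and generates $\sigma(\mathcal{B}([-\kappa,\kappa)^d), B_\omega)$; and across $\kappa = 2^m$, this latter family is itself nondecreasing and generates $\mathcal{B}(\mathbb{R}^d)$. The core step is then to invoke the standard monotone convergence theorem for relative entropy along increasing filtrations: if $\mathcal{G}_n \uparrow \mathcal{G}$, then $\DKL(\mu|_{\mathcal{G}_n} | \nu|_{\mathcal{G}_n}) \uparrow \DKL(\mu|_{\mathcal{G}} | \nu|_{\mathcal{G}})$. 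This can be derived either through the Radon-Nikodym martingale $g_n = \mathbb{E}_\nu[d\mu/d\nu | \mathcal{G}_n]$ combined with Jensen's inequality for $x \ln x$, or more uniformly through the variational formula
\[
\DKL(\mu|\nu) \;=\; \sup\Bigl\{ \textstyle\int f\, d\mu - \ln \int e^f \, d\nu \,:\, f \text{ bounded measurable}\Bigr\}.
\]
Applying this twice gives, for fixed $\kappa$, $\lim_{\delta \to 0} \DKL(\{X\}^\kappa_\delta | \{Y\}^\kappa_\delta) = \DKL(\tilde X^\kappa | \tilde Y^\kappa)$ with $\tilde X^\kappa = X \mathbf{1}_{X \in [-\kappa,\kappa)^d} + \omega \mathbf{1}_{X \notin [-\kappa,\kappa)^d}$, and subsequently $\lim_{\kappa \to \infty} \DKL(\tilde X^\kappa | \tilde Y^\kappa) = \DKL(X|Y)$.

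The only mildly delicate point is the case $\DKL(X|Y) = +\infty$, which can occur either from divergence of $\int \ln(d\mu/d\nu) d\mu$ or from failure of absolute continuity. The variational formulation handles both uniformly: any bounded measurable test function $f$ can be approximated (via truncation and averaging on cubes) by an $\mathcal{F}_{2^{-n},\kappa}$-measurable function with arbitrarily small change in $\int f\, dP_X - \ln \int e^f dP_Y$, which suffices to show that the discrete relative entropies exhaust the supremum defining $\DKL(X|Y)$. This is the step that requires the most care to write cleanly, but it is standard measure-theoretic approximation.
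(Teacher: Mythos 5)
Your approach is correct and is a genuinely different route from the paper's. You recast the discretization as a restriction to an increasing (dyadic) family of finite sub-$\sigma$-algebras and invoke the monotone convergence theorem for relative entropy along filtrations ($\mathcal{G}_n\uparrow\mathcal{G}$ implies $\DKL(\mu|_{\mathcal{G}_n}|\nu|_{\mathcal{G}_n})\uparrow\DKL(\mu|_{\mathcal{G}}|\nu|_{\mathcal{G}})$), applied twice to peel off the $\delta$-limit and then the $\kappa$-limit. The paper instead works directly with the Donsker--Varadhan variational formula \eqref{eq:varDKL}: it observes that $\DKL(\{X\}^\kappa_\delta|\{Y\}^\kappa_\delta)$ is the supremum over bin-constant test functions, and it sandwiches this quantity via two explicit approximation arguments (mollification of a bin-constant function to get the upper bound, and cell-averaging of a continuous compactly supported function to get the lower bound), without ever appealing to the martingale-convergence machinery. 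Your proof is shorter and conceptually cleaner where it applies; the paper's proof is more elementary and self-contained.

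One point worth flagging: the monotonicity you rely on requires the filtration to be nested, which forces the dyadic restriction $\delta\in\{2^{-n}\}$, $\kappa\in\{2^m\}$. You justify this by saying the limit ``is well-defined,'' but that is precisely part of what the lemma asserts; for non-nested scales (e.g.\ $\delta=1/3$ vs.\ $\delta=1/2$) the $\sigma$-algebras are incomparable and your main tool does not directly apply. The paper's direct variational argument handles arbitrary $\delta\to 0$ (the mollification and cell-averaging constructions do not need any nesting), so it closes this gap without extra work. In practice this is harmless here because the paper only ever invokes Lemma~\ref{le:discretization} along $\Delta=\{2^{-i}\}$ (see \eqref{eq:Sigmalim} and the monotone-convergence step after \eqref{eq:smoothedDKLG}), and the upper bound $\DKL(\{X\}^\kappa_\delta|\{Y\}^\kappa_\delta)\le\DKL(X|Y)$ holds for all $\delta$ by data processing, but you should either restrict the statement to the dyadic scale or supply the arbitrary-$\delta$ lower bound via a direct variational approximation (which, as you note in your final paragraph, is essentially the paper's own argument).
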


\begin{proof}
 The proof is based on the variational formula, for probability measures $p,q$ on $\dR^d$,
\begin{equation}\label{eq:varDKL}
\DKL(p|q)=\sup_{\phi\in \mathcal C_{b}(\mathbb R^{d})}\left\{\int \phi(x) dp(x)-\ln\int e^{\phi(x)} dq(x)\right\}.
\end{equation}

By construction $\{X\}^\kappa_\delta$ and $\{Y\}^\kappa_\delta$ are random variables on $L^\kappa_\delta = \left(\delta \dZ \cap [-\kappa,\kappa)\right)^d \sqcup \{\omega\}$. We may omit the explicit dependence of $L$ and other parameters on $(\kappa,\delta)$ for ease of notation. For $l \in L$, let $B_l = \{ x \in \dR^d : \{x \}^\kappa_\delta = l\}$ be the bin associated to $l$. By construction, if $l \in L \backslash \{\omega\}$, we have $\bar B_l = [l_1+\delta]\times \cdots \times [l_d + \delta] $ and $B_{\omega} = \{ x \in \dR^d: |x|_{\infty} \geq \kappa \}$, where $|x|_\infty = \max_i |x_i|$.

%We first notice that in \eqref{eq:varDKL}, we may replace $\cC_b$ by $\cC_0$ the compactly supported functions. Also

Let $p,q$ be the distributions of $X $ and $Y$. Then \eqref{eq:varDKL} implies that 
$$
\DKL(\{X\}^\kappa_\delta | \{Y\}^\kappa_\delta  ) = \sup_{\phi \in S^\kappa_\delta} \left\{\int \phi(x) dp(x)-\ln\int e^{\phi(x)} dq(x)\right\}.
$$
where $\cS^\kappa_\delta$ is the set of functions on $\dR^d$ such that $\phi$ is constant on $B_l$ for all $l \in L$. We note also that \eqref{eq:varDKL} implies that 
$$
\DKL(X|Y) = \sup_{\kappa > 0} \sup_{\phi \in \cC^\kappa}  \left\{\int \phi(x) dp(x)-\ln\int e^{\phi(x)} dq(x)\right\},
$$
where $\cC^\kappa$ is the set of continuous functions with support in $[-\kappa,\kappa)^d$. To prove the lemma, it is thus sufficient to check that for any $\kappa >0$,  
\begin{equation}\label{eq:DDKL1}
\sup_{\phi \in S^\kappa_\delta} \left\{\int \phi(x) dp(x)-\ln\int e^{\phi(x)} dq(x)\right\} \leq \sup_{\phi \in \cC_b}  \left\{\int \phi(x) dp(x)-\ln\int e^{\phi(x)} dq(x)\right\}.
\end{equation}
and 
\begin{equation}\label{eq:DDKL2}\sup_{\phi \in \cC^\kappa}  \left\{\int \phi(x) dp(x)-\ln\int e^{\phi(x)} dq(x)\right\} \leq \liminf_{\delta \to 0} \sup_{\phi \in S^\kappa_\delta} \left\{\int \phi(x) dp(x)-\ln\int e^{\phi(x)} dq(x)\right\}.
\end{equation}
Fix $\veps >0$ and take $\phi \in S^\kappa_\delta$ such that the left-hand side of \eqref{eq:DDKL1} is finite, that is $\phi$ positive on some $B_{l_\phi}$ which intersects the support of $q$. By taking a convolution with a compactly supported smooth kernel, we find that there exists a bounded continuous function $\phi_{\veps} \in \cC_b$ such that $| \phi - \phi_\veps |_{\infty} \leq \veps$. Then, for all $\veps >0$ small enough, $\phi_{\veps}$ positive on $B_{l_\phi}$. It follows by dominated convergence that 
$$
\lim_{\veps \to 0} \left\{\int \phi_\veps(x) dp(x)-\ln\int e^{\phi_\veps(x)} dq(x)\right\} = \int \phi(x) dp(x)-\ln\int e^{\phi} dq(x).
$$
This proves \eqref{eq:DDKL1}. Similarly, take $\phi \in \cC_\kappa$  such  that the left-hand side of \eqref{eq:DDKL2} is finite, that is support $\phi$ intersects the support of $q$. Let $\phi_\delta \in \cS^\kappa_\delta$ be equal to $0$ on $B_\omega$ and, on $B_l$ with $l \in L \backslash \{\omega\}$, to $y_l$ with,
$$
\int_{B_l} e^{\phi(x)} dq(x) = \int_{B_l} e^{y_l} dq(x)
$$
Also, since $\phi$ is uniformly continuous, $\phi_\delta$ converges uniformly toward $\phi$ along any sequence $\delta \to 0$. Again, by dominated convergence, this proves \eqref{eq:DDKL2}.
%$L^\delta = \left(\delta \dZ \cap [-\kappa(\delta),\kappa(\delta))\right)^k \sqcup \{\omega\} $
\end{proof}

\bibliographystyle{plain}

\bibliography{bib}

\end{document}